\theoremstyle{plain}
\newtheorem{theorem}{Theorem}[section]
\newtheorem{corollary}[theorem]{Corollary}
\newtheorem{lemma}[theorem]{Lemma}
\newtheorem{proposition}[theorem]{Proposition}
\theoremstyle{definition}
\newtheorem{definition}[theorem]{Definition}
\theoremstyle{remark}
\newtheorem{example}[theorem]{Example}
\newtheorem{remark}[theorem]{Remark}
\newcommand{\dom}{{\mathrm{Dom}}}
\newcommand{\supp}{{\mathrm{supp}}}
\newcommand{\real}{\mathbb{R}}
\newcommand{\ereal}{\mathbb{\overline{R}}}
\newcommand{\dec}{\mathbb{E}}
\newcommand{\cat}[1]{\ensuremath{\text{#1}}}
\newcommand{\im}{{\rm{Im}}}
\newcommand{\sheaf}[1]{#1}
\newcommand{\cor}[2]{C( #1,#2)}
\newcommand{\cmod}[1]{\mathbb{#1}}
\newcommand{\cle}{\preccurlyeq}
\newcommand{\nlnr}[2]{\mathbb{I}\,[#1,#2]}
\newcommand{\nlyr}[2]{\mathbb{I}\,[#1,#2\rangle}
\newcommand{\ylnr}[2]{\mathbb{I}\,\langle#1,#2]}
\newcommand{\ylyr}[2]{\mathbb{I}\,\langle#1,#2\rangle}
\begin{document}

%\linenumbers

\title[Correspondence Modules and Persistence Sheaves]{Correspondence Modules and Persistence Sheaves:
A Unifying Perspective on One-Parameter Persistent Homology}
\author[H. Hang]{Haibin Hang}
\address{Department of Mathematics \\
    Florida State University \\
    Tallahassee, FL 32306-4510 USA}
\email{hhang@math.fsu.edu}
\author[W. Mio]{Washington Mio}
\address{Department of Mathematics \\
    Florida State University \\
    Tallahassee, FL 32306-4510 USA}
\email{wmio@fsu.edu}

\thanks{This research was partially supported by NSF grant DMS-1722995.}

\keywords{Persistent homology, correspondence modules, persistence sheaves, persistence diagrams}

\subjclass[2020]{Primary: 55N31, 62R40; Secondary: 18F20}

%\date{\today}

\begin{abstract}
We develop a unifying framework for the treatment of various persistent homology architectures
using the notion of correspondence modules. In this formulation, morphisms  between
vector spaces are given by partial linear relations, as opposed to linear mappings. In the
one-dimensional case, among other things, this allows us to: (i) treat persistence modules
and zigzag modules as algebraic objects of the same type; (ii) give a categorical formulation
of zigzag structures over a continuous parameter; and (iii) construct barcodes associated with
spaces and mappings that are richer in geometric information. A structural analysis of one-parameter 
persistence is carried out at the level of sections of correspondence modules that yield sheaf-like 
structures, termed persistence sheaves. Under some tameness hypotheses, we prove interval 
decomposition theorems for persistence sheaves and correspondence modules, as well as 
an isometry theorem for persistence diagrams obtained from interval decompositions.  
Applications include: (a) a Mayer-Vietoris sequence that relates the 
persistent homology of sublevelset filtrations and superlevelset filtrations to the levelset homology 
module of a real-valued function and (b) the construction of slices of 2-parameter 
persistence modules along negatively sloped lines.

\end{abstract}
	
\maketitle

\tableofcontents

\section{Introduction}

\subsection{Context}

Rooted in the works of Frosini \cite{Frosini1990} and Robins \cite{Robins1999}, over the years, 
persistent homology has experienced a vigorous development on many fronts, including
theoretical foundations, computation, and applications. 
Through the assembly of homology across multiple scales into algebraic structures known as 
{\em persistence modules}, or $p$-modules, persistent homology provides a powerful technique
for the study of structural properties of geometric objects and data using
topology.  As discussed below, there are  many variants of persistence such as 
forward persistence, backward persistence, and zigzag persistence. 
One of the goals of this paper is to develop a categorical framework that allows us to view all of
these variants as one. Not only does this novel formulation provide a unifying perspective,
but it also reveals new forms of persistence and new 
relationships between different types of persistent structures. We refer to these generalized 
persistence modules as {\em correspondence modules}, or simply $c$-modules. 
In addition to developing this new framework, this paper carries out a structural analysis of 
persistent structures in this expanded setting, investigating interval decomposability of $c$-modules
and stability properties that are crucial for theoretically sound applications.

A prototypical example of a forward $p$-module over $\real$ is the persistent homology
of the {\em sublevel set filtration} of a space $X$ associated with a continuous function $f \colon X \to \real$.
For each $t \in \real$, let $X^t := f^{-1} (-\infty, t]$. Clearly, $X^s \subseteq X^t$, for any $s \leq t$,
so we obtain a filtration of $X$ indexeds over $\real$.  For an integer $i \geq 0$, applying the $i$th  
homology functor  (with coefficients in a fixed field) to this filtration, we obtain a family of vector spaces 
$V_t := H_i (X^t)$  and vector space morphisms $v_s^t \colon V_s \to V_t$, for any $s \leq t$, 
induced by the inclusions $X^s \subseteq X^t$, satisfying $v_t^t = id$,
$\forall t \in \real$, and $v_r^t = v_s^t \circ v_r^s$, if $r \leq s \leq t$. The family $(V_t, v_s^t)$
forms a {\em forward $p$-module}.
An analogous construction with superlevel sets $X_t := f^{-1} [t, \infty)$ yields a similar algebraic 
object, however, with a contravariant behavior. That is, if $s \leq t$, we have a morphism 
$v_s^t \colon V_t \to V_s$, where $V_t := H_i (X_t)$. The family $(V_t, v_s^t)$ is an example of 
a {\em backward $p$-module}.

Zigzag modules, introduced in \cite{Carlsson2010}, are parameterized
over discrete subsets of $\real$. In a zigzag module, a morphism may point in either direction.
An important example is the levelset persistence of a function $f \colon X \to \real$. 
For $t \in \real$, let $X[t]:= f^{-1} (t)$. Although there is no natural mapping from 
$X[s]$ to $X[t]$, for $s < t$, the interlevel set $X_s^t := f^{-1} [s,t]$ can act
as an interpolant, as there are inclusions $X[s] \hookrightarrow X_s^t 
\hookleftarrow X[t]$. Thus, so long as we restrict the parameter values to a discrete 
set, with the aid of interlevel sets we get a sequence of spaces connected by morphisms
alternating from forward to backward. Passing to homology, we obtain a zigzag
module. This formulation is adequate to study the level sets of Morse-type functions,
but insufficient for more general continuous functions, as there seems
to be something inherently discrete about such structures. The question of how
to give a category theory formulation of zigzag structures over $\real$ has been 
asked by many and posed explicitly in \cite{Oudot2015}. 
Another characteristic of this zigzag formulation is that the homology of interlevel sets are 
introduced somewhat artificially in the sequence, as they are not the objects of interest. 
Correspondence  modules  provide a solution to both problems, as detailed in Section \ref{S:levelset}. 
Our approach via $c$-modules altogether removes the homology of interlevel sets from
the sequence, recasting them as morphisms in the appropriate category. We note that levelset persistence  
over a continuous parameter also has been studied via a 2-parameter formulation of persistence
employing interlevel sets  \cite{Botnan2018,Cochoy2020}. This and connections to our formulation
are further discussed in  Section \ref{S:applications}.

Our constructs are based on two main concepts: $c$-modules 
and persistence sheaves, or $p$-sheaves. In a correspondence
module, a morphism between two vectors spaces $U$ and $V$ is given by a partial 
linear relation; that is, a linear subspace of $U \times V$. Letting $\cat{CVec}$ be the 
category whose objects are the vector spaces over a (fixed) field $k$ and the morphisms 
from $U$ to $V$ are the partial linear 
relations from $U$ to $V$, a correspondence module over a poset $(P, \cle)$
is a functor $F \colon (P, \cle) \to \cat{CVec}$. Partial linear relations previously have been
used in topological data analysis by Burghelea and Haller to study circle-valued
mappings \cite{Burghelea2017}, but the present use is quite distinct.

Formally, a $p$-module over $(P, \cle)$ is a functor from $(P, \cle)$ to
$\cat{Vec}$, the category of vector spaces (over $k$) and linear mappings \cite{Lesnick2015,Chazal2016}.
A linear map $T \colon U \to V$ may be viewed as a $\cat{CVec}$-morphism by replacing it
with its graph $G_T$. Thus, via graphs,  any $p$-module may be thought of as a $c$-module.
Note that a ``backward'' mapping $S \colon V \to U$ also may be viewed
as a $\cat{CVec}$-morphism from $U$ to $V$ by replacing $S$ with $G^\ast_S$, where the 
operation $\ast$ swaps the coordinates of $G_S$. From this viewpoint, persistence
structures in which morphisms are given by linear mappings, regardless of whether
the mappings are forward or backward, can all be formulated under the 
same category theory framework. Thus, zigzag modules \cite{Carlsson2010,Gabriel1972} also 
may be viewed as $c$-modules.

The focus of this paper is on $c$-modules over $(\real, \leq)$. Persistence sheaves are 
introduced mainly because it is difficult to directly analyze persistent structures in the 
$\cat{CVec}$ category, but they also are of interest in their own right. We note that a
sheaf-theoretical approach to persistent homology was first investigated
by Curry \cite{Curry2013} and further studied by Kashiwara and Schapira
\cite{Kashiwara2018}, Berkouk and Ginot \cite{Berkouk2018derived},
and Berkouk et al. \cite{Berkouk2019levelsets}. However, the $p$-sheaves that we use to
investigate the structure of $c$-modules satisfy a gluing property that is weaker than the
standard gluing property for sheaves, as
specified in Definition \ref{D:sheaf} and illustrated in Example \ref{E:sheaf}. 
Persistence sheaves also encode finer information than sheaves over open sets
of $\real$, as shown in Example \ref{E:open}.

Under appropriate tameness hypotheses, a $p$-module admits a decomposition, unique 
up to isomorphism, as a direct sum of  atomic units known as {\em interval modules}. This leads to a 
compact representation of persistence modules as persistence diagrams ($p$-diagrams) or barcodes.
In a seminal piece, in which the expression persistent homology was introduced, Edelsbrunner et al. 
developed an algorithm to  compute the persistent homology of a filtered simplicial  complex
and introduced an early form of $p$-diagrams \cite{Edelsbrunner2002}.  Another landmark is the 
work of Zomorodian and Carlsson \cite{ Zomorodian2005},
where the persistent homology of a discrete simplicial filtration is formulated as a graded module over the
polynomial ring $k[x]$ from which one obtains interval decompositions under appropriate finiteness
hypotheses. Barcodes were introduced in \cite{Carlsson2005} as a  representation
of an interval decomposition. This work also led to the insight that the fundamental structure 
underlying persistent homology is that of an inductive system of vector spaces; that is,
a  persistence module. It is in this algebraic setting that Crawley-Boevey 
showed that any pointwise finite-dimensional $p$-module over $(\real, \leq)$ admits
an interval decomposition \cite{Crawley-Boevey2015,Botnan2019decomposition}. 
As such, these $p$-modules may be represented by barcodes or $p$-diagrams, or alternatively,
by Bubenik's persistence landscapes \cite{Bubenik2015}. For levelset persistence, using 
a series of zigzag structures over progressively denser discrete subsets of $\real$, Carlsson et al. showed that 
one can define  persistence diagrams associated with a continuous parameter 
$t \in \real$ \cite{Carlsson2019}. However, the question remained unanswered at the level of modules. 
This paper develops sheaf-theoretical analogues of the techniques of \cite{Crawley-Boevey2015}
to prove interval decomposition theorems for $c$-modules and $p$-sheaves. The argument
involves a whole hierarchy of decompositions that ultimately leads to an interval
decomposition.

%The primary goal of this paper is to develop a framework for the unified 
%treatment of various one-parameter persistent homology architectures that have been 
%formulated and studied over the years by many authors 
%(cf.\,\cite{Edelsbrunner2008survey,Carlsson2009topology,Chazal2016}).
%In the process, we broaden the landscape  to include novel persistent homology structures 
%that can become valuable tools for  summarizing and gaining additional insight on the 
%organization and shape of complex  data objects and datasets. The focus is on persistent 
%homology parameterized over the  real line, but there are many connections with 
%multi-parameter persistence. 

The stability of persistence diagrams is another theme of central interest, 
as it is important to ensure that  persistent homology can be used reliably in 
applications.  A  breakthrough result  in this direction is the celebrated stability
theorem of  Cohen-Steiner, Edelsbrunner  and Harer \cite{Cohen-Steiner2007}. 
If $f,g \colon X \to \real$  satisfy some regularity conditions, then $d_b (D_f, D_g) 
\leq \|f-g\|_\infty$,  where $D_f$ and $D_g$ are the persistence diagrams associated 
with the  sublevelset filtration of $X$ induced by $f$ and $g$, respectively, and $d_b$ 
denotes bottleneck distance.  As one often is interested in comparing functional data 
defined on different  domains, extensions to this setting have been studied in 
\cite{Frosini2019,Hang2019}. For
structural data (finite metric spaces), Chazal et al. showed that
the persistence diagram of the Vietoris-Rips filtration is stable with respect
to the Gromov-Hausdorff distance \cite{Chazal2009dgh}. As the transition from functions, or 
other filtrations, to persistence diagrams goes through persistence modules, 
Chazal et al. \cite{Chazal2009,Chazal2016} introduced an interleaving distance $d_I$ between 
$p$-modules to analyze stability at an algebraic level. The Isometry Theorem,
proven by Lesnick \cite{Lesnick2015} and Chazal et al. \cite{Chazal2009,Chazal2016}, states that 
$d_b (D(\cmod{U}), D (\cmod{V})) = d_I (\cmod{U}, \cmod{V})$, where
$\cmod{U}$ and $\cmod{V}$ are $p$-modules over $\real$ and
$D (\cmod{U})$ denotes the persistence diagram of $\cmod{U}$. We prove 
an isometry theorem for $p$-sheaves as a further  extension of such stability 
results.

\subsection{Main Results}

The {\em sections} of a $c$-module $\cmod{V}$ over any interval  $I \subseteq \real$
(see Definition \ref{D:presheaf})  form a vector space $\sheaf{F}(I)$. 
Moreover, if $I \subseteq J$, there is a restriction homomorphism $F^J_I \colon F(J) \to F(I)$.
Sections satisfy certain locality and gluing properties that yield a sheaf-like structure that 
we term persistence sheaf. Analysis of the structure of $p$-sheaves has the 
advantage of placing  us back in the $\cat{Vec}$ category, at the expense of replacing the 
domain category $(\real, \leq)$ with the category
$(\cat{Int}, \subseteq)$, whose objects are the intervals of the real line with morphisms
given  by inclusions. It is in this framework that we establish the central results of the paper,
which are as follows:

\begin{enumerate}[(I)]
\item Under appropriate tameness hypotheses, we prove interval decomposition theorems 
for  $c$-modules and $p$-sheaves that lead to barcode or persistence diagram representations
of their structures.
\item We prove an isometry theorem that states that, for any two interval 
decomposable $p$-sheaves, the bottleneck distance between their persistence diagrams 
is the same as an interleaving distance between their $p$-sheaves of sections. This distance
extends the usual interleaving distance between $p$-modules (cf.\,\cite{Chazal2009,Chazal2016}).
\end{enumerate}
%--------------

We should point out that interval decompositions of tame $p$-sheaves may be approached 
via block decompositions for 2-D persistence modules \cite{Botnan2018,Cochoy2020}.
However, this is not sufficient to obtain an interval  decomposition theorem for
the class of pointwise finite-dimensional correspondence modules we are interested in 
because their  $p$-sheaves of sections only satisfy a weaker form of tameness -- see 
Examples \ref{E:vtame1} and \ref{E:vtame2}, and  Remark \ref{R:applications}.  
For this reason we approach interval decompositions in a different way, 
developing a sheaf theoretical analogue of the arguments used by Crawley-Boevey in 
the proof of a decomposition theorem for pointwise finite-dimensional  $p$-modules 
over $\real$ \cite{Crawley-Boevey2015}. This approach also provides a different
perspective on interval 
decompositions of $p$-sheaves. We first prove the result for tame $p$-sheaves, as it is simpler 
to describe the arguments in this setting. The proof is then readily adapted to $p$-sheaves
of sections of pointwise finite-dimensional $c$-modules, our primary objects of study. 
Similar remarks apply to our stability results.

One of the applications discussed in the paper illustrates particularly well the unifying
quality of $c$-modules.  For a function $f \colon X \to \real$ and $t \in \real$, the sublevel and 
superlevel sets $X^t$ and $X_t$, respectively, form a cover of $X$ by two subspaces
whose intersection is the level set $X[t]$. If $X$ is a locally compact polyhedron, $f$ is
proper, and homology is Steenrod-Sitnikov \cite{Milnor1961}, 
which satisfies a strong form of excision, there is a Mayer-Vietoris sequence for the cover
$X = X^t \cup X_t$. We construct a {\em persistent Mayer-Vietoris sequence} that ties 
together the persistent homology modules obtained from the sublevelset and superlevelset 
filtrations of $X$, induced by $f$, and the levelset homology $c$-module of $(X, f)$. 
Here we use in full force the fact that we can perform the
direct sum of forward and backward $p$-modules and construct morphisms involving
levelset $c$-modules, all as objects in the same category.

\subsection{Organization}

The rest of the paper is organized as follows. Section \ref{S:cmodules} introduces
some basic terminology and the notion of correspondence modules. Section \ref{S:psheaf}
is devoted to the basic properties of persistence sheaves. Tameness properties
of $c$-modules and $p$-sheaves that imply interval decomposability are
discussed in Section \ref{S:tameness}. The decomposition theorems for
$c$-modules and $p$-sheaves are proven in Section \ref{S:decomposition}
and the Isometry Theorem in Section \ref{S:isometry}. Section \ref{S:applications}
discusses applications to: (i) levelset persistence; (ii) the construction of a
persistent Mayer-Vietoris sequence; and (iii) 1-dimensional slices of
2-parameter persistence modules along lines of negative slope. We close
the paper with some discussion in Section \ref{S:remarks}.

\section{Correspondence Modules} \label{S:cmodules}

\subsection{Preliminaries}

We denote by $\cat{Vec}$ the category whose objects are the vector spaces
over a fixed field $k$ with linear mappings as morphisms. A poset
$(P,\cle)$ is treated as a category whose objects are the elements of $P$.
For $s, t \in P$, there is a single morphism from $s$ to $t$ if $s \cle t$, and none
otherwise. Abusing notation, we also denote the morphism by $s \cle t$.

A {\em persistence module} ($p$-module) over $P$ is a functor
$\cmod{V} \colon P \to \cat{Vec}$. Correspondence modules, defined next, generalize
$p$-modules by allowing more general morphisms between vector spaces.

\begin{definition} \label{D:correspondence}
Let $U$ and $V$ be vector spaces. A (linear, partial) {\em correspondence} from
$U$ to $V$ is a linear  subspace $C\subseteq U \times V$. We denote the set
of all  such linear correspondences by $\cor{U}{V}$. If $C \in \cor{U}{V}$, then:
\begin{enumerate}[(i)]
\item The {\em domain} of $C$ is defined as $\dom (C) = \pi_U (C)$
and the image of $C$ as $\im (C) = \pi_V (C)$, where $\pi_U$ and $\pi_V$
denote the projections onto $U$ and $V$, respectively. The
kernel of $C$ is defined as $\ker (C) = \{u \in U \,|\, (u,0) \in C\}$.
\item The {\em reverse correspondence} $C^\ast \in \cor{V}{U}$ is
defined as
\[
C^\ast = \{ (v,u) \, \colon \, (u,v) \in C\} \,.
\]
\end{enumerate}
\end{definition}

\begin{example}
Let $T\colon U \to V$ be a linear map and $G_T$ the graph of $T$. Then, $G_T \in 
\cor{U}{V}$ and $G^\ast_T \in \cor{V}{U}$.  The graph of the identity map
$I_V \colon V \to V$ gives the diagonal correspondence $\Delta_V := G_{I_V}$.
\end{example}

\begin{definition}
Let $C_1\in \cor{U}{V}$ and $C_2 \in \cor{V}{W}$. The {\em composition}
$C_2 \circ C_1 \in \cor {U}{W}$ is defined as 
$C_2 \circ C_1 = \{ (u,w) \in U \times W \colon \exists v \in V \text{with} \
(u,v) \in C_1 \ \text{and} \ (v,w) \in C_2\}$.
\end{definition}
	
With this composition operation, we form a category $\cat{CVec}$ having vector
spaces over $k$ as objects and correspondences $C \in \cor{U}{V}$ as morphisms from $U$
to $V$. In this category, the identity morphism of an object $V$ is $\Delta_V$, the graph of
the identity map.

\begin{lemma} \label{L:iso}
Let $C \subseteq U \times V$ be a correspondence. If $\dom (C) = U$ and
$\ker (C^\ast) = 0$, then $C$ is the graph of a linear mapping
$T \colon U \to V$. In particular, isomorphisms in $\cat{CVec}$ are given by linear mappings.
More precisely, let  $C \subseteq U \times V$ and $D \subseteq V \times U$ be
correspondences such that $D \circ C = \Delta_U$ and $C \circ D = \Delta_V$. Then, there
is an isomorphism $T \colon U \to V$ such that $G_T = C$ and $G_T^\ast = D$.
\end{lemma}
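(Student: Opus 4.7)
My plan is to prove the two assertions in sequence, using the first as the technical backbone for the second. For the first statement, I would argue that the hypothesis $\dom(C) = U$ gives, for each $u \in U$, some $v \in V$ with $(u,v) \in C$, and the hypothesis $\ker(C^\ast) = 0$ gives uniqueness: if $(u, v_1), (u, v_2) \in C$, then by linearity of $C$ as a subspace, $(0, v_1 - v_2) \in C$, so $v_1 - v_2 \in \ker(C^\ast) = 0$. This defines a set-theoretic map $T \colon U \to V$; linearity of $T$ is then immediate from $C$ being a linear subspace of $U \times V$, and by construction $C = G_T$.

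For the isomorphism statement, I would first derive the two hypotheses of the first part from $D \circ C = \Delta_U$ and $C \circ D = \Delta_V$. The inclusion $\Delta_U \subseteq D \circ C$ immediately forces $\dom(C) = U$ (and symmetrically $\dom(D) = V$). The less obvious step is $\ker(C^\ast) = 0$: suppose $(0, w) \in C$; since $\dom(D) = V$, pick $u_0 \in U$ with $(w, u_0) \in D$, so that $(0, u_0) \in D \circ C \subseteq \Delta_U$, forcing $u_0 = 0$ and hence $(w, 0) \in D$. Combining $(w, 0) \in D$ with $(0, 0) \in C$ yields $(w, 0) \in C \circ D \subseteq \Delta_V$, so $w = 0$. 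I expect this to be the main obstacle, as it is the only place where both compositional identities must be combined in a nontrivial way.

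Once $C = G_T$ is established for some linear $T \colon U \to V$, I would apply the same argument symmetrically, swapping the roles of $C$ and $D$ and of $U$ and $V$, to obtain $D = G_S$ for some linear $S \colon V \to U$. Unwinding the composition identities in terms of $T$ and $S$ then gives $S \circ T = I_U$ and $T \circ S = I_V$, so $T$ is a linear isomorphism with $T^{-1} = S$. Finally, the identity $G_T^\ast = \{(T(u), u) \colon u \in U\} = \{(v, T^{-1}(v)) \colon v \in V\} = G_{T^{-1}} = G_S$ gives $D = G_T^\ast$, completing the proof.
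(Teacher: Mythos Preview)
Your proposal is correct and follows essentially the same approach as the paper's proof, with the same logical structure: first establish that the two hypotheses yield a well-defined linear map $T$ with $C = G_T$, then derive those hypotheses from the inverse-morphism conditions, apply symmetry to get $D = G_S$, and conclude $S = T^{-1}$. Your treatment is in fact more detailed than the paper's, which simply asserts that $\ker(C^\ast) = 0$ ``follows from the fact that $C$ and $D$ are inverse morphisms'' without spelling out the two-step argument you give, and which does not explicitly verify the final identity $D = G_T^\ast$.
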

%---------------
\begin{proof}
The assumptions on $C$ imply that, for each $u \in U$, there is a unique
$v \in V$ such that $(u,v) \in C$. Define $T$ by $T(u) =v$, which is linear with
the desired properties. For the statement about isomorphisms,
$D \circ C = \Delta_U$ implies that $\text{Dom} (C) = U$. Thus, to show that $C$ is the
graph of a linear mapping $T \colon U \to V$, it suffices to verify that $\ker \, (C^\ast) = 0$,
which follows from the fact the $C$ and $D$ are inverse morphisms. Similarly, there is 
$S \colon V \to U$ such that $G_S = D$. Note that $D \circ C = \Delta_U$ and 
$C \circ D = \Delta_V$ imply that $S \circ T = I_U$ and $T \circ S = I_V$.
\end{proof}

\begin{definition} \label{D:cmod}
A {\em correspondence module} (abbreviated $c$-module) over a poset
$(P, \cle)$ is a functor
$\cmod{V} \colon P \to \cat{CVec}$.
\end{definition}

We adopt the following notation:
\begin{itemize}
\item[(i)] $V_t$ for the vector space associated with $t \in P$; that is, $V_t := \cmod{V} (t)$;
\item[(ii)] $v_s^t$ for the morphism associated with $s \cle t$;
that is, $v_s^t := \cmod{V} (s \cle t)$.
\end{itemize}

\begin{definition} \label{D:morphism}
Let $\cmod{U}$ and $\cmod{V}$ be $c$-modules over $P$. 
\begin{enumerate}[(i)]
\item A {\em morphism}
$F \colon \cmod{U} \to \cmod{V}$ is a natural transformation from the functor
$\cmod{U}$ to the functor $\cmod{V}$. In other words, a collection of compatible
morphisms $f_t \in \cor{U_t}{V_t}$, $t \in P$, meaning that $f_t \circ u_s^t =
v_s^t \circ f_s$, for any $s \cle t$. A morphism $F$ is an {\em isomorphism} if
$f_t$ is a $\cat{CVec}$ isomorphism, $\forall t \in P$.

\item $\cmod{U}$ is a submodule of $\cmod{V}$ if $U_t$ is a
subspace of $V_t$, $\forall t \in P$, and $u_s^t = (U_s \times U_t) \cap v_s^t$,
for any $s \leq t$.
\end{enumerate}
\end{definition}

\begin{definition}
The {\em graph functor} $\cmod{G} \colon \cat{Vec} \to \cat{CVec}$ is defined 
by $\cmod{G} (V) = V$, for any object $V$, and $\cmod{G} (T) = G_T$, for 
any morphism $T$.
\end{definition}

\begin{example} Persistence Modules and Zigzag Modules as $c$-Modules

\smallskip

\begin{enumerate}[(i)]
\item Let $\cmod{U} \colon P \to \cat{Vec}$ be a persistence module over $P$.
The composition $\cmod{V} = \cmod{G} \circ \cmod{U}$
yields a  correspondence module. Thus, any persistence module may
be viewed as a $c$-module via the graphs of its morphisms. 

\smallskip

\item  Consider the poset $P_n = \{0, 1, 2, \ldots, n\}$ with the usual ordering $\leq$. A
zigzag module  $\cmod{V}$ over $P_n$ is a sequence
\begin{equation}
V_0 \overset{p_1}{\longleftrightarrow} V_1 \overset{p_2}{\longleftrightarrow} V_2 
\longleftrightarrow \ldots \longleftrightarrow V_{n-1}  \overset{p_n}{\longleftrightarrow} V_n \,,
\end{equation}
where each $V_i$ is a $k$-vector space and $\overset{p_i}{\longleftrightarrow}$
denotes either a forward homomorphism $f_i \colon V_{i-1} \to V_i$ or a backward
homomorphism $g_i \colon V_i \to V_{i-1}$ \cite{Carlsson2010}. Let 
$C_i \subseteq V_{i-1} \times V_i$ be defined by:
\begin{enumerate}[(a)]
\item $C_i = G_{f_i}$, the graph of  $f_i$, if $p_i$ is a forward homomorphism;
\item $C_i = G^\ast_{g_i}$, the reverse of the graph of $g_i$, if $p_i$ is a backward
homomorphism.
\end{enumerate}
Set $C_{ii} = \Delta_{V_i} \subseteq V_i \times V_i$, for $0 \leq i \leq n$, and
$C_{ij} = C_j \circ \ldots \circ C_{i+1} \subseteq V_i \times V_j$, for $0 \leq i < j \leq n$. 
These correspondences induce a $c$-module structure on $\cmod{V}$. More precisely,
$\cmod{V} (i) := V_i$ and $\cmod{V} (i\leq j):= C_{ij}$ define a functor 
$\cmod{V} \colon (P_n, \leq) \to \cat{CVec}$.
\end{enumerate}
\end{example}

We denote by $\cat{CMod} \,(P)$, or simply $\cat{CMod}$, the category whose objects
are the $c$-modules over $P$ with natural transformations as morphisms.
Next, we show that morphisms in $\cat{CMod}$ have images in the category theory
sense. Zero morphisms, kernels and cokernels are not well defined in
$\cat{CMod}$. However, in some special situations, we can associate a kernel or a
cokernel $c$-module to a morphism.

\begin{definition}
Let $\cmod{U}$ and $\cmod{V}$ be $c$-modules over $P$ and $F \colon \cmod{U} 
\to \cmod{V}$ a $\cat{CMod}$ morphism given by compatible $\cat{CVec}$ morphisms
$f_t \colon U_t \to V_t$. We adopt the notation $[v_t]$ for the element of cokernel of
$f_t$ represented by $v_t \in V_t$.
\begin{enumerate}[(i)]
\item Let $\im_t = \im (f_t)$ and $\text{im}_s^t := v_s^t \cap (\im_s \times \im_t)$.
We refer to the pair $\im (F) :=(\{\im_t \colon t \in P\}, \{\text{im}_s^t \colon s \cle t\})$
as the {\em image of $F$}.
\item Similarly, letting $K_t = \ker (f_t)$ and $k_s^t = u_s^t \cap (K_s \times K_t)$,
define the {\em kernel of $F$} as the pair $\ker (F) := (\{ K_t \colon t \in P\}, 
\{k_s^t \colon s \cle t\})$.
\item Let $Q_t = \text{coker} (f_t)$ and $q_s^t \subseteq Q_s \times Q_t$ be
the subspace given by $([v_s], [v_t]) \in q_s^t$ if and only if there exist
$a_s \in \im (f_s)$ and $a_t \in \im (f_t)$ such that $(v_s + a_s, v_t + a_t) \in
v_s^t$. Define the {\em cokernel of $F$} as the pair $\text{coker} (F) := 
(\{Q_t \colon t \in P\}, \{q_s^t \colon s \cle t\})$.
\end{enumerate}
\end{definition}

\begin{proposition}[Images and Kernels] \label{P:imker}
If $F \colon \cmod{U} \to \cmod {V}$ is a $\cat{CMod}$ morphism, then
\begin{enumerate}[\rm (i)]
\item $\im (F)$ is a submodule of $\cmod{V}$ and the inclusion
$\im (F) \hookrightarrow \cmod{V}$ is an image of the morphism $F$ in the $\cat{CMod}$
category;
\item If $G \colon \cmod{W} \to \cmod{U}$ is another $\cat{CMod}$ morphism
and the sequence
\[
\begin{tikzcd}
W_t \ar[r, "g_t"] & U_t \ar[r, "f_t"]  &  V_t
\end{tikzcd}
\]
is exact in $\cat{CVec}$, $\forall t \in P$,  then $\ker (F)$ is a submodule of $\cmod{U}$;
\item If $\cmod{V}$ is a persistence module, then $\text{coker} \,(F)$ is a $c$-module.
\end{enumerate}
\end{proposition}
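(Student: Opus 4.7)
The plan is to dispatch the three items in sequence, with the substantive work in the composition axioms for the derived (sub)modules in (i) and (iii); (ii) will reduce to (i). For part (i), almost everything is formal: each $\im_t \subseteq V_t$ is a subspace, $\im_s^t = v_s^t \cap (\im_s \times \im_t)$ is a linear correspondence between $\im_s$ and $\im_t$, and $\im_t^t = \Delta_{\im_t}$. The only delicate step is verifying $\im_r^t = \im_s^t \circ \im_r^s$ for $r \cle s \cle t$. The inclusion $\im_s^t \circ \im_r^s \subseteq \im_r^t$ is obvious from $v_s^t \circ v_r^s = v_r^t$. For the reverse inclusion, given $(a,c) \in \im_r^t$, functoriality of $\cmod{V}$ yields $b \in V_s$ with $(a,b) \in v_r^s$ and $(b,c) \in v_s^t$; the point is that $b$ automatically lies in $\im_s$, because picking $u \in U_r$ with $(u,a) \in f_r$, the pair $(u,b)$ lies in $v_r^s \circ f_r = f_s \circ u_r^s$ by naturality, yielding $u' \in U_s$ with $(u',b) \in f_s$. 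The corestriction $\widetilde F \colon \cmod{U} \to \im(F)$ gives the factorization $F = \iota \circ \widetilde F$ through the pointwise-mono inclusion $\iota$, and any other factorization through a submodule must contain $\im(f_t)$ in each component, forcing the unique factoring through $\im(F)$.

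Part (ii) is a reduction: exactness of $W_t \to U_t \to V_t$ in $\cat{CVec}$ reads $K_t = \ker(f_t) = \im(g_t)$ as subspaces of $U_t$. With this identification, $k_s^t = u_s^t \cap (K_s \times K_t)$ is precisely the correspondence produced by applying the construction of (i) to $G \colon \cmod{W} \to \cmod{U}$. Hence $\ker(F)$ coincides with $\im(G)$ as submodule data and is therefore a submodule of $\cmod{U}$ by (i).

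For part (iii), I would verify the functor axioms for $\text{coker}(F)$ directly. Linearity of $q_s^t$ and $q_t^t = \Delta_{Q_t}$ are immediate, so the content is the composition identity $q_r^t = q_s^t \circ q_r^s$. The inclusion $q_r^t \subseteq q_s^t \circ q_r^s$ is straightforward: given a witness $(v_r + a_r, v_t + b_t) \in v_r^t$, functoriality of $\cmod{V}$ produces $v_s \in V_s$ with $(v_r + a_r, v_s) \in v_r^s$ and $(v_s, v_t + b_t) \in v_s^t$, which in turn witness $([v_r], [v_s]) \in q_r^s$ and $([v_s], [v_t]) \in q_s^t$ with the other ``correction'' equal to $0$. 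The reverse inclusion $q_s^t \circ q_r^s \subseteq q_r^t$ is the main obstacle, and it is where the hypothesis that $\cmod{V}$ is a persistence module enters: writing $v_r^s = G_{\phi_r^s}$ and $v_s^t = G_{\phi_s^t}$, witnesses $a_r \in \im(f_r)$, $a_s, b_s \in \im(f_s)$, $b_t \in \im(f_t)$, $v_s \in V_s$ for $([v_r], [v_s]) \in q_r^s$ and $([v_s], [v_t]) \in q_s^t$ yield
\[
\phi_r^t(v_r + a_r) = \phi_s^t(v_s + b_s) = (v_t + b_t) + \phi_s^t(b_s - a_s),
\]
so $([v_r], [v_t]) \in q_r^t$ provided $\phi_s^t(b_s - a_s) \in \im(f_t)$. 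This reduces to the auxiliary claim $\phi_s^t(\im(f_s)) \subseteq \im(f_t)$, which follows from naturality: for $b \in \im(f_s)$, pick $u \in U_s$ with $(u,b) \in f_s$; then $(u,\phi_s^t(b)) \in v_s^t \circ f_s = f_t \circ u_s^t$ produces $u' \in U_t$ with $(u',\phi_s^t(b)) \in f_t$. The argument uses both that composition is deterministic in $\cmod{V}$ (so the correction $b_s - a_s$ can be tracked linearly through $\phi_s^t$) and the naturality-driven containment above; this is precisely why (iii) needs the persistence module hypothesis while (i) and (ii) do not.
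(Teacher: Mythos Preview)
Your proposal is correct and follows essentially the same approach as the paper: for (i) you verify the composition rule by using naturality $v_r^s \circ f_r = f_s \circ u_r^s$ to show the intermediate element lies in $\im(f_s)$; for (ii) you reduce to (i) via $\ker(F)=\im(G)$; and for (iii) you check both inclusions for $q_r^t = q_s^t \circ q_r^s$ using the linear maps $\phi_s^t$ and the naturality-based containment $\phi_s^t(\im f_s)\subseteq \im f_t$. One small slip: in your displayed computation the middle term should be $\phi_s^t(v_s+a_s)$ rather than $\phi_s^t(v_s+b_s)$, and the correction is $\phi_s^t(a_s-b_s)$; the sign is immaterial since $\im(f_t)$ is a subspace, so the argument is unaffected.
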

%----------------------
\begin{proof}
(i) To verify that $\im (F)$ is a submodule of $\cmod{V}$, it suffices to check
the composition rule $\text{im}_r^t = \text{im}_s^t \circ \text{im}_r^s$, for any
$r \cle s \cle t$. The inclusion $\text{im}_r^t \supseteq \text{im}_s^t \circ \text{im}_r^s$
is straightforward. For the reverse inclusion, let $(v_r, v_t) \in \text{im}_r^t$. Then,
there exist $v_s \in V_s$, $u_r \in U_r$ and $u_t \in U_t$ such that $(v_r, v_s)
\in v_r^s$, $(v_s, v_t) \in v_s^t$, $(u_r, v_r) \in f_r$ and $(u_t, v_t) \in f_t$. Our goal
is to show that $v_s \in \im (f_s)$, as this implies that $(v_r, v_t) \in 
\text{im}_s^t \circ \text{im}_r^s$. Since $(u_r, v_s) \in v_r^s \circ f_r =
f_s \circ u_r^s$, there exists $u_s \in U_s$ such that $(u_r, u_s) \in u_r^s$
and $(u_s, v_s) \in f_s$, showing that $v_s \in \im f_s$, as desired.
The universal property for images in \cat{CMod} is easily verified for $\im (F)$.

(ii) The assumption that the sequences are exact implies that $\ker (F) = \im (G)$.
Hence, (i) implies that $\ker (F)$ is a submodule of $\cmod{U}$. 

(iii) Since $\cmod{V}$ is a $p$-module, the correspondences $v_s^t$ are given
by graphs of linear mappings; that is, $v_s^t = G_{\phi_s^t}$, where
$\phi_s^t \colon V_s \to V_t$ are linear mappings. Let $\text{coker} (F) := (Q_t, q_s^t)$.
For $r \cle s \cle t$, we first show that $q_r^t \subseteq q_s^t \circ q_r^s$.
Let $([v_r], [v_t]) \in q_r^t$. Then, there exist
$a_r \in \im (f_r)$ and $a_t \in \im (f_t)$ such that $(v_r + a_r, v_t + a_t) \in
v_r^t$, which means that $\phi_r^t (v_r + a_r) = v_t + a_t$. Let
$v_s = \phi_r^s (v_r)$ and $a_s = \phi_r^s (a_r)$. A diagram chase 
shows that $a_s \in \im (f_s)$. Clearly, $\phi_r^s (v_r + a_r) = v_s + a_s$
and $\phi_s^t (v_s + a_s) = v_t + a_t$, showing that $([v_r], [v_s]) \in q_r^s$ 
and $([v_s], [v_t]) \in q_s^t$; that is, $([v_r], [v_t]) \in q_s^t \circ q_r^s$ . For
the converse inclusion, suppose that $([v_r], [v_s]) \in q_r^s$  and
$([v_s], [v_t]) \in q_s^t$. Then, there exist $a_r \in \im (f_r)$, $a_s, b_s \in \im (f_s)$
and $a_t \in \im (f_t)$ such that $\phi_r^s(v_r + a_r ) = v_s + a_s$
and $\phi_s^t (v_s + b_s) = v_t + a_t$. The fact that $F$ is a
$\cat{CMod}$ morphism implies that $c_t = \phi_s^t (a_s - b_s) \in 
\im (f_t)$. Then,
\begin{equation}
\begin{split}
\phi_r^t (v_r + a_r) &= \phi_s^t (v_s + a_s) = \phi_s^t (v_s + b_s + a_s - b_s) \\
&= v_t + a_t + \phi_s^t (a_s - b_s) = v_t + (a_t + c_t) \,,
\end{split}
\end{equation}
which implies that $([v_r], [v_t]) \in q_r^t$, concluding the proof.
\end{proof}

We close this section with a discussion of direct sums of $c$-modules.

\begin{definition}
Let $\{\cmod{V}^\lambda, \lambda \in \Lambda\}$, be an indexed
collection of $c$-modules. Define the {\em direct sum}
$\cmod{V} = \oplus_{\lambda \in \Lambda} \cmod{V}^\lambda$ by
$V_t = \oplus_{\lambda \in \Lambda} V_t^\lambda$, with correspondences
$v_s^t = \oplus_{\lambda \in \Lambda} \cmod{V}^\lambda (s \cle t)
\subseteq V_s \times V_t$, for any $s \cle t$.
\end{definition}
	
\begin{proposition}
If $\{\cmod{V}^\lambda, \lambda \in \Lambda\}$ is an indexed collection of $c$-modules,
then the direct sum $\cmod{V} = \oplus_{\lambda \in \Lambda} \cmod{V}^\lambda$ also is a
$c$-module.
\end{proposition}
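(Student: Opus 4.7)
The plan is to verify that the data $(V_t, v_s^t)$ from the definition satisfies the two axioms needed for a functor from $(P,\cle)$ into $\cat{CVec}$: that each $v_s^t$ really is a correspondence (i.e.\,a linear subspace of $V_s \times V_t$), that $v_t^t = \Delta_{V_t}$, and that $v_r^t = v_s^t \circ v_r^s$ whenever $r \cle s \cle t$. All three reduce to a single fact about how composition of correspondences interacts with direct sums.

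First, I would fix once and for all the canonical linear isomorphism
\[
\left(\bigoplus_{\lambda \in \Lambda} V_s^\lambda\right) \times \left(\bigoplus_{\lambda \in \Lambda} V_t^\lambda\right) \;\cong\; \bigoplus_{\lambda \in \Lambda}\bigl(V_s^\lambda \times V_t^\lambda\bigr),
\]
given by $\bigl((u^\lambda)_\lambda,(w^\lambda)_\lambda\bigr) \mapsto \bigl((u^\lambda,w^\lambda)\bigr)_\lambda$. Under this identification, $v_s^t := \bigoplus_\lambda (\cmod{V}^\lambda)(s\cle t)$ is a linear subspace of $V_s \times V_t$, hence a legitimate object of $\cor{V_s}{V_t}$. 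The identity axiom is then transparent: $v_t^t = \bigoplus_\lambda \Delta_{V_t^\lambda}$ corresponds to $\Delta_{V_t}$ under the identification above.

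The heart of the argument is the composition axiom. Writing $(v_s^t)^\lambda := \cmod{V}^\lambda(s\cle t)$, functoriality of each $\cmod{V}^\lambda$ gives $(v_r^t)^\lambda = (v_s^t)^\lambda \circ (v_r^s)^\lambda$ for every $\lambda$, so it is enough to establish the general identity
\[
\bigoplus_\lambda\bigl(B^\lambda \circ A^\lambda\bigr) \;=\; \Bigl(\bigoplus_\lambda B^\lambda\Bigr)\circ\Bigl(\bigoplus_\lambda A^\lambda\Bigr),
\]
for any families $A^\lambda \in \cor{X^\lambda}{Y^\lambda}$ and $B^\lambda \in \cor{Y^\lambda}{Z^\lambda}$. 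The inclusion $\subseteq$ is immediate: coordinate-wise witnesses $y^\lambda \in Y^\lambda$ assemble into an element $(y^\lambda)_\lambda \in \bigoplus_\lambda Y^\lambda$ that serves as a single intermediate witness. For $\supseteq$, given an intermediate $(y^\lambda)_\lambda \in \bigoplus_\lambda Y^\lambda$, the definition of the direct sum of correspondences forces $(x^\lambda, y^\lambda) \in A^\lambda$ and $(y^\lambda, z^\lambda) \in B^\lambda$ for each $\lambda$, yielding coordinate-wise witnesses in the composed correspondences.

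The main (and only mild) obstacle is bookkeeping: one must consistently apply the identification between $(\oplus V_s^\lambda)\times(\oplus V_t^\lambda)$ and $\oplus(V_s^\lambda\times V_t^\lambda)$, and notice that what makes composition distribute over direct sums is precisely that the intermediate witness can be chosen coordinate-wise. There is no deeper conceptual difficulty; the proposition is essentially a compatibility check between two natural constructions.
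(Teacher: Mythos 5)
Your proof is correct and follows essentially the same route as the paper: a direct verification of the composition rule $v_r^t = v_s^t\circ v_r^s$ by observing that, under the canonical identification $(\oplus_\lambda V_s^\lambda)\times(\oplus_\lambda V_t^\lambda)\cong\oplus_\lambda(V_s^\lambda\times V_t^\lambda)$, membership in the direct-sum correspondence is exactly coordinate-wise membership, so intermediate witnesses can be chosen (and read off) component by component. Your packaging of the harder inclusion via the identification is in fact a bit cleaner than the paper's expansion into linear combinations with coefficient matching, but the underlying idea is identical.
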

	
\begin{proof}
We only need to show that $v_s^t \circ v_r^s = v_r^t$,
for any $r \cle s \cle t$. We first verify that $v_s^t \circ v_r^s
\supseteq v_r^t$. Given $(v_r, v_t ) \in v_r^t$, write
$(v_r, v_t ) = \sum_{\lambda \in \Lambda} c_\lambda (v_r^\lambda, v_t^\lambda)$, where
$(v_r^\lambda, v_t^\lambda) \in \cmod{V}^\lambda (r \cle t)$ and
all but finitely many coefficients $c_\lambda \in k$ vanish. For each $\lambda \in \Lambda$, 
there exists $v_s^\lambda \in \cmod{V}_s^\lambda$, such that 
$(v_r^\lambda, v_s^\lambda) \in \cmod{V}^\lambda (r \cle s)$ and
$(v_s^\lambda, v_t^\lambda) \in \cmod{V}^\lambda (s \cle t)$. Letting $v_s = 
\oplus_{\lambda \in \Lambda} c_\lambda v_s^\lambda$, it follows that
$(v_r, v_s) \in v_r^s$ and $(v_s, v_t) \in v_s^t$. Thus,
$(v_r, v_t) \in v_s^t \circ v_r^s$, as claimed. 

Conversely, let $(v_r, v_t) \in v_s^t \circ v_r^s$. Then,
there exists $v_s \in  \oplus_{\lambda \in \Lambda} V_s^\lambda$ such that
$(v_r, v_s) \in v_r^s$ and $(v_s, v_t) \in v_s^t$. Write
\begin{equation}
(v_r,v_s)=  \sum_{\lambda \in \Lambda} c_\lambda (v_r^\lambda, v_s^\lambda)
\quad \text{and} \quad
(v_s,v_t)=  \sum_{\lambda \in \Lambda} d_\lambda (v_s^\lambda, v_t^\lambda) \,,
\end{equation}
where $(v_r^\lambda, v_s^\lambda) \in \cmod{V}^\lambda (r \cle s)$,
$(v_s^\lambda, v_t^\lambda) \in \cmod{V}^\lambda (s \cle t)$ and all but finitely
many scalars $c_\lambda, d_\lambda \in k$ vanish.
Then, $v_s =  \sum_{\lambda \in \Lambda} c_\lambda v_s^\lambda$ and
$v_s =  \sum_{\lambda \in \Lambda} d_\lambda v_s^\lambda$,
which implies $c_\lambda = d_\lambda$, $\forall \lambda \in \Lambda$. Hence,
$(v_r, v_t) = \sum_{\lambda \in \Lambda} c_\lambda (v_r^\lambda, v_t^\lambda)
\in v_r^t$.
\end{proof}

\begin{definition}
A  correspondence module $\cmod{V}$ is {\em indecomposable} if
$\cmod{V} \cong \cmod{V}_1 \oplus \cmod{V}_2$ implies that either
$\cmod{V}_1=0$ or $\cmod{V}_2=0$.
\end{definition}

%------------------------------------

\subsection{Interval Correspondence Modules}

We now specialize to correspondence modules over $(\real, \leq)$. We introduce
interval $c$-modules associated with each interval $I \subseteq \real$. Unlike interval
$p$-modules (cf.\,\cite{Chazal2016}), there may be up to four non-isomorphic interval 
$c$-modules associated with $I$ (cf.\,\cite{Botnan2018,Carlsson2019,Cochoy2020}). 

Let $\dec$ be the set of extended, decorated real numbers defined as
\begin{equation}
\dec = \real \times \{+, -\} \cup \{ -\infty, + \infty\} .
\end{equation}
For $t \in \real$, we use the abbreviations $t^+ := (t, +)$, $t^- := (t, -)$,
and $t^\ast$ for either $t^+$ or $t^-$. Throughout the paper, $\dec$ is equipped with the
total ordering $\leq$ given by: 
\begin{enumerate}[(i)]
\item $t_1^\ast \leq t_2 ^\ast$, for any $t_1, t_2 \in \real$ satisfying $t_1 < t_2$;
\item $t^- \leq t^+$, for any $t \in \real$;
\item $- \infty \leq t^\ast \leq +\infty$, $\forall t \in \real$, and $-\infty \leq + \infty$;
\item $p \leq p$, $\forall p \in \dec$. 
\end{enumerate}
For $p, q \in \dec$, we write $p < q$ to mean that $p \leq q$ and $p \ne q$.
%-----------------
Decorated numbers give a uniform notation for intervals in $\real$,
whether open, closed, or half-open. We adopt the following identification between
objects in $\cat{Int}$ and elements of $\{(p,q) \in \dec^2 \,|\, p < q\}$ (cf.\,\cite{Chazal2016}):
\begin{enumerate}[(a)]
\item For $t_1, t_2 \in \real$ with $t_1 < t_2$, $(t_1, t_2) \leftrightarrow (t_1^+, t_2^-)$,
$[t_1, t_2) \leftrightarrow (t_1^-, t_2^-)$, $(t_1, t_2] \leftrightarrow (t_1^+, t_2^+)$, and
$[t_1, t_2] \leftrightarrow (t_1^-, t_2^+)$;
\item For $t \in \real$, the one-point interval $[t,t]$ corresponds to $(t^-, t^+) \in \dec^2$;
\item For $t \in \real$, $(-\infty, t) \leftrightarrow (-\infty, t^-)$,
$(-\infty, t]  \leftrightarrow  (-\infty, t^+)$, $(t, +\infty) \leftrightarrow  (t^+, +\infty)$ and
$[t, +\infty) \leftrightarrow (t^-, +\infty)$;
\item The entire real line $\real$ corresponds to  $(-\infty, +\infty) \in \dec^2$. 
\end{enumerate}

\begin{definition} \label{D:interval}
Let $(p, q) \in \dec^2$, $p < q$, represent an interval in $\real$.  We define $c$-modules
$\nlnr{p}{q}$, $\nlyr{p}{q}$, $\ylnr{p}{q}$ and $\ylyr{p}{q}$ associated with $(p, q)$ as follows
(see Fig.\,\ref{F:intervals}):
\begin{enumerate}[(i)]
\item Let $\cmod{I}$ denote any of the above $c$-modules and $t \in \real$. Define
$\cmod{I} (t) = k$, $\forall t \in (p, q)$, and $\cmod{I} (t) = 0$, otherwise. 
For $s, t \in \real$ and $s \leq t$, set $\cmod{I} (s \leq t) = \Delta_k$ if $s, t \in (p, q)$, 
and $\cmod{I} (s \leq t) = 0 \times 0$ if $s, t \notin (p, q)$;
\item $\nlnr{p}{q} (s \leq t) = 0 \times 0$ if $s \notin (p, q)$ or $t \notin (p, q)$;
\item $\nlyr{p}{q} (s \leq t) = k \times 0$ if $s \in (p, q)$ and $t \in (q, +\infty)$;

$\nlyr{p}{q} (s \leq t) = 0 \times 0$ if $s \in (-\infty, p)$;
\item $\ylnr{p}{q} (s \leq t) = 0 \times k$ if $s \in (-\infty, p)$ and $t \in (p, q)$;

$\ylnr{p}{q} (s \leq t) = 0 \times 0$ if $t \in (q, +\infty)$;
\item $\ylyr{p}{q} (s \leq t) = k \times 0$ if $s \in (p, q)$ and $t \in (q, +\infty)$;

$\ylyr{p}{q} (s \leq t) = 0 \times k$ if $s \in (-\infty, p)$ and $t \in (p, q)$.
\end{enumerate}
\end{definition}

\begin{remark} \label{R:arrows} \

\begin{enumerate}[(a)]
\item If $(p, q) \in \dec^2$ is a finite interval, the four modules in Definition \ref{D:interval}
fall in different isomorphism classes. If $p = -\infty$,
then $\nlyr{p}{q} = \ylyr{p}{q}$ and $\nlnr{p}{q} = \ylnr{p}{q}$. Similarly, if
$q = +\infty$, $\nlyr{p}{q} = \nlnr{p}{q}$ and $\ylnr{p}{q} = \ylyr{p}{q}$. If
$(p, q) = (-\infty, +\infty)$, all four $c$-modules coincide. 

\item We adopt the pictorial representation of these four types of interval modules
indicated in Fig.\,\ref{F:intervals}.
\end{enumerate}
\end{remark}

\begin{center}
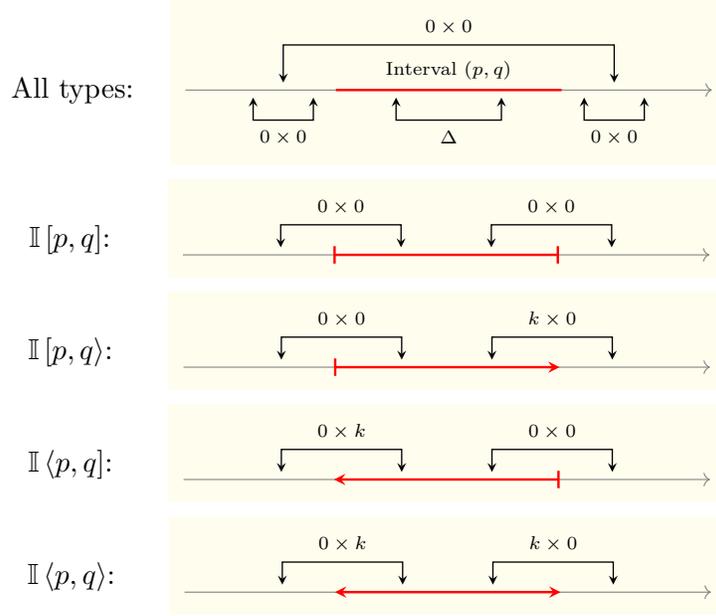
\begin{figure}
\hspace{-0.13in}
\begin{tikzpicture}[line width=0.9pt]
\draw (-5, 0) node {All types:};
\fill[yellow!8!white] (-3.7, -1) rectangle (3.7, 1.2);
\draw [->, thin, gray]  (-3.5, 0) -- (3.5, 0);
\begin{scope}[>=stealth]
\draw[color=red] (-1.5, 0) -- node[above=0.1pt, color=black] {\tiny{Interval $(p,q)$}}(1.5, 0);
\draw[<->, thin, line width=0.5pt] (-2.2, 0.1) -- (-2.2, 0.6) -- node[above=0.1pt] 
{\tiny{$0 \times 0$}} (2.2, 0.6) -- (2.2, 0.1);
\draw[<->, thin, line width=0.5pt] (1.8, -0.1) -- (1.8, -0.4) -- node[below=0.1pt] 
{\tiny{$0 \times 0$}} (2.6, -0.4) -- (2.6, -0.1);
\draw[<->, thin, line width=0.5pt] (-1.8, -0.1) -- (-1.8, -0.4) -- node[below=0.1pt] 
{\tiny{$0 \times 0$}} (-2.6, -0.4) -- (-2.6, -0.1);
\draw[<->, thin, line width=0.5pt] (-0.7, -0.1) -- (-0.7, -0.4) -- node[below=0.1pt] 
{\tiny{$\Delta$}} (0.7, -0.4) -- (0.7, -0.1);
\end{scope}
\end{tikzpicture}

\medskip
\begin{tikzpicture}[line width=0.9pt]
\draw (-5, 0.2) node {$\nlnr{p}{q}$:};
\fill[yellow!8!white] (-3.7, -0.3) rectangle (3.7, 1);
\draw [->, thin, gray]  (-3.5, 0) -- (3.5, 0);
\begin{scope}[>=stealth]
\draw[|-|, color=red] (-1.5, 0) -- (1.5, 0);
\draw[<->, thin, line width=0.5pt] (0.6, 0.1) -- (0.6, 0.4) -- node[above=0.1pt] 
{\tiny{$0 \times 0$}} (2.2, 0.4) -- (2.2, 0.1);
\draw[<->, thin, line width=0.5pt] (-2.2, 0.1) -- (-2.2, 0.4) -- node[above=0.1pt] 
{\tiny{$0 \times 0$}} (-0.6, 0.4) -- (-0.6, 0.1);
\end{scope}
\end{tikzpicture}

\medskip

\begin{tikzpicture}[line width=0.9pt]
\draw (-5, 0.2) node {$\nlyr{p}{q}$:};
\fill[yellow!8!white] (-3.7, -0.3) rectangle (3.7, 1);
\draw [->, thin, gray]  (-3.5, 0) -- (3.5, 0);
\begin{scope}[>=stealth]
\draw[|->, color=red] (-1.5, 0) -- (1.5, 0);
\draw[<->, thin, line width=0.5pt] (0.6, 0.1) -- (0.6, 0.4) -- node[above=0.1pt] 
{\tiny{$k \times 0$}} (2.2, 0.4) -- (2.2, 0.1);
\draw[<->, thin, line width=0.5pt] (-2.2, 0.1) -- (-2.2, 0.4) -- node[above=0.1pt] 
{\tiny{$0 \times 0$}} (-0.6, 0.4) -- (-0.6, 0.1);
\end{scope}
\end{tikzpicture}

\medskip

\begin{tikzpicture}[line width=0.9pt]
\draw (-5, 0.2) node {$\ylnr{p}{q}$:};
\fill[yellow!8!white] (-3.7, -0.3) rectangle (3.7, 1);
\draw [->, thin, gray]  (-3.5, 0) -- (3.5, 0);
\begin{scope}[>=stealth]
\draw[<-|, color=red] (-1.5, 0) -- (1.5, 0);
\draw[<->, thin, line width=0.5pt] (0.6, 0.1) -- (0.6, 0.4) -- node[above=0.1pt] 
{\tiny{$0 \times 0$}} (2.2, 0.4) -- (2.2, 0.1);
\draw[<->, thin, line width=0.5pt] (-2.2, 0.1) -- (-2.2, 0.4) -- node[above=0.1pt] 
{\tiny{$0 \times k$}} (-0.6, 0.4) -- (-0.6, 0.1);
\end{scope}
\end{tikzpicture}

\medskip

\begin{tikzpicture}[line width=0.9pt]
\draw (-5, 0.2) node {$\ylyr{p}{q}$:};
\fill[yellow!8!white] (-3.7, -0.3) rectangle (3.7, 1);
\draw [->, thin, gray]  (-3.5, 0) -- (3.5, 0);
\begin{scope}[>=stealth]
\draw[<->, color=red] (-1.5, 0) -- (1.5, 0);
\draw[<->, thin, line width=0.5pt] (0.6, 0.1) -- (0.6, 0.4) -- node[above=0.1pt] 
{\tiny{$k \times 0$}} (2.2, 0.4) -- (2.2, 0.1);
\draw[<->, thin, line width=0.5pt] (-2.2, 0.1) -- (-2.2, 0.4) -- node[above=0.1pt] 
{\tiny{$0 \times k$}} (-0.6, 0.4) -- (-0.6, 0.1);
\end{scope}
\end{tikzpicture}
\caption{Interval $c$-modules associated with the interval $(p,q)$.}
\label{F:intervals}
\end{figure}
\end{center}

\begin{lemma}[Indecomposability]
Interval $c$-modules are indecomposable.
\end{lemma}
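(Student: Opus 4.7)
The plan is to assume $\cmod{I} \cong \cmod{V}^1 \oplus \cmod{V}^2$ and show that one summand must be zero. The $\cat{CMod}$-isomorphism is furnished by $\cat{CVec}$-isomorphisms $\phi_t \colon \cmod{I}(t) \to V^1_t \oplus V^2_t$, which by Lemma \ref{L:iso} are graphs of linear isomorphisms $\tilde\phi_t$ of the underlying vector spaces. For $t \notin (p,q)$, $\cmod{I}(t) = 0$ forces $V^1_t = V^2_t = 0$, while for $t \in (p,q)$, $V^1_t \oplus V^2_t \cong k$ forces exactly one of them to be one-dimensional and the other zero. Set $A := \{t \in (p,q) : V^1_t \cong k\}$ and $B := \{t \in (p,q) : V^2_t \cong k\}$; these partition $(p,q)$, and the problem reduces to showing $A = \emptyset$ or $B = \emptyset$.

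The key observation is that for all four interval types of Definition \ref{D:interval}, $\cmod{I}(s \leq t) = \Delta_k$ whenever $s, t \in (p,q)$ with $s \leq t$; boundary behavior plays no role since $V^1_t = V^2_t = 0$ outside $(p,q)$. Suppose for contradiction that both $A$ and $B$ are nonempty and choose $s < t$ in $(p,q)$ belonging to different sets. Naturality of $\phi$ asserts that $v^1_{s,t} \oplus v^2_{s,t}$ is obtained from $\Delta_k$ by applying $\tilde\phi_s \times \tilde\phi_t$ to its elements; since these maps are linear isomorphisms, this preserves the correspondence invariants $\dim$, $\dim \ker$, and $\dim \ker (\cdot)^\ast$. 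Hence $v^1_{s,t} \oplus v^2_{s,t}$ must be one-dimensional with trivial kernel and trivial reverse kernel. However, since the nonzero factors of $V^1_s \oplus V^2_s$ and $V^1_t \oplus V^2_t$ sit in different summands, one has $v^1_{s,t} \subseteq k \times 0$ and $v^2_{s,t} \subseteq 0 \times k$ (up to swapping the labels $1$ and $2$). A short case analysis on the at most two possibilities for each factor shows that no direct sum $v^1_{s,t} \oplus v^2_{s,t}$ can simultaneously have dimension $1$, trivial kernel, and trivial reverse kernel, yielding the desired contradiction.

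With one of $A$, $B$ empty, the corresponding summand vanishes at every $t \in \real$ and is therefore the zero $c$-module. The main obstacle I anticipate is bookkeeping: verifying that the three correspondence invariants are genuinely preserved under pre- and post-composition with graph-of-isomorphism correspondences (a direct consequence of Lemma \ref{L:iso}) and executing the small case analysis cleanly. The uniformity across all four types of Definition \ref{D:interval} is automatic because the correspondence attached to $s \leq t$ inside $(p,q)$ is always $\Delta_k$, regardless of type.
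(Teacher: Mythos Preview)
Your proof is correct but takes a different route from the paper's. The paper uses the classical ``local endomorphism ring'' argument: given an isomorphism $\eta \colon \cmod{I} \to \cmod{U} \oplus \cmod{V}$, it forms the idempotent $\psi_t = \phi_t^{-1} \circ \pi \circ \phi_t \colon k \to k$, where $\pi$ is projection onto $\cmod{U}$ followed by inclusion, observes that an idempotent of $k$ is multiplication by $0$ or $1$, and then checks that naturality forces this scalar to be independent of $t \in (p,q)$. Your argument instead partitions $(p,q)$ into the sets $A$ and $B$ and directly inspects the correspondence $v^1_{s,t} \oplus v^2_{s,t}$ at a pair $s,t$ lying in different parts, reaching a contradiction by a short case analysis on dimensions and kernels. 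Both arguments rely on Lemma~\ref{L:iso} to realize the $\cat{CVec}$-isomorphisms as graphs of linear isomorphisms, and both exploit only the fact that $\cmod{I}(s \leq t) = \Delta_k$ inside $(p,q)$. The paper's idempotent approach is more conceptual and would generalize readily to other settings where the endomorphism ring of a ``simple'' object is a division ring; your approach is more elementary and self-contained, at the cost of being specific to the one-dimensional situation at hand.
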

\begin{proof}
The argument is standard. Let $\cmod{I}$ be an interval $c$-module of any of the
types described in Definition \ref{D:interval}. The corresponding interval in $\real$ is
denoted $I$. Suppose $\eta$ is a $c$-module isomorphism from $\cmod{I}$ to
$\cmod{U} \oplus \cmod{V}$. By Lemma \ref{L:iso}, $\forall t \in I$, $\eta_t$ is the
graph of a linear isomorphism $\phi_t \colon k \to U_t \oplus V_t$. Let
$\pi \colon \cmod{U} \oplus \cmod{V} \to  \cmod{U} \oplus \cmod{V}$ denote
projection onto $\cmod{U}$ followed by inclusion into $\cmod{U} \oplus \cmod{V}$. Then, 
 $\psi_t = \phi^{-1}_t \circ \pi \circ \phi_t \colon k \to k$ is an idempotent of $k$.
 Thus, $\psi_t$ is given by multiplication by $0$ or $1$. Since $\phi_t$ induce a $c$-module
 morphism, one can verify that this scalar is independent of $t \in I$. Hence, either
 $\cmod{U} =0$ or $\cmod{V} = 0$.
\end{proof}

%-------------------------------------

\section{Persistence Sheaves} \label{S:psheaf}

Henceforth, all $c$-modules will be over $(\real, \leq)$. In this section, we
develop a framework for the study of the structure of $c$-modules over $\real$
based on the concept of persistence sheaves. 

Let $2^\real$ be the category whose objects are the subsets of $\real$ with inclusion
of sets as morphisms.  We may think of the objects of $2^\real$ as the open sets of
$\real$ in the discrete topology. We denote by $\cat{Int}$ the subcategory of $2^\real$
whose objects are all intervals $I \subseteq \mathbb{R}$, open, closed or half-open 
(half-closed).

\begin{definition} (Presheaves)
\begin{enumerate}[(i)]
\item A {\em discrete presheaf} is a contravariant functor from $2^\real$
to $\cat{Vec}$; that is, a functor $\sheaf{G} \colon (2^\real)^{\text{op}} \to \cat{Vec}$.
\item A {\em persistence presheaf\/} is a functor
$\sheaf{F} \colon \cat{Int}^{\text{op}} \to \cat{Vec}$.
\end{enumerate}
\end{definition}

\noindent
Here, the superscript $\text{op}$ denotes the opposite category.
Clearly, any discrete presheaf defines a persistence presheaf via
restriction to $\cat{Int}$. For a persistence presheaf $\sheaf{F}$, we adopt the
following terminology:
\begin{enumerate}[(a)]
\item We refer to an  element of the vector space $\sheaf{F}(I)$ as a
{\em section} of $\sheaf{F}$ over $I$. 
\item For $I \subseteq J$, we refer to the linear map $F_I^J :=
\sheaf{F} (I \subseteq J) \colon \sheaf{F} (J) \to \sheaf{F} (I)$ as a {\em restriction
homomorphism}. If $s \in \sheaf{F} (J)$, we sometimes use the notation $s|_I$ for $F_I^J (s)$.
\item If $I = \{a\}$ is a singleton, we simplify the notation for $\sheaf{F}(I)$, $F_I^J$ and
$s|_I$ to $F_a$, $F_a^J$ and $s|_a$, respectively.
\item If $s \in F(I)$, the {\em support} of $s$ is defined as
$\supp [s]:= \{a \in I \colon F^I_a (s) \ne 0\}$.
\item If $s$ and $s'$ are sections of $\sheaf{F}$, we write $s \cle s'$ to indicate that
$s$ is a restriction of $s'$.
\end{enumerate}

Similar notation and terminology are adopted for discrete presheaves.

\begin{remark} 
Let $\sheaf{F}$ be the restriction of a discrete presheaf $\sheaf{G}$ to $\cat{Int}$. 
If $J \subseteq \real$ is an interval, $A \subseteq J$, and it is clear from the context what
the presheaf $G$ is, we abuse notation
and write $F_A^J$ for the restriction homomorphism ``inherited'' from $\sheaf{G}$.
Similarly, if $s \in F(J)$, we write $s|_A$ for $G^J_A (s)$.
More generally, if $V$ is a subspace of $\sheaf{F} (J)$, we set
$V|_A = \{ s|_A \,\colon\, s \in V \}$.
\end{remark}

\begin{definition} (Morphisms)

\begin{enumerate}[(i)]
\item Let $F$ and $G$ be persistence presheaves. A {\em morphism} from $F$ to $G$ is
a natural transformation $\Phi:F\rightarrow G$; that is, a collection
$\Phi:=\{\phi_I:F(I) \to G(I) \colon I \in \cat{Int}\}$ of linear mappings such that
$G^J_I\circ\phi_J=\phi_I\circ F^J_I$, for any $I\subseteq J$.

\item $\Phi \colon \sheaf{F} \to \sheaf{G}$ is an {\em isomorphism} if there is a morphism
$\Psi \colon \sheaf{G} \to \sheaf{F}$ such that $\Psi \circ \Phi$ and
$\Phi \circ \Psi$ are the identity morphisms of $F$ and $G$, respectively.
\end{enumerate}
\end{definition}

\begin{definition} \label{D:presheaf}

\medskip
Let $\cmod{V}$ be a $c$-module and $A \subseteq \real$.
\begin{enumerate}[(i)]
\item A {\em section} of $\cmod{V}$ over $A$ is an indexed family $\sigma = (v_t)$, $t \in A$,
such that $v_t \in V_t$ and $(v_r, v_t) \in v_r^t$, for any $r, t \in A$
with $r \leq t$.  The {\em domain} of $s$ is the set $A$ and we
denote it $\text{Dom} (s)$. The {\em support} of $s$ is defined as
$\supp [s]= \{t \in \dom (s) \colon v_t \ne 0\}$.
Pointwise addition and scalar multiplication induce a
$k$-vector space structure on the collection of all sections over $A$. 
If $s$ is a section over $B$ and $A \subseteq B$, the restriction of $s$ to $A$
is denoted $s|_A$.

\item The {\em discrete presheaf of sections of \,$\cmod{V}$}, denoted
$\sheaf{D}_{\cmod{V}}$, is the contravariant  functor that associates to each
$A \subseteq \real$ the vector space $\sheaf{D}_{\cmod{V}} (A)$ of all sections
of $\cmod{V}$ over $A$. If $A \subseteq B$, then $\sheaf{D}_{\cmod{V}} (A \subseteq B)$
is the linear mapping  given by $s \mapsto s|_A$, for any section $s$ over $B$.

\item The {\em persistence presheaf of sections of $\cmod{V}$} is the
restriction of $\sheaf{D}_{\cmod{V}}$ to $\cat{Int}$.
\end{enumerate}
\end{definition}

To define persistence sheaves, we introduce the notion of connected covering.

\begin{definition} \label{D:covering}
Let $X$ be a set and $X = \cup X_\lambda$, $\lambda \in \Lambda$, a covering of $X$ by
subsets $X_\lambda$. The covering is {\em connected} if for any non-trivial partition 
$\Lambda = \Lambda_0 \sqcup \Lambda_1$, the intersection of the sets
$X_0 = \cup_{\lambda \in \Lambda_0} X_\lambda$ and $X_1 =
\cup_{\lambda \in \Lambda_1} X_\lambda$ is non-empty.
\end{definition}

\begin{lemma} \label{L:covering}
Let $X = \cup X_\lambda$, $\lambda \in \Lambda$, be a covering of $X$ with
$X_\lambda \ne \emptyset$, $\forall \lambda \in \Lambda$. The covering is
connected if and only if, for any $\lambda, \mu \in \Lambda$, there is a finite sequence
$\lambda_0, \ldots, \lambda_n \in \Lambda$ such that $\lambda_0 = \lambda$,
$\lambda_n = \mu$ and $X_{\lambda_{i-1}} \cap X_{\lambda_i} \ne \emptyset$,
for $1 \leq i \leq n$.
\begin{proof}
Consider the equivalence relation on $\Lambda$ generated by $\lambda \sim \mu$
if $X_\lambda \cap X_\mu \ne \emptyset$. Then, $\lambda \sim \mu$ if and only if 
a sequence as above exists. It is simple to verify that the covering is connected
if and only if $[\lambda] = \Lambda$, $\forall \lambda \in \Lambda$.
\end{proof}
\end{lemma}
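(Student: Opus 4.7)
The plan is to follow the hint in the statement and work with the equivalence relation $\sim$ on $\Lambda$ generated by declaring $\lambda \sim \mu$ whenever $X_\lambda \cap X_\mu \ne \emptyset$. A direct unwinding of the definition of the generated equivalence relation shows that $\lambda \sim \mu$ precisely when a finite chain of indices $\lambda = \lambda_0, \lambda_1, \ldots, \lambda_n = \mu$ with $X_{\lambda_{i-1}} \cap X_{\lambda_i} \ne \emptyset$ exists. Hence the lemma reduces to proving that the covering is connected if and only if $\sim$ has a single equivalence class, i.e.\ $[\lambda] = \Lambda$ for every (equivalently, some) $\lambda \in \Lambda$.

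First I would treat the contrapositive of the forward direction. Suppose there exist $\lambda, \mu \in \Lambda$ with $\lambda \not\sim \mu$. Set $\Lambda_0 = [\lambda]$ and $\Lambda_1 = \Lambda \setminus [\lambda]$; by assumption $\mu \in \Lambda_1$, so this is a non-trivial partition. Let $X_0 = \bigcup_{\alpha \in \Lambda_0} X_\alpha$ and $X_1 = \bigcup_{\beta \in \Lambda_1} X_\beta$. If $X_0 \cap X_1$ contained some point $x$, then $x \in X_\alpha \cap X_\beta$ for some $\alpha \in \Lambda_0$, $\beta \in \Lambda_1$, giving $\alpha \sim \beta$; but $\alpha \sim \lambda$, so transitivity would place $\beta \in [\lambda] = \Lambda_0$, a contradiction. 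Thus $X_0 \cap X_1 = \emptyset$, and the covering is not connected.

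For the reverse direction I would argue directly. Assume that for every $\lambda, \mu$ a connecting chain exists. Let $\Lambda = \Lambda_0 \sqcup \Lambda_1$ be any non-trivial partition and pick $\lambda \in \Lambda_0$, $\mu \in \Lambda_1$. A chain $\lambda_0, \ldots, \lambda_n$ with $\lambda_0 = \lambda$ and $\lambda_n = \mu$ must cross between the parts: there is some index $i$ with $\lambda_{i-1} \in \Lambda_0$ and $\lambda_i \in \Lambda_1$ (or the symmetric situation). Then the non-empty set $X_{\lambda_{i-1}} \cap X_{\lambda_i}$ lies in $X_0 \cap X_1$, proving connectedness.

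No step here is a real obstacle; the content of the argument is purely combinatorial and rests on the standard description of the equivalence relation generated by a symmetric reflexive relation via finite chains. The only mild care needed is in the forward direction, to choose the partition so that equivalence classes are not split, which is achieved by taking one part to be a full equivalence class $[\lambda]$.
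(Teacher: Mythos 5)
Your proof is correct and follows exactly the paper's approach: both pass through the equivalence relation generated by $\lambda \sim \mu$ when $X_\lambda \cap X_\mu \ne \emptyset$ and identify connectedness of the covering with this relation having a single class. You merely spell out the two implications that the paper dismisses as "simple to verify," and your details (taking one part of the partition to be a full equivalence class in one direction, and finding a crossing edge of the chain in the other) are exactly right.
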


\begin{lemma} \label{L:cover}
Let $\{X_\lambda, \lambda \in \Lambda\}$ be a connected covering of a
set $X$. If $V \subseteq X$ is such that, for each $\lambda \in \Lambda$,
$X_\lambda \subseteq V$ or $X_\lambda \cap V = \emptyset$, 
then $V = X$ or $V = \emptyset$.
\end{lemma}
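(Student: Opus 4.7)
The plan is to argue by contradiction, assuming $V \neq \emptyset$ and $V \neq X$, and to build a non-trivial partition of $\Lambda$ whose associated unions have empty intersection, contradicting Definition \ref{D:covering}.

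Concretely, I would set
\[
\Lambda_0 = \{\lambda \in \Lambda \colon X_\lambda \cap V = \emptyset\}
\quad \text{and} \quad
\Lambda_1 = \{\lambda \in \Lambda \colon X_\lambda \cap V \ne \emptyset\}.
\]
Then $\Lambda = \Lambda_0 \sqcup \Lambda_1$ is a disjoint partition. By the hypothesis on $V$, for each $\lambda \in \Lambda_1$ we must have $X_\lambda \subseteq V$, while for each $\lambda \in \Lambda_0$ we have $X_\lambda \subseteq X \setminus V$. Consequently,
\[
\bigcup_{\lambda \in \Lambda_0} X_\lambda \subseteq X \setminus V
\quad \text{and} \quad
\bigcup_{\lambda \in \Lambda_1} X_\lambda \subseteq V,
\]
so these two unions are disjoint.

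Next I would show that both $\Lambda_0$ and $\Lambda_1$ are non-empty, so the partition above is non-trivial. Since $V \ne \emptyset$, pick $x \in V \subseteq X = \cup_\lambda X_\lambda$; any $\lambda$ with $x \in X_\lambda$ satisfies $X_\lambda \cap V \ne \emptyset$, hence $\lambda \in \Lambda_1$. Symmetrically, if $V \ne X$ there exists $y \in X \setminus V$, and any $\lambda$ with $y \in X_\lambda$ forces $X_\lambda \not\subseteq V$, so by hypothesis $X_\lambda \cap V = \emptyset$, giving $\lambda \in \Lambda_0$. This contradicts the connectedness of the covering, completing the proof.

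I do not expect a genuine obstacle here; the only mild subtlety is bookkeeping for possibly empty members of the covering, which is harmless because the partition $\Lambda_0, \Lambda_1$ above automatically places any empty $X_\lambda$ in $\Lambda_0$ and the disjointness of the two unions uses only the set-theoretic hypothesis on $V$. One could alternatively invoke Lemma \ref{L:covering} directly, chaining a $\lambda$ with $X_\lambda \subseteq V$ to a $\mu$ with $X_\mu \subseteq X \setminus V$ through a sequence of pairwise-intersecting pieces to obtain a contradiction, but the partition argument is cleaner.
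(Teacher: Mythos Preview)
Your argument is correct and is essentially the same as the paper's: both construct the partition $\Lambda = \Lambda_0 \sqcup \Lambda_1$ according to whether $X_\lambda$ meets $V$ or not, observe that the corresponding unions are disjoint, and invoke connectedness. The only cosmetic difference is that you frame it as a proof by contradiction (explicitly verifying both $\Lambda_i$ are nonempty), whereas the paper concludes directly that one $\Lambda_i$ must be empty.
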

%--------------------
\begin{proof}
Let $\Lambda_0 = \{ \lambda \in \Lambda \,|\, X_\lambda \subseteq V\}$ and
$\Lambda_1 = \{ \lambda \in \Lambda \,|\, X_\lambda \cap V = \emptyset\}$. This gives a
partition of $\Lambda$ with the property that $X_0 \cap X_1 = \emptyset$, with $X_0$ and
$X_1$ as in Definition \ref{D:covering}. Since the covering is connected, either
$\Lambda_0 = \emptyset$ or $\Lambda_1 = \emptyset$. This implies that $V=X$ or 
$V = \emptyset$.
\end{proof}

%-------------------------
\begin{definition} \label{D:sheaf}
Let $\sheaf{F}$ be a persistence presheaf. $\sheaf{F}$ is a {\em persistence sheaf}
(abbreviated $p$-sheaf) if the following conditions are satisfied:
\begin{enumerate}[(i)]
\item (Locality) For any covering $I=\cup I_\lambda$, $\lambda \in \Lambda$,
of $I$ by non-empty intervals $I_\lambda$, if $s \in \sheaf{F}(I)$ is such that
$s|_{I_\lambda} = 0$, $\forall \lambda \in \Lambda$,  then $s = 0$;

\item (Connective Gluing) For any connected covering
$I=\cup I_\lambda$, $\lambda \in \Lambda$, of $I$ by intervals $I_\lambda$,
if $s_\lambda \in
\sheaf{F}(I_\lambda)$, $\lambda \in \Lambda$, are sections of
$\sheaf{F}$ such that $s_\lambda|_{ I_\lambda \cap I_{\lambda'}} = 
s_{\lambda'}|_{ I_\lambda \cap I_{\lambda'}}$,
$\forall \lambda, \lambda' \in \Lambda$, then there is a section
$s \in \sheaf{F}(I)$ such that $s|_{I_\lambda} = s_\lambda$, 
$\forall \lambda \in \Lambda$. 
\end{enumerate}
\end{definition}

\noindent
Note that property (ii) differs from the usual gluing property for sheaves
because of the connectivity condition on the coverings. The next example shows
that the sections of a $c$-module in general do not satisfy the usual gluing property.

\begin{example} \label{E:sheaf}
Using the notation introduced in Definition \ref{D:interval}, let
$\cmod{V} = \cmod{I}[-1^-, 0^-] \oplus \cmod{I}[0^-, 1^+]$ be the direct sum of the interval 
$c$-modules of type $\cmod{I} [\, , ]$ associated with the intervals
$[-1,0)$ and $[0,1]$, respectively. Then, the constant sections $s_1$ over $[-1,0)$ and $s_2$
over $[0,1]$ defined by $s_1 \equiv (1,0)$ and $s_2 \equiv (0,1)$ have disjoint domains.
However, $s_1 \cup s_2$ is not a section over the interval $[-1,1] = [-1,0) \cup [0,1]$.
\end{example}

\begin{example} \label{E:open}
Here we describe a $c$-module $\cmod{V}$ whose $p$-sheaf $F$ of sections 
is  non-trivial, but the space of sections over any open interval is trivial. 
Thus, $p$-sheaves contains finer structural information than sheaves over
open sets of $\real$ equipped with the standard topology. Let
$\cmod{V} := \cmod{I} [0^-, 0^+]$ be the  interval module over the singleton $I=\{0\}$
introduced in Definition \ref{D:interval}. For any  open interval $J \subseteq \real$, 
we have $F(J) = 0$ because the relation $v_s^t = 0$, for any $s < t$. On the 
other hand, the space of sections $F(0)$ is a 1-dimensional vector space.
\end{example}

\begin{proposition}
The persistence presheaf of sections of a $c$-module $\cmod{V}$ is a $p$-sheaf.
\end{proposition}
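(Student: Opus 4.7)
The plan is to define the candidate global section pointwise and then verify the correspondence condition at every pair $r\le t$. Given a covering $I=\cup_{\lambda\in\Lambda} I_\lambda$ and compatible sections $s_\lambda=(v_t^\lambda)_{t\in I_\lambda}$, I set $v_t:=v_t^\lambda$ for any $\lambda$ with $t\in I_\lambda$; compatibility on overlaps makes this well-defined. Locality then follows by the same pointwise reasoning: if $s|_{I_\lambda}=0$ for every $\lambda$, then $v_t=0$ for every $t\in I$, since each $t$ lies in some $I_\lambda$.

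The nontrivial content is showing that $s=(v_t)_{t\in I}$ satisfies $(v_r,v_t)\in v_r^t$ for all $r\le t$ in $I$. I will first handle the case of two intersecting intervals $J_1,J_2$ with compatible sections $s_1,s_2$: given $r\le t$ in $J_1\cup J_2$ with $r\in J_1$ and $t\in J_2$, I need an intermediate point $u\in J_1\cap J_2$ with $r\le u\le t$. Choosing any $u_0\in J_1\cap J_2$, the three cases $u_0\in[r,t]$, $u_0<r$, and $t<u_0$ can each be resolved using that $J_1$ and $J_2$ are intervals; for instance, $u_0<r\le t$ with $u_0,t\in J_2$ forces $r\in J_2$, so $u=r$ works, and symmetrically for $t<u_0$. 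The composition law $v_r^t=v_u^t\circ v_r^u$ together with $(v_r,v_u)\in v_r^u$ (from $s_1$) and $(v_u,v_t)\in v_u^t$ (from $s_2$, using agreement on the overlap at $u$) then yields the desired membership.

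For the general case, fix $r\le t$ in $I$, choose $\alpha,\beta\in\Lambda$ with $r\in I_\alpha$ and $t\in I_\beta$, and invoke Lemma \ref{L:covering} to produce a finite chain $\alpha=\lambda_0,\ldots,\lambda_n=\beta$ with consecutive nonempty intersections. Iterating the two-interval gluing along this chain produces, by induction on $j$, a section $\tau_j$ over the interval $I_{\lambda_0}\cup\cdots\cup I_{\lambda_j}$ that agrees pointwise with $s$; at each step the running union remains an interval because its overlap with $I_{\lambda_{j+1}}$ contains $I_{\lambda_j}\cap I_{\lambda_{j+1}}\neq\emptyset$, and compatibility of $\tau_j$ with $s_{\lambda_{j+1}}$ on that overlap reduces pointwise to the hypothesized pairwise compatibility. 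The terminal section $\tau_n$ then certifies $(v_r,v_t)\in v_r^t$.

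The main obstacle is the two-interval case, specifically the geometric argument that the overlap $J_1\cap J_2$ admits a point in the closed subinterval $[r,t]$; the remainder of the proof is bookkeeping via Lemma \ref{L:covering} and induction along the chain. A minor point to be careful about in the write-up is that the two-interval step uses an asymmetric composition ($r\le u\le t$), so the ordering of $r,t$ inside $J_1\cup J_2$ must be tracked even though the gluing of sections is intrinsically symmetric.
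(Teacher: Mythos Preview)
Your argument is correct and takes a genuinely different route from the paper's proof. The paper defines the candidate $s$ pointwise as you do, but then verifies that $s$ is a section by a Zorn's lemma argument: it considers the poset of all sections $r \cle s$ (restrictions of $s$ to subintervals that happen to be sections), extracts a maximal element $\hat{s}$ with domain $J \subseteq I$, observes that maximality forces each $I_\lambda$ to be either contained in $J$ or disjoint from it, and then invokes Lemma~\ref{L:cover} to conclude $J = I$. Your approach instead works constructively: you isolate the two-interval case, prove it by a direct order-theoretic case split to locate an intermediate point $u \in J_1 \cap J_2 \cap [r,t]$, and then promote this to arbitrary connected coverings by threading a finite chain from Lemma~\ref{L:covering} and inducting along it. Your method is more elementary in that it avoids Zorn's lemma entirely (the only transfinite ingredient in the paper's version), and it makes the role of connectedness very explicit through the chain characterization. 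The paper's version is shorter and sidesteps the case analysis, at the cost of a mild appeal to choice. Both are clean; yours would be preferable in a setting where one wants to track constructivity.
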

\begin{proof}
Locality is clearly satisfied, so we verify the connective gluing property. Let
$I = \cup I_\lambda$, $\lambda \in \Lambda$, be a connected covering of
an interval $I$ by intervals $I_\lambda$, and let $s_\lambda \in \sheaf{V} (I_\lambda)$,
$\lambda \in \Lambda$, be sections of $\cmod{V}$ that agree on overlaps. There
is a well-defined family $s = (v_t)$, $t \in I$, such that
$s|_{I_\lambda} = s_\lambda$, $\forall \lambda \in \Lambda$. We need to show that
$s$ is a section. Let $S$ be the collection of all sections $r$ of $\cmod{V}$
satisfying $r \cle s$; that is, sections $r$ that coincide with the restriction of $s$ to some
subinterval of $I$. $S$ is non-empty because the restriction of $s$ to any $I_\lambda$
gives a section. Moreover, $(S, \cle)$ is a partially ordered set such that each chain in 
$S$ has an  upper bound in $S$. By Zorn's lemma, $S$ contains a maximal element $\hat{s}$
defined on an interval $J \subseteq I$. By construction, for each $\lambda \in \Lambda$,
$I_\lambda \cap J = \emptyset$ or $I_\lambda \subseteq J$, for otherwise,
we would be able to extend $\hat{s}$ to a larger interval. Since $J \ne \emptyset$,
Lemma \ref{L:cover} ensures that $J=I$, proving that $s = \hat{s}$.
\end{proof}

\begin{definition}
Let $\sheaf{F}_\lambda$, $\lambda \in \Lambda$, be $p$-sheaves. The direct sum
$\oplus \sheaf{F}_\lambda$ is the $p$-sheaf defined by:
\begin{enumerate}[(i)]
\item $(\oplus \sheaf{F}_\lambda) (I) = \oplus \sheaf{F}_\lambda (I)$, for any interval $I$;
\item $(\oplus \sheaf{F}_\lambda) (I \subseteq J) = 
\oplus \sheaf{F}_\lambda (I \subseteq J)$.
\end{enumerate} 
\end{definition}

\begin{definition}
Let $\sheaf{F}$ and $\sheaf{G}$ be $p$-sheaves. $\sheaf{F}$ is a {\em subsheaf} of
$\sheaf{G}$ if (i) for any interval $I$, $\sheaf{F}(I) \subseteq \sheaf{G}(I)$ and
(ii) for any $I \subseteq J$ and $s \in \sheaf{F}(J)$, $\sheaf{F}^J_I (s) = 
\sheaf{G}^J_I (s)$.
\end{definition}

It is easy to verify that the intersection of a collection of subsheaves of a
$p$-sheaf $\sheaf{F}$ is also a subsheaf of $\sheaf{F}$.

%--------------------

\begin{definition} \label{D:span}
Let $S$ be a set of sections of a $p$-sheaf $\sheaf{F}$. The {\em $p$-sheaf
generated by $S$} is the intersection of all subsheaves $\sheaf{G}$ of $\sheaf{F}$
with the property that if $s \in S$, then $s$ is a section of $\sheaf{G}$. This subsheaf of
$\sheaf{F}$ is denoted $\sheaf{F}\langle S \rangle$. 
\end{definition}

\begin{definition} \label{D:intsheaf}
Let $(p,q) \in \dec^2$ be an interval. We define the {\em interval $p$-sheaves}
$k[p,q]$, $k[p,q\rangle$, $k\langle p,q]$ and $k\langle p,q\rangle$ associated
with $(p,q)$, as follows: 
\begin{enumerate}[(i)]
\item $k[p,q](I)=k$ if $I\subseteq (p,q)$ and $k[p,q](I)=0$ otherwise. The
element $1 \in k[p,q](p,q)$ is called the {\em unit section} of $k[p,q]$.
\item $k[p,q\rangle(I)=k$ if $I\subseteq (p,+\infty)$ and $I\cap(p,q)\neq \emptyset$,
and $k[p,q\rangle(I)=0$ otherwise. The element $1 \in k[p,q\rangle (p, +\infty)$
is called the {\em unit section} of $k[p,q\rangle$.
\item $k\langle p,q](I)=k$ if $I\subseteq (-\infty,q)$ and $I\cap(p,q)\neq \emptyset$,
and $k\langle p,q](I)=0$ otherwise. The element of $1 \in k \langle p,q] (-\infty, q)$
is called the {\em unit section} of $k \langle p,q]$.
\item $k\langle p,q\rangle(I)=k$ if $I\cap(p,q)\neq \emptyset$ and
$k\langle p,q\rangle(I)=0$ otherwise. The element $1 \in k \langle p,q\rangle
(-\infty, +\infty)$ is called the {\em unit section} of $k \langle p,q\rangle$.
\item If $\sheaf{F}$ is any of the above $p$-sheaves and $I \subseteq J$,
then $\sheaf{F}^J_I$ is the identity map if $\sheaf{F}(J)=\sheaf{F}(I)=k$, and
$\sheaf{F}^J_I$ is the trivial map, otherwise.
\end{enumerate}
We refer to each of these four possibilities as the {\em type} of the interval
$p$-sheaf associated with $(p, q)$.
\end{definition}

\begin{proposition} \label{P:sinterval}
For any interval $(p,q) \in \dec^2$, $ p < q$, the following holds: 
\begin{enumerate}[\rm (i)]
\item $k[p,q]$ is isomorphic to the $p$-sheaf of sections of $\cmod{I}[p,q]$;
\item $k[p,q\rangle$ is isomorphic to the $p$-sheaf of sections of $\cmod{I}[p,q\rangle$;
\item $k\langle p,q]$ is isomorphic to the $p$-sheaf of sections of $\cmod{I}\langle p,q]$;
\item $k\langle p,q\rangle $ is isomorphic to the $p$-sheaf of sections of
$\cmod{I}\langle p,q\rangle$.
\end{enumerate}
\end{proposition}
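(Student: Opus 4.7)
The plan is to construct, for each of the four types, an explicit natural isomorphism between the interval $p$-sheaf of Definition \ref{D:intsheaf} and the $p$-sheaf of sections of the corresponding interval $c$-module of Definition \ref{D:interval}. The isomorphism will send the unit section $1 \in k$ to a canonical constant section (extended by zero where forced), so the whole proposition reduces to computing the section spaces and checking naturality of restriction.

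The main step is to compute, for each type and each interval $I \subseteq \real$, the vector space of sections of the corresponding interval $c$-module over $I$ directly from the definition of a section and the explicit form of the correspondences $v_r^t$ given in Definition \ref{D:interval}. For $\cmod{V} = \nlnr{p}{q}$, if $I$ contains any point outside the real interval associated with $(p,q) \in \dec^2$, then the correspondence $0 \times 0$ between such a point and any other point of $I$ forces every value of the section to vanish; if $I$ is contained in this real interval, all correspondences are $\Delta_k$, so a section is a constant $k$-valued family on $I$, giving a one-dimensional space. For $\nlyr{p}{q}$, the ``extending'' correspondence $k \times 0$ on the right forces $v_t = 0$ beyond the real interval but leaves earlier values free, while the ``killing'' correspondence $0 \times 0$ on the left still forces vanishing whenever $I$ meets $(-\infty, p)$. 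An analogous analysis for $\ylnr{p}{q}$ and $\ylyr{p}{q}$ shows that the space of sections matches the value of the corresponding interval $p$-sheaf stated in Definition \ref{D:intsheaf}.

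Having identified the section spaces, I define the morphism $\Phi$ on each interval $I$ by sending the generator $1 \in k$, when both sides are nonzero, to the canonical section whose value on the $k$-supported part of $I$ equals $1$, and as the zero map when either side vanishes. Naturality reduces to two immediate observations: the restriction of a constant section is the constant section with the same value, and any commuting square involving a zero vector space commutes trivially. Since $\Phi_I$ is by construction a linear isomorphism for each interval $I$, this provides the required $p$-sheaf isomorphism.

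The only real obstacle is the bookkeeping of cases: for each of the four types one must examine intervals that lie inside the support, extend past one endpoint, extend past both endpoints, or miss the support entirely, and one must keep track of the decorated endpoints. However, no deep tool beyond the definition of a section is needed; the pictorial diagrams in Figure \ref{F:intervals} essentially dictate the answer in every case, and the arrow pattern of each type determines uniquely which intervals $I$ admit a nonzero section and forces its shape.
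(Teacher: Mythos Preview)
Your proposal is correct and is essentially the argument the paper has in mind: the paper's own proof reads in full ``The proof is straightforward,'' and what you have written is a careful unpacking of that straightforward verification. Your case analysis of the section spaces for each of the four interval $c$-module types matches Definition~\ref{D:intsheaf} exactly, and your construction of the isomorphism via the canonical constant section is the natural one.
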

%-----------
\begin{proof}
The proof is straightforward.
\end{proof}

\begin{proposition} \label{P:isheaf}
Let $s$ be a section of a $p$-sheaf $\sheaf{F}$ with $\dom (s) = (x,y) \in \dec^2$,
$x < y$, and let $\sheaf{F}\langle s \rangle$ be the subsheaf generated by $s$.
Suppose that $\supp [s]$ is an interval $(p, q) \in \dec^2$, where $x \leq p < q \leq y$. 
Then, the following statements hold:

\begin{enumerate}[\rm (i)]
\item If $x=p$ and $q<y$, then $\sheaf{F}\langle s \rangle$ is isomorphic to
$k[p,q\rangle$.
\item If $x<p$ and $q=y$, then $\sheaf{F}\langle s \rangle$ is isomorphic to
$k\langle p,q]$.
\item If $x<p$ and $q<y$, then $\sheaf{F}\langle s \rangle$ is isomorphic to
$k\langle p,q\rangle$.
\item If $x=p$ and $q=y$, then $\sheaf{F}\langle s \rangle$ is isomorphic to
$k[p,q]$.
\end{enumerate}
\end{proposition}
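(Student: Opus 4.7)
The plan is to exhibit $\sheaf{F}\langle s \rangle$ explicitly, in each of the four cases, by first extending $s$ by zero to a canonical section $\tilde{s}$ defined on the ``maximal'' interval prescribed by the target interval $p$-sheaf, and then identifying the subsheaf generated by $\tilde{s}$ with the corresponding object from Definition \ref{D:intsheaf}. In case (iv) no extension is needed and I set $\tilde{s} := s$. In case (i), with $\dom(s) = (p, y)$ and $\supp[s] = (p, q)$, I pick $z \in \dec$ with $q \leq z < y$ and cover $(p, +\infty)$ by the two intervals $(p, y)$ and $(z, +\infty)$. The section $s$ and the zero section both restrict to zero on the overlap $(z, y)$ --- pointwise vanishing suffices by applying the locality axiom to the cover of $(z, y)$ by singletons --- and the cover is connected, so by the connective gluing axiom these sections assemble into a section $\tilde{s} \in \sheaf{F}((p, +\infty))$. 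Case (ii) is symmetric, extending to $(-\infty, q)$, and case (iii) uses a three-piece connected cover of $\real$ (with overlaps chosen to lie outside $(p, q)$) to extend $s$ in both directions at once.

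Having constructed $\tilde{s}$, I define a candidate subsheaf $\sheaf{G} \subseteq \sheaf{F}$ by $\sheaf{G}(I) := k \cdot \tilde{s}|_I$ whenever $I$ is contained in the domain of $\tilde{s}$, and $\sheaf{G}(I) := 0$ otherwise. The assumption $\supp[s] = (p, q)$ guarantees that $\tilde{s}|_I \neq 0$ precisely when $I$ lies in the domain of $\tilde{s}$ and $I \cap (p, q) \neq \emptyset$, which exactly matches the prescription of $k[p,q]$, $k[p,q\rangle$, $k\langle p,q]$, or $k\langle p,q\rangle$ given in Definition \ref{D:intsheaf} for the appropriate case. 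The map that sends the unit section of the corresponding interval $p$-sheaf to $\tilde{s}$, extended by $k$-linearity and by restriction homomorphisms, is then a $p$-presheaf isomorphism onto $\sheaf{G}$ by construction.

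It remains to verify that $\sheaf{G}$ is a $p$-subsheaf of $\sheaf{F}$ and that $\sheaf{G} = \sheaf{F}\langle s\rangle$. Locality is inherited from $\sheaf{F}$. For connective gluing, given a connected cover $I = \cup I_\lambda$ and compatible sections $g_\lambda = c_\lambda \tilde{s}|_{I_\lambda} \in \sheaf{G}(I_\lambda)$, I propagate the scalars along the chains supplied by Lemma \ref{L:covering}: whenever $I_\lambda \cap I_\mu$ meets $(p, q)$, the nonvanishing of $\tilde{s}$ on that intersection forces $c_\lambda = c_\mu$, while whenever some $I_\lambda$ escapes the domain of $\tilde{s}$ the same propagation forces the neighboring scalars to vanish; this yields a glued section lying in $\sheaf{G}(I)$. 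Equality $\sheaf{G} = \sheaf{F}\langle s\rangle$ then follows by two-way containment: $\sheaf{G}$ is a subsheaf containing $s$, so $\sheaf{F}\langle s\rangle \subseteq \sheaf{G}$ by minimality, and conversely any subsheaf of $\sheaf{F}$ containing $s$ must, by performing the very same connective gluing inside it, also contain $\tilde{s}$ and hence all of $\sheaf{G}$. The principal obstacle I foresee is this scalar-propagation argument, together with the careful bookkeeping of the decorated endpoints in $\dec^2$ required to choose the extension intervals so that overlaps lie outside $(p, q)$ whenever needed; the hypothesis $x \leq p < q \leq y$ provides exactly the room to do so in each of the four cases.
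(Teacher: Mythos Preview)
Your approach is essentially the same as the paper's: extend $s$ by zero via connective gluing to a section $\tilde{s}$ over the maximal domain, define $\sheaf{G}(I)=k\cdot\tilde{s}|_I$, exhibit the presheaf isomorphism with the appropriate interval $p$-sheaf, and then argue $\sheaf{G}=\sheaf{F}\langle s\rangle$ by two-way containment. The one point where you diverge is in showing that $\sheaf{G}$ is a $p$-sheaf: you attempt a direct verification of the connective gluing axiom via scalar propagation along chains, whereas the paper simply observes that a presheaf isomorphic to a $p$-sheaf (here $k[p,q\rangle$, etc.) is automatically a $p$-sheaf, and therefore $\sheaf{G}$ is a subsheaf of $\sheaf{F}$. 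This shortcut completely bypasses the propagation argument you flag as the principal obstacle.

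Your propagation argument is plausible but does have the subtlety you anticipate: when two members $I_\lambda,I_\mu$ of a connected cover overlap only outside $(p,q)$, the restriction $\tilde{s}|_{I_\lambda\cap I_\mu}$ vanishes and the scalars $c_\lambda,c_\mu$ cannot be compared directly on that overlap. One then needs to know that the restricted family $\{I_\lambda\cap(p,q)\}$ still forms a connected cover of $I\cap(p,q)$, or to route the chain through members meeting $(p,q)$; this is true (using that two intervals each meeting $(p,q)$ and overlapping must overlap inside $(p,q)$) but requires an extra lemma. The paper's device makes all of this unnecessary: once you have the presheaf isomorphism $\Phi\colon\sheaf{G}\to k[p,q\rangle$ already in hand, the sheaf axioms for $\sheaf{G}$ follow immediately from those of $k[p,q\rangle$, and you can skip the direct verification.
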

\begin{proof}
For statement (i), by the connective gluing property, there is a section
$\overline{s}$ over $(p,+\infty)$ such that $\overline{s}|_{(p,y)}=s$ and
$\overline{s}|_{(q,+\infty)}=0$. Note that $\overline{s}$ must be a section of any
subsheaf of $\sheaf{F}$ having $s$ as a section. For an interval $I\subseteq(p,+\infty)$,
we denote by
$\langle \overline{s}|_I\rangle$ the subspace of $\sheaf{F} (I)$ spanned
by $\overline{s}|_I$. Let $\cat{Int}_{p,q}$ be the collection of all intervals satisfying
$I \subseteq(p,+\infty)$ and $I\cap(p,q)\neq \emptyset$. Define a persistence presheaf
$\sheaf{G} \subseteq \sheaf{F}$ by $\sheaf{G}(I)=\langle \overline{s}|_I \rangle$ if 
$I \in \cat{Int}_{p,q}$, and $\sheaf{G}(I)=0$ otherwise. Note that $\overline{s}|_I\neq 0$, 
$\forall I \in \cat{Int}_{p,q}$, and $\overline{s}|_I = 0$ if $I \notin \cat{Int}_{p,q}$.
By construction, $G$ is a sub-presheaf of $F\langle s \rangle$.
	
Let $\Phi \colon \sheaf{G} \to k[p,q\rangle$ be
the homomorphism defined as follows: $\phi_I (\overline{s}|_I) = 1$ if $I \in \cat{Int}_{p,q}$,
and $\phi_I=0$, otherwise. Similarly, define $\Psi \colon k[p,q\rangle\rightarrow
\sheaf{G}$ by $\psi_I (1) = \overline{s}|_I$ if $I \in \cat{Int}_{p,q}$ and $\psi^I_I=0$,
otherwise. Then, for any interval $I$, $\phi_I$ and $\psi_I$ are
mutual inverses, so $\Phi$ is a presheaf isomorphism. Since $k[p,q\rangle$ is
a sheaf, $\sheaf{G}$ is not just a presheaf, but a subsheaf of $\sheaf{F}$.
Thus, $G = \sheaf{F} \langle s \rangle$,
showing that $\sheaf{F}\langle s \rangle$ is isomorphic to $k [p,q\rangle$.
The proofs for the other cases are similar.
\end{proof}

%--------------------------

\section{Tameness} \label{S:tameness}
This section discusses tameness conditions for $p$-sheaves and $c$-modules
under which we prove interval decomposition theorems in Section \ref{S:decomposition}.

\begin{definition} \label{D:descend1} 
Let $\sheaf{F}$ be a $p$-sheaf.
\begin{enumerate}[(i)]
\item $\sheaf{F}$ satisfies the {\em descending chain condition (DCC) on images} if
for any ascending sequence $I_1\subseteq I_2\subseteq I_3\subseteq \ldots$ of 
intervals and any interval $I\subseteq\cap I_n$, the chain $\im(\sheaf{F}_{I}^{I_1})
\supseteq \im(\sheaf{F}_{I}^{I_2}) \supseteq \ldots$ is stable, that is, it eventually 
becomes constant;
\item $\sheaf{F}$ satisfies the {\em descending chain condition (DCC) on kernels} if
for any ascending sequence $I_1\subseteq I_2\subseteq I_3\subseteq \ldots$ of intervals 
and any interval $I \supseteq \cup I_n$, the chain $\ker (\sheaf{F}^{I}_{I_1})
\supseteq \ker(\sheaf{F}^{I}_{I_2}) \supseteq \ldots$ is stable.
\end{enumerate}
\end{definition}

\begin{definition} \label{D:descend2}
Let $\cmod{V}$ be a $c$-module and $t \in \real$.

\begin{enumerate}[(i)]
\item $\cmod{V}$ satisfies the {\em descending chain condition (DCC) on images} at
$t$ if for any sequence $\ldots \leq \ell_2 \leq \ell_1 \leq t$,
the chain  $\im (v_{\ell_1}^t) \supseteq \im (v_{\ell_2}^t) \supseteq \ldots$ stabilizes
in finitely many steps, and for any sequence $t \leq u_1 \leq u_2 \leq \ldots$,
the chain $\dom(v_t^{u_1}) \supseteq \dom(v_t^{u_2}) \supseteq \ldots$ is stable.

\item $\cmod{V}$ satisfies the {\em descending chain condition (DCC) on kernels}
at $t$ if for any sequence $t \leq \ldots \leq u_2 \leq u_1$, the chain  
$\ker (v_t^{u_1}) \supseteq \ker (v_t^{u_2}) \supseteq \ldots$ is stable,
and for any sequence $\ell_1 \leq \ell_2 \leq \ldots \leq t$,
the chain  $\ker \, (v_{\ell_1}^t)^\ast \supseteq \ker \, (v_{\ell_2}^t)^\ast \supseteq \ldots$
is stable. Here, $^\ast$ denotes the operation of reversing
correspondences (see Definition \ref{D:correspondence}).
\end{enumerate}
\end{definition}

\begin{definition}\label{D:tame} (Tameness)

\begin{enumerate}[(i)]
\item A $p$-sheaf $\sheaf{F}$ is {\em tame} if it satisfies the DCC
on both images and kernels.

\item A $c$-module is {\em virtually tame} if it satisfies
the DCC on both images and kernels at each $t \in \real$.

\item A $c$-module is {\em tame} if its $p$-sheaf of sections is tame.
\end{enumerate}
\end{definition}

\begin{remark}
If a $c$-module $\cmod{V}$ has the property that all correspondences $v_s^t$,
$s < t$, are finite-dimensional, then $\cmod{V}$ is virtually tame. In particular, a
pointwise finite-dimensional $c$-module is virtually tame.
\end{remark}

The next examples show that the sheaf of sections of a virtually tame $c$-module
is not necessarily tame.

\begin{example} \label{E:vtame1}
Let $k$ be a field. Consider the $c$-module $\cmod{V}$ given by $V_t = k$,
$\forall t \in \real$, with connecting morphisms $v_s^t = k \times k$, for any $s < t$,
and $v_t^t = \Delta_k$, the graph of the identity map, $\forall t \in \real$.
$\cmod{V}$ is virtually tame because each $V_t$ is 1-dimensional. Furthermore, it 
admits the interval decomposition $\cmod{V} \cong \oplus_{x \in \real} \langle x \rangle$, 
where $\langle x \rangle$ denotes the interval module of type $\langle \ \rangle$
supported on the singleton $\{x\}$.
Note, however, that $F \ncong \oplus_{x \in \real} F_x$, where $F_x$ is the $p$-sheaf
of sections of $\langle x \rangle$. Indeed, for any interval $I \subseteq \real$, 
$F(I)$ comprise all sequences $(v_t)$ with $v_t \in k$ and $t \in I$. On the
other hand, the sections of the direct sum $p$-sheaf are the sections over $I$
whose supports are finite sets. Although $\sheaf{F}$ satisfies the 
DCC on images, $\sheaf{F}$ does not satisfy the DCC on kernels
and therefore is not tame. Indeed, consider the chain $I_n = [0,n]$ and let
$I = [0, +\infty)$. Then, the chain $\ker (\sheaf{F}^I_{I_n})$ is not stable.
\end{example}

\begin{example} \label{E:vtame2}
This example shows that the virtual tameness of $\cmod{V}$ does not imply
the tameness of $\sheaf{F}$ even for persistence modules. Let $I_n = (0, 1/n]$,
$\cmod{I}_n := \cmod{I} [0^+, 1/n^+\rangle$ be the associated interval $p$-module of
type $[ \ \rangle$, and
$\cmod{V} = \oplus_n \cmod{I}_n$. $\cmod{V}$ is pointwise finite dimensional
and thus virtually tame. However, its sheaf of sections is not tame, as the DCC 
on kernels is not satisfied. Indeed, let $I = (0, +\infty)$ and consider the chain
$J_n = (1/n, +\infty)$. Then, the chain $\ker (\sheaf{F}^I_{J_n})$ 
is strictly decreasing, thus not stable.
\end{example}

\begin{remark} \label{R:applications}
In spite of the above examples, if the $c$-module $\cmod{V}$  is pointwise 
finite-dimensional and there is a finite set   $T=\{t_1,\cdots,t_n\}\subseteq \real$ with 
$t_i<t_{i+1}$, $1 \leq i < n$, 
such  that if $s, t \in \real$ satisfy $s \leq t < t_1$, $t_n < s \leq t$, or  
$t_i < s \leq t < t_{i+1}$, for some $i$, then the relation $v_s^t$ is an 
isomorphism. Under this assumption, $\dim \sheaf{F} (I) < \infty$, for any 
interval $I$, implying that $\sheaf{F}$ is tame.
\end{remark}

\begin{proposition} \label{P:descending}
Let $\sheaf{F}$ be a $p$-sheaf and $I_1\subseteq I_2\subseteq I_3\subseteq \ldots$
an ascending sequence of intervals. Then, the following holds:
\begin{enumerate}[\rm (i)]
\item If $\sheaf{F}$ satisfies the DCC on images
and $I\subseteq \cap I_n$, then $\im(\sheaf{F}^{\cup I_n}_I) =
\im(\sheaf{F}^{I_m}_I)$ for $m$ large enough;
\item If $\sheaf{F}$ satisfies the DCC on kernels
and $\cup I_n \subseteq I$, then $\ker (\sheaf{F}_{\cup I_n}^I)
= \ker (\sheaf{F}_{I_m}^I)$ for $m$ large enough.
\end{enumerate}
\end{proposition}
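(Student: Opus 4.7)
The two parts will be approached quite differently: (ii) is almost a direct consequence of the locality axiom, while (i) demands a lifting construction built on the connective gluing axiom together with a secondary stabilization lemma.

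For (ii), I would factor $\sheaf{F}^I_{I_m}$ as $\sheaf{F}^{\cup I_n}_{I_m} \circ \sheaf{F}^I_{\cup I_n}$ via the intermediate restriction to $\cup I_n$. This immediately gives $\ker(\sheaf{F}^I_{\cup I_n}) \subseteq \ker(\sheaf{F}^I_{I_m})$ for every $m$, and the DCC on kernels supplies an $m_0$ for which the right-hand side has stabilized to some $K$. For the reverse inclusion, given $s \in K$, set $t := \sheaf{F}^I_{\cup I_n}(s)$; then $t|_{I_m} = \sheaf{F}^I_{I_m}(s) = 0$ for every $m \geq m_0$. Since $\{I_m\}_{m \geq m_0}$ covers $\cup I_n$ by non-empty intervals, locality forces $t = 0$, giving $s \in \ker(\sheaf{F}^I_{\cup I_n})$.

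For (i), one inclusion is again immediate from the factorization $\sheaf{F}^{\cup I_n}_I = \sheaf{F}^{I_n}_I \circ \sheaf{F}^{\cup I_n}_{I_n}$, so $\im(\sheaf{F}^{\cup I_n}_I)$ is contained in the stable value $W$ of the DCC chain $\{\im(\sheaf{F}^{I_n}_I)\}$. The substantive direction is that every $w \in W$ comes from a section over $\cup I_n$. The plan is to apply the connective gluing axiom to the nested (hence connected) covering $\{I_n\}_{n \geq m}$ of $\cup I_n$, which requires building a compatible family $s_n \in \sheaf{F}(I_n)$ with $s_n|_I = w$ and $s_{n+1}|_{I_n} = s_n$. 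To organize this lifting, I would introduce the stable image subspaces $W_n := \bigcap_{k \geq n} \im(\sheaf{F}^{I_k}_{I_n}) \subseteq \sheaf{F}(I_n)$, which are well defined by a further application of the DCC on images, this time with target $I_n$.

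The core obstacle is proving the surjectivity statements $\sheaf{F}^{I_{n+1}}_{I_n}(W_{n+1}) = W_n$ and $\sheaf{F}^{I_m}_I(W_m) = W$ for $m$ large. A naive lift of an element of $W_n$ to $\sheaf{F}(I_{n+1})$ offers no control over whether it lies in the intersection defining $W_{n+1}$, so one cannot finish by a straightforward diagonal choice. The trick I plan to use is to pick a single $k$ large enough that both $\im(\sheaf{F}^{I_k}_{I_n})$ and $\im(\sheaf{F}^{I_k}_{I_{n+1}})$ have already stabilized to $W_n$ and $W_{n+1}$, lift the target element all the way up to $\sheaf{F}(I_k)$, and then restrict back to $I_{n+1}$; the result lands in $W_{n+1}$ for free. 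Granting these surjectivities, one picks $s_m \in W_m$ with $s_m|_I = w$, inductively lifts to $s_{n+1} \in W_{n+1}$, and invokes connective gluing to produce a section $s \in \sheaf{F}(\cup I_n)$ with $s|_I = w$.
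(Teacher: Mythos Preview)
Your proposal is correct and follows essentially the same route as the paper. In particular, your subspaces $W_n=\bigcap_{k\ge n}\im(\sheaf{F}^{I_k}_{I_n})$ coincide with the paper's $V_n$, your ``lift to a common large $I_k$ and restrict back'' is precisely how one justifies the paper's assertion that $\sheaf{F}^{I_m}_{I_n}(V_m)=V_n$, and your use of locality for (ii) mirrors the paper's argument verbatim.
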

\begin{proof}
(i) Set $I_0=I$. For any $n \geq 0$, the DCC on images,
applied to the interval $I_n$ and the chain $I_{n+1} \subseteq I_{n+2} \subseteq \ldots$, 
ensures that there exists $N (n) > n$ such that $\im(\sheaf{F}^{I_m}_{I_n}) = 
\im(\sheaf{F}^{I_{N(n)}}_{I_n})$, $\forall m\geq N(n)$. Set $V_n =
\im(\sheaf{F}^{I_{N(n)}}_{I_n}) \subseteq \sheaf{F}(I_n)$. Then,
$\sheaf{F}^{I_m}_{I_n}(V_m)=V_n$, for any $m\geq n$. By construction, for any
$s_0 \in V_0$, there is a sequence of sections $s_n \in V_n$, $n \geq 1$,
such that $\sheaf{F}^{I_m}_{I_n}(s_m)=s_n$, for $m \geq n$. By the connective
gluing property, there exists $s \in \sheaf{F}(\cup I_n)$ such that 
$\sheaf{F}^{\cup I_n}_{I_n}(s) = s_n$, $\forall n \geq 0$. Thus, 
$\sheaf{F}^{\cup I_n}_{I}(s)=s_0$. This implies that 
$\im(\sheaf{F}^{\cup I_n}_{I}) \supseteq V_0 = \im(\sheaf{F}^{I_m}_{I})$,
$\forall m\geq N(0)$. The inclusion $\im(\sheaf{F}^{\cup I_n}_{I})\subseteq
\im(\sheaf{F}^{I_m}_{I})$ is clearly satisfied $\forall m\geq N(0)$, so this
proves the claim.
	
(ii) By the DCC on kernels, we can choose $n_0 > 0$ such that
$\ker(\sheaf{F}^{I}_{I_n}) = \ker (\sheaf{F}^{I}_{I_{n_0}})$, for any $n \geq n_0$.
Clearly, $\ker (\sheaf{F}^{I}_{\cup I_n}) \subseteq 
\ker(\sheaf{F}^{I}_{I_{n_0}})$. For the opposite inclusion, let
$s \in \ker(\sheaf{F}^{I}_{I_{n_0}})$, so that $s|_{I_n}=0$, $\forall n>n_0$. 
By locality, $s|_{\cup I_n}=0$, which implies $s \in \ker(\sheaf{F}^{I}_{\cup I_n})$. 
Hence, $ \ker(\sheaf{F}^{I}_{I_{n_0}}) \subseteq \ker(\sheaf{F}^{I}_{\cup I_n})$,
concluding the proof.
\end{proof}

Our next goal is to prove a version of Proposition \ref{P:descending} for
virtually tame $c$-modules. We begin with an extension result for sections of
a $c$-module.

\begin{proposition}\label{P:extension}
Let $\cmod{V}$ be a $c$-module and $A \subseteq \real$. If $\cmod{V}$ is
virtually tame, then any section of $\cmod{V}$ over $A$ can be extended
to a section over an interval containing $A$.
\end{proposition}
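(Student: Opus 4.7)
The strategy is a Zorn's lemma argument that reduces the question to \emph{filling a single gap} in the domain of an extension. Let $\mathcal{S}$ be the poset of sections $s'$ of $\cmod{V}$ with $\dom(s') \supseteq A$ and $s'|_A = s$, ordered by extension. Chains have upper bounds (the pointwise union is a section, since each compatibility $(v'_r, v'_t) \in v_r^t$ is witnessed within the chain), so Zorn yields a maximal element $s^* = (v^*_t)$ with domain $A^*$. The claim reduces to showing $A^*$ is an interval; assuming otherwise, fix $t \in (\inf A^*, \sup A^*) \setminus A^*$ and aim to extend $s^*$ to $A^* \cup \{t\}$, contradicting maximality. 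Writing $A^{*-} = A^* \cap (-\infty,t)$ and $A^{*+} = A^* \cap (t,+\infty)$ (both non-empty), this amounts to producing $v_t \in V_t$ with $(v^*_r, v_t) \in v_r^t$ for every $r \in A^{*-}$ and $(v_t, v^*_u) \in v_t^u$ for every $u \in A^{*+}$.

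The natural candidate sets are
\[
S^-_r = \{w \in V_t : (v^*_r, w) \in v_r^t\}, \qquad
S^+_u = \{w \in V_t : (w, v^*_u) \in v_t^u\},
\]
which are non-empty affine subspaces of $V_t$ parallel to $\ker((v_r^t)^\ast)$ and $\ker(v_t^u)$ respectively; non-emptiness is immediate from the factorization $v_r^u = v_t^u \circ v_r^t$ applied to $(v^*_r, v^*_u) \in v_r^u$. The section property of $s^*$ also forces nesting: $S^-_{r'} \subseteq S^-_r$ for $r \leq r'$ in $A^{*-}$ and $S^+_u \subseteq S^+_{u'}$ for $u \leq u'$ in $A^{*+}$. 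Along cofinal sequences $(r_n)$ increasing to $\sup A^{*-}$ and $(u_n)$ decreasing to $\inf A^{*+}$, I would study the single decreasing chain $T_n := S^-_{r_n} \cap S^+_{u_n}$, each $T_n$ non-empty by the same composition argument applied to the pair $(r_n, u_n)$. The direction subspaces $\ker((v_{r_n}^t)^\ast) \cap \ker(v_t^{u_n})$ stabilize by the DCC on kernels at $t$ applied to both factors, and a decreasing chain of non-empty affine subspaces with stabilizing directions must itself stabilize (once the direction $W$ is fixed, any inclusion $w_{n+1} + W \subseteq w_n + W$ forces $w_{n+1} - w_n \in W$, hence equality). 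Any $v_t$ in the stable value of $(T_n)$, which by cofinality equals $\bigcap_r S^-_r \cap \bigcap_u S^+_u$, supplies the required extension.

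The main obstacle is moving from pairwise existence to a simultaneous witness: the finite intersection property for $\{S^-_r\}_{r \in A^{*-}}$ and $\{S^+_u\}_{u \in A^{*+}}$ is evident, and the DCC on kernels delivers non-emptiness of each of $\bigcap_r S^-_r$ and $\bigcap_u S^+_u$ individually, but not of their joint intersection. Packaging both sides as the single chain $T_n$ and invoking the principle that a decreasing chain of non-empty affine subspaces with stabilizing direction subspaces is eventually constant is what bridges this gap. I do not foresee any need for the DCC on images in this proposition; that half of virtual tameness should enter naturally in the decomposition results of Section \ref{S:decomposition} rather than in this existence statement.
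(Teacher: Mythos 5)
Your proposal is correct and follows essentially the same route as the paper's proof: a Zorn's lemma maximal extension, reduction to filling a single gap point, and stabilization of the nested non-empty affine subspaces $T_n$ (the paper's $\im^{\ell_n,u_n}_r$) via the DCC on kernels applied to their direction subspaces $\ker((v_{r_n}^t)^\ast)\cap\ker(v_t^{u_n})$. Your closing observation is also consistent with the paper, which likewise uses only the kernel half of virtual tameness here.
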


\begin{proof}
Let $f$ be a section over $A$ and let $S$ be the set of all sections that extend
 $f$ over some set containing $A$. For $g,h\in S$, write $g\leq h$  to mean that
$h$ extends $g$. Note that $(S,\leq)$ is a poset in which each
ascending chain has an upper bound. By Zorn's lemma, there exists a maximal element
$\bar{f}\in S$. We claim that the domain of $\bar{f}$ is an interval. Suppose not,
then there exists $r \notin \dom(\bar{f})$ such that
$L = \{t \in \dom(\bar{f})\, |\, t < r\}$ and $U =\{t\in \dom(\bar{f}) \,|\, r < t\}$
are not empty. Choose an increasing sequence $\{\ell_n\}\subseteq L$ with
$\lim \ell_n=\sup L$ and a decreasing sequence $\{u_n\}\subseteq U$ with
$\lim u_n=\inf U$. If $\sup L \in L$, we assume that $\ell_n = \sup L$, $\forall n$.
Similarly, $u_n = \inf U$, for every $n$, if $\inf U \in U$. For $n \geq 1$, set
\begin{equation}
\im^{\ell_n,u_n}_r =\{v \in V_r \,|\, (\bar{f}(\ell_n),v) \in v_{\ell_n}^r
\ \text{and} \ (v, \bar{f}(u_n)) \in v_r^{u_n}\} \,,
\end{equation}
a non-empty affine subspace of $V_r$. Note that these affine subspaces
form a nested sequence
\begin{equation}
\im^{\ell_1,u_1}_r \supseteq \im^{\ell_2,u_2}_r \supseteq
\im^{\ell_3,u_3}_r\supseteq \ldots
\label{E:nested}
\end{equation}
We show that this sequence is stable. To this end, let
\begin{equation}
\ker^r_{\ell_n,u_n} = \{v \in V_r \, |\,  (0,v) \in v_{\ell_n}^r \ \text{and} \
(v, 0) \in v_r^{u_n} \} \,,
\end{equation}
which also form a nested sequence
\begin{equation}
\ker^r_{\ell_1,u_1} \supseteq \ker^r_{\ell_2,u_2} \supseteq \ker^r_{\ell_3,u_3}
\supseteq \ldots
\label{E:nested1}
\end{equation}
Note that the stability of the sequence $\ker^r_{\ell_n,u_n}$ implies the
stability of $\im^{\ell_n,u_n}_r$. Indeed, suppose $\exists N$ such that
$\ker^r_{\ell_n,u_n} = \ker^r_{\ell_N,u_N}$, $\forall n \geq N$. For any
$v_n \in \im^{\ell_n,u_n}_r$, we may write
\begin{equation}
\im^{\ell_n,u_n}_r = v_n + \ker^r_{\ell_n,u_n} \,.
\label{E:stab}
\end{equation}
Since the choice of $v_N \in \im^{\ell_N,u_N}_r$ in \eqref{E:stab} is arbitrary,
using \eqref{E:nested} and the stability of kernels, we have
\begin{equation}
\im^{\ell_N,u_N}_r = v_N + \ker^r_{\ell_N,u_N} = v_n + \ker^r_{\ell_N,u_N}
= v_n + \ker^r_{\ell_n,u_n} = \im^{\ell_n,u_n}_r \,,
\end{equation}
$\forall n \geq N$, as claimed.
The stability of \eqref{E:nested1} follows from
$\ker^r_{\ell_n,u_n} = \ker(v_{\ell_n}^r)^\ast \cap  \ker (v_r^{u_n})$ and the
fact that the right-hand side of this equation stabilizes by virtual tameness.
To conclude, pick $v \in \cap \, \im^{\ell_n,u_n}_r$ and extend $\bar{f}$ to $r$
via the assignment $\bar{f} (r) = v$. This contradicts the maximality of $\bar{f}$.
\end{proof}

\begin{proposition} \label{P:cdescending}
If $\sheaf{F}$ is the $p$-sheaf of sections of a virtually tame $c$-module $\cmod{V}$,
then for any ascending sequence of intervals $I_1\subseteq I_2\subseteq I_3\subseteq \ldots$
the following holds:
\begin{enumerate}[\rm (i)]
\item If $A \subseteq \cap I_n$ is finite, then $\im(F^{\cup I_n}_A) 
= \im(F^{I_m}_A)$ for $m$ large enough;
\item If $A \subseteq I\supseteq \cup I_n$, where $I$ is an interval and $A$ is
a finite set, then $\ker (F_{\cup I_n}^I)|_A = \ker (F_{I_m}^I)|_A$ for $m$
large enough.
\end{enumerate}
\end{proposition}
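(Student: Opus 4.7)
I will adapt the proof of Proposition \ref{P:descending} to the present setting, where $\sheaf{F}$ is not assumed tame but $\cmod{V}$ is virtually tame, using the finiteness of $A$ to reduce the stability questions to single-point statements amenable to the pointwise DCC conditions on $\cmod{V}$.

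For part (i), the inclusion $\im(F^{\cup I_n}_A)\subseteq \im(F^{I_m}_A)$ is immediate from the factorization of restrictions, so the content is the reverse inclusion. Writing $A=\{a_1<\cdots<a_k\}$, the collection $\{I_n\cap(-\infty,a_1],\ [a_1,a_2],\ \ldots,\ [a_{k-1},a_k],\ I_n\cap[a_k,+\infty)\}$ is a connected covering of $I_n$ whose members meet pairwise only at single points of $A$. By the connective gluing property, a section $\sigma\in F(A)$ lies in $\im(F^{I_n}_A)$ iff (a) each pair $(\sigma(a_i),\sigma(a_{i+1}))$ extends to a section on $[a_i,a_{i+1}]$ (an $n$-independent condition), (b) $\sigma(a_1)$ extends to a section on $I_n^L:=I_n\cap(-\infty,a_1]$, and (c) $\sigma(a_k)$ extends to a section on $I_n^R:=I_n\cap[a_k,+\infty)$. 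This reduces stability to a one-sided single-point extension problem.

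For the right side, let $E_n\subseteq V_{a_k}$ denote the set of values extending to a section on $I_n^R$. Any $v\in E_n$ satisfies $v\in\bigcap_{u\in I_n^R}\dom(v_{a_k}^u)$, and by the DCC on images at $a_k$ (applied to an increasing cofinal sequence in $\bigcup I_n^R$) this intersection stabilizes to some $W\subseteq V_{a_k}$. The crux is showing that any $v\in W$ actually extends to a full section on $\bigcup I_n^R$. I would do this by adapting the Zorn plus nested-affine-subspace argument in the proof of Proposition \ref{P:extension}: start from $v$, take a maximal extension $\bar\sigma$ on some interval $J\subseteq\bigcup I_n^R$ with $\bar\sigma(a_k)=v$; if $J\subsetneq\bigcup I_n^R$, pick $r$ just beyond $\sup J$ and observe that the sequence of affine subspaces $\{v_r\in V_r\colon(\bar\sigma(u),v_r)\in v_u^r\}$, as $u\nearrow\sup J$, is decreasing, eventually stable by DCC on kernels at $r$, and nonempty at each stage (using that $v\in W$ together with the fact that for each sufficiently large $n$ there is an honest section on $I_n^R$ through $v$). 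Selecting any element of the intersection yields a strict extension of $\bar\sigma$, contradicting maximality. The left-sided case is symmetric, and combining (a), (b), (c) yields the claim.

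Part (ii) is handled dually. For $a\in A$, the constraint on $s(a)$ for $s\in\ker(F^I_{I_n})$ is: $s(a)=0$ when $a\in I_n$ (by locality); $s(a)\in\bigcap_{t\in I_n,\,t<a}\ker((v_t^a)^*)$ when $a$ lies strictly to the right of $I_n$; and symmetrically on the left. By the DCC on kernels at $a$, each such intersection stabilizes as $n\to\infty$, and simultaneous stabilization over the finite set $A$ occurs at a finite stage. The remaining assembly---showing that any stabilized tuple of values at $A$ is actually realized by some $s\in\ker(F^I_{\cup I_n})$---uses Proposition \ref{P:extension} to produce sections realizing the stabilized values on intervals containing $A\setminus\bigcup I_n$, and then connective gluing to paste these with the zero section on $\bigcup I_n$ into a section on $I$. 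The main obstacle throughout is the single-point extension step described in the previous paragraph: upgrading pointwise membership in a stabilized image or kernel to the existence of a genuine section along a half-infinite interval, which requires a careful adaptation of the affine-subspace argument of Proposition \ref{P:extension} in a constrained setting where the prescribed value at $a_1$ or $a_k$ must be carried along.
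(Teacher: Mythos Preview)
Your overall strategy for part (i)---use connective gluing to reduce $\im(F^{I_n}_A)$ to a middle condition (automatic from Proposition~\ref{P:extension}) plus two one-sided single-point extension conditions at $a_1$ and $a_k$, then show the one-sided conditions stabilize---matches the paper's approach exactly. The gap is in the extension step: your Zorn argument does not work as stated.

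Concretely, suppose $\bar\sigma$ is a maximal extension of $v$ on $J=[a_k,c]\subsetneq\bigcup I_n^R$ and $r>c$. You assert that the affine subspaces $\{v_r:(\bar\sigma(u),v_r)\in v_u^r\}$ are nonempty for $u\in J$, justifying this by the existence of sections $\tau_n$ on $I_n^R$ with $\tau_n(a_k)=v$. But those sections need not agree with $\bar\sigma$ at $u$; all you know is $\tau_n(u)\in\dom(v_u^r)$, not $\bar\sigma(u)\in\dom(v_u^r)$. A maximal extension built without control can simply get stuck: its value at $c$ may lie outside $\dom(v_c^r)$ for every $r>c$, and nothing in your argument prevents this.

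The paper avoids this by a discrete ladder construction rather than a Zorn argument. Fix a cofinal sequence $t_0=a_k\le t_1\le t_2\le\cdots$ with $\bigcup[t_0,t_n]=\bigcup I_n^R$. Apply the DCC on images at \emph{each} $t_n$ to obtain stabilized subspaces $U_n:=\dom(v_{t_n}^{t_m})$ for $m\ge N_n$. The crucial observation is that if $y_n\in U_n$, then factoring $v_{t_n}^{t_{N_{n+1}}}=v_{t_{n+1}}^{t_{N_{n+1}}}\circ v_{t_n}^{t_{n+1}}$ produces $y_{n+1}\in U_{n+1}$ with $(y_n,y_{n+1})\in v_{t_n}^{t_{n+1}}$. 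Starting from $y_0=v\in U_0$ (which is exactly the stabilized condition $v\in W$), iterate to get a section over the discrete set $\{t_n:n\ge0\}$, then invoke Proposition~\ref{P:extension} once to fill in the interval. The point is that you must choose each $y_n$ inside the stabilized domain $U_n$, not merely inside $\dom(v_{t_n}^{t_{n+1}})$, to guarantee the next step.

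For part (ii) your sketch has the right ingredients but is vague at the assembly step. The paper proceeds more directly: given $f\in\ker(F^I_{I_m})|_A$, pick a representative $\hat f\in\ker(F^I_{I_m})$, set $r_-=\max A_-$, $r_+=\min A_+$, and keep $g:=\hat f|_{I_-\cup I_+}$ where $I_\pm$ are the pieces of $I$ beyond $r_\pm$. The DCC on kernels at $r_-$ and $r_+$ (the second clause in Definition~\ref{D:descend2}(ii)) ensures that for $m$ large, $g(r_-)\in\ker(v_{r_-}^{s_n})$ and $g(r_+)\in\ker((v_{t_n}^{r_+})^\ast)$ for all $n$, so $g$ glues with the zero section on $\bigcup I_n$; then Proposition~\ref{P:extension} extends to all of $I$. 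This sidesteps having to characterize abstractly which tuples at $A$ are realizable.
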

\begin{proof}
(i) Since $F^{\cup I_n}_A = F^{I_m}_A \circ F^{\cup I_n}_{I_m}$, it follows that
$\im(F^{\cup I_n}_A) \subseteq  \im(F^{I_m}_A)$, $\forall m$. For the
reverse inclusion, let $f \in \im(F^{I_m}_A)$. We show that, for $m$
sufficiently large, there is a section $g$ defined over $\cup I_n$ such
that $g|_A = f$. Let $s_0 = \min A$ and $t_0 = \max A$. We construct $g$
by gluing sections over the following three intervals:
$I_0 = [s_0, t_0]$, $I_- = \{x \in \cup I_n \colon x \leq s_0 \}$,
and $I_+ = \{x \in \cup I_n \colon x \geq t_0 \}$. 

Let $[s_n, t_n] \subseteq I_n$, $n \geq 1$, be a sequence of intervals such
that $[s_n, t_n] \subseteq [s_{n+1}, t_{n+1}]$, $A \subseteq \cap [s_n, t_n]$,
and $\cup [s_n, t_n] =  \cup I_n$, $\forall n \geq 1$. Note that we also have
$A \subseteq [s_0, t_0] \subseteq [s_1, t_1]$. For each $n \geq 0$, the virtual
tameness of $\cmod{V}$ implies that the descending chains
\[
\im (v_{s_{n + 1}}^{s_n}) \supseteq \im (v_{s_{n+2}}^{s_n}) \supseteq \ldots
\quad
\text{and}
\quad
\dom (v_{t_n}^{t_{n+1}}) \supseteq \dom (v_{t_n}^{t_{n+2}}) \supseteq \ldots
\]
are stable. Thus, $\exists N_n \geq n$ such that $\im (v_{s_m}^{s_n})$
and $\dom (v_{t_n}^{t_m})$ are constant for $m \geq N_n$. Set
$L_n := \im (v_{s_m}^{s_n})$ and $U_n := \dom (v_{t_n}^{t_m})$,
for $m \geq N_n$. By construction, for any $x_n \in L_n$ and $y_n \in U_n$,
$n \geq 0$,
there exist $x_{n+1} \in L_{n+1}$ and $y_{n+1} \in U_{n+1}$ such that
\begin{equation}
(x_{n+1}, x_n) \in v_{s_{n + 1}}^{s_n} 
\quad \text{and} \quad
(y_n, y_{n+1}) \in v_{t_n}^{t_{n+1}} . 
\label{E:sequence}
\end{equation}

If $f \in \im(F^{I_m}_A)$ and $m \geq N_0$, then $x_0 := f_{s_0} \in L_0$
and $y_0 = f_{t_0} \in U_0$. Iteratively, as described above,
construct $x_n \in L_n$ and $y_n \in U_n$ satisfying \eqref{E:sequence}. 
The sequences $\{x_n\}$ and $\{y_n\}$, $n \geq 0$, together with $f$,
yield a section of $\cmod{V}$ over the set
$\{s_n \colon n \geq 0\} \cup A \cup \{t_n \colon n \geq 0\}$. By
Proposition \ref{P:extension}, this section can be
extended to a section over $\cup I_n$ with the desired properties.

\medskip
	
(ii) $F^I_{I_m} = F^{\cup I_n}_{I_m} \circ F^I_{\cup I_n}$ implies that
$\ker (F^I_{\cup I_n}) \subseteq \ker(F^I_{I_m} )$. Therefore,
$\ker (F^I_{\cup I_n})|_A \subseteq \ker(F^I_{I_m})|_A$, for every $m$.
For the reverse inclusion, let  $A_0 = A \cap (\cup I_n)$,
$A_- = \{a \in A \colon a < t, \forall t \in \cup I_n\}$ and
$A_+ = \{a \in A \colon a > t, \forall t \in \cup I_n\}$. Set
$r_- = \max A_-$, $r_+ = \min A_+$, 
\[
I_- = I \setminus (r_- , + \infty) 
\quad \text{and} \quad
I_+ = I \setminus (-\infty, r_+) \,.
\]
Let $f \in \ker (F_{I_m}^I)|_A$. Since $A$ is finite, $\exists N_0 > 0$ such that
$A_0 \subseteq I_m$, for $m \geq N_0$. Hence, $f|_{A_0} \equiv 0$ if
$m \geq N_0$. Pick a section $\hat{f} \in \ker (F_{I_m}^I)$ such that
$\hat{f}|_A = f$ and set $g = \hat{f}|_{I_- \cup I_+}$. Next, we show that we
can extend $g$ to a section $h$ over $I_- \cup I_+  \cup_n I_n$ such that
$h|_{\cup I_n} \equiv 0$, provided that $m$ is sufficiently large. Note that
any such $h$ will have the property that $h|_A = f$.

Let $[s_n, t_n]$, $n \geq 1$, be as in the proof of (i). By construction,
$r_- < \ldots \leq s_2 \leq s_1$ and $t_1 \leq t_2 \leq \ldots < r_+$. By
virtual tameness, there exists $n_0 \geq N_0$ such that the chains 
\begin{equation}
\ker v_{r_-}^{s_1} \supseteq \ker v_{r_-}^{s_2} \supseteq \ldots
\quad \text{and} \quad
\ker (v_{t_1}^{r_+})^\ast \supseteq \ker (v_{t_2}^{r_+})^\ast \supseteq \ldots
\end{equation}
are stable at $s_n$ and $t_n$, $n \geq n_0$, respectively. Hence, if
$f \in \ker (F_{I_m}^I)|_A$, 
$m \geq n_0$, we have that $g_{r_-} \in \ker (v_{r_-}^{s_n})$ and
$g_{r_+} \in \ker (v^{r_+}_{t_n})^\ast$, for any $n \geq n_0$, which implies
that the section $g$ can be extended, as claimed. By Proposition
\ref{P:extension}, we can further extend $g$ to a section over $I$.
This concludes the proof.
\end{proof}

%----------------------------------

\section{Decomposition Theorems} \label{S:decomposition}

In this section, we prove interval decomposition theorems for tame $p$-sheaves
and virtually tame $c$-modules that lead to representations of their structures by
barcodes or persistence diagrams. We develop a sheaf-theoretical analogue of
the techniques employed by Crawley-Boevey to obtain such decompositions for
pointwise finite-dimensional persistence modules \cite{Crawley-Boevey2015}. 

\subsection{Coverings}
The arguments and constructions in
\cite{Crawley-Boevey2015} use the notion of {\em sections} of a vector space 
whose definition we recall next. To avoid confusion with sections of $c$-modules 
and $p$-sheaves, we rename them splittings. 

\begin{definition} Splittings of Vector Spaces (cf.\,\cite{Crawley-Boevey2015})

\begin{enumerate}[(i)]
\item A {\em splitting} of a vector space $V$ is a pair $(F^-, F^ +)$ of subspaces
$F^-\subseteq F^+ \subseteq V$.
\item A collection $\{(F_\lambda^-, F_\lambda^+) : \lambda \in \Lambda\}$ of
splittings of $V$ is {\em disjoint} if for all $\lambda\neq\mu$, either
$F_\lambda^+ \subseteq F_\mu^-$ or $F_\mu^+ \subseteq F_\lambda^-$;
\item A collection of splittings $\{(F_\lambda^-, F_\lambda^+) : \lambda \in \Lambda\}$
{\em covers $V$} if for any subspace $U \subseteq V$, with $U \neq V$, 
$\exists \lambda \in \Lambda$ such that $U + F_\lambda^- \neq U + F_\lambda^+$;
and it {\em strongly covers $V$} provided that for all subspaces $U, W \subseteq V$
with $W \nsubseteq U$, $\exists \lambda \in \Lambda$ such that
$U + (F_\lambda^- \cap W) \neq U + (F_\lambda^+ \cap W)$.
\end{enumerate}
\end{definition}

\begin{proposition}[Crawley-Boevey \cite{Crawley-Boevey2015}] \label{P:cover}
Let $\{(F_\lambda^-, F_\lambda^+) \colon \lambda \in \Lambda\}$ be a
set of splittings that is disjoint and covers $V$. 
\begin{enumerate}[\rm (i)]
\item If $W_\lambda$ is a complement of $F^-_\lambda$ in $F_\lambda^+$,
$\forall \lambda \in \Lambda$, then the inclusions $W_\lambda \subseteq V$ induce
a direct sum decomposition $V =\oplus_{\lambda\in\Lambda} W_\lambda$.
\item If $\{(G^-_\sigma,G^+_\sigma):\sigma\in\Sigma\}$ is another set of splittings
that is disjoint and strongly covers $V$, then the family
\[
\{(F_\lambda^- + G^-_\sigma\cap F_\lambda^+ , F_\lambda^- + G^+_\sigma\cap F_\lambda^+)
\,\colon(\lambda,\sigma)\in\Lambda\times\Sigma\}
\]
of splittings is disjoint and covers $V$.
\end{enumerate}
\end{proposition}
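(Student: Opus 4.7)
The plan is to handle (i) and (ii) separately, with (i) broken into showing the $W_\lambda$ sum is both direct and equal to $V$, and (ii) amounting to verifying disjointness of the refined family by a case split and then using the strong covering hypothesis in a single crucial step.

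For part (i), the first task is to show that the sum $\sum_\lambda W_\lambda$ is direct. Given any nontrivial finite relation $w_{\lambda_1} + \cdots + w_{\lambda_n} = 0$ with $w_{\lambda_i} \in W_{\lambda_i}$, disjointness of the splittings lets me linearly order the finitely many indices involved so that $F^+_{\lambda_1} \subseteq F^-_{\lambda_2} \subseteq F^+_{\lambda_2} \subseteq \cdots$; isolating $w_{\lambda_n}$ places it in $F^-_{\lambda_n}$, but $W_{\lambda_n}$ is a complement of $F^-_{\lambda_n}$ in $F^+_{\lambda_n}$, forcing $w_{\lambda_n} = 0$, and iterating gives the direct sum. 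For $U := \sum_\lambda W_\lambda = V$, I argue by contradiction: if $U \neq V$, the covering hypothesis produces $\lambda$ with $U + F^-_\lambda \neq U + F^+_\lambda$, so there is $x \in F^+_\lambda$ not lying in $U + F^-_\lambda$; writing $x = y + w$ with $y \in F^-_\lambda$ and $w \in W_\lambda \subseteq U$ yields the contradiction $x \in U + F^-_\lambda$.

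For part (ii), write $H^{\pm}_{\lambda,\sigma} := F_\lambda^- + G^\pm_\sigma \cap F_\lambda^+$. Disjointness of the refined family splits into two cases for pairs $(\lambda,\sigma) \neq (\lambda',\sigma')$. If $\lambda \neq \lambda'$, disjointness of the $F$-family gives (say) $F^+_\lambda \subseteq F^-_{\lambda'}$, which immediately forces $H^+_{\lambda,\sigma} \subseteq F^+_\lambda \subseteq F^-_{\lambda'} \subseteq H^-_{\lambda',\sigma'}$. If $\lambda = \lambda'$ but $\sigma \neq \sigma'$, disjointness of the $G$-family (say $G^+_\sigma \subseteq G^-_{\sigma'}$) transports through intersection with $F^+_\lambda$ and addition of $F^-_\lambda$ to yield $H^+_{\lambda,\sigma} \subseteq H^-_{\lambda,\sigma'}$. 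For the covering property, given $U \subsetneq V$ the covering hypothesis on the $F$-family produces $\lambda$ with $U + F^-_\lambda \neq U + F^+_\lambda$; setting $U' := U + F^-_\lambda$ and $W := F^+_\lambda$, this says $W \not\subseteq U'$. Now the strong covering hypothesis on the $G$-family (applied to this $U'$ and $W$) yields $\sigma$ with $U' + (G^-_\sigma \cap W) \neq U' + (G^+_\sigma \cap W)$, which is exactly $U + H^-_{\lambda,\sigma} \neq U + H^+_{\lambda,\sigma}$.

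The main obstacle, as always in this kind of argument, is really just the directness step in (i): one must be careful that disjointness gives a total order on the finite set of indices supporting a given relation (and not merely a partial one), so that the "largest" index is unambiguously defined and the complement property can be invoked. Everything else is bookkeeping, and the central reason part (ii) works is that the strong covering condition was designed precisely to survive intersection with an arbitrary subspace $W$ — here $W = F^+_\lambda$ — which is exactly the feature ordinary covering lacks.
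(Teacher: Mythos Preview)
The paper does not supply its own proof of this proposition; it is stated with attribution to Crawley-Boevey \cite{Crawley-Boevey2015} and used as a black box. Your argument is correct and is essentially the standard proof from that reference: the directness step via totally ordering the finitely many indices in a relation, the spanning step via the covering hypothesis, and the refinement in (ii) via a case split on $\lambda$ versus $\sigma$ followed by invoking strong covering with $U' = U + F_\lambda^-$ and $W = F_\lambda^+$.
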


\begin{corollary}\label{C:directsum}
Suppose that $\{(F_\lambda^-, F_\lambda^+) : \lambda \in \Lambda\}$ is disjoint
and covers $V$ and $\{(G^-_\sigma,G^+_\sigma):\sigma\in\Sigma\}$
is disjoint and strongly covers $V$. If $W_{\sigma,\lambda}$ is a complement of 
$(F_\lambda^-\cap G^+_\sigma) + (G^-_\sigma\cap F_\lambda^+)$ in $G^+_\sigma
\cap F_\lambda^+$, then:
\begin{enumerate}[\rm (i)]
\item The inclusions $W_{\sigma,\lambda} \subseteq V$ induce a
decomposition $V=\oplus_{\sigma,\lambda}W_{\sigma,\lambda}$;
\item For any $\lambda \in \Lambda$, the inclusions
$F^-_\lambda, W_{\sigma,\lambda} \subseteq F^+_\lambda$ induce a direct
sum decomposition $F_\lambda^+ = F_\lambda^-\oplus
\left(\oplus_{\sigma}W_{\sigma,\lambda}\right)$.
\end{enumerate}
\end{corollary}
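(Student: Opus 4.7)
The plan is to deduce both parts from Proposition \ref{P:cover} applied to appropriate auxiliary splittings. For part (i), the natural move is to feed the two given families into Proposition \ref{P:cover}(ii), which produces the combined family $\{(H^-_{\sigma,\lambda}, H^+_{\sigma,\lambda})\}$ with $H^\pm_{\sigma,\lambda} := F_\lambda^- + (G_\sigma^\pm \cap F_\lambda^+)$, disjoint and covering $V$. If one verifies that $W_{\sigma,\lambda}$ is a complement of $H^-_{\sigma,\lambda}$ in $H^+_{\sigma,\lambda}$, then Proposition \ref{P:cover}(i) immediately yields the decomposition $V = \oplus_{\sigma,\lambda} W_{\sigma,\lambda}$ of part (i). To verify the complement claim, one first computes
\[
H^+_{\sigma,\lambda} = F_\lambda^- + \bigl[W_{\sigma,\lambda} + (F_\lambda^- \cap G_\sigma^+) + (G_\sigma^- \cap F_\lambda^+)\bigr] = F_\lambda^- + (G_\sigma^- \cap F_\lambda^+) + W_{\sigma,\lambda} = H^-_{\sigma,\lambda} + W_{\sigma,\lambda},
\]
using the defining decomposition of $G_\sigma^+ \cap F_\lambda^+$ and absorbing $F_\lambda^- \cap G_\sigma^+$ into $F_\lambda^-$. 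For triviality of the intersection, given $w \in W_{\sigma,\lambda} \cap H^-_{\sigma,\lambda}$, one writes $w = f^- + g$ with $f^- \in F_\lambda^-$ and $g \in G_\sigma^- \cap F_\lambda^+$; a short chase using $w, g \in F_\lambda^+ \cap G_\sigma^+$ places $f^- \in F_\lambda^- \cap G_\sigma^+$, whence $w \in (F_\lambda^- \cap G_\sigma^+) + (G_\sigma^- \cap F_\lambda^+)$, which intersects $W_{\sigma,\lambda}$ trivially by construction.

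For part (ii), the plan is to pass to the quotient $Q := F_\lambda^+ / F_\lambda^-$ with projection $\pi$, and to show that $Q = \oplus_\sigma \pi(W_{\sigma,\lambda})$ with $\pi$ injective on each $W_{\sigma,\lambda}$; this is equivalent to $F_\lambda^+ = F_\lambda^- \oplus \oplus_\sigma W_{\sigma,\lambda}$. I would apply Proposition \ref{P:cover}(i) on $Q$ with the induced splittings $(\bar{G}_\sigma^-, \bar{G}_\sigma^+) := (\pi(G_\sigma^- \cap F_\lambda^+), \pi(G_\sigma^+ \cap F_\lambda^+))$. Disjointness on $Q$ is immediate from disjointness of the $G_\sigma$'s, and the fact that $\pi(W_{\sigma,\lambda})$ is a complement of $\bar{G}_\sigma^-$ in $\bar{G}_\sigma^+$, with $\pi|_{W_{\sigma,\lambda}}$ injective, follows by repeating the chase used in (i), this time exploiting $\pi(F_\lambda^- \cap G_\sigma^+) = 0$ to kill the unwanted summand.

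The main obstacle -- and the only place where the \emph{strong} covering hypothesis on the $G$'s, rather than ordinary covering, is essential -- is showing that the induced splittings cover $Q$. Given $U \subsetneq Q$, I would lift to $\tilde{U} := \pi^{-1}(U)$, note that $F_\lambda^- \subseteq \tilde{U} \subsetneq F_\lambda^+$, and apply the strong covering property of $\{(G_\sigma^-, G_\sigma^+)\}$ to the pair $(\tilde{U}, F_\lambda^+)$ (valid since $F_\lambda^+ \nsubseteq \tilde{U}$) to extract some $\sigma$ with $\tilde{U} + (G_\sigma^- \cap F_\lambda^+) \neq \tilde{U} + (G_\sigma^+ \cap F_\lambda^+)$. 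Because both sides contain $F_\lambda^- = \ker \pi$, pushing forward through $\pi$ preserves the inequality, giving $U + \bar{G}_\sigma^- \neq U + \bar{G}_\sigma^+$ and completing the hypotheses for Proposition \ref{P:cover}(i) on $Q$.
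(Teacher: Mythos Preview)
Your argument for part (i) is correct and coincides with the paper's: both feed the two families into Proposition~\ref{P:cover}(ii) to obtain the combined splittings $H^\pm_{\sigma,\lambda}=F_\lambda^-+(G_\sigma^\pm\cap F_\lambda^+)$, check that $W_{\sigma,\lambda}$ is a complement of $H^-_{\sigma,\lambda}$ in $H^+_{\sigma,\lambda}$, and then invoke Proposition~\ref{P:cover}(i). The paper phrases the complement check as the quotient isomorphism
\[
\frac{G_\sigma^+\cap F_\lambda^+}{(F_\lambda^-\cap G_\sigma^+)+(G_\sigma^-\cap F_\lambda^+)}
\;\cong\;
\frac{F_\lambda^-+G_\sigma^+\cap F_\lambda^+}{F_\lambda^-+G_\sigma^-\cap F_\lambda^+}
\]
induced by inclusion, which is exactly the content of your element-level chase.

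For part (ii) your route genuinely differs from the paper's. The paper stays in $V$: it first uses the disjointness of the $H_\sigma^\pm$ (so that the $H_{\sigma_i}^+$ can be linearly ordered) to show by a direct cancellation argument that $(\oplus_\sigma W_{\sigma,\lambda})\cap F_\lambda^-=0$; it then enlarges $\oplus_\sigma W_{\sigma,\lambda}$ by an auxiliary $W_\lambda$ to a complement of $F_\lambda^-$ in $F_\lambda^+$, and finally uses part (i) together with Proposition~\ref{P:cover}(i) on the $F$-family to force $W_\lambda=0$. Your approach instead passes to the quotient $Q=F_\lambda^+/F_\lambda^-$, pushes the $G$-splittings down, and applies Proposition~\ref{P:cover}(i) directly on $Q$. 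This is correct: disjointness descends trivially, your use of the \emph{strong} covering hypothesis on $(\tilde U,F_\lambda^+)$ is exactly what is needed to get covering on $Q$, and the complement/injectivity checks go through as you outline. Your version is a bit more conceptual---it isolates precisely where strong covering enters and avoids both the ordering trick and the auxiliary $W_\lambda$---while the paper's version has the advantage of reusing part (i) rather than re-running Proposition~\ref{P:cover}(i) on a new space.
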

%---------------
\begin{proof}
(i) Note that
\begin{equation} \label{E:iso}
\frac{G^+_\sigma\cap F_\lambda^+}{F_\lambda^-\cap G^+_\sigma + G^-_\sigma
\cap F_\lambda^+} 
\simeq \frac{F_\lambda^- + G^+_\sigma\cap F_\lambda^+}{F_\lambda^- +
G^-_\sigma\cap F_\lambda^+} \,.
\end{equation}
Since the isomorphism in \eqref{E:iso} is induced by inclusion, a complement
of $(F_\lambda^-\cap G^+_\sigma) + (G^-_\sigma\cap F_\lambda^+)$ in $G^+_\sigma
\cap F_\lambda^+$ is also a complement of $F_\lambda^- + G^-_\sigma\cap F_\lambda^+$
in $F_\lambda^- + G^+_\sigma\cap F_\lambda^+$. Thus, the claim follows from
Proposition \ref{P:cover}.

\smallskip

(ii) Since $W_{\sigma,\lambda}$ is a complement of 
$F_\lambda^- + G^-_\sigma\cap F_\lambda^+$ in $F_\lambda^- +
G^+_\sigma\cap F_\lambda^+$, we have $W_{\sigma,\lambda}\cap F_\lambda^-=0$
and $W_{\sigma,\lambda}\subseteq F_{\lambda}^+$, $\forall\sigma \in \Sigma$.
For a fixed $\lambda$,  to simplify notation, set $H_{\sigma}^-:=F_\lambda^- +
G^-_\sigma\cap F_\lambda^+$ and $H_{\sigma}^+:=F_\lambda^- + 
G^+_\sigma\cap F_\lambda^+$. Now, we show that
$\oplus_\sigma W_{\sigma,\lambda}\cap F_\lambda^-=0$. Let
$v_{\lambda} + v_{\sigma_1}+\cdots + v_{\sigma_n}=0$ be a relation
with $v_{\lambda}\in F_{\lambda}^-$ and $v_{\sigma_i} \in
W_{\sigma_i,\lambda}$. Since the splittings
$\{(G_\sigma^-, G_\sigma^+) \colon \sigma \in \Sigma\}$ of $V$ are disjoint,
so are the splittings $\{(H_\sigma^-, H_\sigma^+) \colon \sigma \in \Sigma\}$
of $F^+_\lambda$. By disjointness, we may assume that
$H_{\sigma_i}^+\subseteq H_{\sigma_{i+1}}^-$, for all $i<n$.  Then,
 $v_{\sigma_n} =- v_{\lambda} - v_{\sigma_1} - \cdots - v_{\sigma_{n-1}}
 \in H_{\sigma_n}^-$ because $v_\lambda \in F^-_\lambda \subseteq H_{\sigma_n}^-$
 and $v_{\sigma_i} \in H_{\sigma_i}^+ \subseteq  H_{\sigma_n}^-$. Hence,
 $v_{\sigma_n} \in H_{\sigma_n}^- \cap W_{\sigma_n, \lambda} = 0$.
Similarly, we show that all other terms in the relation vanish. Therefore,
$\oplus_\sigma W_{\sigma,\lambda}\cap F_\lambda^-=0$ and
$\oplus_\sigma W_{\sigma, \lambda} \subseteq F^+_\lambda$.

Choose $W_\lambda  \subseteq F^+_\lambda$ such that
 $W_\lambda\oplus\big(\oplus_{\sigma}W_{\sigma,\lambda}\big)$ is a complement
of $F_\lambda^-$ in $F_\lambda^+$. Since
$\{(F^-_\lambda, F^+_\lambda) \colon \lambda \in \Lambda\}$ is disjoint
and covers $V$, it follows that
\begin{equation}
\Big(\bigoplus_\lambda W_\lambda\Big) \oplus \Big(\bigoplus_{\sigma,\lambda}
W_{\sigma,\lambda}\Big)=V .
\end{equation}
On the other hand, by (i), $\oplus_{\sigma,\lambda}W_{\sigma,\lambda}=V$.
Thus, $W_\lambda=0$, $\forall \lambda \in \Lambda$. This proves that
$F_\lambda^+ = F_\lambda^-\oplus \left(\oplus_{\sigma}W_{\sigma,\lambda}\right)$.
\end{proof}

Let $\sheaf{F}$ be a $p$-sheaf and $x,y,p,q \in \dec$ satisfy
$x \leq p < q \leq y$. We use the following abbreviations:
\begin{align*}
\im^{(x,y)}_{(p,q)} &= \im \big(F^{(x,y)}_{(p,q)} \big) &
\ker^{(x,y)}_{(p,q)} &= \ker\big(F^{(x,y)}_{(p,q)}\big) \\
\im^{(\bar{x},y)}_{(p,q)} &= \cup_{z<x} \im^{(z,y)}_{(p,q)} &
\im^{(x,\bar{y})}_{(p,q)} &= \cup_{y<z} \im^{(x,z)}_{(p,q)} \\
\ker^{(x,y)}_{(\bar{p},q)} &= \cup_{x\leq z<p}
\ker^{(x,y)}_{(z,q)} &
\ker^{(x,y)}_{(p,\bar{q})} &=\cup_{q<z\leq y} 
\ker^{(x,y)}_{(p,z)} \,,
\end{align*}
with the convention that $\im^{(\bar{x},y)}_{(p,q)} = 0$ if  $x = -\infty$,
and $\im^{(x,\bar{y})}_{(p,q)} = 0$ if $y = +\infty$. Similarly,
for kernels, we make the convention that $\ker^{(x,y)}_{\ \, \emptyset} = F(x,y)$,
for any $x<y$.
\begin{align*}
\ker^{(x,\bar{y})}_{(p,q)} &= \im^{(x,\bar{y})}_{(x,y)}\cap \ker^{(x,y)}_{(p,q)}&
\ker^{(\bar{x},y)}_{(p,q)}&=\im^{(\bar{x},y)}_{(x,y)}\cap \ker^{(x,y)}_{(p,q)}\\
\ker^{(x,y)}_{)p,q(}&=\ker^{(x,y)}_{(x,p)}\cap \ker^{(x,y)}_{(q,y)}&
\ker^{(x,y)}_{)\bar{p},q(}&=\ker^{(x,y)}_{(x,\bar{p})}\cap \ker^{(x,y)}_{(q,y)} \\
\ker^{(x,y)}_{)p,\bar{q}(}&=\ker^{(x,y)}_{(x,p)}\cap \ker^{(x,y)}_{(\bar{q},y)} . &
\end{align*}

%---------------------
\begin{lemma}[Covering Lemma for Sheaves] \label{L:scover1}
Let $\sheaf{F}$ be a $p$-sheaf and $p,q \in \dec$ with $p < q $. Then,
\begin{enumerate}[\rm (i)]
\item $\{\big(\im ^{(\bar{x},q)}_{(p,q)}, \im^{(x,q)}_{(p,q)}\big),
x\leq p\}$ is a disjoint set of splittings of $\sheaf{F}(p,q)$;
\item $\{\big(\im^{(p,\bar{y})}_{(p,q)}, \im^{(p,y)}_{(p,q)}\big),
y \geq q\}$ is a disjoint set of splittings of $\sheaf{F}(p,q)$;
\item $\{\big(\ker_{(-\infty,\bar{x})}^{(-\infty,q)}\big|_I,
\ker_{(-\infty,x)}^{(-\infty,q)}\big|_I\big), x<q\}$ is a disjoint set of splittings of
$\sheaf{F}(-\infty,q)\big|_I$, for any interval $I \subseteq (-\infty, q)$;
\item $\{\big(\ker^{(p,+\infty)}_{(\bar{y},+\infty)}\big|_I,
\ker^{(p,+\infty)}_{(y,+\infty)}\big|_I \big), y>p\}$ is a disjoint set of splittings of
$\sheaf{F}(p,+\infty)\big|_I$, for any interval $I \subseteq (p, +\infty)$.
\end{enumerate}
Furthermore, if $\sheaf{F}$ satisfies the DCC on images,
the splittings in (i) and (ii) form strong coverings. Similarly, if $\sheaf{F}$ satisfies
the DCC on kernels, the splittings in (iii) and (iv) form
strong coverings. In particular, if $\sheaf{F}$ is tame, each of the above
sets of splittings strongly covers the corresponding vector space of sections.
\end{lemma}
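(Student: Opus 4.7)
The plan is to split the proof into two independent parts: disjointness follows directly from the functoriality of restriction, while strong covering requires the appropriate DCC hypothesis via Proposition \ref{P:descending}. For disjointness in (i), if $x_1 < x_2 \leq p$, the factorization $F^{(x_1,q)}_{(p,q)} = F^{(x_2,q)}_{(p,q)} \circ F^{(x_1,q)}_{(x_2,q)}$ immediately yields $\im^{(x_1,q)}_{(p,q)} \subseteq \im^{(x_2,q)}_{(p,q)}$; since $x_1$ is one of the indices $z < x_2$ appearing in the union defining $\im^{(\overline{x_2},q)}_{(p,q)}$, we obtain $F^+_{x_1} \subseteq F^-_{x_2}$. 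The same monotonicity argument handles (ii), where $\im^{(p,y)}_{(p,q)}$ is decreasing in $y$; (iii), where $\ker^{(-\infty,q)}_{(-\infty,x)}$ decreases as $x$ increases because vanishing on a larger interval is a stronger condition; and (iv), which is dual to (iii).

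For the strong covering claim in (i), fix $U, W \subseteq \sheaf{F}(p,q)$ with $W \nsubseteq U$ and pick $w \in W \setminus U$. I would examine
\[
T = \{x \leq p \colon w \in U + \im^{(x,q)}_{(p,q)} \cap W\}.
\]
Since $\im^{(p,q)}_{(p,q)} = \sheaf{F}(p,q) \supseteq W$, we have $p \in T$, and the monotonicity of images in $x$ makes $T$ upward-closed. Let $x_0 = \inf T$ in $\dec$. The central step is to verify $x_0 \in T$: choose a sequence $x_n \in T$ with $x_n \searrow x_0$, observe that $\bigcup_n (x_n,q) = (x_0,q)$, and invoke Proposition \ref{P:descending}(i) to conclude that $\im^{(x_0,q)}_{(p,q)} = \im^{(x_m,q)}_{(p,q)}$ for $m$ large. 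Consequently $w \in U + \im^{(x_m,q)}_{(p,q)} \cap W = U + \im^{(x_0,q)}_{(p,q)} \cap W$, and so $x_0 \in T$. On the other hand, every $z < x_0$ lies outside $T$, whence $w \notin U + \im^{(z,q)}_{(p,q)} \cap W$; distributing the union through the sum then yields $w \notin U + \im^{(\overline{x_0},q)}_{(p,q)} \cap W = U + F^-_{x_0} \cap W$. Thus the splitting at $x_0$ separates $U$ and $W$ as required.

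Cases (ii), (iii), (iv) follow the same template with minor adjustments. For (ii) one sets $y_0 = \sup T$ (the analogous set) and applies Proposition \ref{P:descending}(i) to the ascending chain $(p, y_n)$ with $y_n \nearrow y_0$. For (iii) and (iv) the roles of image and kernel swap: in (iii), $T$ becomes downward-closed in $x$ and one takes its supremum; in (iv), $T$ is upward-closed in $y$ and one takes its infimum. In both cases one invokes Proposition \ref{P:descending}(ii) in place of (i) to show that the relevant extremum is attained, and the same union-through-sum identity converts non-membership of the co-extremal values into non-membership in $U + F^-$.

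The hardest part will be verifying that the extremum $x_0$ (or $y_0$) actually lies in $T$. This is precisely where the DCC hypotheses are indispensable: without stabilization of the relevant image or kernel chain at the limit — which is exactly what Proposition \ref{P:descending} delivers — membership in $T$ could be lost at the boundary, and the separation argument would collapse. Everything else amounts to bookkeeping with the monotonicity of images and kernels under restriction and the distributive identity $U + \bigcup_z A_z = \bigcup_z (U + A_z)$ applied to the unions defining the $F^-$.
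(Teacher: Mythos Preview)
Your proof is correct and follows essentially the same route as the paper. The only cosmetic difference is that the paper tracks the subspace condition, defining $S = \{(z,q) : \im^{(z,q)}_{(p,q)} \cap W \nsubseteq U\}$ and taking $(x_0,q) = \bigcup_{I \in S} I$, whereas you fix a witness $w \in W \setminus U$ and track $T = \{x : w \in U + \im^{(x,q)}_{(p,q)} \cap W\}$; both then invoke Proposition~\ref{P:descending} on an exhausting chain to show the extremal index lies in the set, and both conclude by observing that every strictly smaller (resp.\ larger) index fails the membership condition.
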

%--------------------
\begin{proof}
(i) For $x \leq p$, write $F^-_x = \im^{(\bar{x},q)}_{(p,q)}$ and
$F^+_x = \im^{(x,q)}_{(p,q)}$. It is simple to check that
$\{ (F^-_x, F^+_x) \colon  x \leq p\}$ is a disjoint set of splittings. To verify the
strong covering property under the assumption that $\sheaf{F}$ satisfies the
DCC on images,  let $U, W \subseteq \sheaf{F}(p,q)$
be subspaces with $W \nsubseteq U$. Set
\begin{equation}
S = \{(z,q) \, \colon z \leq p \mbox{ and } \im ^{(z,q)}_{(p,q)} \cap W \nsubseteq U\}
\end{equation}
Note that $S \ne \emptyset$ because $(p, q) \in S$.
Let $I_1\subseteq I_2\subseteq I_3\subseteq \ldots$ be a sequence of intervals from
$S$ such that  $\cup_{I \in S} I=\cup_{n \geq 1} I_n$. Write the interval $\cup I_n$
as $\cup I_n= (x_0,q)$, with $x_0 \leq p$. By Proposition \ref{P:descending},
\begin{equation} \label{E:cont1}
F^+_{x_0} \cap W = \im^{(x_0,q)}_{(p,q)} \cap W =\im^{I_m}_{(p,q)}
\cap W \nsubseteq U \,,
\end{equation}
for $m$ sufficiently large. On the other hand, for any $z<x_0$, we have that
$(z,q) \notin S$. This implies that $\im ^{(z,q)}_{(p,q)}\cap W \subseteq U$. 
Therefore,
\begin{equation} \label{E:cont2}
F^-_{x_0} \cap W = \im^{(\bar{x}_0,q)}_{(p,q)} \cap W = \cup_{z<x_0}\im ^{(z,q)}_{(p,q)}
\cap W \subseteq U \,.
\end{equation}
It follows from \eqref{E:cont1} and \eqref{E:cont2} that
\begin{equation}  \label{E:scover}
U + F^-_{x_0} \cap W \ne U + F^+_{x_0} \cap W ,
\end{equation}
concluding the argument. The proofs of the other statements are similar.
\end{proof}
%---------------------
\begin{lemma}[Covering Lemma for Modules] \label{L:scover2}
Let $\sheaf{F}$ be the $p$-sheaf of sections of a $c$-module $\cmod{V}$
and $p, q \in \dec$ with $p < q$. Then,
\begin{enumerate}[\rm (i)]
\item For any $A \subseteq (p, q)$,
$\{\big(\im^{(\bar{x},q)}_{(p,q)}\big|_A, \im^{(x,q)}_{(p,q)}\big|_A\big),
x\leq p\}$ is a disjoint set of splittings of $\sheaf{F}(p,q)\big|_A$;
\item For any $A \subseteq (p, q)$,
$\{\big(\im^{(p,\bar{y})}_{(p,q)}\big|_A, \im^{(p,y)}_{(p,q)}\big|_A\big),
y\geq q\}$ is a disjoint set of splittings of $\sheaf{F}(p,q)\big|_A$;
\item  For any $A \subseteq (-\infty, q)$, 
$\{\big(\ker_{(-\infty,\bar{x})}^{(-\infty,q)}\big|_A,
\ker_{(-\infty,x)}^{(-\infty,q)}\big|_A\big), x<q\}$ is a disjoint set of splittings
of $\sheaf{F}(-\infty,q)\big|_A$;
\item For any $A \subseteq (p, +\infty)$,
$\{\big(\ker^{(p,+\infty)}_{(\bar{y},+\infty)}\big|_A,
\ker^{(p,+\infty)}_{(y,+\infty)}\big|_A\big), y>p\}$ is a disjoint set of splittings
of $\sheaf{F}(p,+\infty)\big|_A$.
\end{enumerate}
Moreover, if $\cmod{V}$ is virtually tame and $A$ is finite,
then each of the above set of splittings is a strong cover.
\end{lemma}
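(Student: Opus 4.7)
\medskip
\noindent\textbf{Proof proposal.}
The plan is to adapt the proof of Lemma \ref{L:scover1} by substituting Proposition \ref{P:cdescending} for Proposition \ref{P:descending} wherever ascending chains of intervals appear, exploiting virtual tameness of $\cmod{V}$ together with the finiteness of $A$ to salvage the controlled-stability arguments now only on the finite set $A$.

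I would dispose of disjointness with no tameness assumption. Since $s\mapsto s|_A$ is linear, it preserves subspace inclusions. The families in Lemma \ref{L:scover1}(i)--(iv) are disjoint by virtue of the inclusions $\im^{(x,q)}_{(p,q)}\subseteq\im^{(\bar{x}',q)}_{(p,q)}$ for $x<x'$, and the analogous nestings for kernels; these transport verbatim to the $|_A$-restricted subspaces, yielding disjointness in each of (i)--(iv).

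The heart of the proposal is the strong covering assertion under virtual tameness with $A$ finite. For part (i), given $U,W\subseteq\sheaf{F}(p,q)|_A$ with $W\nsubseteq U$, set
\[
S=\{(z,q)\colon z\leq p,\ \im^{(z,q)}_{(p,q)}\big|_A\cap W\nsubseteq U\},
\]
which is nonempty because $(p,q)\in S$. Choose an ascending sequence $\{I_n\}\subseteq S$ with $\cup_n I_n=\cup_{I\in S} I=:(x_0,q)$, $x_0\leq p$. Since $A\subseteq(p,q)\subseteq I_n$ for all $n$ and $A$ is finite, Proposition \ref{P:cdescending}(i) gives $\im(F^{(x_0,q)}_A)=\im(F^{I_m}_A)$ for $m$ sufficiently large. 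The functorial factorization $F^J_A=F^{(p,q)}_A\circ F^J_{(p,q)}$, valid whenever $J\supseteq(p,q)$, implies $\im^J_{(p,q)}|_A=\im(F^J_A)$, so this reads $\im^{(x_0,q)}_{(p,q)}|_A=\im^{I_m}_{(p,q)}|_A$ and hence $F^+_{x_0}\cap W\nsubseteq U$, while $F^-_{x_0}\cap W=\cup_{z<x_0}\im^{(z,q)}_{(p,q)}|_A\cap W\subseteq U$ by the maximality inherent in the definition of $x_0$. The strong covering inequality $U+(F^-_{x_0}\cap W)\neq U+(F^+_{x_0}\cap W)$ then follows exactly as in Lemma \ref{L:scover1}.

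Parts (ii), (iii), (iv) follow the same template: (ii) is symmetric to (i) via the upper endpoint $y\geq q$; (iii) and (iv) appeal instead to Proposition \ref{P:cdescending}(ii) on kernels, with the ambient interval chosen as $(-\infty,q)$ and $(p,+\infty)$ respectively so that the hypothesis $A\subseteq I\supseteq\cup I_n$ is met. The main obstacle I anticipate is strictly bookkeeping: one must set up the identifications $\im^J_{(p,q)}|_A=\im(F^J_A)$ and, on the kernel side, $\ker^{I}_{J}|_A=\{s|_A\colon s\in\ker F^I_J\}$ cleanly, so that Proposition \ref{P:cdescending} can be invoked directly on the pair $(A,\text{ambient interval})$ without further preparation. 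Once these identifications are in place, the strong covering arguments of Lemma \ref{L:scover1} descend intact to the $|_A$-picture.
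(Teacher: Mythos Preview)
Your proposal is correct and follows essentially the same approach as the paper: restrict the vector spaces in Lemma~\ref{L:scover1} to $A$ and replace the invocation of Proposition~\ref{P:descending} by Proposition~\ref{P:cdescending}, using virtual tameness and finiteness of $A$. The bookkeeping identifications you flag (e.g., $\im^{J}_{(p,q)}|_A=\im(F^J_A)$ via the factorization $F^J_A=F^{(p,q)}_A\circ F^J_{(p,q)}$) are exactly what is needed, and the paper treats them as implicit in the phrase ``restrict the relevant vector spaces of sections to $A$.''
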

%----------------------
\begin{proof}
The proof is nearly identical to that of Lemma \ref{L:scover1}. The only changes
needed are to restrict the relevant vector spaces of sections to $A$ and to use
Proposition \ref{P:cdescending} in lieu of Proposition \ref{P:descending} in
the tameness argument.
\end{proof}
%----------------------

\subsection{Decomposition of Tame $p$-Sheaves}
Our next goal is to decompose a tame $p$-sheaf $\sheaf{F}$ as a direct sum
of ``atomic'' subsheaves that are sheaf-theoretical analogues of interval
$c$-modules.  The building blocks of this decomposition are described next.

%---------------------
\begin{definition}\label{D:xysheaf}
For any $p$-sheaf $\sheaf{F}$ and $x,y\in\dec$ with $x<y$, we let:
\begin{enumerate}[\rm (i)]
\item $\sheaf{F}[x,y]$ be a complement of $\im^{(\bar{x},y)}_{(x,y)}+\im^{(x,\bar{y})}_{(x,y)}$
in $\im^{(x,y)}_{(x,y)} = F(x,y)$;
\item $\sheaf{F}[x,y\rangle$ be a complement of
$\ker^{(\bar{x},+\infty)}_{(y,+\infty)}+\ker^{(x,+\infty)}_{(\bar{y},+\infty)}$ in $\ker^{(x,+\infty)}_{(y,+\infty)}$;
\item $\sheaf{F}\langle x,y]$ be a complement of
$\ker^{(-\infty,\bar{y})}_{(-\infty,x)}+\ker^{(-\infty,y)}_{(-\infty,\bar{x})}$ in $\ker^{(-\infty,y)}_{(-\infty,x)}$;
\item $\sheaf{F}\langle x,y\rangle$ be a complement of
$\ker^{(-\infty,+\infty)}_{)\bar{x},y(}+\ker^{(-\infty,+\infty)}_{)x,\bar{y}(}$ in $\ker^{(-\infty,+\infty)}_{)x,y(}$.
\end{enumerate}
\end{definition}
%----------------------

\begin{proposition} \label{P:xysheaf}
For any $p$-sheaf $\sheaf{F}$, the following statements hold:

\begin{enumerate}[\rm (i)]

\item If $x, y \in \dec$, $x < y$, and $I \subseteq (x,y)$ is an interval, then
\[
\im^{(x,y)}_{(x,y)}\big|_I = \sheaf{F}[x,y]\big|_I \oplus 
\left(\im^{(\bar{x},y)}_{(x,y)}\big|_I + \im^{(x,\bar{y})}_{(x,y)}\big|_I\right) ;
\]
\item If $x, y \in \dec$, $x < y$, and $I \subseteq (x,+\infty)$ is an interval, then
\[
\ker^{(x,+\infty)}_{(y,+\infty)}\big|_I = \sheaf{F}[x,y\rangle\big|_I 
\oplus \left(\ker^{(\bar{x},+\infty)}_{(y,+\infty)}\big|_I + \ker^{(x,+\infty)}_{(\bar{y},+\infty)}\big|_I\right);
\]
\item If $x, y \in \dec$, $x < y$, and $I \subseteq (-\infty, y)$ is an interval, then
\[
\ker^{(-\infty,y)}_{(-\infty,x)}\big|_I = \sheaf{F}\langle x,y]\big|_I 
\oplus \left(\ker^{(-\infty,\bar{y})}_{(-\infty,x)}\big|_I + \ker^{(-\infty,y)}_{(-\infty,\bar{x})}\big|_I\right);
\]
\item If $x, y \in \dec$, $x < y$, and $I$ is any interval, then
\[
\ker^{(-\infty,+\infty)}_{)x,y(}\big|_I = \sheaf{F}\langle x,y\rangle\big|_I
\oplus \left(\ker^{(-\infty,+\infty)}_{)\bar{x},y(}\big|_I + \ker^{(-\infty,+\infty)}_{)x,\bar{y}(}\big|_I\right).
\]
\end{enumerate}
\end{proposition}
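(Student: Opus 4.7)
The four statements of Proposition \ref{P:xysheaf} are structurally parallel and are proved by the same strategy; I describe the plan for part (i) in detail, with parts (ii)--(iv) following by analogous applications of Lemma \ref{L:scover1}(i), (iv); (ii), (iii); and (iii), (iv), respectively, in conjunction with the corresponding subspaces from Definition \ref{D:xysheaf}.

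First I set the stage. By Definition \ref{D:xysheaf}(i), $\sheaf{F}[x,y]$ is a complement of $B := \im^{(\bar{x},y)}_{(x,y)} + \im^{(x,\bar{y})}_{(x,y)}$ in $F(x,y)$, so that $F(x,y) = \sheaf{F}[x,y] \oplus B$. Applying the (surjective) restriction $F^{(x,y)}_I$ immediately yields the sum $F(x,y)|_I = \sheaf{F}[x,y]|_I + B|_I$; the content of the proposition is that this sum is direct.

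The plan is to apply Corollary \ref{C:directsum} at the level of $V := F(x,y)|_I$, using the two families of splittings obtained by restricting to $I$ the families from Lemma \ref{L:scover1}(i) and (ii) taken with $(p,q) = (x,y)$:
\begin{align*}
\mathcal{F}_1 &= \{(\im^{(\bar{x_0},y)}_{(x,y)}|_I,\ \im^{(x_0,y)}_{(x,y)}|_I) : x_0 \leq x\}, \\
\mathcal{F}_2 &= \{(\im^{(x,\bar{y_0})}_{(x,y)}|_I,\ \im^{(x,y_0)}_{(x,y)}|_I) : y_0 \geq y\}.
\end{align*}
Disjointness is inherited from the unrestricted families. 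Under the tameness hypothesis, I expect both families to strongly cover $F(x,y)|_I$ by a direct adaptation of the proof of Lemma \ref{L:scover1}(i), (ii): given $U,W \subseteq F(x,y)|_I$ with $W \nsubseteq U$, one runs the same set-$S$ argument but with Proposition \ref{P:descending}(i) invoked for chains of images \emph{restricted to $I$} rather than to all of $(x,y)$. Once strong covering is in hand, Corollary \ref{C:directsum} produces the decomposition of $F(x,y)|_I$. Identifying the ``corner piece'' at $(x_0,y_0) = (x,y)$ as a complement of $B|_I$ in $F(x,y)|_I$ uses the identities $\im^{(x_0,y)}_{(x,y)} \cap \im^{(x,y_0)}_{(x,y)} = \im^{(x_0,y_0)}_{(x,y)}$ and its ``barred'' counterparts, each of which follows from the connective gluing property applied to the connected cover $(x_0,y_0) = (x_0,y) \cup (x,y_0)$ with overlap $(x,y)$.

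The final step transfers the resulting direct sum $F(x,y)|_I = W \oplus B|_I$ (with $W$ the corner piece) to one featuring $\sheaf{F}[x,y]|_I$. The composition $\sheaf{F}[x,y] \hookrightarrow F(x,y) \twoheadrightarrow F(x,y)|_I \twoheadrightarrow F(x,y)|_I / B|_I$ is surjective (given any $v \in F(x,y)|_I$, lift to $\tilde v \in F(x,y)$ and use $F(x,y) = \sheaf{F}[x,y] \oplus B$ to find a representative in $\sheaf{F}[x,y]$), and comparing with the iso $W \cong F(x,y)|_I / B|_I$ shows that $\sheaf{F}[x,y]|_I$ also complements $B|_I$ in $F(x,y)|_I$. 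The main obstacle I anticipate is precisely this strong-cover verification in Step 2, where the restriction $|_I$ forces one to carefully track how the ascending-chain stabilization of Proposition \ref{P:descending}(i) interacts with intersections $\im^{(\cdot,y)}_{(x,y)}|_I \cap W$ for arbitrary $W \subseteq F(x,y)|_I$; this is where the tameness hypothesis does the essential work.
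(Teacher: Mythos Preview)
Your approach has a genuine gap: the proposition is stated for \emph{any} $p$-sheaf $\sheaf{F}$, with no tameness hypothesis, but your plan relies on the strong covering conclusion of Lemma \ref{L:scover1}, which requires the DCC on images (and, for the other parts, on kernels). Without tameness, the families $\mathcal{F}_1$ and $\mathcal{F}_2$ need not strongly cover $F(x,y)|_I$, so Corollary \ref{C:directsum} is unavailable. Your final transfer step is also incomplete: knowing that some complement $W$ of $B|_I$ exists, together with surjectivity of $\sheaf{F}[x,y]|_I \to F(x,y)|_I / B|_I$, does not by itself yield $\sheaf{F}[x,y]|_I \cap B|_I = 0$; you would still need injectivity of that map, which is exactly the point at issue.

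The paper's argument is far more direct and uses only the connective gluing property, no tameness. Since $\sheaf{F}[x,y]$ is by definition a complement of $B$ in $F(x,y)$, the restriction to $I$ is automatically a sum; the only thing to check is that the two summands remain independent after restriction. Take $s_0 \in \sheaf{F}[x,y]$ and $s_1 \in B$ with $s_0|_I = s_1|_I$, and set $s = s_0 - s_1$, so $s|_I = 0$. If $I \cap (x,y) = \emptyset$ the statement is trivial. Otherwise pick $t \in I \cap (x,y)$ with $s|_t = 0$; by gluing, one can split $s$ as a section vanishing on $[t,y)$ plus a section vanishing on $(x,t]$, and each of these lies in $B$ (the first in $\im^{(x,\bar y)}_{(x,y)}$-type piece, the second extendable past $x$, hence in the $\im^{(\bar x,y)}_{(x,y)}$-type piece). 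Thus $s \in B$, so $s_0 = s + s_1 \in \sheaf{F}[x,y] \cap B = 0$. The same pattern handles (ii)--(iv). I recommend abandoning the covering-lemma route here and using this elementary gluing argument instead.
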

%-----------------------
\begin{proof}
Here we just prove (ii), the proofs of the other statements being similar.
Since, by definition, $\ker^{(x,+\infty)}_{(y,+\infty)} = \sheaf{F}[x,y\rangle \oplus
\left(\ker^{(\bar{x},+\infty)}_{(y,+\infty)} + \ker^{(x,+\infty)}_{(\bar{y},+\infty)}\right)$,
we just need to show that the two summands in this direct sum decomposition remain
independent after restriction to $I$; that is,
\begin{equation}
\sheaf{F}[x,y\rangle\big|_I \cap  \left(\ker^{(\bar{x},+\infty)}_{(y,+\infty)}\big|_I +
\ker^{(x,+\infty)}_{(\bar{y},+\infty)}\big|_I\right)=0.
\label{E:int1}
\end{equation}

Given $s_0\in \sheaf{F}[x,y\rangle$ and $s_1\in \left(\ker^{(\bar{x},+\infty)}_{(y,+\infty)}
+ \ker^{(x,+\infty)}_{(\bar{y},+\infty)}\right)$ with $s_0|_I = s_1|_I$, let $s=s_0-s_1$.
Clearly, $s|_I=0$. If $I\cap(x,y)=\emptyset$, the left-hand side of \eqref{E:int1} equals $0$, so
the proof is trivial. If $I\cap(x,y)\neq \emptyset$, by the gluing property, $s$ is the sum of two
sections from $\ker^{(\bar{x},+\infty)}_{(y,+\infty)}+\ker^{(x,+\infty)}_{(\bar{y},+\infty)}$.
Hence, $s\in \ker^{(\bar{x},+\infty)}_{(y,+\infty)}+\ker^{(x,+\infty)}_{(\bar{y},+\infty)}$, which
implies that $s_0=s_1+s\in \ker^{(\bar{x},+\infty)}_{(y,+\infty)}+\ker^{(x,+\infty)}_{(\bar{y},+\infty)}$,
so that $s_0=s_1=0$. The result follows.
\end{proof}

%-----------------------

For $x,y \in \dec$, $x<y$, we may view $\sheaf{F}[x,y\rangle$ as a persistence presheaf with
$\sheaf{F}[x,y\rangle (I) = \sheaf{F}[x,y\rangle|_I$, if $I \subseteq (x, +\infty)$,
and $\sheaf{F}[x,y\rangle (I) = 0$, otherwise. Morphisms are induced by restriction
of sections of $\sheaf{F}$. Similarly, we may treat $\sheaf{F}\langle x,y]$,
$\sheaf{F}\langle x,y\rangle$ and $\sheaf{F}[x,y]$ as persistence presheaves, where
$\sheaf{F}\langle x,y] (I)=0$  if $I \nsubseteq (-\infty,y)$ and $\sheaf{F}[x,y] (I)=0$
if $I \nsubseteq (x,y)$. Henceforth,
we refer to these interchangeably as $p$-sheafs or spaces of sections, the meaning 
determined by the context.

Let $m[x,y] = \dim(\sheaf{F}[x,y])$, $m[x,y\rangle = \dim(\sheaf{F}[x,y\rangle) $, 
$m\langle x,y] = \dim(\sheaf{F}\langle x,y])$,
and $m\langle x,y\rangle = \dim(\sheaf{F}\langle x,y \rangle) $. 
%----------------------------
\begin{proposition} \label{P:intervals}
If $\sheaf{F}$ is a $p$-sheaf and $x, y \in \dec$, then:
\begin{enumerate}[\rm (i)]
\item $\sheaf{F}[x,y] \cong m[x,y] \, k[x,y]$, if $-\infty<x<y<+\infty$;
\item $\sheaf{F}[x,y\rangle \cong m[x,y\rangle\, k[x,y\rangle$, if 
$-\infty<x<y\leq +\infty$;
\item $\sheaf{F}\langle x,y] \cong m\langle x,y]\, k\langle x,y]$,
if $-\infty\leq x<y<+\infty$;
\item $\sheaf{F}\langle x,y\rangle \cong m\langle x,y\rangle\, k\langle x,y\rangle$,
if $-\infty\leq x<y\leq +\infty$.
\end{enumerate}
\end{proposition}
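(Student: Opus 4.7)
The plan is to prove case (iv) in detail; cases (i)--(iii) follow from the same template with the ambient interval $\real$ replaced by $(x,y)$, $(x,+\infty)$, or $(-\infty,y)$, and the decomposed pieces shown to lie in the image or kernel subspaces from Definition \ref{D:xysheaf} that $\sheaf{F}[x,y]$, $\sheaf{F}[x,y\rangle$, or $\sheaf{F}\langle x,y]$ respectively complements; for these three cases an additional application of connective gluing is used to extend each piece by zero past the barred endpoint, placing it in the relevant $\im^{(\bar{x},y)}_{(x,y)}$, $\ker^{(\bar{x},+\infty)}_{(y,+\infty)}$, or $\ker^{(-\infty,\bar{y})}_{(-\infty,x)}$ subspace. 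Write $W = \sheaf{F}\langle x,y\rangle$ (viewed as a subspace of $\sheaf{F}(-\infty,+\infty)$) and fix a $k$-basis $\{s_\lambda\}_{\lambda\in\Lambda}$ of $W$, so $|\Lambda| = m\langle x,y\rangle$. The strategy is to show each $s_\lambda$ generates a copy of $k\langle x,y\rangle$ in $\sheaf{F}$ via Proposition \ref{P:isheaf}, and that these copies assemble into a direct sum equal to $\sheaf{F}\langle x,y\rangle$.

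The heart of the argument is a \emph{support lemma}: every nonzero $s\in W$ has $\supp[s]$ equal to the real interval underlying $(x,y)$. Suppose for contradiction that $s_{t_0}=0$ for some $t_0$ in that real interval. The cover $\real = (-\infty,t_0] \cup [t_0,+\infty)$ is connected (the pieces share $\{t_0\}$), and the section $s|_{(-\infty,t_0]}$ and the zero section on $[t_0,+\infty)$ agree on $\{t_0\}$ because $s_{t_0}=0$. Connective gluing then yields $\tilde{s}_1\in\sheaf{F}(\real)$ with $\tilde{s}_1|_{(-\infty,t_0]} = s|_{(-\infty,t_0]}$ and $\tilde{s}_1|_{[t_0,+\infty)} = 0$; construct $\tilde{s}_2$ symmetrically, and note $s = \tilde{s}_1 + \tilde{s}_2$ by locality. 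Since $s|_{(-\infty,x)}=0$ and $(-\infty,x)\subseteq(-\infty,t_0]$, $\tilde{s}_1$ vanishes on $(-\infty,x)$; and $\tilde{s}_1$ vanishes on $(t_0^-,+\infty)$, with $t_0^- < y$ in every configuration because $t_0$ lies in the real interval underlying $(x,y)$. Thus $\tilde{s}_1\in\ker^{(-\infty,+\infty)}_{)x,\bar{y}(}$, and symmetrically (using $z=t_0^+ > x$) $\tilde{s}_2\in\ker^{(-\infty,+\infty)}_{)\bar{x},y(}$, so $s$ lies in the subspace that $W$ complements in $\ker^{(-\infty,+\infty)}_{)x,y(}$, forcing $s=0$: a contradiction.

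With the support lemma in hand, each $s_\lambda$ has support exactly the interval $(x,y)$, so Proposition \ref{P:isheaf} (in whichever of its four subcases matches the finiteness of $x,y$, together with the coincidences among the four interval $p$-sheaf types when an endpoint is infinite) gives $\sheaf{F}\langle s_\lambda\rangle \cong k\langle x,y\rangle$. The inclusions $\sheaf{F}\langle s_\lambda\rangle\hookrightarrow\sheaf{F}\langle x,y\rangle$ assemble into a morphism $\Phi\colon\bigoplus_\lambda k\langle x,y\rangle \to \sheaf{F}\langle x,y\rangle$ of $p$-sheaves. For an interval $I$ with $I\cap(x,y)\neq\emptyset$, $\Phi_I$ is surjective by the definition $\sheaf{F}\langle x,y\rangle(I)=W|_I$, and injective because any relation $\sum c_\lambda s_\lambda|_I = 0$ gives $s=\sum c_\lambda s_\lambda\in W$ with $s|_{t_0}=0$ at any $t_0\in I\cap(x,y)$, which by the support lemma forces $s=0$ and hence $c_\lambda=0$ for all $\lambda$; when $I\cap(x,y)=\emptyset$ both sides are already zero. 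The main obstacle is the decorated-endpoint bookkeeping in the support lemma: one has to verify in the degenerate configurations ($x=-\infty$, $y=+\infty$, $t_0$ coinciding with a closed endpoint of the real interval underlying $(x,y)$, or a single-point underlying interval) that the decorations $t_0^-$ and $t_0^+$ can indeed be chosen to land $\tilde{s}_1,\tilde{s}_2$ in the intended barred kernel subspaces, and to correctly identify $\sheaf{F}\langle s_\lambda\rangle$ with $k\langle x,y\rangle$ in Step 2 using the degenerate-case coincidences analogous to those recorded in Remark \ref{R:arrows}.
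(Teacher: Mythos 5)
Your proof is correct and follows essentially the same route as the paper's: fix a basis of the space of sections, show each basis element has full support $(x,y)$ by using connective gluing to split a section vanishing at an interior point into pieces lying in the two ``barred'' subspaces that the space complements, then invoke Proposition \ref{P:isheaf} and assemble. The only differences are cosmetic — you work out case (iv) where the paper works out case (ii), and you spell out the gluing construction, the decoration bookkeeping, and the final isomorphism check that the paper compresses into ``Since $B$ is a basis\ldots''.
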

%-----------------------------
\begin{proof}
We prove (ii), the proofs of the other statements being similar.
Choose a basis $B = \{s_\lambda \colon \lambda \in \Lambda\}$ of the space of
sections $\sheaf{F}[x,y\rangle$.  By definition, for each $\lambda \in \Lambda$,
$\dom(s_\lambda)=(x,+\infty)$. Note that $\supp [s_\lambda] = (x,y)$. Indeed,
suppose $\exists t \in (x,y)$ such that $s_\lambda (t) = 0$. Then, we may write
$s_\lambda$ as the sum of sections in $\ker^{(\bar{x},+\infty)}_{(y,+\infty)}$ and
$\ker^{(x,+\infty)}_{(\bar{y},+\infty)}$, which implies that $s_\lambda = 0$, a contradiction.
By Proposition \ref{P:isheaf}, the $p$-sheaf generated by $s_\lambda$ satisfies
$\sheaf{F}\langle s_\lambda \rangle\cong k[x,y\rangle$. Since $B$ is a basis,
$\sheaf{F}[x,y\rangle\cong m[x,y\rangle \, k[x,y\rangle$. 
\end{proof}

%-----------------------------
\begin{lemma}[Decomposition Lemma] \label{L:sheafsum}
Let $\sheaf{F}$ be a tame $p$-sheaf. If $p, q \in \dec$, $p < q$, then the space
of sections $\sheaf{F}(p,q)$ may be decomposed as
\[
\begin{split}
\sheaf{F}(p,q) = &\bigoplus_{-\infty<x\leq p<q\leq y<+\infty} \sheaf{F}[x,y]\big|_I
\bigoplus_{-\infty<x\leq p<y\leq+\infty} \sheaf{F}[x,y\rangle\big|_I \\
&\ \,  \bigoplus_{-\infty\leq x<q\leq y<+\infty} \sheaf{F}\langle x,y]\big|_I 
\ \ \, \bigoplus_{-\infty\leq x<y\leq +\infty} \sheaf{F}\langle x,y\rangle\big|_I \,,
\end{split}
\]
where $I = (p,q)$.
\end{lemma}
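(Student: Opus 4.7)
The plan is to decompose $\sheaf{F}(p,q)$ by a two-stage application of Corollary~\ref{C:directsum}, pairing the four disjoint strong coverings supplied by Lemma~\ref{L:scover1} according to whether each endpoint is finite or infinite.

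\emph{First stage.} Using the image-based coverings from Lemma~\ref{L:scover1}(i)--(ii), which are strong because $\sheaf{F}$ is tame, I set $F^-_x = \im^{(\bar x,q)}_{(p,q)}$, $F^+_x = \im^{(x,q)}_{(p,q)}$ for $-\infty\leq x\leq p$, and $G^-_y = \im^{(p,\bar y)}_{(p,q)}$, $G^+_y = \im^{(p,y)}_{(p,q)}$ for $q\leq y\leq+\infty$. Corollary~\ref{C:directsum} then produces
\[
\sheaf{F}(p,q) = \bigoplus_{x,y} W_{x,y},
\]
with $W_{x,y}$ a complement of $(F^-_x\cap G^+_y)+(G^-_y\cap F^+_x)$ in $F^+_x\cap G^+_y$. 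Since $(x,q)\cap(p,y) = (p,q)\neq\emptyset$, the connective gluing property lets me identify $F^+_x\cap G^+_y = \im^{(x,y)}_{(p,q)}$, $F^-_x\cap G^+_y = \im^{(\bar x,y)}_{(p,q)}$, and $G^-_y\cap F^+_x = \im^{(x,\bar y)}_{(p,q)}$. When both $x,y$ are finite, Proposition~\ref{P:xysheaf}(i) permits me to choose $W_{x,y} = \sheaf{F}[x,y]|_I$, accounting for the first sum in the statement.

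\emph{Second stage.} For the three remaining cases (one or both endpoints infinite), I refine the corresponding $W_{x,y}$ using the kernel-based disjoint strong coverings of Lemma~\ref{L:scover1}(iii)--(iv). For $x$ finite and $y=+\infty$, sections in $W_{x,+\infty}$ all extend to $\sheaf{F}(x,+\infty)$, so I apply Corollary~\ref{C:directsum} again inside $\sheaf{F}(x,+\infty)$, using the right-kernel splitting indexed by $y'>p$; Proposition~\ref{P:xysheaf}(ii) then identifies each refined summand, restricted to $I$, with $\sheaf{F}[x,y'\rangle|_I$. The symmetric procedure handles $x=-\infty$, $y$ finite, producing summands $\sheaf{F}\langle x',y]|_I$ via Proposition~\ref{P:xysheaf}(iii). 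Finally, $W_{-\infty,+\infty}$ is refined by combining both kernel coverings via Corollary~\ref{C:directsum}, with Proposition~\ref{P:xysheaf}(iv) identifying each refined summand as $\sheaf{F}\langle x',y'\rangle|_I$. Collecting the four families yields the claimed decomposition.

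\emph{Main obstacle.} The principal difficulty is the bookkeeping across the two stages: I must verify that (a) the connective-gluing identifications of the first stage extend correctly to infinite endpoints, where the image notation must be reinterpreted through extensions and kernels, and (b) when the second-stage kernel coverings are intersected with the appropriate extension space (e.g.\ $\sheaf{F}(x,+\infty)$ for the summand containing $W_{x,+\infty}$), the resulting family remains a disjoint strong cover of that subspace, so that Corollary~\ref{C:directsum} applies iteratively. The identifications supplied by Proposition~\ref{P:xysheaf} must then be checked to match the complements produced by the $W$-construction, closing the argument.
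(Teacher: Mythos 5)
Your proposal follows essentially the same route as the paper's proof: the paper's Step 1 is your first stage (image splittings from Lemma~\ref{L:scover1}(i)--(ii), Corollary~\ref{C:directsum}, connective gluing to compute the intersections, and Proposition~\ref{P:xysheaf}(i) to identify the finite-endpoint complements), and its Steps 2--3 are your second stage, where each leftover summand with an infinite endpoint is refined by pairing the appropriate kernel splittings from Lemma~\ref{L:scover1}(iii)--(iv) with the image splittings and invoking Corollary~\ref{C:directsum}(ii) together with Proposition~\ref{P:xysheaf}(ii)--(iv). The bookkeeping issues you flag are exactly the ones the paper resolves by working throughout with restrictions to $I=(p,q)$ of sections over $(p,+\infty)$, $(-\infty,q)$, and $(-\infty,+\infty)$.
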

%----------------------
\begin{proof}
The proof of the lemma is in three steps, each providing a decomposition
of $\sheaf{F}(p,q)$ that is gradually refined to the target decomposition.

\medskip
\noindent
{\em Step 1.} Since $\sheaf{F}$ satisfies the DCC on
images, by Lemma \ref{L:scover1} the families of splittings
\begin{equation}
(F^-_x, F^+_x) = 
\big(\im^{(\bar{x},q)}_{(p,q)}, \im^{(x,q)}_{(p,q)} \big) ,
\end{equation}
$-\infty\leq x \leq p$, and
\begin{equation}
(G^-_y, G^+_y) = \big(\im^{(p,\bar{y})}_{(p,q)}, \im^{(p,y)}_{(p,q)} \big) ,
\end{equation} 
$q\leq y\leq +\infty$, are disjoint and strongly cover $\sheaf{F}(p,q)$.
By the connective gluing property for sections, we have:
\begin{equation}
\begin{split}
F_x^+ \cap G_y^+ &= \im^{(x,q)}_{(p,q)} \cap \im^{(p,y)}_{(p,q)}
= \im^{(x,y)}_{(p,q)}, \\
F_x^- \cap G_y^+ &= \im^{(\bar{x},q)}_{(p,q)} \cap
\im^{(p,y)}_{(p,q)} = \im^{(\bar{x},y)}_{(p,q)}, \\
F_x^+ \cap G_y^- &= \im^{(x,q)}_{(p,q)} \cap \im^{(p,\bar{y})}_{(p,q)}
=  \im^{(x,\bar{y})}_{(p,q)} .
\end{split}
\end{equation}
Therefore,
\begin{equation} 
\begin{split}
\frac{F_x^+ \cap G_y^+}{(F_x^- \cap G_y^+) + (F_x^+ \cap G_y^-)}
&= \frac{\im^{(x,y)}_{(p,q)}}
{\im^{(\bar{x},y)}_{(p,q)} + \im^{(x,\bar{y})}_{(p,q)}} \\
&=  \frac{\im^{(x,y)}_{(x,y)}\big|_I}
{\im^{(\bar{x},y)}_{(x,y)}\big|_I + \im^{(x,\bar{y})}_{(x,y)}\big|_I} \,,
\end{split}
\label{E:complement1}
\end{equation}
where the last equality follows from the fact that $I = (p,q)
\subseteq (x,y)$. By Proposition \ref{P:xysheaf}(i) and \eqref{E:complement1},
$\sheaf{F}[x,y]|_I$ is a  complement of $(F_x^- \cap G_y^+) + (F_x^+ \cap G_y^-)$
in $F_x^+ \cap G_y^+$. Thus, Corollary \ref{C:directsum}(i) implies that
\begin{equation}\label{E:decompose}
\begin{split}
\sheaf{F}(p,q) = &\bigoplus_{-\infty < x \leq p<q\leq y < +\infty}\sheaf{F}[x,y]\big|_I
\bigoplus_{-\infty < x \leq p} F[x, +\infty]\big|_I \\
&\quad \ \ \ \bigoplus_{q \leq y < +\infty} F[-\infty, y]\big|_I \quad \ \ \,
\bigoplus F[-\infty, +\infty]\big|_I \,.
\end{split}
\end{equation}
Note that, up to this point in the proof, we only have used the DCC
on images, not the full tameness of $\sheaf{F}$. Before proceeding
to the next step recall that, according to Definition \ref{D:xysheaf}(i),
\begin{enumerate}[(a)]
\item $F[x, +\infty]\big|_I$ is a complement of 
$\im^{(\bar{x},+\infty)}_{(x,+\infty)}\big|_I$ in $\im^{(x,+\infty)}_{(x,+\infty)}\big|_I$;
\item $F[-\infty, y]\big|_I$ is a complement of $\im^{(-\infty,\bar{y})}_{(-\infty,y)}\big|_I$
in $\im^{(-\infty, y)}_{(-\infty,y)}\big|_I$;
\item $F[-\infty,+\infty]\big|_I = F(-\infty,+\infty)|_I$.
\end{enumerate}

\smallskip
\noindent
{\em Step 2.} Now we show that, in \eqref{E:decompose}, the summands
$F[x, +\infty]\big|_I$, $-\infty < x \leq p$, may be replaced with
$\bigoplus_{p< y\leq+\infty}\sheaf{F}[x,y\rangle\big|_I$. Consider  the families
of splittings
\begin{equation}
(F^-_x, F^+_x) = 
\big(\im^{(\bar{x},+\infty)}_{(p,+\infty)}\big|_I, \im^{(x,+\infty)}_{(p,+\infty)}\big|_I \big) ,
\end{equation}
$-\infty\leq x \leq p$, and
\begin{equation}
(G^-_y, G^+_y) = \big(\ker^{(p,+\infty)}_{(\bar{y},+\infty)}\big|_I, 
\ker^{(p,+\infty)}_{(y,+\infty)}\big|_I \big) ,
\end{equation} 
$p< y\leq +\infty$, that are disjoint and strongly cover $\sheaf{F}(p,+\infty)\big|_I$. 
Using the gluing property, one may verify that
\begin{equation}
\frac{F_x^+ \cap G_y^+}{(F_x^- \cap G_y^+) + (F_x^+ \cap G_y^-)}
= \frac{\ker^{(x,+\infty)}_{(y,+\infty)}\big|_I}
{\ker^{(\bar{x},+\infty)}_{(y,+\infty)}\big|_I + \ker^{(x,+\infty)}_{(\bar{y},+\infty)}\big|_I} \,.
\label{E:complement2}
\end{equation}
It follows from Proposition \ref{P:xysheaf}(ii) and \eqref{E:complement2} that
$\sheaf{F} [x,y \rangle\big|_I$ is a complement of $(F_x^- \cap G_y^+) + (F_x^+ \cap G_y^-)$
in $F_x^+ \cap G_y^+$. Corollary \ref{C:directsum}(ii), applied to $x=p$, gives
\begin{equation}
\im^{(x,+\infty)}_{(x,+\infty)}\big|_I = \im^{(\bar{x},+\infty)}_{(x,+\infty)}\big|_I
\bigoplus_{p< y\leq+\infty}\sheaf{F}[x,y\rangle\big|_I \,.
\label{E:sum}
\end{equation}
Thus, we may choose $F[x, +\infty]\big|_I$ to be
$\bigoplus_{p< y\leq+\infty}\sheaf{F}[x,y\rangle\big|_I$.

Similarly, we may choose $F[-\infty, y]\big|_I$ to be  
$\oplus_{p< y\leq+\infty}\sheaf{F}\langle x,y] \big|_I$. Therefore, we may rewrite 
\eqref{E:decompose} as
\begin{equation}
\begin{split}
\sheaf{F}(p,q) = &\bigoplus_{-\infty<x\leq p<q\leq y<+\infty} \sheaf{F}[x,y]\big|_I
\bigoplus_{-\infty<x\leq p<y\leq+\infty} \sheaf{F}[x,y\rangle\big|_I \\
&\ \, \bigoplus_{-\infty\leq x<q\leq y<+\infty} \sheaf{F}\langle x,y]\big|_I  \qquad \ \ \,
\bigoplus F[-\infty,+\infty]\big|_I.
\end{split}
\label{E:decompose1}
\end{equation}
\smallskip
\noindent
{\em Step 3.}
To complete the proof, we decompose the last summand in \eqref{E:decompose1}.
Consider the families of splittings
\begin{equation}
(F^-_{x}, F^+_{x}) = 
\big(\ker^{(-\infty,+\infty)}_{(-\infty,\bar{x})}\big|_I, \ker^{(-\infty,+\infty)}_{(-\infty,x)}\big|_I \big) ,
\end{equation}
$-\infty\leq x< +\infty$, and
\begin{equation}
(G^-_{y}, G^+_{y}) = \big(\ker^{(-\infty,+\infty)}_{(\bar{y},+\infty)}\big|_I, 
\ker^{(-\infty,+\infty)}_{(y,+\infty)}\big|_I \big) ,
\end{equation} 
$-\infty< y\leq +\infty$, that are disjoint and strongly cover
$\sheaf{F}(-\infty,+\infty)\big|_I$.  Arguing as in Step 2 and using the fact that
\begin{equation}
\frac{F_x^+ \cap G_y^+}{(F_x^- \cap G_y^+) + (F_x^+ \cap G_y^-)}
= \frac{\ker^{(-\infty,+\infty)}_{)x,y(}\big|_I}
{\ker^{(-\infty,+\infty)}_{)\bar{x},y(}\big|_I + \ker^{(-\infty,+\infty)}_{)x,\bar{y}(} \big|_I} \,,
\end{equation}
we obtain the decomposition
\begin{equation}
\sheaf{F}[-\infty,+\infty]\big|_I =
\bigoplus_{-\infty\leq x<y\leq+\infty}\sheaf{F}\langle x,y\rangle\big|_I \,.
\label{E:decompose2}
\end{equation}
Combining \eqref{E:decompose1} and \eqref{E:decompose2}, we obtain the
desired decomposition.
\end{proof}

\begin{remark} \label{R:decomposition} (Tameness Conditions)
\begin{enumerate}[(i)]
\item As pointed in the proof of Lemma \ref{L:sheafsum}, to obtain the decomposition
in \eqref{E:decompose}, we do not need the full tameness hypothesis on $\sheaf{F}$,
it suffices to assume that $\sheaf{F}$ satisfies the DCC on images. The DCC on kernels
only is needed in Steps 2 and 3 in the proof of the Decomposition Lemma.
\item Similarly, to obtain \eqref{E:decompose1}, in addition to the DCC on images 
for $F$, we only need the DCC on kernels for $\sheaf{F}\langle x,y]$ and
$\sheaf{F}[x,y\rangle$, not the tameness of $F$.
\end{enumerate}
\end{remark}

%----------------------------------
\begin{theorem}[Interval Decomposition of $p$-Sheaves] \label{T:sheafdec}
If $\sheaf{F}$ is a tame $p$-sheaf, then
\begin{equation}
\begin{split}
\sheaf{F} = &\bigoplus_{-\infty<x < y <+\infty} \sheaf{F}[x,y]
\bigoplus_{-\infty <x< y \leq +\infty} \sheaf{F}[x,y\rangle \\
&\bigoplus_{-\infty\leq x< y<+\infty} \sheaf{F}\langle x,y]  
\bigoplus_{-\infty\leq x<y\leq +\infty} \sheaf{F}\langle x,y\rangle \,.
\end{split}
\label{E:sheafdec1}
\end{equation}
Moreover,
\begin{equation}
\begin{split}
\sheaf{F} \cong &\bigoplus_{-\infty<x<y<+\infty} m[x,y]\, k[x,y]
\,  \bigoplus_{-\infty<x<y\leq+\infty} m[x,y\rangle \, k[x,y\rangle\\
&\bigoplus_{-\infty\leq x<y<+\infty} m\langle x,y] \, k\langle x,y]
\bigoplus_{-\infty\leq x<y\leq +\infty} m\langle x,y\rangle\, k\langle x,y\rangle,
\end{split}
\label{E:sheafdec2}
\end{equation}
where $m[x,y]$, $m[x,y\rangle$, $m\langle x,y]$, and $m\langle x,y\rangle$ denote the
multiplicities of the corresponding interval $p$-sheaves.
\end{theorem}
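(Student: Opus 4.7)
The plan is to lift the pointwise direct sum decomposition of Lemma \ref{L:sheafsum} to a decomposition of $\sheaf{F}$ as a $p$-sheaf, and then to apply Proposition \ref{P:intervals} to rewrite each atomic summand in the barcode form \eqref{E:sheafdec2}.

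For \eqref{E:sheafdec1}, I would fix an arbitrary interval $I = (p,q) \in \dec^2$ and evaluate the right-hand side on $I$. The presheaf convention introduced after Proposition \ref{P:xysheaf} forces $\sheaf{F}[x,y](I) = 0$ unless $I \subseteq (x,y)$, i.e.\ unless $-\infty < x \leq p < q \leq y < +\infty$, and the analogous vanishing conditions hold for the other three families with respect to their maximal intervals $(x,+\infty)$, $(-\infty,y)$, and $(-\infty,+\infty)$. Hence the summands surviving at $I$ are precisely those indexed in Lemma \ref{L:sheafsum}, and the pointwise identity follows at once from that lemma.

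To promote this to an identity of $p$-sheaves, I need to check that each of the four families of atomic summands is a subsheaf of $\sheaf{F}$, i.e.\ that restrictions inside $\sheaf{F}$ carry each summand over $J$ into the corresponding summand over $I$ whenever $I \subseteq J$. This is essentially built into Definition \ref{D:xysheaf}: the subsheaf $\sheaf{F}[x,y]$ is defined from a single fixed subspace $\sheaf{F}[x,y] \subseteq \sheaf{F}(x,y)$ by the rule $\sheaf{F}[x,y](I) = \sheaf{F}[x,y]|_I$. For $I \subseteq J \subseteq (x,y)$ and any $s \in \sheaf{F}[x,y]$, one has $(s|_J)|_I = s|_I$, so the restriction map of $\sheaf{F}$ automatically sends $\sheaf{F}[x,y](J)$ into $\sheaf{F}[x,y](I)$; the same argument works verbatim for $\sheaf{F}[x,y\rangle$, $\sheaf{F}\langle x,y]$, and $\sheaf{F}\langle x,y\rangle$. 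Thus the pointwise direct sum commutes with restriction, which is \eqref{E:sheafdec1}.

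Equation \eqref{E:sheafdec2} then follows by substituting into \eqref{E:sheafdec1} the isomorphisms $\sheaf{F}[x,y] \cong m[x,y]\,k[x,y]$ and the analogous identifications of the other three families supplied by Proposition \ref{P:intervals}. The heavy lifting in this theorem has already been done inside Lemma \ref{L:sheafsum}, where both halves of the DCC enter through Lemma \ref{L:scover1} and Corollary \ref{C:directsum}; the step that requires the most care here is verifying that the atomic summands really are subsheaves and that the pointwise decompositions obtained at different intervals are mutually consistent. That consistency is ensured precisely because the complements of Definition \ref{D:xysheaf} are chosen once and for all at the maximal interval associated to each $(x,y)$ and all other values of the subsheaf are obtained from these by restriction.
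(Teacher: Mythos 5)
Your proposal is correct and follows essentially the same route as the paper: reduce \eqref{E:sheafdec1} to the pointwise statement of Lemma \ref{L:sheafsum} using the vanishing conventions for the atomic summands outside their maximal intervals, note that each summand is a subsheaf because its values are all restrictions of a single fixed complement chosen in Definition \ref{D:xysheaf}, and then obtain \eqref{E:sheafdec2} from Proposition \ref{P:intervals}. The only difference is that you spell out the subsheaf/consistency verification that the paper leaves implicit.
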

\begin{proof}
To prove \eqref{E:sheafdec1}, since all summands are subsheaves of $\sheaf{F}$, it suffices
to verify that the direct sum decomposition is satisfied by the space of sections of
$\sheaf{F}$ over each interval $I$. This is an immediate consequence of
Lemma \ref{L:sheafsum} because $\sheaf{F}[x,y] (I) = 0$ if $I \nsubseteq (x,y)$,
$\sheaf{F}\langle x,y] (I) = 0$ if $I \nsubseteq (-\infty, y)$, and
$\sheaf{F}[x,y\rangle (I) = 0$ if $I \nsubseteq (x, +\infty)$.

The isomorphism in \eqref{E:sheafdec2} follows from \eqref{E:sheafdec1}
and Proposition \ref{P:intervals}.
\end{proof}

\begin{remark}\label{R:udecomp}
Suppose that $\sheaf{F}$ is decomposable into interval $p$-sheaves, where
$\sheaf{F}$ is not necessarily tame. Then, $m[x,y]= \dim \sheaf{F}[x,y\rangle$.
Similarly, $m[x,y\rangle=\dim \sheaf{F}[x,y\rangle$, $m\langle x,y]=
\dim \sheaf{F}\langle x,y]$ and $m\langle x,y\rangle =\dim \sheaf{F}\langle x,y \rangle$.
Hence, for any interval decomposable $p$-sheaf, the decomposition is unique.
\end{remark}

%-------------------------------

\subsection{Decomposition of Virtually Tame $c$-Modules}
The main goal of this section is to prove an interval decomposition theorem for 
$c$-modules for which all correspondences $v_s^t$ are finite dimensional;
in particular, for pointwise  finite dimensional $c$-modules. The result is discussed in
the more general setting of virtually tame correspondence modules. We begin
with an analogue of Lemma \ref{L:sheafsum} for $c$-modules.

%-----------------------------
\begin{lemma} \label{L:cmodsum}
Let $\cmod{V}$ be a virtually tame $c$-module and $\sheaf{F}$ its
$p$-sheaf of sections. If $p, q \in \dec$, $p < q$, then the space of sections
$\sheaf{F} (p,q)$ satisfies
\[
\begin{split}
\sheaf{F}(p,q)|_A = &\bigoplus_{-\infty<x\leq p<q\leq y<+\infty} \sheaf{F}[x,y]\big|_A
\bigoplus_{-\infty<x\leq p<y\leq+\infty} \sheaf{F}[x,y\rangle\big|_A  \\
&\ \, \bigoplus_{-\infty\leq x<q\leq y<+\infty} \sheaf{F}\langle x,y]\big|_A \ \ \,
\bigoplus_{-\infty\leq x<y\leq +\infty} \sheaf{F}\langle x,y\rangle\big|_A \,,
\end{split}
\]
for any finite set $A \subseteq (p,q)$.
\end{lemma}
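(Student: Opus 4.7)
The plan is to mirror the three-step argument of Lemma \ref{L:sheafsum}, with sheaf-level strong coverings (Lemma \ref{L:scover1}) replaced by their module analogues (Lemma \ref{L:scover2}). Virtual tameness of $\cmod{V}$ together with the finiteness of $A$ ensures, via Proposition \ref{P:cdescending}, that the required families of image- and kernel-based splittings of $\sheaf{F}(p,q)|_A$, $\sheaf{F}(p,+\infty)|_A$, $\sheaf{F}(-\infty,q)|_A$, and $\sheaf{F}(-\infty,+\infty)|_A$ are disjoint strong covers. Setting $V=\sheaf{F}(p,q)|_A$, Step 1 uses the image-based splittings
\[
(F^-_x,F^+_x) = \bigl(\im^{(\bar x,q)}_{(p,q)}|_A,\ \im^{(x,q)}_{(p,q)}|_A\bigr),\quad -\infty\leq x\leq p,
\]
\[
(G^-_y,G^+_y) = \bigl(\im^{(p,\bar y)}_{(p,q)}|_A,\ \im^{(p,y)}_{(p,q)}|_A\bigr),\quad q\leq y\leq +\infty,
\]
together with Corollary \ref{C:directsum}(i), in which $\sheaf{F}[x,y]|_A$ plays the role of the complement of $(F^-_x\cap G^+_y)+(F^+_x\cap G^-_y)$ in $F^+_x\cap G^+_y$. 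Steps 2 and 3 refine the remaining summands $F[x,+\infty]|_A$, $F[-\infty,y]|_A$, and $F[-\infty,+\infty]|_A$ using the kernel-based splittings of Lemma \ref{L:scover2}(iii),(iv), with Proposition \ref{P:xysheaf}(ii)--(iv) identifying the new complements as $\sheaf{F}[x,y\rangle|_A$, $\sheaf{F}\langle x,y]|_A$, and $\sheaf{F}\langle x,y\rangle|_A$, respectively.

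The main obstacle is the collection of intersection identities that, in the tame $p$-sheaf proof, are immediate from the connective gluing property on intervals. After restriction to the finite set $A$, identities such as
\[
\im^{(x,q)}_{(p,q)}|_A\cap \im^{(p,y)}_{(p,q)}|_A = \im^{(x,y)}_{(p,q)}|_A
\]
are no longer automatic and must be proved by an extension argument. Given lifts $\tilde s_1\in \sheaf{F}(x,q)$ and $\tilde s_2\in \sheaf{F}(p,y)$ with $\tilde s_1|_A=\tilde s_2|_A=\sigma$, I would paste $\tilde s_1$ on $(x,p]$, $\sigma$ on $A$, and $\tilde s_2$ on $(q,y)$ to obtain a partial section on $(x,p]\cup A\cup (q,y)$. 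Compatibility across $q$ is the only nontrivial case: for $s<q<t$ in this set, I choose any $r\in A$ (nonempty, else the statement is vacuous) and use $\tilde s_1(r)=\tilde s_2(r)=\sigma(r)$ together with $v_s^t=v_r^t\circ v_s^r$ to deduce $(\tilde s_1(s),\tilde s_2(t))\in v_s^t$. Proposition \ref{P:extension} then extends the partial section to a section over $(x,y)$ whose restriction to $A$ is $\sigma$, which is the desired element of $\im^{(x,y)}_{(p,q)}|_A$. Parallel extension arguments will establish the companion identities for $\im^{(\bar x,y)}_{(p,q)}|_A$ and $\im^{(x,\bar y)}_{(p,q)}|_A$, and will verify that $\sheaf{F}[x,y]|_A$ meets $\im^{(\bar x,y)}_{(p,q)}|_A+\im^{(x,\bar y)}_{(p,q)}|_A$ trivially inside $F^+_x\cap G^+_y$, so that it is indeed a valid complement.

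The analogous kernel-intersection identities required in Steps 2 and 3, for instance $\ker^{(x,+\infty)}_{(\bar y,+\infty)}|_A \cap \ker^{(\bar x,+\infty)}_{(y,+\infty)}|_A$ and its variants, will be handled by the same extension technique: a section defined on $A$ that is killed by two restriction maps will be extended, using Proposition \ref{P:extension}, to a section witnessing membership in the finer kernel. Once these pointwise intersection identities are in place, the three-step argument of Lemma \ref{L:sheafsum} transfers verbatim and assembles into the direct sum decomposition claimed in the statement.
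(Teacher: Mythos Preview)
Your proposal is correct and follows the same approach as the paper, whose own proof is a single sentence: run the three steps of Lemma~\ref{L:sheafsum} verbatim, restricting all spaces of sections to $A$ and replacing Lemma~\ref{L:scover1} by Lemma~\ref{L:scover2}. You have in fact gone further than the paper by isolating the one place where ``identical'' is an overstatement: the intersection identities such as $\im^{(x,q)}_{(p,q)}|_A \cap \im^{(p,y)}_{(p,q)}|_A = \im^{(x,y)}_{(p,q)}|_A$ do not follow from connective gluing alone once the common overlap is the finite set $A$ rather than the interval $(p,q)$, and your use of Proposition~\ref{P:extension} to bridge that gap is exactly what is needed. The same remark applies to the analogue of Proposition~\ref{P:xysheaf} with $A$ in place of $I$, which the paper also invokes implicitly; your extension technique handles that as well.
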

%--------------
\begin{proof}
The proof is identical to that of Lemma \ref{L:sheafsum}. The only changes
needed are to restrict the relevant vector spaces of sections to $A$ and to
replace the use of Lemma \ref{L:scover1} with the Covering Lemma for Modules
(Lemma \ref{L:scover2}).
\end{proof}

%-----------------------------
\begin{theorem}[Interval Decomposition of $c$-Modules] \label{T:intdec}
If $\cmod{V}$ is a virtually tame $c$-module, then
\[
\begin{split}
\cmod{V} \cong &\bigoplus_{-\infty<x<y<+\infty} m[x,y]\,  \cmod{I}[x,y]
\, \bigoplus_{-\infty<x<y\leq+\infty} m[x,y\rangle \, \cmod{I}[x,y\rangle \\
&\bigoplus_{-\infty\leq x<y<+\infty} m\langle x,y] \,\cmod{I}\langle x,y]
\bigoplus_{-\infty\leq x<y\leq +\infty} m\langle x,y\rangle\, \cmod{I}\langle x,y\rangle ,
\end{split}
\]
where $m[x,y]$, $m\langle x,y]$, $m[x,y\rangle$, and $m\langle x,y\rangle$ denote the
multiplicities of the corresponding interval $c$-modules.
\end{theorem}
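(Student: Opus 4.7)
The plan is to lift the finite-set decompositions from Lemma \ref{L:cmodsum} to a full $c$-module decomposition of $\cmod{V}$, paralleling the strategy of Theorem \ref{T:sheafdec}. First, I fix a basis for each complement $\sheaf{F}[x,y]$, $\sheaf{F}[x,y\rangle$, $\sheaf{F}\langle x,y]$, $\sheaf{F}\langle x,y\rangle$ of Definition \ref{D:xysheaf}. By Proposition \ref{P:intervals} each basis section $r$ has $\supp[r]$ equal to the underlying interval, and the isomorphism $\sheaf{F}[x,y]\cong m[x,y]\,k[x,y]$ (and the analogues for the other three types), evaluated at a singleton $\{t\}$ with $t$ in the corresponding support, is dimension-preserving; hence the values $\{r(t)\}$ of the chosen basis are linearly independent in $V_t$.

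Next, I apply Lemma \ref{L:cmodsum} twice. With $A=\{t\}$ and $(p,q)=(t^-,t^+)$, the identifications $\sheaf{F}(\{t\})|_{\{t\}}=V_t$ and $\{(x,y)\supseteq(t^-,t^+)\}=\{(x,y):t\in(x,y)\}$ yield
\[
V_t \;=\; \bigoplus_{r:\,t\in\supp[r]}\, k\cdot r(t),
\]
where $r$ runs over all chosen basis sections. For $s<t$, applying the lemma with $A=\{s,t\}$ and $(p,q)=(s^-,t^+)$, and noting that $\sheaf{F}([s,t])|_{\{s,t\}}=v_s^t$ by Proposition \ref{P:extension} (every pair in $v_s^t$ is a section over $\{s,t\}$, which extends to an interval necessarily containing $[s,t]$), gives
\[
v_s^t \;=\; \bigoplus_{r}\, k\cdot\bigl(r(s),r(t)\bigr),
\]
where $r$ now runs over basis sections whose natural domain contains $\{s,t\}$; the spanning pair $(r(s),r(t))$ may have one or both coordinates zero depending on where $s,t$ sit relative to $\supp[r]$ and the type of $r$.

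Finally, for each basis section $r$ of type $[x,y]$ (respectively $[x,y\rangle$, $\langle x,y]$, $\langle x,y\rangle$) let $\cmod{V}_r$ denote an abstract copy of $\cmod{I}[x,y]$ (respectively $\cmod{I}[x,y\rangle$, $\cmod{I}\langle x,y]$, $\cmod{I}\langle x,y\rangle$). Define $\cat{CVec}$-isomorphisms $\phi_t\colon V_t\to\bigoplus_r (\cmod{V}_r)_t$ on the basis $\{r(t)\}$ by sending $r(t)$ to the unit generator of the $r$-th summand when $t$ lies in the natural domain, and to zero otherwise. A case-by-case check against Definition \ref{D:interval} shows that the pair $(r(s),r(t))$ transports under $(\phi_s,\phi_t)$ precisely to the correspondence of the $r$-th interval $c$-module at $(s,t)$, so the $\phi_t$ assemble into a $c$-module isomorphism $\cmod{V}\cong\bigoplus_r\cmod{V}_r$; regrouping summands by interval type gives the stated decomposition with multiplicities $m[x,y]=\dim\sheaf{F}[x,y]$ and similarly for the other types. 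The main technical obstacle is this last matching step: one must verify by support analysis that, in each of the four interval types and across all boundary configurations (including those where $r(s)=0$ or $r(t)=0$), the 1-dimensional summand $k\cdot(r(s),r(t))$ produced by the pairwise decomposition agrees exactly with the correspondence prescribed in Definition \ref{D:interval}, so that the same global choice of basis sections coherently realises both the pointwise decomposition from $A=\{t\}$ and the pairwise decomposition from $A=\{s,t\}$.
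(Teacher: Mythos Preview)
Your proposal is correct and follows essentially the same route as the paper: apply Lemma~\ref{L:cmodsum} with $A=\{t\}$, $(p,q)=(t^-,t^+)$ to decompose each $V_t$, and with $A=\{s,t\}$, $(p,q)=(s^-,t^+)$ together with Proposition~\ref{P:extension} (which identifies $v_s^t$ with $\sheaf{F}([s,t])|_{\{s,t\}}$) to decompose each correspondence, invoking Proposition~\ref{P:intervals} to match the summands to interval $c$-modules. Your write-up is more explicit than the paper's about how the globally chosen basis sections of the complements in Definition~\ref{D:xysheaf} assemble the pointwise and pairwise decompositions into a single $c$-module isomorphism, but this is exactly the coherence the paper is relying on implicitly.
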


\begin{proof}
We first show that each vector space $V_t$, $t \in \real$, decomposes as claimed.
Letting $p = t^-$, $q = t^+$ and $A = \{t\}$, Lemma \ref{L:cmodsum} and
Proposition \ref{P:intervals} imply that
\begin{equation}
\begin{split}
V_t \cong &\bigoplus_{-\infty<x<y<+\infty} m[x,y]\,  \cmod{I}[x,y]_t
\, \bigoplus_{-\infty<x<y\leq+\infty} m[x,y\rangle \, \cmod{I}[x,y\rangle_t \\
&\bigoplus_{-\infty\leq x<y<+\infty} m\langle x,y] \,\cmod{I}\langle x,y]_t
\bigoplus_{-\infty\leq x<y\leq +\infty} m\langle x,y\rangle\, \cmod{I}\langle x,y\rangle_t .
\end{split}
\end{equation}
To verify the decomposition for correspondences, let $s,t \in \real$,
with $s < t$. Set $p = s^-$ and $q = t^+$, and $A = \{s,t\}$. Note that
$(p,q)$ is the closed interval $[s,t]$. Proposition \ref{P:extension} implies that
$v_s^t = F(p,q)|_A$. The desired decomposition for correspondences now follows
from Lemma \ref{L:cmodsum} and Proposition \ref{P:intervals}.
\end{proof}

\begin{remark}\label{R:unique-decomp-cmod}
Suppose that a $c$-module $\cmod{V}$ is virtually tame and let $\sheaf{F}$ be its
 $p$-sheaf of sections. As in Remark \ref{R:udecomp}, the multiplicity 
 $m[x,y]$ of the interval component $\cmod{I}[x,y]$ equals $\dim\sheaf{F}[x,y]$. 
 Similarly, $m[x,y\rangle=\dim \sheaf{F}[x,y\rangle$, $m\langle x,y]=
 \dim \sheaf{F}\langle x,y]$ and $m\langle x,y\rangle =\dim \sheaf{F}\langle x,y \rangle$.
Hence, the decomposition of a virtually tame $c$-module is unique. Moreover, if
$\sheaf{F}$ is interval decomposable (not necessarily tame), then the multiplicity
of each interval component obtained from the decomposition of $\cmod{V}$ is the
same as the multiplicity of the corresponding interval summand of $\sheaf{F}$.
\end{remark}

%--------------------------

\section{The Isometry Theorem} \label{S:isometry}

In this section, we prove one of the main results of this paper, the stability
of persistence diagrams associated with interval decomposable $p$-sheaves.
We define the interleaving distance $d_I (\sheaf{F}, \sheaf{G})$ between any 
two $p$-sheaves $\sheaf{F}$ and $\sheaf{G}$ and, assuming that the sheaves
are decomposable, we also define the bottleneck distance 
$d_b(dgm (\sheaf{F}), dgm (\sheaf{G}))$ between their persistence diagrams. 
The Isometry Theorem states that
\begin{equation}
d_b(dgm (\sheaf{F}), dgm (\sheaf{G})) = d_I (\sheaf{F}, \sheaf{G}),
\end{equation}
for any decomposable $p$-sheaves $\sheaf{F}$ and $\sheaf{G}$. The
inequality
\begin{equation}
d_b(dgm (\sheaf{F}), dgm (\sheaf{G})) \geq d_I (\sheaf{F}, \sheaf{G})
\end{equation}
follows directly from the definition of $d_I$ and $d_b$. However, the proof of
the algebraic stability statement
\begin{equation}
d_b(dgm (\sheaf{F}), dgm (\sheaf{G})) \leq  d_I (\sheaf{F}, \sheaf{G})
\end{equation}
involves rather delicate arguments.

\subsection{Interleavings}
This section introduces the notions of interleaving and interleaving distance for
$p$-sheaves, extending the corresponding concepts for persistent modules
\cite{Chazal2009,Chazal2016} to our setting, as needed in the formulation of the 
Isometry Theorem.

Given a decorated number $p = t^\ast \in \dec$, $t \in \real$, and
$\epsilon \in \real$, let $p + \epsilon := (t+\epsilon)^\ast$, with the additional
convention that $-\infty + \epsilon = -\infty$ and $+\infty + \epsilon = +\infty$. 

\begin{definition}
(Dilations and Erosions)
\begin{enumerate}[(i)]
\item If $I = (p, q) \in \dec^2$ is an interval, the $\epsilon$-dilation of $I$,
$\epsilon \geq 0$, is defined as $I^\epsilon :=(p-\epsilon,q+\epsilon)$. 
\item The $\epsilon$-erosion of $I = (p, q)$, $\epsilon > 0$, is defined as
$I^{-\epsilon} :=(p+\epsilon,q-\epsilon)$, if $p+\epsilon < q-\epsilon$.
Otherwise, $I^{-\epsilon} = \emptyset$.
\item For $\epsilon > 0$ and $Z \subseteq \real$, if $Z =
\sqcup_{\lambda \in \Lambda} I_\lambda$ is its representation as the disjoint
union of its connected components, define
$Z^{-\epsilon} = \sqcup_{\lambda \in \Lambda} I_\lambda^{-\epsilon}$.
\end{enumerate}
\end{definition}

\begin{definition}
Let $\sheaf{F}$ and $\sheaf{G}$ be $p$-sheaves and $\epsilon \geq 0$.
\begin{enumerate}[(i)]
\item  An $\epsilon$-homomorphism $\Phi \colon \sheaf{F}\rightarrow\sheaf{G}$ is a
collection
\[
\{\phi^I_{I^{-\epsilon}} \colon \sheaf{F}(I) \to \sheaf{G}(I^{-\epsilon}) \colon I \in \cat{Int}\}
\]
of linear maps such that $\sheaf{G}^{J^{-\epsilon}}_{I^{-\epsilon}} \circ
\phi^J_{J^{-\epsilon}} = \phi^I_{I^{-\epsilon}} \circ \sheaf{F}^J_I$, for any  $I\subseteq J$,
with the convention that $\sheaf{G}(I^{-\epsilon}) = 0$ if $I^{-\epsilon} = \emptyset$. 
We refer to a $0$-homomorphism simply as a homomorphism.
\item The $\epsilon$-erosion of $\sheaf{F}$ is the $\epsilon$-homomorphism
$e^\epsilon_\sheaf{F} \colon \sheaf{F} \to \sheaf{F}$ given by
$\{\sheaf{F}^I_{I^{-\epsilon}} \colon I \in \cat{Int}\}$. (Note that $e^0_\sheaf{F}$ is
the identity.)
\end{enumerate}
\end{definition}

To motivate the definition of interleaving and explain how it generalizes the notion
of interleaving of persistence modules (cf.\,\cite{Chazal2009,Chazal2016}), let 
$\cmod{V}$ be a $p$-module and $\sheaf{F}$ be its sheaf of sections. Note that 
vectors in $V_s$, $s \in \real$, are in
one-to-one correspondence with sections of $\sheaf{F}$ over $[s, +\infty)$, where
$v_s \in V_s$  corresponds to the section $(v_t)_{t \geq s}$ given by $v_t = v_s^t (v_s)$.
Under this correspondence, the transition map $v_s^t$ becomes a $(t-s)$-erosion map.
The difference for more general $p$-sheaves is that we need to consider sections
over all intervals, not just those of the form $[s, +\infty)$.

\begin{definition}[Interleaving] \label{D:inter}
Let $\sheaf{F}$ and $\sheaf{G}$ be $p$-sheaves.
\begin{enumerate}[(i)]
\item An $\epsilon$-{\em interleaving} between $\sheaf{F}$ and $\sheaf{G}$,
$\epsilon \geq 0$, is a pair of $\epsilon$-homomorphisms $\Phi^\epsilon\colon \sheaf{F}
\to \sheaf{G}$ and $\Psi^\epsilon \colon \sheaf{G} \to \sheaf{F}$ such that
$\Psi^\epsilon\circ\Phi^\epsilon=e^{2\epsilon}_\sheaf{F}$ and 
$\Phi^\epsilon\circ\Psi^\epsilon=e^{2\epsilon}_\sheaf{G}$. (Note that a $0$-interleaving
is an isomorphism.) 
\item Given $\epsilon \geq 0$, $\sheaf{F}$ and $\sheaf{G}$ are $\epsilon^+$-{\em interleaved}
if they are $(\epsilon+\delta)$-interleaved for every $\delta>0$.
\item The {\em interleaving distance} between $\sheaf{F}$ and $\sheaf{G}$ is defined as
\[
\begin{split}
d_I(\sheaf{F},\sheaf{G}) &= \inf \{\epsilon>0 \colon \sheaf{F} \text{ and }\sheaf{G} \text{ are } 
\epsilon \text{-interleaved}\} \\
&= \min \{\epsilon \geq 0 \colon \sheaf{F} \text{ and }\sheaf{G} \text{ are } 
\epsilon^+ \text{-interleaved}\},
\end{split}
\]
with the convention that $d_I(\sheaf{F},\sheaf{G}) = \infty$ if no interleaving exists.
\end{enumerate}
\end{definition}

\begin{remark} \label{R:interleave}
The two forms of Definition \ref{D:inter}(iii) are equivalent because
if $\sheaf{F}$ and $\sheaf{G}$ are $\epsilon$-interleaved, then they are
$\epsilon^+$-interleaved. Note, however, that the converse is not necessarily true.
Moreover, $d_I$ is an extended pseudo-metric on the space
of isomorphism classes of $p$-sheaves.
\end{remark}

\begin{remark} \label{R:blind}
Given $s,t \in \ereal$, $s \leq t$, there are up to four intervals $(x,y) \in \dec^2$, defined
by $s$ and $t$, given by the different combinations of decorations for $s$ and $t$. More
precisely, let
\begin{equation} \label{E:blind}
A_{s,t} = \{(x,y) \in \{s^-, s^+\} \times \{t^-, t^+\} \colon x < y\},
\end{equation}
with the convention that $-\infty^\ast = -\infty$ and $+\infty^\ast = +\infty$. Then, the intervals
associated with $s$ and $t$ are $(x,y) \in A_{s,t}$. For a fixed interval-sheaf
type, say $k [\,,]$, any two $p$-sheaves in the collection
$\{k[x,y] \colon (x,y) \in A_{s,t}\}$ are $0^+$-interleaved.  The same applies to the
other three types: $k\langle x,y]$, $k[x,y\rangle$, and $k \langle x,y\rangle$.  Hence,
the interleaving distance is blind to changes in the decorations of the endpoints
of an interval. 
\end{remark}

%----------------------------------

\subsection{Persistence Diagrams}
To motivate the definition of persistence diagrams,  suppose that a $p$-sheaf $\sheaf{F}$
is interval decomposable; that is,
\begin{equation}
\begin{split}
\sheaf{F} \cong &\bigoplus_{-\infty<x<y<+\infty} m[x,y]\, k[x,y]
\, \bigoplus_{-\infty<x<y\leq+\infty} m[x,y\rangle \, k[x,y\rangle \\
&\bigoplus_{-\infty\leq x<y<+\infty} m\langle x,y] \, k\langle x,y]
\bigoplus_{-\infty\leq x<y\leq +\infty} m\langle x,y\rangle\, k\langle x,y\rangle,
\end{split}
\label{E:sdecomposition}
\end{equation}
where $m[x,y]$, $m\langle x,y]$, $m[x,y\rangle$, and $m\langle x,y\rangle$ denote
the multiplicities of the various interval summands. Then, $m[x,y]$ coincides with
the dimension of the space of sections $\sheaf{F}[x,y]$, defined in 
Section \ref{S:decomposition}, subsequently reinterpreted as a $p$-sheaf. 
Similarly, for $m\langle x,y]$, $m [x,y\rangle$, and $m\langle x,y \rangle$.
Hence, the decomposition of $\sheaf{F}$, if it exists, is uniquely determined by $m[x,y]$,
$m\langle x,y]$, $m[x,y\rangle$, and $m\langle x,y\rangle$. Although the construction 
of the spaces of sections $\sheaf{F}[x,y]$, $\sheaf{F}\langle x,y]$, $\sheaf{F}[x,y\rangle$, 
and $\sheaf{F}\langle x,y \rangle$ involve choices of complementary subspaces to certain
spaces of sections, the dimensions of these complements are independent
of the choices made. Therefore, the following definition of persistence diagrams is 
well posed.

\begin{definition}
Given any $p$-sheaf $\sheaf{F}$, define its {\em decorated persistence diagram} 
$\text{Dgm}(\sheaf{F})$  as the quadruple of functions $([m], \langle m], 
[m \rangle,\langle m\rangle)$ with domains
\begin{enumerate}[(i)]
\item $\dom ([m])=\{(x,y)\in\mathbb{E}^2:-\infty<x<y<+\infty\}$,
\item $\dom([m\rangle)=\{(x,y)\in\mathbb{E}^2:-\infty<x<y\leq +\infty\}$,
\item $\dom(\langle m])=\{(x,y)\in\mathbb{E}^2:-\infty\leq x<y<+\infty\}$,
\item $\dom(\langle m\rangle)=\{(x,y)\in\mathbb{E}^2:-\infty\leq x<y\leq +\infty\}$,
\end{enumerate}
given by
\begin{align*}
[m](x,y) &=\dim\sheaf{F}[x,y], &[m \rangle (x,y) &=\dim\sheaf{F} [x,y\rangle,  \\ 
\langle m](x,y) &= \dim \sheaf{F} \langle x, y ],
&\langle m\rangle(x,y)&=\dim\sheaf{F}\langle x,y\rangle.
\end{align*}
\end{definition}

As pointed out in Remark \ref{R:blind}, for a given interval $p$-sheaf type, the
interleaving distance is not sensitive to changes in the decoration of the endpoints of an
interval. Thus, to obtain an isometry theorem, we should not distinguish persistence
diagrams associated with those intervals. To this end, as in \cite{Chazal2016}, 
we introduce ``undecorated'' versions of persistence diagrams.

\begin{definition} \label{D:undec}
Given any $p$-sheaf $\sheaf{F}$, we define its {\em (undecorated) persistence diagram}
$dgm (\sheaf{F})$ as the quadruple of functions $([\overline{m}],[\overline{m}\rangle,
\langle\overline{m}],\langle\overline{m}\rangle)$ with domains
\begin{enumerate}[(i)]
\item $\dom([\overline{m}])=\{(s,t)\in \real \times \real \colon s< t\}$,
\item $\dom([\overline{m}\rangle)=\{(s,t)\in \real \times \ereal \colon s< t\}$,
\item $\dom(\langle\overline{m}])=\{(s,t)\in \ereal \times \real \colon s< t\}$,
\item $\dom(\langle\overline{m}\rangle)=\{(s,t)\in \ereal \times \ereal \colon s \leq t\}$,
\end{enumerate}
given by $[\overline{m}](s,t)=\sum_{(x,y)\in A_{s,t}}[m](x,y)$, with $A_{s,t}$ as in
\eqref{E:blind}. The function values $[\overline{m}\rangle(s,t)$, $\langle\overline{m}](s,t)$,
and $\langle\overline{m}\rangle(s,t)$ are defined similarly.
\end{definition}

\begin{remark}
We often refer to the domain of each of the four functions in a $p$-diagram as a multiset
and to the value of the function at $(s,t)$ as the multiplicity of $(s,t)$. We also frequently
treat the multiplicity functions as defined on $\ereal \times \ereal$ by extending them to be zero
outside their original domains. 
\end{remark}

Notice that for singletons $(s^-, s^+) \in \dec^2$, $s \in \real$,  forgetting decorations returns 
points on the diagonal
$\Delta=\{(s,s) \colon s \in \real\}$. Nonetheless, in Definition \ref{D:undec}(i)-(iii), $\Delta$
is not included in the domain of the multiplicity function because $k[s^-,s^+]$, $k\langle s^-,s^+]$
and $k[s^-,s^+\rangle$ are all $0^+$-interleaved with the trivial $p$-sheaf. This is not the case,
however, for $k\langle s^-,s^+\rangle$. Indeed, the $\delta$-erosion map for global sections
of this $p$-sheaf is the identity, for any $\delta>0$, implying that $k\langle s^-,s^+\rangle$
is not $\delta$-interleaved with the trivial $p$-sheaf, for any $\delta>0$.

Henceforth, we refer to undecorated persistence diagrams simply as persistence diagrams,
or $p$-diagrams. In order to define the bottleneck distance between $p$-diagrams, we
first discuss a variant of the notion of matching of multisets that is suited to our goals.
Abusing terminology, we often refer to a multiset $(A,f)$, with multiplicity function $f$,
simply as $A$.

A partial matching between the multisets $(A, f)$ and $(B, g)$ is a multiset injection
from a subset of $A$ to $B$. More formally, a {\em resolution} of $A$ is a mapping
$\pi_A \colon \hat{A} \to A$ such that  $|\pi_A^{-1} (a)| = f(a)$, $\forall a \in A$.
A {\em partial matching} between $A$ and $B$ is an injection
$\sigma \colon \bar{A} \to \hat{B}$, where $\bar{A} \subseteq \hat{A}$. The
partial matching $\sigma$ is {\em surjective} if $\im(\sigma) = \hat{B}$. We
abuse terminology and refer to elements of $\hat{A}$ as elements of the multiset $A$,
and to $\sigma (a)$ as an element of the multiset $B$.

As usual, the bottleneck distance will be based on the $\ell_\infty$-distance on the
extended plane, denoted $d_\infty \colon \ereal \times \ereal \to \ereal$ and given by 
\begin{equation}
d_\infty \left((s_1,t_1),(s_2,t_2)\right) = \max\{|s_1-s_2|,|t_1-t_2|\},
\end{equation}
with the convention that $|\infty-\infty|=0$. Note that 
$d_\infty\big((s,t),\Delta\big)=|s-t|/2$.

\begin{definition}
Let $A$ and $B$ be multisets with domain $\ereal \times \ereal$, $\epsilon > 0$, and $L>0$. A
partial matching $\sigma$ between $A$ and $B$ is an $(\epsilon,L)$-{\em matching} 
provided that:
\begin{enumerate}[(i)]
\item if $\sigma(a)=b$, then $d_\infty(a,b)\leq \epsilon$;
\item if $a \in \supp (A) \setminus \dom (\sigma)$, then $d_\infty(a,\Delta)\leq L\epsilon$;
\item if $b \in \supp(B) \setminus \im(\sigma)$, then $d_\infty(b,\Delta)\leq L\epsilon$.
\end{enumerate}
A partial matching $\sigma$ is a {\em full $\epsilon$-matching} if it is a multiset bijection
between $A$ and $B$ that satisfies condition (i) above.
\end{definition}

\begin{definition}
The persistence diagrams $dgm_i=(f_1^i,f_2^i,f_3^i,f_4^i)$, $i \in \{ 1,2\}$, are
{\em $\epsilon$-matched} if the following holds:
\begin{enumerate}[(i)]
\item $f_1^1$ and $f_1^2$ are $(\epsilon,2)$-matched;
\item $f_2^1$ and $f_2^2$ are $(\epsilon,1)$-matched;
\item $f_3^1$ and $f_3^2$ are $(\epsilon,1)$-matched;
\item $f_4^1$ and $f_4^2$ are fully $\epsilon$-matched.
\end{enumerate}
\end{definition}

\begin{definition}
The {\em bottleneck distance} between persistence diagrams is the
extended pseudo-metric defined as
\[
d_b(dgm_1, dgm_2) = \inf \{\epsilon>0 \colon dgm_1 \text{ and } dgm_2
\text{ are $\epsilon$-matched} \}.
\]
\end{definition}

The next proposition shows that a converse to the stability of persistence 
diagrams holds.

\begin{proposition} \label{P:cstability}
Let $\sheaf{F}$ and $\sheaf{G}$ be decomposable $p$-sheaves. If $dgm(\sheaf{F})$
and $dgm(\sheaf{G})$ are $\epsilon$-matched, $\epsilon > 0$, then $\sheaf{F}$ 
and $\sheaf{G}$ are $\epsilon^+$-interleaved. Thus,
\[
d_I(\sheaf{F},\sheaf{G})\leq d_b(dgm(\sheaf{F}),dgm(\sheaf{G})).
\]
\end{proposition}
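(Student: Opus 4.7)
The plan is to reduce the proposition to summand-by-summand constructions using the interval decompositions of $\sheaf{F}$ and $\sheaf{G}$. Since direct sums of $\epsilon$-interleavings are again $\epsilon$-interleavings, it suffices, for each $\delta>0$, to: (a) exhibit an $(\epsilon+\delta)$-interleaving between the two interval $p$-sheaves associated with each matched pair of points in the diagrams; and (b) show that each unmatched interval summand is $(\epsilon+\delta)$-interleaved with the zero $p$-sheaf. Because the four multisets $[\overline{m}]$, $[\overline{m}\rangle$, $\langle\overline{m}]$, $\langle\overline{m}\rangle$ are matched separately, the four resulting interleavings can be assembled into a single interleaving, and the decomposable structure from Theorem \ref{T:sheafdec} then gives the desired $\epsilon^+$-interleaving of $\sheaf{F}$ and $\sheaf{G}$.

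For step (a), fix a matched pair $(s_1,t_1),(s_2,t_2)$ of the same interval-sheaf type, with $d_\infty((s_1,t_1),(s_2,t_2))\leq\epsilon$. By Remark \ref{R:blind}, the interleaving distance is insensitive to decorations, so we may choose any representatives $(x_i,y_i)\in\dec^2$ lifting $(s_i,t_i)$. For each of the four interval-sheaf types I will define $\epsilon$-homomorphisms between the two associated interval $p$-sheaves by the only natural choice: on each interval $I$ where both $\sheaf{F}(I^{-\epsilon-\delta})$ and the target space of sections are nonzero, send the unit section to the unit section; in all other cases send it to zero. Naturality and the identities $\Psi\circ\Phi=e^{2(\epsilon+\delta)}$, $\Phi\circ\Psi=e^{2(\epsilon+\delta)}$ then follow from $|x_1-x_2|\leq\epsilon$ and $|y_1-y_2|\leq\epsilon$ by a case check on whether the relevant eroded intervals still meet the supporting interval of each interval $p$-sheaf.

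For step (b), an unmatched point $(s,t)$ in the multiset $[\overline{m}]$ satisfies $d_\infty((s,t),\Delta)\leq 2\epsilon$, hence $t-s\leq 4\epsilon$, which makes $I^{-(\epsilon+\delta)}\subseteq(x,y)$ fail for every $I\subseteq(x,y)$, so $e^{2(\epsilon+\delta)}_{k[x,y]}=0$ and $k[x,y]$ is $(\epsilon+\delta)$-interleaved with $0$. For the types $[,\rangle$ and $\langle,]$ the unmatched distance bound is $L\epsilon=\epsilon$, i.e. $t-s\leq 2\epsilon$, and a parallel computation shows $e^{2(\epsilon+\delta)}=0$ on the corresponding interval $p$-sheaves. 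The fact that $\langle\overline{m}\rangle$ is required to be \emph{fully} matched is essential here, since $k\langle s^-,s^+\rangle$ has an identity $\delta$-erosion on global sections and therefore cannot be $\delta$-interleaved with the zero sheaf for any $\delta>0$. The main obstacle in carrying out this plan is the bookkeeping in step (a): one must verify the commuting squares defining an $(\epsilon+\delta)$-homomorphism for every inclusion $I\subseteq J$ and handle the boundary cases where $x=-\infty$, $y=+\infty$, or two endpoints among $\{x_1,x_2,y_1,y_2\}$ coincide, which is why the passage to $\epsilon^+$ (rather than $\epsilon$) is needed.
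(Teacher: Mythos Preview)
Your approach is essentially the same as the paper's: build the interleaving summand-by-summand from the given diagram matching, using that matched interval $p$-sheaves of the same type with $d_\infty\leq\epsilon$ are $(\epsilon+\delta)$-interleaved, and that unmatched summands are $(\epsilon+\delta)$-interleaved with zero; then assemble via the direct-sum decomposition. The paper's proof is in fact terser than yours and leaves the unmatched case implicit, so your write-up is more complete.

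One slip to fix in step (b): the claim ``$I^{-(\epsilon+\delta)}\subseteq(x,y)$ fails for every $I\subseteq(x,y)$'' is wrong as written, since an erosion of $I$ is always a subset of $I$. What you need (and what makes $e^{2(\epsilon+\delta)}_{k[x,y]}=0$) is that $I^{-2(\epsilon+\delta)}=\emptyset$ for every $I\subseteq(x,y)$, which follows because the length of $(x,y)$ is at most $4\epsilon<4(\epsilon+\delta)$; your conclusion is correct once this is rephrased.
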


\begin{proof}
Let $\epsilon \geq 0$. If $dgm(\sheaf{F})=(f_1, f_2, f_3, f_4)$ and $dgm(\sheaf{G}) =
(g_1, g_2, g_3, g_4)$ are $\epsilon$-matched, there exist:
\begin{enumerate}[(a)]
\item an $(\epsilon,2)$-matching between $f_1$ and $g_1$;
\item an $(\epsilon,1)$-matching between $f_2$ and $g_2$;
\item an $(\epsilon,1)$-matching between $f_3$ and $g_3$;
\item a full $\epsilon$-matching between $f_4$ and $g_4$.
\end{enumerate}

We show that the subsheaves $[\sheaf{F}]$ and $[\sheaf{G}]$ are
$\epsilon^+$-interleaved. The argument for the components of type $\langle \ ]$,
$[\  \rangle$, and $\langle\ \rangle$ of $\sheaf{F}$ and $\sheaf{G}$ are
similar. As the diagrams disregard decorations of the endpoints of an interval,
suppose that for $s_i,t_i \in \real$, $s_i < t_i$, $i=1,2$, the interval modules
$k[s_1^\ast, t_1^\ast]$ and $k[s_2^\ast, t_2^\ast]$ are $(\epsilon, 2)$-matched
under the given $\epsilon$-matching of diagrams. Then, $d^\infty((s_1,t_1),(s_2,t_2))
\leq\epsilon$,  which implies that $k[s_1^\ast,t_1^\ast]$ and $k[s_2^\ast,t_2^\ast]$ are 
$(\epsilon+\delta)$-interleaved for any $\delta>0$. Assembling all of these pairwise
interleavings, yields the desired $(\epsilon + \delta)$-interleaving.
\end{proof}

%--------------------------

\subsection{Algebraic Stability}

This section is devoted to the proof of the algebraic stability of persistence diagrams;
that is, the statement that
\[
d_b(dgm(\sheaf{F}), dgm(\sheaf{G})) \leq 
d_I (\sheaf{F}, \sheaf{G}).
\]
The general strategy for the proof of stability, particularly in Lemma \ref{L:fmatching} and
Theorem \ref{T:stab} below, is similar to that used by Bjerkevik \cite{Bjerkevik2016stability} 
to study algebraic stability in the context of multi-parameter persistence modules.
The arguments needed for $p$-sheaves, however, are quite distinct. 

\begin{definition}
Let $\sheaf{F}$ be a $p$-sheaf. A set $B_{\sheaf{F}}$ of non-trivial sections of
$\sheaf{F}$ is a {\em basis} of $\sheaf{F}$ if for any interval $I \subseteq \real$, 
the set
\[
B_\sheaf{F} (I) := \{f|_I \colon f \in B_\sheaf{F}, I \subseteq \dom (f) \text{ and }
f|_I \not\equiv 0\}
\]
is a basis of $\sheaf{F} (I)$.
\end{definition}
\begin{lemma}\label{L:csupp}
If $B_{\sheaf{F}}$ is a basis of a $p$-sheaf $\sheaf{F}$, then each section in
$B_{\sheaf{F}}$ has connected support. Moreover, if $f \in B_{\sheaf{F}}$, then
the subsheaf $\sheaf{F}\langle f \rangle$ spanned by $f$ (see Definition \ref{D:span})
is isomorphic to an interval $p$-sheaf.
\end{lemma}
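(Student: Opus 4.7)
The plan is to derive the connectedness of $\supp[f]$ by contradiction, so that Proposition \ref{P:isheaf} then immediately identifies $\sheaf{F}\langle f\rangle$ as an interval $p$-sheaf.

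For the connectedness claim, I would suppose $f \in B_{\sheaf{F}}$ has disconnected support. Since $\dom(f)$ is an interval, there exist $s_1 < r < s_3$ in $\dom(f)$ with $s_1, s_3 \in \supp[f]$ and $r \notin \supp[f]$. The cover $\dom(f) = I_1 \cup I_2$, with $I_1 = \dom(f) \cap (-\infty, r]$ and $I_2 = \dom(f) \cap [r, +\infty)$, is connected since $I_1 \cap I_2 = \{r\} \neq \emptyset$. Because $f|_r = 0$, the section $f|_{I_1}$ and the zero section on $I_2$ agree on the overlap, so connective gluing yields $h_1 \in \sheaf{F}(\dom(f))$ with $h_1|_{I_1} = f|_{I_1}$ and $h_1|_{I_2} = 0$; symmetrically one obtains $h_2$ with the roles of $I_1$ and $I_2$ swapped. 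Both $h_1$ and $h_2$ are non-zero (evaluate at $s_1$ and $s_3$), and locality applied to $h_1 + h_2 - f$ over $\{I_1, I_2\}$ forces $f = h_1 + h_2$.

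Expanding $h_1$ and $h_2$ in the basis $B_{\sheaf{F}}(\dom(f))$, which contains $f$ as a basis element, let $\alpha$ and $\beta$ denote their respective $f$-coefficients. Uniqueness of the basis expansion of $f = h_1 + h_2$ forces $\alpha + \beta = 1$. On the other hand, restricting $h_1$'s expansion to $I_2$ gives $\alpha\, f|_{I_2} + \sum_{g \neq f} c_g\, g|_{I_2} = 0$, and since $f|_{I_2} \neq 0$ is itself a basis element of $B_{\sheaf{F}}(I_2)$, the linear-independence of $B_{\sheaf{F}}(I_2)$ forces $\alpha = 0$; symmetrically $\beta = 0$, contradicting $\alpha + \beta = 1$. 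Thus $\supp[f]$ is an interval $(p, q) \in \dec^2$, and writing $\dom(f) = (x, y) \in \dec^2$ with $x \leq p < q \leq y$, Proposition \ref{P:isheaf} identifies $\sheaf{F}\langle f\rangle$ with one of $k[p,q]$, $k[p,q\rangle$, $k\langle p,q]$, or $k\langle p,q\rangle$, according to whether $x = p$ and whether $q = y$.

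The main obstacle I anticipate is the linear-independence step: one must confirm that the $f$-coefficient of $h_1$ at the level of $\dom(f)$ matches the coefficient attached to $f|_{I_2}$ after restriction. This requires reading the basis property in its natural labelled form, where each $f \in B_{\sheaf{F}}$ contributes a distinct basis element of $B_{\sheaf{F}}(I)$ whenever $f|_I \neq 0$, so that coefficient extraction on $I_2$ unambiguously recovers $\alpha$. Once this bookkeeping is in hand, everything else reduces to routine applications of locality, connective gluing, and the already-proven Proposition \ref{P:isheaf}.
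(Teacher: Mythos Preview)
Your proposal is correct and follows essentially the same approach as the paper: split $f$ into two pieces via connective gluing at a point where it vanishes, then derive a contradiction from the basis coefficients of $f$ in each piece, and finish with Proposition~\ref{P:isheaf}. The only cosmetic difference is that you restrict to the side where the piece vanishes to force the $f$-coefficient to be $0$, whereas the paper restricts to the side where the piece equals $f$ to force the coefficient to be $1$; both routes yield the same contradiction $\alpha+\beta=1$ versus $\alpha=\beta\in\{0,1\}$, and you are right to flag explicitly the labelled-basis bookkeeping that the paper leaves implicit.
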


\begin{proof}
Suppose that $f\in B_{\sheaf{F}}$ has disconnected support. Then, there exist
$r<s<t$ such that $f|_r\neq 0$, $f|_s=0$ and $f|_t\neq 0$. By the gluing property,
$f$ may be decomposed as $f=a+b$, where $a_x=f_x$, for
$x\in L:=\dom(f) \cap (-\infty, s]$, and $a_x=0$, for $x\in R:=\dom(f) \cap [s, +\infty)$. 
This implies that $b_x = f_x$, $\forall x \in R$ and $b|_L \equiv 0$. Note that both
$a$ and $b$ are non-trivial sections. By the definition of basis, $a$ and $b$ can be
uniquely expressed as linear combinations
\begin{equation}
a=c_0f+\sum_{f_\lambda\in B_{\sheaf{F}(I)}\setminus \{f\}}c_\lambda f_\lambda
\quad \text{and} \quad
b=d_0f+\sum_{f_\lambda\in B_{\sheaf{F}}\setminus \{f\}}d_\lambda f_\lambda.
\end{equation}
Since $f = a+b$, it follows that $c_0 + d_0 =1$. On the other hand,
\begin{equation}
a|_L =c_0f|_L + \sum c_\lambda f_\lambda|_L \quad \text{and} \quad
b|_R = d_0 f|_R + \sum d_\lambda f_\lambda|_R.
\end{equation}
Since both $a|_L$ and $b|_R$ are non-trivial and coincide with $f|_L$ and
$f|_R$, respectively, it follows that $c_0 = d_0 = 1$. This contradicts the fact that
$c_0 + d_0 =1$.

The fact that, for any $f \in B_{\sheaf{F}}$, the subsheaf $\sheaf{F}\langle f \rangle$
is isomorphic to an interval sheaf now follows from Proposition \ref{P:isheaf}.
\end{proof}

Let $\sheaf{F}$ be interval decomposable. If $\Phi \colon \sum_{\lambda \in 
\Lambda} \sheaf{F}_\lambda \to \sheaf{F}$ is a $p$-sheaf isomorphism,
where each $\sheaf{F}_\lambda$ is an interval $p$-sheaf, then the images under
$\Phi$ of the unit sections $s_\lambda$ of $\sheaf{F}_\lambda$
(see Definition \ref{D:intsheaf}) form a basis of $\sheaf{F}$. The following 
proposition provides a converse statement. Hence, we may view an interval
decomposition of a $p$-sheaf as a choice of basis.

\begin{proposition}
If $B_\sheaf{F}$ is a basis of $\sheaf{F}$, then there are interval modules
$F_\lambda$, $\lambda \in \Lambda$, and an isomorphism 
$\Phi \colon \sum_{\lambda \in  \Lambda} \sheaf{F}_\lambda \to \sheaf{F}$
that maps the unit sections $s_\lambda$ of $\sheaf{F}_\lambda$
bijectively onto $B_\sheaf{F}$.
\end{proposition}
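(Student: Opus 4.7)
The plan is to identify each element $f$ of $B_\sheaf{F}$ with the unit section of an appropriate interval $p$-sheaf and to assemble the corresponding inclusions into a single morphism that will be shown to be an isomorphism. Take $\Lambda = B_\sheaf{F}$. For each $f \in \Lambda$, Lemma \ref{L:csupp} guarantees that $\supp[f]$ is connected and that $\sheaf{F}\langle f \rangle$ is isomorphic to some interval $p$-sheaf $\sheaf{F}_f$ (of the type dictated by how $\supp[f]$ sits inside $\dom(f)$ via Proposition \ref{P:isheaf}); I would choose the isomorphism $\iota_f \colon \sheaf{F}_f \to \sheaf{F}\langle f \rangle$ so that the unit section $s_f$ of $\sheaf{F}_f$ is sent to $f$ itself (as noted below, the basis hypothesis forces $\dom(f)$ to coincide with the natural domain of $s_f$). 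Post-composing with the inclusion $\sheaf{F}\langle f \rangle \hookrightarrow \sheaf{F}$ gives $\phi_f \colon \sheaf{F}_f \to \sheaf{F}$, and the candidate isomorphism is $\Phi = \bigoplus_{f \in \Lambda} \phi_f$, which by construction bijects $\{s_f : f \in \Lambda\}$ onto $B_\sheaf{F}$.

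To verify that $\Phi$ is a $p$-sheaf isomorphism, I would check, for each interval $I$, that $\Phi_I \colon \bigoplus_f \sheaf{F}_f(I) \to \sheaf{F}(I)$ is a linear bijection. Since each $\sheaf{F}_f(I)$ is at most one-dimensional and is spanned by $s_f|_I$ when nonzero, the source has a natural basis $\{s_f|_I : \sheaf{F}_f(I) \neq 0\}$, and $\Phi_I$ sends this to $\{f|_I : \sheaf{F}_f(I) \neq 0\}$. The remaining task is then to show that this target set equals $B_\sheaf{F}(I) = \{f|_I : f \in B_\sheaf{F},\, I \subseteq \dom(f),\, f|_I \not\equiv 0\}$, since $\Phi_I$ will then send a basis bijectively to a basis.

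The main obstacle is precisely this comparison. One inclusion is immediate: if $I \subseteq \dom(f)$ and $f|_I \neq 0$, then $f|_I$ is a nonzero section of $\sheaf{F}\langle f \rangle$ over $I$, whence $\sheaf{F}_f(I) \neq 0$. The converse requires showing that every $f \in B_\sheaf{F}$ has $\dom(f)$ equal to the maximal interval on which $\sheaf{F}\langle f \rangle$ carries nonzero sections, i.e. the natural domain of $s_f$. This is forced by the basis hypothesis: if $I$ were an interval inside this natural domain but not contained in $\dom(f)$, then the one-dimensional summand of $\sheaf{F}(I)$ coming from $\sheaf{F}\langle f \rangle$ would have to be spanned by some other $f' \in B_\sheaf{F}$ with $I \subseteq \dom(f')$, and a disjointness argument using the indecomposability of interval $p$-sheaves (Proposition \ref{P:isheaf}) together with Lemma \ref{L:csupp} forces $\sheaf{F}\langle f' \rangle = \sheaf{F}\langle f \rangle$, which contradicts the linear independence of $B_\sheaf{F}(J)$ on large enough $J$ containing both $\dom(f)$ and an interior point of $\dom(f') \setminus \dom(f)$. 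Tightening this bookkeeping and checking that the direct-sum property is preserved at every $I$ is where the bulk of the technical work lies; once done, the isomorphism claim follows.
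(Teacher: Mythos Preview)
Your approach is the same as the paper's: identify each $\sheaf{F}\langle f_\lambda\rangle$ with an interval $p$-sheaf via Lemma~\ref{L:csupp} and Proposition~\ref{P:isheaf}, assemble the inclusions into $\Phi$, and check that $\Phi_I$ is a bijection for every interval $I$ using the basis property. The paper's four-line proof does exactly this and does not even mention the domain subtlety you flag.

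On that subtlety: your claim that the basis hypothesis forces $\dom(f)$ to coincide with the natural domain of the unit section requires reading the definition of basis as an \emph{indexed} family rather than a bare set. Under the literal set reading, take $\sheaf{F} = k[p,q\rangle$ with unit section $g$ and set $B_\sheaf{F} = \{g,\, g|_{(p,y)}\}$ for some $q < y < +\infty$; one checks that $B_\sheaf{F}(I)$ is a basis of $\sheaf{F}(I)$ for every $I$, yet $\sheaf{F}$ is indecomposable, so no bijection onto a two-element $B_\sheaf{F}$ can exist. Under the indexed reading your conclusion is correct, and the argument is simpler than the disjointness route you sketch: if $\dom(f_\lambda)$ were strictly smaller than the natural domain $J$, write the extension $\overline{f_\lambda} \in \sheaf{F}(J)$ in the basis $B_\sheaf{F}(J)$, then restrict to $I = \dom(f_\lambda)$; this expresses $f_\lambda$ as a linear combination of $f_\mu|_I$ with each $\mu \neq \lambda$ (since every such $\mu$ satisfies $\dom(f_\mu) \supseteq J \supsetneq I$), contradicting linear independence of the indexed family $B_\sheaf{F}(I)$.
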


\begin{proof}
Set $\sheaf{F}_\lambda :=  \sheaf{F}\langle f_\lambda \rangle$.
Lemma \ref{L:csupp} implies that each $F_\lambda$ is isomorphic to
an interval $p$-sheaf. Moreover, the isomorphism may be chosen to map 
the unit section $s_\lambda$ of $F_\lambda$ to $f_\lambda$. The
fact that $B_\sheaf{F}$ is a basis implies that these induce an isomorphism
$\Phi \colon \sum_{\lambda \in  \Lambda} \sheaf{F}_\lambda \to \sheaf{F}$ with
the desired properties.
\end{proof}

Let $f$ and $g$ be sections of $\sheaf{F}$ and $\sheaf{G}$, respectively,
with connected support. By Proposition \ref{P:isheaf}, $\sheaf{F}\langle f\rangle$
and $\sheaf{G}\langle g\rangle$ are (isomorphic to) interval $p$-sheaves.

\begin{definition} \label{D:smatch}
Under the above assumptions, define $f$ and $g$ to be $\epsilon$-{\em matched} if:
\begin{enumerate}[(i)]
\item $\sheaf{F}\langle f\rangle$ and $\sheaf{G}\langle g\rangle$ are interval
$p$-sheaves of the same type;
\item $d_H(\supp(f),\supp(g))\leq \epsilon$, where $d_H$ denotes (extended)
Hausdorff distance. 
\end{enumerate} 
\end{definition}

\begin{lemma}
Under the assumptions of Definition \ref{D:smatch}, if the sections $f$ and $g$
are $\epsilon$-matched, then $\sheaf{F}\langle f\rangle$
and $\sheaf{G}\langle g \rangle$ are $\epsilon$-interleaved.
\end{lemma}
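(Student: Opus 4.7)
By Proposition \ref{P:isheaf} together with hypothesis (i) of Definition \ref{D:smatch}, both $\sheaf{F}\langle f\rangle$ and $\sheaf{G}\langle g\rangle$ are isomorphic to standard interval $p$-sheaves of a common type $T \in \{[\,,\,],\, [\,,\rangle,\, \langle\,,],\, \langle\,,\rangle\}$, associated with the decorated endpoints of $\supp(f)$ and $\supp(g)$, say $kT(p_1,q_1)$ and $kT(p_2,q_2)$ respectively. The Hausdorff bound in (ii) translates into $|p_1 - p_2| \leq \epsilon$ and $|q_1 - q_2| \leq \epsilon$ at the level of the underlying real parts. The lemma thus reduces to the purely combinatorial claim that any two interval $p$-sheaves of the same type whose decorated endpoints are within $\epsilon$ are $\epsilon$-interleaved.

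\textbf{Construction.} For each interval $I$, I define
\[
\phi^I_{I^{-\epsilon}} \colon kT(p_1,q_1)(I) \longrightarrow kT(p_2,q_2)(I^{-\epsilon})
\]
to be the identity on $k$ whenever both sides are nonzero, and to be the zero map otherwise; $\psi^I_{I^{-\epsilon}}$ is defined symmetrically. Because every nonzero restriction map in an interval $p$-sheaf is the identity on $k$ and every other restriction is zero, the naturality squares for $\Phi^\epsilon$ and $\Psi^\epsilon$ commute automatically: each composition in a square is either the identity on $k$ or zero, and the two paths agree in every configuration.

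\textbf{Verification of the interleaving identities.} The composition $\psi^{I^{-\epsilon}}_{I^{-2\epsilon}} \circ \phi^I_{I^{-\epsilon}}$ at $I$ equals the identity on $k$ precisely when all three of $kT(p_1,q_1)(I)$, $kT(p_2,q_2)(I^{-\epsilon})$, and $kT(p_1,q_1)(I^{-2\epsilon})$ are nonzero, and it vanishes otherwise. The erosion map $(e^{2\epsilon}_{\sheaf{F}\langle f\rangle})^I_{I^{-2\epsilon}}$ is the identity exactly when the first and third of these are nonzero. So the identity $\Psi^\epsilon \circ \Phi^\epsilon = e^{2\epsilon}_{\sheaf{F}\langle f\rangle}$ reduces to showing: nonvanishing of $kT(p_1,q_1)(I)$ and $kT(p_1,q_1)(I^{-2\epsilon})$ forces nonvanishing of $kT(p_2,q_2)(I^{-\epsilon})$. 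This is a short case check using $|p_1-p_2| \leq \epsilon$, $|q_1-q_2| \leq \epsilon$, and the monotonicity $I^{-2\epsilon} \subseteq I^{-\epsilon} \subseteq I$; the symmetric identity $\Phi^\epsilon \circ \Psi^\epsilon = e^{2\epsilon}_{\sheaf{G}\langle g\rangle}$ is verified in the same way.

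\textbf{Main obstacle.} The substantive content lies in the decoration bookkeeping: for mixed types such as $\langle\,,]$, the condition for a space $kT(p,q)(I)$ to equal $k$ mixes strict and non-strict inclusions, and one has to confirm that these transfer correctly under $\pm\epsilon$-shifts of the endpoints in $\dec$. Adopting the convention that the $\pm$-decoration is preserved by such shifts, each required inequality follows directly from the endpoint bounds, and the case analysis, though split into four sub-cases, is entirely mechanical.
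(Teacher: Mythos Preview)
Your proposal is correct and follows essentially the same idea as the paper's proof: build the $\epsilon$-homomorphisms by sending the generating section of one interval $p$-sheaf to the restriction of the generating section of the other. The paper does this in one line, defining $\Phi^\epsilon(f)=g|_{\dom(f)^{-\epsilon}}$ and $\Psi^\epsilon(g)=f|_{\dom(g)^{-\epsilon}}$ directly on the generators, whereas you first pass through the identifications with the standard interval $p$-sheaves $kT(p_i,q_i)$ via Proposition~\ref{P:isheaf} and then write the maps as identity-or-zero on each $F(I)$. These are two presentations of the same construction; your version makes the case analysis for the interleaving identities more explicit, while the paper's version is terser but leaves the verification implicit.

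One small caution: your claim that naturality ``commutes automatically'' is a shade too breezy. With $I\subseteq J$, the two sides of the naturality square are each identity-or-zero maps $F(J)\to G(I^{-\epsilon})$, but showing they agree still requires checking that whenever $F(J)=G(I^{-\epsilon})=k$ one has $F(I)=k\Leftrightarrow G(J^{-\epsilon})=k$; this is where the endpoint inequalities and the decoration bookkeeping you mention actually enter, and it is not purely formal. The paper's proof is equally brief on this point, so you are not missing anything the paper supplies.
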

\begin{proof}
Define $\epsilon$-homomorphisms $\Phi^\epsilon \colon \sheaf{F}\langle f\rangle
\to \sheaf{G}\langle g\rangle $ and $\Psi^\epsilon \colon \sheaf{G}\langle g\rangle
\to \sheaf{F}\langle f\rangle$ by $\Phi^\epsilon(f)=g|_{\dom(f)^{-\epsilon}}$ and
$\Psi^\epsilon(g)=f|_{\dom(g)^{-\epsilon}}$. This completes characterize these
homomorphisms because the $p$-sheaves are spanned by $f$ and $g$. Then,
$(\Phi^\epsilon, \Psi^\epsilon)$  is an $\epsilon$-interleaving.
\end{proof}
	
\begin{definition} \label{D:significant}	
Given $\delta>0$, a section is said to be $\delta$-{\em trivial} if it gets mapped
to the zero section under the $\delta$-erosion map. Otherwise, the section is called
$\delta$-{\em significant}. We denote by $X_\sheaf{F}^\epsilon \subseteq B_{\sheaf{F}}$
the {\em set of all $2\epsilon$-significant sections in $B_{\sheaf{F}}$}.
\end{definition}

\begin{definition}
($\epsilon$-Mappings and $\epsilon$-Matchings)
\begin{enumerate}[(i)]
\item A mapping $\sigma \colon \dom(\sigma) \subseteq B_\sheaf{F} 
\to B_\sheaf{G}$ is an $\epsilon$-{\em mapping} if $f$ and $\sigma(f)$ are 
$\epsilon$-matched, $\forall f\in\dom(\sigma)$.
\item $B_\sheaf{F}$ and $B_\sheaf{G}$ are said to be 
$\epsilon$-{\em matched} if there exists an injective $\epsilon$-map
$\sigma \colon \dom(\sigma) \subseteq B_\sheaf{F} \to B_\sheaf{G}$
such that $X^\epsilon_\sheaf{F} \subseteq \dom(\sigma)$ and
$X_\sheaf{G}^\epsilon \subseteq\im(\sigma)$.
\end{enumerate}
\end{definition}

\begin{proposition}\label{P:basis_diagram}
If $B_\sheaf{F}$ and $B_\sheaf{G}$ are $\epsilon$-matched, then
$dgm(\sheaf{F})$ and $dgm(\sheaf{G})$ are $\epsilon$-matched.
\end{proposition}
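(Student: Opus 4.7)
The plan is to convert the basis-level matching $\sigma$ directly into a matching on each of the four multisets forming $dgm(\sheaf{F})$ and $dgm(\sheaf{G})$, and then check the appropriate distance bound in each of the four cases separately.

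First, I would fix the bookkeeping. By the proposition preceding this one, each basis element $f\in B_\sheaf{F}$ generates an interval subsheaf $\sheaf{F}\langle f\rangle$, whose (undecorated) isomorphism type is one of $[\,,]$, $[\,,\rangle$, $\langle\,,]$, $\langle\,,\rangle$, and whose support endpoints $(s_f,t_f)\in\ereal^{\,2}$ are precisely the point of $dgm(\sheaf{F})$ contributed by $f$; analogously for $B_\sheaf{G}$. Because $\sigma$ is type-preserving (Definition \ref{D:smatch}(i)), its restriction to basis elements of a given type defines an injection between the corresponding multisets of $dgm(\sheaf{F})$ and $dgm(\sheaf{G})$, and I will check the four required matchings one type at a time.

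For a matched pair $f\mapsto\sigma(f)$ of a common type, Definition \ref{D:smatch}(ii) gives $d_H(\supp f,\supp\sigma(f))\le\epsilon$. Since the supports are intervals with endpoints $(s_f,t_f)$ and $(s_{\sigma(f)},t_{\sigma(f)})$ (taking values in $\ereal$, with the convention $|{\pm\infty}-{\pm\infty}|=0$), this is equivalent to $d_\infty\bigl((s_f,t_f),(s_{\sigma(f)},t_{\sigma(f)})\bigr)\le\epsilon$, satisfying condition (i) of the two matchings of type $(\epsilon,2)$ and $(\epsilon,1)$ and also the full-matching requirement for type $\langle\,,\rangle$.

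The bulk of the work is computing, for each interval type, what it means for a basis element to be $2\epsilon$-trivial in the sense of Definition \ref{D:significant}, and matching the outcome to the correct constant $L$. Using the description of $k[x,y]$, $k[x,y\rangle$, $k\langle x,y]$, $k\langle x,y\rangle$ from Definition \ref{D:intsheaf}, the $2\epsilon$-erosion of the unit section of $\sheaf{F}\langle f\rangle$ is zero exactly when:
\begin{itemize}
\item type $[\,,]$: $t_f-s_f<4\epsilon$, i.e.\ $d_\infty\bigl((s_f,t_f),\Delta\bigr)<2\epsilon$;
\item types $[\,,\rangle$ and $\langle\,,]$: $t_f-s_f\le2\epsilon$ (with $t_f$ or $s_f$ finite, respectively), i.e.\ $d_\infty\bigl((s_f,t_f),\Delta\bigr)\le\epsilon$;
\item type $\langle\,,\rangle$: never, since $(-\infty,+\infty)^{-2\epsilon}=(-\infty,+\infty)$ makes the global erosion the identity.
\end{itemize}
Consequently, an $f\in B_\sheaf{F}\setminus\dom(\sigma)$, which by hypothesis lies outside $X^\epsilon_\sheaf{F}$ and is therefore $2\epsilon$-trivial, automatically satisfies the unmatched-point condition with $L=2$ for type $[\,,]$ and $L=1$ for types $[\,,\rangle,\langle\,,]$; and for type $\langle\,,\rangle$ it forces $f\in\dom(\sigma)$, so no unmatched points of this type can exist on the $\sheaf{F}$-side. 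The identical reasoning applied to $g\in B_\sheaf{G}\setminus\im(\sigma)$ (using $X^\epsilon_\sheaf{G}\subseteq\im(\sigma)$) handles the $\sheaf{G}$-side. Combining this with the $\langle\,,\rangle$-case observation, the restriction of $\sigma$ to type-$\langle\,,\rangle$ basis elements is a bijection, producing the required full $\epsilon$-matching. Assembling the four pieces yields an $\epsilon$-matching of $dgm(\sheaf{F})$ and $dgm(\sheaf{G})$.

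The only delicate step is the per-type erosion calculation for the unmatched-point bound, particularly keeping track of when endpoints are infinite and of the strict versus non-strict inequalities coming from the sheaf-value conventions on empty intervals; once that bookkeeping is settled, the rest is a direct translation of the basis matching into a matching on the decorated diagrams followed by passage to the undecorated quotient of Definition \ref{D:undec}.
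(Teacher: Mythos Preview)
Your proposal is correct and follows exactly the approach the paper has in mind: the paper's own proof is the single sentence ``This follows directly from the definitions of $\epsilon$-matchings for sections and for persistence diagrams,'' and what you have written is a careful unpacking of that sentence. Your per-type computation of when the unit section is $2\epsilon$-trivial, together with the identification $d_H(\supp f,\supp g)=d_\infty\bigl((s_f,t_f),(s_g,t_g)\bigr)$ for intervals, is precisely the bookkeeping the paper is leaving to the reader.
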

\begin{proof}
This follows directly from the definitions of $\epsilon$-matchings for
sections and for persistence diagrams.
\end{proof}

By Proposition \ref{P:basis_diagram}, we can translate a diagram-matching
problem into a question of matching bases and this is how we approach
the proof of the stability theorem. 

\begin{lemma} \label{L:injectives}
Let $\epsilon>0$. If there exist injective $\epsilon$-mappings
$\sigma \colon X^\epsilon_\sheaf{F} \to B_\sheaf{G}$
and $\tau \colon X^\epsilon_\sheaf{G} \to B_\sheaf{F}$, then there is an 
$\epsilon$-matching between $B_\sheaf{F}$ and $B_\sheaf{G}$.
\end{lemma}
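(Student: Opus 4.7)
The plan is to combine $\sigma$ and $\tau$ into a single injective $\epsilon$-mapping via a Cantor--Bernstein-style graph argument, following the general strategy used by Bjerkevik in \cite{Bjerkevik2016stability} but adapted to $p$-sheaf bases. First I would form the bipartite graph $\Gamma$ with vertex set $B_\sheaf{F} \sqcup B_\sheaf{G}$ whose edges are $\{f, \sigma(f)\}$ for each $f \in X^\epsilon_\sheaf{F}$ and $\{\tau(g), g\}$ for each $g \in X^\epsilon_\sheaf{G}$. Because $\sigma$ and $\tau$ are injective, every vertex of $\Gamma$ has degree at most $2$, so each connected component is either a finite path, a finite (even) cycle, or a singly or doubly infinite path.

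In each component I will choose an alternating matching that covers every vertex lying in $X^\epsilon_\sheaf{F} \cup X^\epsilon_\sheaf{G}$. For cycles and doubly infinite paths this is immediate: every vertex has degree $2$, which forces internal $B_\sheaf{F}$-vertices to lie in $X^\epsilon_\sheaf{F}$ and internal $B_\sheaf{G}$-vertices to lie in $X^\epsilon_\sheaf{G}$, and either of the two alternating perfect matchings covers everything. For a singly infinite path one takes the alternating matching that includes the unique endpoint. For a finite path with endpoints on opposite sides the unique perfect alternating matching works. The only nontrivial case is a finite path whose two endpoints lie on the same side, say both in $B_\sheaf{F}$; such a path has even length $m$, every $B_\sheaf{G}$-vertex must be matched, and the two candidate matchings each leave exactly one endpoint unmatched.

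The main obstacle, and the crux of the proof, is therefore to show that at least one of the two endpoints can always be chosen outside $X^\epsilon_\sheaf{F}$. My plan is to trace edge types along the path. If an endpoint $v_0 \in B_\sheaf{F}$ lies in $X^\epsilon_\sheaf{F}$, then since $v_0$ has degree one it cannot also lie in $\tau(X^\epsilon_\sheaf{G})$, so the only incident edge is the $\sigma$-edge with $v_1 = \sigma(v_0)$. Each internal $B_\sheaf{G}$-vertex has degree $2$ and hence lies in $X^\epsilon_\sheaf{G}$, forcing the next edge to be a $\tau$-edge; iterating, the edge types along the path alternate $\sigma, \tau, \sigma, \tau, \ldots$. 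Since $m$ is even the last edge $e_m$ is a $\tau$-edge, which puts $v_m \in \tau(X^\epsilon_\sheaf{G})$, and then the degree-one condition at $v_m$ forces $v_m \notin X^\epsilon_\sheaf{F}$. The symmetric argument, with the roles of $\sigma$ and $\tau$ swapped, handles finite paths whose endpoints both lie in $B_\sheaf{G}$.

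With this structural fact in hand the construction is complete. Taking the union $\mu$ of the matchings chosen in each component gives an injective partial map $\mu \colon \dom(\mu) \subseteq B_\sheaf{F} \to B_\sheaf{G}$ with $X^\epsilon_\sheaf{F} \subseteq \dom(\mu)$ and $X^\epsilon_\sheaf{G} \subseteq \im(\mu)$. Each pair $(f, \mu(f))$ arises either as $(f, \sigma(f))$ with $f \in X^\epsilon_\sheaf{F}$ or as $(\tau(g), g)$ with $g \in X^\epsilon_\sheaf{G}$, and in both cases it is $\epsilon$-matched because $\sigma$ and $\tau$ are $\epsilon$-mappings and the relation of being $\epsilon$-matched is symmetric in its two arguments. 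Hence $\mu$ is an injective $\epsilon$-mapping witnessing the desired $\epsilon$-matching between $B_\sheaf{F}$ and $B_\sheaf{G}$.
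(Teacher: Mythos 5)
Your proposal is correct and follows essentially the same route as the paper: both form the bipartite graph whose edges are the graphs of $\sigma$ and $\tau$, decompose it into degree-$\le 2$ components, and run a Cantor--Schr\"oder--Bernstein-type selection per component, with the key observation in both cases being that a degree-one endpoint whose only incident edge points the ``wrong way'' must be a $2\epsilon$-trivial section. Your explicit path/cycle taxonomy and edge-type alternation is just a more detailed rendering of the paper's choice between $\sigma|_{C^\epsilon_{\sheaf{F}}}$ and $\tau^{-1}$ on each component.
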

\begin{proof}
Let $G$ be the bipartite graph with vertex set partitioned as  $B_{\sheaf{F}} 
\sqcup B_{\sheaf{G}}$ and whose edge set is the union of the graphs
of the maps $\sigma$ and $\tau$. We denote an edge $\{u,v\}$, where $\sigma (u) =v$,
by $u \xrightarrow{\sigma} v$. Similarly, $u \xrightarrow{\tau} v$ if $v = \tau (u)$. Since
$\sigma$ and $\tau$ are injections, each vertex of $G$ has degree $\leq 2$.

We construct an $\epsilon$-matching over each connected component of
$G$ and take the union of these to obtain the desired matching. Let $C$ be any
connected component of $G$ and $V_C$ its vertex set. Let
$C^\epsilon_\sheaf{F} := X^\epsilon_\sheaf{F} \cap V_C$ and 
$C^\epsilon_\sheaf{G} := X^\epsilon_\sheaf{G} \cap V_C$, the sets of
vertices in $C$ comprised of $\epsilon$-significant sections of $\sheaf{F}$ and
$\sheaf{G}$, respectively. Then, at least one of the following properties is satisfied:
(a) $C^\epsilon_\sheaf{G}  \subseteq \im\,\sigma|_{C^\epsilon_\sheaf{F}}$ or
(b) $C^\epsilon_\sheaf{F}  \subseteq \im\,\tau|_{C^\epsilon_\sheaf{G}}$.
Indeed, there is a sequence
\begin{equation}
\ldots \xrightarrow{\tau} v_{-1} \xrightarrow{\sigma} v_0 \xrightarrow{\tau}
v_1\xrightarrow{\sigma} \ldots
\end{equation}
such that $V_C = \cup_{i} \{v_i\}$. This sequence may be infinite or finite on
either end. Note that if it is finite on the right, then the last element is a section that
is $2\epsilon$-trivial. If the sequence is infinite on the left or finite starting
with an edge of type $\xrightarrow{\sigma}$, then the construction implies that 
$C^\epsilon_\sheaf{G}  \subseteq \im\,\sigma|_{C^\epsilon_\sheaf{F}}$,
regardless of the behavior of the sequence on the right end.
Similarly, if it is finite on the left, starting with a type $\xrightarrow{\tau}$
edge, then $C^\epsilon_\sheaf{F} \subseteq \im\,\tau|_{C^\epsilon_\sheaf{G}}$.

If property (a) above is satisfied, then $\sigma|_{C^\epsilon_\sheaf{F}} \colon
C^\epsilon_\sheaf{F} \to B_\sheaf{G} \cap C$ is an $\epsilon$-matching between
$B_\sheaf{F} \cap V_C$ and $B_\sheaf{G} \cap V_C$. Let
$\zeta:= \tau|_{C^\epsilon_\sheaf{G}} \colon C^\epsilon_\sheaf{G} \to 
B_\sheaf{F} \cap V_C$. If property (b) is satisfied, then $\tau^{-1}|_{\im (\zeta)}
\colon \im (\zeta) \supseteq C^\epsilon_\sheaf{F} \to C^\epsilon_\sheaf{G}$ is 
an $\epsilon$-matching
between $B_\sheaf{F} \cap V_C$ and $B_\sheaf{G} \cap V_C$. Assembling
the matchings over all connected components of $G$ yields an
$\epsilon$-matching between $B_\sheaf{F}$ and $B_\sheaf{G}$.
\end{proof}

We now proceed to the core argument in the proof of the algebraic stability of
persistence diagrams, namely, the construction of the injective 
$\epsilon$-mappings called for in the hypothesis of Lemma \ref{L:injectives}.
In the remainder of this section, we index the bases of
$\sheaf{F}$ and $\sheaf{G}$ as
$B_\sheaf{F} = \{f_\lambda \colon \lambda \in \Lambda_{\sheaf{F}}\}$ and
$B_\sheaf{G} = \{g_\mu \colon \mu \in  \Lambda_{\sheaf{G}}\}$, respectively. 
We also adopt the following abbreviations:
\begin{align}
D_\lambda &:=  \dom (f_\lambda),  &D_\mu &:= \dom (g_\mu), \\
Z_\lambda &:= D_\lambda \setminus \supp (f_\lambda),
&Z_\mu &:= D_\mu \setminus \supp (g_\mu)\,.
\end{align}
For $\epsilon \geq 0$, $\Phi^\epsilon:\sheaf{F} \to \sheaf{G}$ 
and $\Psi^\epsilon:\sheaf{G} \to \sheaf{F}$ denote $\epsilon$-homomorphisms. Using
$B_\sheaf{F}$ and $B_\sheaf{G}$,  an $\epsilon$-morphism $\Phi^\epsilon$ may be
represented by a ``matrix'' $\left(\phi^\mu_\lambda\right)$, where
\begin{equation}
\Phi^\epsilon(f_{\lambda}) = \sum_{\mu \in \Lambda_G} \phi^\mu_\lambda \,
g_{\mu}|_{D_\lambda^{-\epsilon}} \,,
\end{equation}
for any $\lambda \in \Lambda_F$. Here, we make the convention that
$\phi^\mu_\lambda=0$ if $g_\mu|_{D_\lambda^{-\epsilon}}=0$; that is,
$D_\lambda^{-\epsilon} \subseteq Z_\mu$. Thus, for a fixed $\lambda$, only
finitely many entries $\phi^\mu_\lambda$ can be non-zero. 
Similarly, $\Psi^\epsilon$ may be represented by a matrix
$\left(\psi^\lambda_\mu\right)$ whose entries, for a fixed $\mu$, vanish for
all but finitely many values of $\lambda$.

\begin{lemma} \label{L:match1}
If $\phi^\mu_\lambda \neq 0$, then $D_\lambda^{-\epsilon} \subseteq
D_\mu$ and $Z_\lambda^{-\epsilon}\subseteq Z_\mu$. Moreover, the interval
$p$-sheaves $F \langle f_\lambda \rangle$ and $G\langle g_\mu \rangle$ 
are of the same type.

\end{lemma}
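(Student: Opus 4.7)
The plan is to derive all three conclusions from the naturality of $\Phi^\epsilon$ together with the linear independence of the basis $B_\sheaf{G}(I)$ over each interval $I$. The two containments will come out essentially from unraveling the indexing convention for the basis expansion, while the type-matching assertion will require a combinatorial case analysis built on top of them.

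For the containment $D_\lambda^{-\epsilon} \subseteq D_\mu$, the expansion
\[
\Phi^\epsilon(f_\lambda) = \sum_\mu \phi^\mu_\lambda\, g_\mu\big|_{D_\lambda^{-\epsilon}}
\]
expresses an element of $\sheaf{G}(D_\lambda^{-\epsilon})$ in the basis $B_\sheaf{G}(D_\lambda^{-\epsilon})$. By the definition of that basis, $g_\mu|_{D_\lambda^{-\epsilon}}$ appears only if $D_\lambda^{-\epsilon} \subseteq D_\mu$ (so the restriction is defined) and $g_\mu|_{D_\lambda^{-\epsilon}} \not\equiv 0$ (equivalently, $D_\lambda^{-\epsilon} \not\subseteq Z_\mu$), so $\phi^\mu_\lambda \neq 0$ forces both conditions, yielding the first containment. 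For $Z_\lambda^{-\epsilon} \subseteq Z_\mu$, I would let $C$ be an arbitrary connected component of $Z_\lambda$. Then $C \subseteq D_\lambda$ and $f_\lambda|_C = 0$ by the very definition of $Z_\lambda$, and naturality of $\Phi^\epsilon$ with respect to the inclusion $C \subseteq D_\lambda$ gives
\[
\sum_\mu \phi^\mu_\lambda \, g_\mu\big|_{C^{-\epsilon}} \;=\; \sheaf{G}^{D_\lambda^{-\epsilon}}_{C^{-\epsilon}}\bigl(\Phi^\epsilon(f_\lambda)\bigr) \;=\; \phi^C_{C^{-\epsilon}}(f_\lambda|_C) \;=\; 0.
\]
Linear independence of $B_\sheaf{G}(C^{-\epsilon})$ then forces every $\mu$ contributing a nonzero coefficient to satisfy $g_\mu|_{C^{-\epsilon}} = 0$, i.e.\ $C^{-\epsilon} \subseteq Z_\mu$. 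Taking the union over the connected components of $Z_\lambda$ yields the second containment.

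The same-type assertion is the main obstacle. By Proposition \ref{P:isheaf}, the type of $\sheaf{F}\langle f_\lambda\rangle$ is determined by whether $D_\lambda$ strictly extends past $\supp(f_\lambda)$ on each end, which in turn corresponds precisely to which of the two possible unbounded tails (left, right) are present as connected components of $Z_\lambda$. The containment $Z_\lambda^{-\epsilon}\subseteq Z_\mu$ transports each such tail of $Z_\lambda$, after erosion, to a tail of $Z_\mu$ on the same side, so $\sheaf{G}\langle g_\mu\rangle$ must be \emph{at least as open} as $\sheaf{F}\langle f_\lambda\rangle$ on each side where the latter is of type $\langle$ or $\rangle$. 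The subtle direction is the converse — that $Z_\mu$ cannot acquire an extra tail where $Z_\lambda$ has none — and I would attack it by a case analysis over the four interval types, using the condition $D_\lambda^{-\epsilon}\subseteq D_\mu$ together with the maximality of $D_\lambda$ and $D_\mu$ among the domains of basis elements realising their respective types, and tracking the expansion of $\Phi^\epsilon(f_\lambda|_I)$ as $I$ ranges over intervals approaching the boundary of $D_\lambda$. The key combinatorial point will be that, once the first two containments have pinned down the supports and domains tightly enough, any extra tail of $Z_\mu$ on the "missing" side would force $g_\mu|_{D_\lambda^{-\epsilon}}$ to live in a basis space of smaller dimension than predicted by its own interval-sheaf type, contradicting the uniqueness of interval decompositions (Remark \ref{R:udecomp}).
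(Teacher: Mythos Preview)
Your arguments for the two containments are correct and match the paper's proof essentially verbatim: $D_\lambda^{-\epsilon}\subseteq D_\mu$ is immediate from the basis expansion of $\Phi^\epsilon(f_\lambda)$ in $\sheaf{G}(D_\lambda^{-\epsilon})$, and $Z_\lambda^{-\epsilon}\subseteq Z_\mu$ follows by restricting to each connected component $C$ of $Z_\lambda$ and invoking linear independence in $B_\sheaf{G}(C^{-\epsilon})$.

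For the same-type assertion, the paper is far terser than you: it simply says this ``follows directly'' from the two containments, with no further argument. You are right that one direction is immediate --- since the type of a basis element is determined by the boundedness pattern of its domain, $D_\lambda^{-\epsilon}\subseteq D_\mu$ forces $D_\mu$ to be unbounded on every side where $D_\lambda$ is, so $g_\mu$ is at least as ``open'' as $f_\lambda$. Your proposed converse, however, will not go through. The tools you list --- restricting $f_\lambda$ to intervals $I$ approaching $\partial D_\lambda$, maximality of domains, dimension counts via Remark~\ref{R:udecomp} --- cannot detect an extra tail of $Z_\mu$ on a side where $D_\lambda$ is bounded, because every such $I$ lies inside $D_\lambda$ and its $\epsilon$-erosion never reaches that side of $g_\mu$. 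Concretely: for small $\epsilon>0$ the assignment $\phi^I_{I^{-\epsilon}}=\mathrm{id}_k$ whenever both spaces are $k$ (and zero otherwise) defines a valid $\epsilon$-homomorphism $\Phi^\epsilon\colon k[0^+,1^-\rangle\to k\langle 0^+,1^-\rangle$ with $\phi^\mu_\lambda=1\ne 0$, yet the types differ. So no argument using only $\Phi^\epsilon$ can establish the converse.

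The practical resolution is that the sole downstream use of this clause is in Lemma~\ref{L:match2}, where one has simultaneously $\phi^\mu_\lambda\ne 0$ and $\psi^{\lambda'}_\mu\ne 0$ with $f_\lambda$ and $f_{\lambda'}$ already assumed to be of the same type. Applying the ``at least as open'' conclusion once in each direction sandwiches $g_\mu$ between two basis elements of equal type, which forces equality. So the part of the claim you \emph{have} cleanly established is precisely what the sequel requires; your elaborate case analysis for the other half is both unworkable and unnecessary.
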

\begin{proof}
$D_\lambda^{-\epsilon} \subseteq D_\mu$ follows directly from the definition of 
$\phi^\mu_\lambda$. Suppose that $Z_\lambda \neq \emptyset$ and let
$I \subseteq \real$ be a connected component of $Z_\lambda$. Then,
\begin{equation}
0 = \Phi^{\epsilon}(f_{\lambda}|_I) = \sum_{\mu \in \Lambda_G} 
\phi^\mu_\lambda \, g_{\mu}|_{I^{-\epsilon}} \,.
\end{equation}
Since non-trivial sections in $\{g_{\lambda}|_{I^{-\epsilon}}\}$ are independent, 
we have $\phi_{\lambda}^{\mu}g_{\mu}|_{I^{-\epsilon}}=0$ for any $\mu$. Hence,
$\phi^{\mu}_{\lambda}\neq 0$ implies that $g_\mu|_{I^{-\epsilon}}=0$.
Thus, $Z_\lambda^{-\epsilon}\subseteq Z_\mu$.

Lemma \ref{L:csupp} shows that both $F \langle f_\lambda \rangle$ and 
$G\langle g_\mu \rangle$  are isomorphic to interval $p$-sheaves. The 
fact that these interval $p$-sheaves are of the same type follows  directly from
$D_\lambda^{-\epsilon} \subseteq D_\mu$ and 
$Z_\lambda^{-\epsilon}\subseteq Z_\mu$.
\end{proof}

To state the next lemma, we introduce some terminology.
Let $\alpha \colon \dec \to \ereal \times\mathbb{N}$ be given by $\alpha(t^+)=(t,1)$
and $\alpha(t^-)=(t,-1)$, $\forall t \in \real$, $\alpha (+\infty) = (+\infty, 1)$, and 
$\alpha (-\infty) = (-\infty, -1)$.  Define ``$+$'' and ``$-$'' operations in
$\ereal \times\mathbb{N}$ that perform coordinate-wise addition or subtraction. 
If we order the elements of $\ereal \times\mathbb{N}$ by $(a,n) < (b,m)$ if 
$a < b$, or if $a=b$ and $n < m$,  then $\alpha$ is order
preserving.

\begin{lemma} \label{L:match2}
Let $\lambda, \lambda' \in \Lambda_\sheaf{F}$ and suppose that
$\phi^{\mu}_{\lambda} \psi^{\lambda'}_{\mu} \ne 0$ for some 
$\mu \in \Lambda_\sheaf{G}$. If any one of the conditions
\begin{enumerate}[\rm (i)]
\item $\sheaf{F}\langle f_\lambda \rangle\simeq k[p,q]$,  
$\sheaf{F}\langle f_{\lambda'} \rangle 
\simeq k[p',q']$ and $\alpha(q)-\alpha(p)\geq \alpha(q')-\alpha(p')$,
\item $\sheaf{F}\langle f\lambda  \rangle\simeq k[p,q\rangle$, 
$\sheaf{F}\langle {f\lambda'}\rangle 
\simeq k[p',q'\rangle$ and $\alpha(p)+\alpha(q)\leq \alpha(p')+\alpha(q')$,
\item $\sheaf{F}\langle f_\lambda \rangle\simeq k\langle p,q]$, 
$\sheaf{F}\langle f_{\lambda'}  \rangle\simeq k\langle p',q']$ and 
$\alpha(p)+\alpha(q)\geq \alpha(p')+\alpha(q')$,
\item $\sheaf{F}\langle f_\lambda \rangle\simeq k\langle p,q\rangle$, 
$\sheaf{F}\langle f_{\lambda'}\rangle 
\simeq k\langle p',q'\rangle$ and $\alpha(q)-\alpha(p)\leq \alpha(q')-\alpha(p')$.
\end{enumerate}
is satisfied, then $g_{\mu}$ is $\epsilon$-matched
with either $f_{\lambda}$ or $f_{\lambda'}$.
\end{lemma}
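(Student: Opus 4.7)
The plan is to apply Lemma \ref{L:match1} twice (once to $\phi^\mu_\lambda\neq 0$ and once to $\psi^{\lambda'}_\mu\neq 0$) to translate the non-vanishing of these matrix entries into inclusions of domains and zero sets. First, this already gives that $\sheaf{G}\langle g_\mu\rangle$ is an interval $p$-sheaf of the same type as $\sheaf{F}\langle f_\lambda\rangle$, and also of the same type as $\sheaf{F}\langle f_{\lambda'}\rangle$, so condition (i) of Definition \ref{D:smatch} is automatic for either candidate match. What remains is the Hausdorff-distance condition (ii). I will argue by contradiction: assume $g_\mu$ is matched with neither $f_\lambda$ nor $f_{\lambda'}$, combine the corresponding failures with the hypothesis inequality (one of (i)--(iv)), and derive an impossibility.

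Write the supports as $(p_\lambda,q_\lambda)$, $(p_\mu,q_\mu)$, $(p_{\lambda'},q_{\lambda'})$. In case (i) (type $[\,,]$), the non-vanishing of $\phi^\mu_\lambda$ reads $\alpha(p_\mu)\leq\alpha(p_\lambda)+\bar\epsilon$ and $\alpha(q_\mu)\geq\alpha(q_\lambda)-\bar\epsilon$, where $\bar\epsilon=(\epsilon,0)$, and the non-vanishing of $\psi^{\lambda'}_\mu$ symmetrically gives $\alpha(p_\mu)\geq\alpha(p_{\lambda'})-\bar\epsilon$ and $\alpha(q_\mu)\leq\alpha(q_{\lambda'})+\bar\epsilon$. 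Failure of the Hausdorff bound with $f_\lambda$ says $\alpha(p_\mu)<\alpha(p_\lambda)-\bar\epsilon$ or $\alpha(q_\mu)>\alpha(q_\lambda)+\bar\epsilon$; failure with $f_{\lambda'}$ says $\alpha(p_\mu)>\alpha(p_{\lambda'})+\bar\epsilon$ or $\alpha(q_\mu)<\alpha(q_{\lambda'})-\bar\epsilon$. Taking the four possible combinations: in the two ``same-endpoint'' combinations one obtains a pinch on $p_\mu$ (resp.\ $q_\mu$) forcing $\alpha(p_\lambda)-\alpha(p_{\lambda'})>2\bar\epsilon$ (resp.\ $\alpha(q_{\lambda'})-\alpha(q_\lambda)>2\bar\epsilon$), which via condition (i), $\alpha(q_\lambda)-\alpha(p_\lambda)\geq\alpha(q_{\lambda'})-\alpha(p_{\lambda'})$, forces a strict violation of one of the two inclusions on the other endpoint. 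In the two ``mixed-endpoint'' combinations, one deduces simultaneously $\alpha(p_{\lambda'})<\alpha(p_\lambda)$ and $\alpha(q_\lambda)<\alpha(q_{\lambda'})$, hence $\alpha(q_\lambda)-\alpha(p_\lambda)<\alpha(q_{\lambda'})-\alpha(p_{\lambda'})$, again contradicting (i).

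Cases (ii)--(iv) are handled in exactly the same pattern, with the only substantive change being which inclusions are forced by $Z_\lambda^{-\epsilon}\subseteq Z_\mu$, $Z_\mu^{-\epsilon}\subseteq Z_{\lambda'}$. For type $[\,,\rangle$ the zero-set condition forces $\alpha(q_\mu)\leq\alpha(q_\lambda)+\bar\epsilon$ (same orientation on $q$ as on $p$), and this is precisely why the relevant hypothesis (ii) is a sum inequality $\alpha(p)+\alpha(q)\leq\alpha(p')+\alpha(q')$ rather than a difference; case (iii) is the mirror situation for $\langle\,,]$; and for type $\langle\,,\rangle$ both endpoints flip, producing the reversed difference inequality (iv). In every case the two ``same-endpoint'' cases close via the pinching argument and the two ``mixed-endpoint'' cases close by directly contradicting the hypothesis inequality. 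The main obstacle is not conceptual but bookkeeping: keeping straight, for each of the four types, which direction the inclusions from Lemma \ref{L:match1} push on each endpoint, and verifying that the hypothesis inequality in (i)--(iv) is the one that precisely blocks all four contradiction cases. Once a single clean table relating type $\to$ endpoint-inequality direction $\to$ correct form of the hypothesis is established, the four cases collapse into one unified argument.
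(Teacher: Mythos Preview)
Your proposal is correct and follows the same overall strategy as the paper: apply Lemma~\ref{L:match1} to both $\phi^\mu_\lambda\neq 0$ and $\psi^{\lambda'}_\mu\neq 0$, observe this forces the types to match, then argue by contradiction on the Hausdorff condition, treating the four interval types separately.

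The only notable difference is organizational. You split each type into four sub-cases according to which endpoint fails for $\lambda$ and which for $\lambda'$, handling the ``same-endpoint'' and ``mixed-endpoint'' combinations by different mechanisms. The paper instead collapses this: writing $\supp(g_\mu)=(x,y)$, it observes (in Case~1, for instance) that the inclusions from Lemma~\ref{L:match1} together with the failure $d_H(\supp f_\lambda,\supp g_\mu)>\epsilon$ always yield the single inequality $\alpha(y)-\alpha(x)>\alpha(q)-\alpha(p)$, regardless of which endpoint witnesses the failure. The symmetric argument for $\lambda'$ gives $\alpha(q')-\alpha(p')>\alpha(y)-\alpha(x)$, and chaining these two immediately contradicts hypothesis~(i). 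This avoids the four-way split entirely and in particular does not need to revisit the inclusions on the ``other'' endpoint in your same-endpoint cases. Your argument is valid, but the paper's organization is more economical and makes clearer why the particular form of the hypothesis in each of (i)--(iv) is the right one.
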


\begin{proof}
By Lemma \ref{L:match1}, the interval $p$-sheaf $G\langle g_\mu \rangle$ 
has the same type as the intervals $p$-sheaves $F \langle f_\lambda \rangle$
and $F \langle f_\lambda' \rangle$ and 
\begin{enumerate}[(1)]
\item $D_\lambda^{-\epsilon} \subseteq D_\mu$ and $Z_\lambda^{-\epsilon}
\subseteq Z_\mu$;
\item $D_\mu^{-\epsilon} \subseteq D_{\lambda'}$ and $Z_\mu^{-\epsilon} 
\subseteq Z_{\lambda'}$.
\end{enumerate}
Thus, to prove the lemma, it suffices to show that the assumptions that
\begin{enumerate}[(3)]
\item[(3)] $d_H(\supp(f_\lambda),\supp(g_\mu))>\epsilon$ and
\item[(4)] $d_H(\supp(f_{\lambda'}),\supp(g_\mu))>\epsilon$
\end{enumerate}
lead to a contradiction. We divide the argument into four cases.

{\em Case 1.} Suppose that (i) is satisfied, so that  we may write
$\sheaf{G}\langle g_{\mu}\rangle\simeq k[x,y]$. By (1), we have $
x\leq p+\epsilon$ and $y\geq q-\epsilon$, whereas (3) implies 
that $x<p-\epsilon$ or $y>q+\epsilon$. If $x<p-\epsilon$, the inequality
$y\geq q-\epsilon$ ensures that
\begin{equation}
\alpha(y)-\alpha(x)>\alpha(q)-\alpha(p) \,.
\label{E:alpha1}
\end{equation}
If $y>q+\epsilon$, the inequality $x<p+\epsilon$ also implies \eqref{E:alpha1}.
Similarly, (2) and (4) yield
\begin{equation}
\alpha(q')-\alpha(p')>\alpha(y)-\alpha(x) \,.
\label{E:alpha2}
\end{equation}
Then, \eqref{E:alpha1} and \eqref{E:alpha2} imply that
$\alpha(q)-\alpha(p)<\alpha(q')-\alpha(p')$, which contradicts
the hypothesis.
	
{\em Case 2.} Suppose that (ii) is satisfied and write $g_{\mu}\simeq k[x,y\rangle$. 
By (1), we have $x\leq p+\epsilon$ and $y\leq q+\epsilon$, whereas (3)
implies that $x<p-\epsilon$ or $y<q-\epsilon$. Either possibility yields
$\alpha(x)+\alpha(y)<\alpha(p)+\alpha(q)$. Similarly, by (2) and (4), we have 
$\alpha(p')+\alpha(q')<\alpha(x)+\alpha(y)$. Then,
$\alpha(p')+\alpha(q')<\alpha(p)+\alpha(q)$, which is a contradiction.
	
{\em Case 3.} Suppose that (iii) is satisfied and let  $g_{\mu}\simeq k\langle x,y]$. 
By (1) we have $x\geq p-\epsilon$ and $y\geq q-\epsilon$, whereas (3)
implies that $x>p+\epsilon$ or $y>q+\epsilon$, either giving 
$\alpha(x)+\alpha(y)>\alpha(p)+\alpha(q)$. Similarly, by (2) and (4), we 
have $\alpha(p')+\alpha(q')>\alpha(x)+\alpha(y)$. Then, $\alpha(p')+
\alpha(q')>\alpha(p)+\alpha(q)$, which is a contradiction.
	
{\em Case 4.} Suppose that (iv) holds and write $g_{\mu}\simeq k\langle x,y\rangle$. 
By (1) we have $x\geq p-\epsilon$ and $y\leq q+\epsilon$, whereas (3) 
implies $x>p+\epsilon$ or $y<q-\epsilon$, either yielding 
$\alpha(y)-\alpha(x)<\alpha(q)-\alpha(p)$. Similarly, by (2) and (4), we 
have $\alpha(q')-\alpha(p')<\alpha(y)-\alpha(x)$. Hence, 
$\alpha(q')-\alpha(p')<\alpha(q)-\alpha(p)$, contradicting the hypothesis.
\end{proof}

\begin{definition}
Given $f \in B_\sheaf{F}$, define $m^\epsilon_\sheaf{G}(f) \subseteq 
B_\sheaf{G}$ as the the subset of all sections $g \in B_\sheaf{G}$ such
that $f$ and $g$ are $\epsilon$-matched.
\end{definition}

Let $A=\{f_{\lambda_1},\ldots,f_{\lambda_m}\}\subseteq B_{\sheaf{F}}$ be a
finite, indexed sub-collection of basis elements with the property that all interval 
subsheaves $\sheaf{F} \langle f_{\lambda_i} \rangle$, $1 \leq i \leq m$, are 
of the same type and the inequalities specified in the hypotheses of
Lemma \ref{L:match2} are satisfied for any $i \leq j$. More precisely, if 
$\sheaf{F}\langle f_{\lambda_i} \rangle\simeq k[p_i,q_i]$ and
$\sheaf{F}\langle f_{\lambda_j} \rangle  \simeq k[p_j,q_j]$, 
$i \leq j$, then $\alpha(q_i)-\alpha(p_i)\geq 
\alpha(q_j)-\alpha(p_j)$, and similarly for other interval sheaf types. 
Define
\begin{equation}
\nu(A):=\{g_\mu\in B_{\sheaf{G}}: \phi_{\lambda_i}^\mu 
\psi_\mu^{\lambda_j}\neq 0 \mbox{ for some pair }i\leq j\} .
\end{equation}
Note that $|\nu (A)| < \infty$ because, for each fixed pair $i \leq j$,
only finitely many entries $\phi_{\lambda_i}^\mu$ 
and $\phi^{\lambda_j}_\mu$ can be non-zero. Moreover, 
Lemma \ref{L:match2} implies that each section in $\nu(A)$ is 
$\epsilon$-matched  with at least one section in $A$. Thus, we 
have
\begin{equation}
\nu(A) \subseteq \cup_{f\in A}\, m^\epsilon_\sheaf{G}(f) \,.
\label{E:mgf}
\end{equation}
We remark that the special ordering of the elements of $A$ is essential in
the argument that shows that \eqref{E:mgf} holds.

\begin{proposition}\label{P:cardinality}
Let $A$ be as above. If $(\Phi^\epsilon, \Psi^\epsilon)$ is an
$\epsilon$-interleaving and $A \subseteq X^\epsilon_\sheaf{F}$ (that is,
all sections in $A$ are $2\epsilon$-significant),  then
\[
|A|\leq |\nu(A)|<\infty.
\]
\end{proposition}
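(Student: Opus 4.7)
Finiteness $|\nu(A)|<\infty$ follows because for any fixed $\lambda$ only finitely many matrix entries $\phi^\mu_\lambda$ are nonzero, and dually for each $\mu$ only finitely many $\psi^{\lambda'}_\mu$ are nonzero; consequently $\nu(A)$ is a finite union, over the finitely many pairs $i\leq j$ in $A$, of finite sets.

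The core of the argument is a linear-algebra reduction based on the scalar $|A|\times|A|$ matrix
\[
M_{ij} \;:=\; \sum_{\mu\in\Lambda_\sheaf{G}}\phi^\mu_{\lambda_i}\psi^{\lambda_j}_\mu
\]
obtained from the composition of $\Phi^\epsilon$ and $\Psi^\epsilon$. The plan is to show that $M$ is lower triangular with $1$'s on the diagonal in the given ordering of $A$, so that the submatrix $\tilde M_{ij}:=\sum_{\mu\in\nu(A)}\phi^\mu_{\lambda_i}\psi^{\lambda_j}_\mu$ inherits this structure; a rank argument then gives $|A|\leq|\nu(A)|$. Expanding $\Psi^\epsilon\Phi^\epsilon(f_{\lambda_i})=f_{\lambda_i}|_{D_{\lambda_i}^{-2\epsilon}}$ in the basis of $\sheaf{F}(D_{\lambda_i}^{-2\epsilon})$ (Lemma~\ref{L:match1} ensures $D_{\lambda_i}^{-2\epsilon}\subseteq D_{\lambda'}$ for every $\lambda'$ contributing a nonzero term) and comparing coefficients, I get $M_{ii}=1$ by the $2\epsilon$-significance of $f_{\lambda_i}$, and $M_{ij}=0$ whenever $i\neq j$ and $f_{\lambda_j}|_{D_{\lambda_i}^{-2\epsilon}}\neq 0$.

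The key step, and the place where the ordering of $A$ enters, is to rule out $M_{ij}\neq 0$ for $i<j$. For types $k[x,y]$ and $k\langle x,y\rangle$ the zero region is trivial (either $Z=\emptyset$, or $D^{-2\epsilon}=\real$ so $f|_{D_{\lambda_i}^{-2\epsilon}}=f\neq 0$), so if $M_{ij}\neq 0$ then Lemma~\ref{L:match1} forces $f_{\lambda_j}|_{D_{\lambda_i}^{-2\epsilon}}\neq 0$, giving $M_{ij}=0$ automatically for every $i\neq j$. The delicate case is the half-open types, where the ordering is essential. Suppose $\sheaf{F}\langle f_{\lambda_i}\rangle\simeq k[p_i,q_i\rangle$ and $\sheaf{F}\langle f_{\lambda_j}\rangle\simeq k[p_j,q_j\rangle$, and assume $M_{ij}\neq 0$ with $f_{\lambda_j}|_{D_{\lambda_i}^{-2\epsilon}}=0$. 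From Lemma~\ref{L:match1} one reads $p_j\leq p_i+2\epsilon$ and $q_j\leq q_i+2\epsilon$; the vanishing condition translates into $D_{\lambda_i}^{-2\epsilon}\subseteq Z_{\lambda_j}$, i.e.\ $q_j\leq p_i+2\epsilon$; and the $2\epsilon$-significance of $f_{\lambda_i}$ and $f_{\lambda_j}$ gives $q_i>p_i+2\epsilon$ and $q_j>p_j+2\epsilon$. Combining these yields the strict inequalities $p_j<p_i$ and $q_j<q_i$, so $\alpha(p_j)+\alpha(q_j)<\alpha(p_i)+\alpha(q_i)$, contradicting the ordering $\alpha(p_i)+\alpha(q_i)\leq \alpha(p_j)+\alpha(q_j)$ required for $i<j$ in this type. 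The case $k\langle x,y]$ is handled by the symmetric inequalities.

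With the lower triangular structure of $M$ established, note that for every $i\leq j$ the definition of $\nu(A)$ forces $\tilde M_{ij}=M_{ij}$; hence $\tilde M$ shares the same diagonal and strict upper triangular entries as $M$, so $\tilde M$ is lower triangular with $1$'s on its diagonal and is therefore invertible with rank $|A|$. Since $\tilde M$ factors as a product of an $|A|\times|\nu(A)|$ matrix and a $|\nu(A)|\times|A|$ matrix, its rank is at most $|\nu(A)|$, yielding $|A|\leq|\nu(A)|$. The main obstacle is the half-open case analysis above, where the specific ordering supplied by Lemma~\ref{L:match2} interacts in a precise way with the $2\epsilon$-significance hypothesis; the rest of the argument is formal linear algebra.
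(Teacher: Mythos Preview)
Your proposal is correct and follows the same approach as the paper: form the $|A|\times|\nu(A)|$ and $|\nu(A)|\times|A|$ matrices of the interleaving maps, show their product is lower triangular with unit diagonal, and deduce $|A|\leq|\nu(A)|$ from the rank inequality. The paper's proof states the identity $\sum_{l}\phi^{\mu_l}_{\lambda_i}\psi^{\lambda_j}_{\mu_l}=\delta_{ij}$ for $i\leq j$ in one line, citing only the interleaving relation and $2\epsilon$-significance; your case analysis on the four interval types supplies the missing justification for the off-diagonal vanishing, and correctly identifies that the ordering of $A$ is only genuinely needed for the half-open types $k[\,,\rangle$ and $k\langle\,,]$, while for $k[\,,]$ and $k\langle\,,\rangle$ the matrix is actually diagonal on $A$.
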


\begin{proof}
Let $\nu(A)=\{g_{\mu_1},\ldots,g_{\mu_n}\}$. Since 
$\Psi^\epsilon\circ\Phi^\epsilon= e_{\sheaf{F}}^{2\epsilon}$ and all sections in
$A$ are $2\epsilon$-significant, we have that  
$\sum_{l=1}^n\phi^{\mu_l}_{\lambda_i}\psi^{\lambda_j}_{\mu_l} 
= \delta_{ij}$, for any $i \leq j$, where $\delta_{ij}$ is Kronecker's delta.  Thus, the
product of the matrices $K:=(\phi^{\mu_l}_{\lambda_i})_{m\times n}$ and 
$L:=(\psi^{\lambda_j}_{\mu_l})_{n\times m}$ is a triangular matrix with diagonal 
entries all equal to $1$. Hence, the product $KL$ has rank $m$. This implies that
$|\nu(A)|=n\geq rank(K)\geq rank(KL)=m=|A|$.
\end{proof}

As pointed out earlier, the general strategy for the proof of stability is similar
to that used by Bjerkevik in the study of multi-parameter persistence 
\cite{Bjerkevik2016stability}, although the arguments for $p$-sheaves differ
substantially. A key matching result needed in the proof of the stability theorem, 
we employ the following classic result in combinatorics and graph theory.

\begin{theorem}[Hall's Matching Theorem] \label{T:hall}
Let $\{Z_x \colon x \in X\}$ be a collection of non-empty subsets of a set $Z$
such that $|A|\leq|\cup_{ x\in A} Z_x| <\infty$, for any finite set $A\subseteq X$.
Then, there exists an injective map $\zeta \colon X \to Z$ such that
$\zeta (x)\in Z_x$, $\forall x \in X$. 
\end{theorem}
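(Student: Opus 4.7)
The plan is to reduce to the classical finite version of Hall's marriage theorem and then bootstrap to arbitrary (possibly infinite) $X$ via a standard compactness argument.

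For the finite case, I would induct on $|X|$. The base case $|X|=1$ is immediate since $Z_x$ is nonempty. For the inductive step, distinguish two cases. In the \emph{generic} case, every nonempty proper subset $A \subsetneq X$ satisfies the strict inequality $|A| < |\cup_{x \in A} Z_x|$; then pick any $x_0 \in X$ and any $z_0 \in Z_{x_0}$, note that the Hall-type hypothesis persists on $X \setminus \{x_0\}$ with sets $Z_x \setminus \{z_0\}$, and close the case by induction. In the \emph{critical} case, there is a proper subset $A_0 \subsetneq X$ saturating the condition, i.e.\ $|A_0| = |\cup_{x \in A_0} Z_x|$; apply induction to $A_0$ to obtain an injection $\zeta_0 \colon A_0 \to Z_{A_0}$, where $Z_{A_0} := \cup_{x \in A_0} Z_x$, and then apply induction separately to $X \setminus A_0$ with the diminished sets $Z_x \setminus Z_{A_0}$. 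Combining the two partial matchings yields the full injection.

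For arbitrary $X$, the finite case is combined with a Tychonoff compactness argument. First, applying the hypothesis to singletons $\{x\}$ shows that each $Z_x$ is a nonempty finite set. Endow each $Z_x$ with the discrete topology and form $P := \prod_{x \in X} Z_x$, which is compact by Tychonoff's theorem. For each finite subset $A \subseteq X$, the set
\[
K_A := \{\zeta \in P \colon \zeta|_A \text{ is injective}\}
\]
is closed (indeed clopen) in $P$, and nonempty by the finite case. Since $K_{A_1} \cap \cdots \cap K_{A_n} \supseteq K_{A_1 \cup \cdots \cup A_n}$, the family $\{K_A\}_{A \text{ finite}}$ has the finite intersection property. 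By compactness, $\bigcap_A K_A$ is nonempty, and any element of this intersection is a globally injective choice function $\zeta \colon X \to Z$ with $\zeta(x) \in Z_x$ for every $x \in X$.

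The main obstacle is the critical-case step in the finite induction: one must verify that after matching $A_0$ via the inductive hypothesis, Hall's condition continues to hold on $X \setminus A_0$ with sets $Z_x \setminus Z_{A_0}$. This is established by applying the original Hall condition to sets of the form $A_0 \cup B$ for finite $B \subseteq X \setminus A_0$ and subtracting off the saturation equality $|A_0| = |Z_{A_0}|$. The passage from finite to infinite via Tychonoff, by contrast, is routine once the finite case is secured.
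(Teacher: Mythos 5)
The paper does not prove this statement at all: it is quoted as ``the following classic result in combinatorics and graph theory'' and used as a black box in the proof of the Matching Lemma (Lemma \ref{L:fmatching}), so there is no in-paper argument to compare against. Your proof is correct and is the standard one for the Hall--Rado version of the marriage theorem with finite sets $Z_x$. The finite induction is the classical Halmos--Vaughan argument, and both cases check out: in the generic case the strict inequality $|A|<|\cup_{x\in A}Z_x|$ absorbs the loss of the single element $z_0$, and in the critical case the identity $|\cup_{x\in B}(Z_x\setminus Z_{A_0})| = |\cup_{x\in A_0\cup B}Z_x|-|Z_{A_0}|$ together with the saturation $|A_0|=|Z_{A_0}|$ gives Hall's condition on $X\setminus A_0$, while disjointness of the two image sets makes the combined map injective. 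The passage to infinite $X$ is also sound: the hypothesis applied to singletons makes each $Z_x$ finite and non-empty, so $P=\prod_x Z_x$ is compact, each $K_A$ is clopen (it depends on finitely many coordinates) and non-empty by the finite case after extending a partial matching arbitrarily off $A$, and the finite intersection property plus the two-point sets $A=\{x,y\}$ yield a globally injective choice function. The one hypothesis worth flagging explicitly is that finiteness of the $Z_x$ is genuinely needed for compactness (and the theorem is false without it), but you do note this, so there is no gap.
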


\begin{lemma}[The Matching Lemma] \label{L:fmatching}
Suppose that $\sheaf{F}$ and $\sheaf{G}$ are decomposable $p$-sheaves 
with bases $B_{\sheaf{F}}$ and $B_{\sheaf{G}}$, respectively, and let 
\[
X := \{ f \in X^\epsilon_\sheaf{F} \colon |m_{\sheaf{G}}^\epsilon(f)| < \infty\}.
\]
If $\sheaf{F}$ and $\sheaf{G}$ are $\epsilon$-interleaved, then there exists an
injective $\epsilon$-map $\zeta \colon X \to B_{\sheaf{G}}$.
\end{lemma}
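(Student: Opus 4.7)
The plan is to verify the hypotheses of Hall's Matching Theorem (Theorem \ref{T:hall}) for the family of sets $\{m^\epsilon_\sheaf{G}(f) \colon f \in X\}$, and then obtain $\zeta$ directly from Hall's theorem. The finiteness requirement $|\cup_{f \in A} m^\epsilon_\sheaf{G}(f)| < \infty$ is immediate for any finite $A \subseteq X$, since each $f \in X$ has $|m^\epsilon_\sheaf{G}(f)| < \infty$ by the definition of $X$. The substantive task is to show
\[
|A| \;\leq\; \Big|\bigcup_{f\in A} m^\epsilon_\sheaf{G}(f)\Big|
\]
for every finite $A \subseteq X$.

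To do this, I would first partition $A = A_1 \sqcup A_2 \sqcup A_3 \sqcup A_4$ according to the four interval $p$-sheaf types generated by the sections: $k[\,,]$, $k[\,,\rangle$, $k\langle\,,]$, and $k\langle\,,\rangle$. By Definition \ref{D:smatch}(i), $m^\epsilon_\sheaf{G}(f)$ consists only of basis sections $g$ for which $\sheaf{G}\langle g\rangle$ has the same type as $\sheaf{F}\langle f\rangle$; therefore the sets $\bigcup_{f\in A_i} m^\epsilon_\sheaf{G}(f)$ are pairwise disjoint in $B_\sheaf{G}$, so it suffices to prove the cardinality inequality one type at a time.

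Fix one type, say $A_i = \{f_{\lambda_1},\ldots,f_{\lambda_m}\}$. I would order its elements in the manner required by Lemma \ref{L:match2}: for type $k[\,,]$ by decreasing $\alpha(q)-\alpha(p)$; for type $k[\,,\rangle$ by increasing $\alpha(p)+\alpha(q)$; for type $k\langle\,,]$ by decreasing $\alpha(p)+\alpha(q)$; and for type $k\langle\,,\rangle$ by increasing $\alpha(q)-\alpha(p)$. Since these are totally preordered quantities, such an ordering on the finite set $A_i$ exists. With $A_i$ so ordered, Proposition \ref{P:cardinality}, applied to the $\epsilon$-interleaving $(\Phi^\epsilon,\Psi^\epsilon)$ provided by hypothesis (and using $A_i \subseteq X \subseteq X^\epsilon_\sheaf{F}$ to ensure $2\epsilon$-significance), yields $|A_i| \leq |\nu(A_i)| < \infty$. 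By inclusion \eqref{E:mgf}, $\nu(A_i) \subseteq \bigcup_{f\in A_i} m^\epsilon_\sheaf{G}(f)$, so $|A_i| \leq |\bigcup_{f\in A_i} m^\epsilon_\sheaf{G}(f)|$. Summing over $i$ and invoking disjointness gives the desired inequality for $A$.

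The main obstacle is the careful bookkeeping needed to invoke Proposition \ref{P:cardinality}: one must recognize that its proof requires (a) a common interval type across $A_i$ and (b) a type-specific monotone ordering tuned to the inequalities of Lemma \ref{L:match2}, both of which are arranged by the partition-and-sort step above. Once Hall's theorem is applied to the marriage condition just verified, we obtain an injection $\zeta \colon X \to B_\sheaf{G}$ with $\zeta(f) \in m^\epsilon_\sheaf{G}(f)$ for every $f \in X$; by definition of $m^\epsilon_\sheaf{G}(f)$, each $f$ is $\epsilon$-matched with $\zeta(f)$, so $\zeta$ is the required injective $\epsilon$-map.
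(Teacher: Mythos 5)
Your proposal is correct and follows essentially the same route as the paper's proof: partition a finite $A$ by interval type, sort each piece so that Proposition \ref{P:cardinality} applies, use the inclusion \eqref{E:mgf} together with disjointness of the images across types to verify Hall's marriage condition, and then invoke Theorem \ref{T:hall}. The only point the paper makes separately that you fold into the general inequality is the non-emptiness of each $m^\epsilon_\sheaf{G}(f)$ (the singleton case of the marriage condition), which your argument handles correctly.
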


\begin{proof}
We construct $\zeta$ by applying Hall's Matching Theorem to $Z := B_\sheaf{G}$,
$Z_f := m^\epsilon_\sheaf{F} (f)$, for any $f \in X$. The fact that
$m^\epsilon_\sheaf{F} (f) \ne \emptyset$ follows from \eqref{E:mgf} and
Proposition \ref{P:cardinality} applied to the singleton $\{f\}$. Given any non-empty
finite  set $A \subseteq X$, write it as the disjoint union 
\begin{equation}
A = [A] \sqcup [A\rangle \sqcup \langle A] \sqcup \langle A \rangle\,,
\end{equation}
where $[A]$ only contains sections $f$ such that $\sheaf{F}\langle f \rangle
\cong k [p,q]$, the subset $[A\rangle$ only contains sections $f$ such that 
$\sheaf{F}\langle f \rangle \cong k [p,q\rangle$,
and similarly for the other two terms. If any of these subsets is empty,
we simply discard it. We can index the elements of $[A]$ appropriately, so
that $[A]$ satisfies the assumptions of Proposition \ref{P:cardinality}. Therefore,
for this labeling of $[A]$, we have $|[A]| \leq |\cup_{f\in [A]}m^\epsilon_\sheaf{G}(f)|$. 
Similar inequalities can be obtained for the other three subsets of $A$ after
appropriate labeling of their elements. Note that the sets
$\cup_{f\in [A]}m^\epsilon_\sheaf{G}(f)$, $\cup_{f\in [A\rangle}m^\epsilon_\sheaf{G}(f)$, 
$\cup_{f\in \langle A]}m^\epsilon_\sheaf{G}(f)$, and 
$\cup_{f\in \langle A \rangle}m^\epsilon_\sheaf{G}(f)$ are pairwise disjoint 
because they contain sections that span interval sheaves of different types.
Therefore, 
\begin{equation}
|A| = |[A]| + |[A\rangle| + |\langle A]| + |\langle A \rangle| \leq
 |\cup_{f\in A}m^\epsilon_\sheaf{G}(f)| \,.
\end{equation}
By Theorem \ref{T:hall}, there is an injection $\zeta \colon
X \to B_\sheaf{G}$ such that $\zeta (f) \in m^\epsilon_\sheaf{G}(f)$,
$\forall f \in X$. In other words, $\zeta$ is an injective $\epsilon$-map.
\end{proof}

\begin{theorem}[The Algebraic Stability Theorem] \label{T:stab}
Let $\sheaf{F}$ and $\sheaf{G}$ be decomposable $p$-sheaves and $\epsilon>0$.
If $\sheaf{F}$ and $\sheaf{G}$ are $\epsilon$-interleaved, then $B_{\sheaf{F}}$ 
and $B_{\sheaf{G}}$ are $(\epsilon + \delta)$-matched, for any $\delta>0$.
Therefore,
\[
d_b (dgm(F), dgm(G)) \leq d_I (F, G).
\]
\end{theorem}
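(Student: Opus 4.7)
The plan is to translate the $\epsilon$-interleaving between $\sheaf{F}$ and $\sheaf{G}$ into a combinatorial matching of their bases, and then lift this basis matching to a diagram matching via Proposition \ref{P:basis_diagram}. More precisely, given an $\epsilon$-interleaving $(\Phi^\epsilon, \Psi^\epsilon)$ and arbitrary $\delta > 0$, I will construct an $(\epsilon + \delta)$-matching between fixed bases $B_{\sheaf F}$ and $B_{\sheaf G}$. Proposition \ref{P:basis_diagram} then yields an $(\epsilon + \delta)$-matching between $dgm(\sheaf F)$ and $dgm(\sheaf G)$, so that $d_b(dgm(\sheaf F), dgm(\sheaf G)) \leq \epsilon + \delta$; letting $\delta \to 0^+$ and taking the infimum over all $\epsilon$-interleavings delivers $d_b \leq d_I$.

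By Lemma \ref{L:injectives}, the construction of an $(\epsilon + \delta)$-matching of bases reduces to producing two injective $(\epsilon + \delta)$-maps, $\sigma \colon X^{\epsilon+\delta}_{\sheaf F} \to B_{\sheaf G}$ and $\tau \colon X^{\epsilon+\delta}_{\sheaf G} \to B_{\sheaf F}$. Since an $\epsilon$-interleaving is automatically an $(\epsilon + \delta)$-interleaving (Remark \ref{R:interleave}), the Matching Lemma (Lemma \ref{L:fmatching}) applied to the latter produces such maps, initially defined only on the subsets
\[
X_{\sheaf F} := \{ f \in X^{\epsilon+\delta}_{\sheaf F} \colon |m^{\epsilon+\delta}_{\sheaf G}(f)| < \infty \}
\quad \text{and} \quad
X_{\sheaf G} := \{ g \in X^{\epsilon+\delta}_{\sheaf G} \colon |m^{\epsilon+\delta}_{\sheaf F}(g)| < \infty \}.
\]

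The main obstacle is to extend these partial injections to all of $X^{\epsilon+\delta}_{\sheaf F}$ and $X^{\epsilon+\delta}_{\sheaf G}$, respectively. For $f \in X^{\epsilon+\delta}_{\sheaf F} \setminus X_{\sheaf F}$, the set $m^{\epsilon+\delta}_{\sheaf G}(f)$ is infinite, meaning that infinitely many basis elements of $\sheaf G$ share the interval-sheaf type of $\sheaf{F}\langle f \rangle$ and have support within Hausdorff distance $\epsilon + \delta$ of $\supp(f)$. The idea is to argue, using the matrices of $\Phi^\epsilon$ and $\Psi^\epsilon$ with respect to $B_{\sheaf F}$ and $B_{\sheaf G}$ together with the rank identity underlying Proposition \ref{P:cardinality}, that any such infinite-multiplicity cluster in $\sheaf G$ is mirrored by a correspondingly large cluster of basis elements in $B_{\sheaf F}$ of the same type and with nearby support. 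This cardinal parity then allows completion of $\sigma$ by a bijective pairing on the infinite-multiplicity remainder, selecting each $\sigma(f) \in m^{\epsilon+\delta}_{\sheaf G}(f)$ outside the image of the Matching Lemma output via a standard Zorn-type transfinite induction that preserves global injectivity. A symmetric construction extends $\tau$.

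Once $\sigma$ and $\tau$ are defined on the whole of $X^{\epsilon+\delta}_{\sheaf F}$ and $X^{\epsilon+\delta}_{\sheaf G}$, Lemma \ref{L:injectives} combines them into an $(\epsilon + \delta)$-matching between $B_{\sheaf F}$ and $B_{\sheaf G}$, which via Proposition \ref{P:basis_diagram} gives the desired $(\epsilon + \delta)$-matching of persistence diagrams and completes the proof of $d_b \leq d_I$.
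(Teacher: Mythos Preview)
Your overall strategy is correct and matches the paper's: reduce to Lemma \ref{L:injectives}, use the Matching Lemma (Lemma \ref{L:fmatching}) to handle basis elements with finitely many potential matches, and then extend over the elements with infinite matching sets. The gap is in that last step, which is where the real work lies and where your argument becomes hand-waving.

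Your proposed extension relies on an unproven ``cardinal parity'' claim: that an infinite cluster $m^{\epsilon+\delta}_{\sheaf G}(f)$ in $B_{\sheaf G}$ forces a correspondingly large cluster in $B_{\sheaf F}$, so that a Zorn-style greedy injection can always pick $\sigma(f)$ outside the image already committed. Nothing in Proposition \ref{P:cardinality} gives you this; that proposition only bounds $|A| \le |\nu(A)|$ for \emph{finite} $A$, and says nothing about the reverse direction or about infinite cardinalities. In particular, there is no guarantee that the injective map $\zeta$ produced by the Matching Lemma leaves room in $m^{\epsilon+\delta}_{\sheaf G}(f)$: its image could a priori exhaust the relevant cluster, and you have not shown otherwise.

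The paper resolves this differently and more concretely. First, it reduces to countable bases by decomposing a bipartite graph on $B_{\sheaf F} \sqcup B_{\sheaf G}$ (edges recording nonzero matrix entries of $\Phi^\epsilon$, $\Psi^\epsilon$) into connected components, each of which is countable. Second, it applies the Matching Lemma at level $\epsilon$, not $\epsilon+\delta$, so that the $\delta$ is held in reserve. Third, for each $f_i$ with $|m^\epsilon_{\sheaf G}(f_i)| = \infty$, it invokes Lemma \ref{L:tight} (an accumulation-point argument) to extract an infinite sequence $S_i \subseteq m^\epsilon_{\sheaf G}(f_i)$ whose members are pairwise $\delta$-matched, and arranges the $S_i$ to be disjoint. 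Finally, rather than avoiding the image of $\zeta$, it \emph{shifts}: set $\sigma(f_i) = g_i^1$, and whenever $\zeta$ already sent some $f \in X$ to $g_i^j \in S_i$, redefine $\sigma(f) = g_i^{j+1}$. Since $g_i^j$ and $g_i^{j+1}$ are $\delta$-matched and $f$ was $\epsilon$-matched to $g_i^j$, the result is an $(\epsilon+\delta)$-match. This shift trick is the missing idea in your proposal; without it (or a genuine proof of your cardinality claim), the extension step does not go through.
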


\begin{proof}
We begin by reducing the argument to the case in which both $B_{\sheaf{F}}$ and 
$B_{\sheaf{G}}$ are countable sets. Let $(\Phi^\epsilon, \Psi^\epsilon)$ be an 
$\epsilon$-interleaving between $\sheaf{F}$ and $\sheaf{G}$ represented in the 
bases $B_{\sheaf{F}}$ and $B_{\sheaf{G}}$ by the matrices 
$(\phi_\lambda^\mu)$ and $(\psi_\mu^\lambda)$, respectively. Let $G$ be the
bipartite graph with vertex set partitioned as
$B_\sheaf{F}\sqcup B_\sheaf{G}$, and with an edge between $f_\lambda$
and $g_\mu$ if $\phi_\lambda^\mu\neq 0$ or $\psi^\lambda_\mu\neq 0$.
Then, any connected component $C$ of $G$ is a bipartite subgraph whose
vertex set $V$ is countable. This can be seen as follows. If we fix $v_0\in V$, 
then for any integer $n \geq 0$, the set $B_n := \{ v\in C \colon d(v_0,v)\leq n\}$ 
is finite, where $d(v_0,v)$ is the distance in the graph $C$. Hence, 
$V=\cup_{n \geq 0} \,B_n$ is countable. Moreover, it is a straightforward
consequence of Definition \ref{D:inter} that if $\sheaf{F}$  and $\sheaf{G}$
are $\epsilon$-interleaved, so are $\sheaf{F}\langle C_1\rangle$ and
$\sheaf{G}\langle C_2\rangle$, the subsheaves of $\sheaf{F}$ and $\sheaf{G}$
spanned by $C_1$ and $C_2$, respectively. 

If there is an $(\epsilon+\delta)$-matching between $C_1$ and $C_2$, for any
connected component $C$ of $G$, then assembling the matchings over all 
connected components of $G$, we obtain an  $(\epsilon+\delta)$-matching
between $B_\sheaf{F}$ and $B_\sheaf{G}$. Thus, without loss of generality, 
we assume that $B_\sheaf{F}$ and $B_\sheaf{G}$ are countable.

By Lemma \ref{L:injectives}, to construct an $(\epsilon+\delta)$-matching 
between $B_\sheaf{F}$ and $B_\sheaf{G}$, it suffices to construct
injective $(\epsilon + \delta)$-maps $\sigma \colon X^\epsilon_\sheaf{F} \to 
B_\sheaf{G}$ and $\tau \colon X^\epsilon_\sheaf{G} \to  B_\sheaf{F}$.
To construct $\sigma$, we filter the countable set $X^\epsilon_\sheaf{F} 
\subseteq B_\sheaf{F}$ and define it inductively over the filtration.
Let 
\begin{equation}
X := \{ f \in X^\epsilon_\sheaf{F} \colon |m_{\sheaf{G}}^\epsilon(f)| < \infty\}
\end{equation}
and $\zeta \colon X \to B_\sheaf{G}$ be an injective $\epsilon$-map whose
existence is guaranteed by the Matching Lemma (Lemma \ref{L:fmatching}).

Before proceeding with the construction, we recall a standard fact:  if 
$N_1, N_2, \cdots \subseteq Z$ are infinite subsets of a set $Z$,
there exist infinite subsets $N'_i\subseteq N_i$, $i \geq 1$,
such that $N'_i\cap N'_j = \emptyset$, for any $i\neq j$.

List the sections in $X^\epsilon_\sheaf{F} \setminus X$ as $\{f_i \colon i \geq 1\}$.
For any fixed $\delta > 0$, since $|m_\sheaf{G}^\epsilon (f_i)|=\infty$, 
by Lemma \ref{L:tight}  below, there exists a sequence
\begin{equation}
S_i:=\{g_i^n \colon n \geq 1\}  \subseteq m_\sheaf{G}^\epsilon (f_i)
\end{equation}
such that any two sections in $S_i$ are
$\delta$-matched. Furthermore, by the remark in the previous paragraph,
we can assume  that $S_i \cap S_j = \emptyset$, for any $i\ne j$. If 
$X = \emptyset$, we simply define $\sigma (f_i)$, $i \geq 1$,  to be any 
element of $S_i$. This concludes the construction of $\sigma$ because it
is an injective  $\epsilon$-mapping, in particular, an injective 
$(\epsilon + \delta)$-map. If $X \ne \emptyset$, let 
\begin{equation}
X_0 := X \setminus \cup_{i \geq 1} \,\zeta^{-1} (S_i) 
\end{equation}
and define $\sigma_0 \colon X_0 \to B_\sheaf{G}$ as $\sigma_0 = \zeta|_{X_0}$,
which is an injective $\epsilon$-map.
For $i \geq 1$, inductively define
\begin{equation}
X_i := X_{i-1} \cup \zeta^{-1} (S_i) \cup \{f_i\}.
\end{equation}
Assuming that an injective $(\epsilon + \delta)$-map 
$\sigma_{i-1} \colon X_{i-1} \to B_\sheaf{G}$ has been constructed,
we extend it to $\sigma_i \colon X_i \to B_\sheaf{G}$, as follows:
\begin{enumerate}[(i)]
\item $\sigma_i (f_i) = g_i^1$;
\item if $f \in \zeta^{-1} (S_i) \subseteq X$ and $\zeta (f) = g_i^j$,
set $\sigma_i (f) = g_i^{j+1}$.
\end{enumerate}
Since $f$ and $g_i^j$ are $\epsilon$-matched and $g_i^j$ and $g_i^{j+1}$
are $\delta$-matched, we have that $f$ and $\sigma_i (f) = g_i^{j+1}$ are 
$(\epsilon + \delta)$-matched. Thus, $\sigma_i$ is an injective 
$(\epsilon + \delta)$-map. As $X^\epsilon_\sheaf{F} = \cup_{i \geq 1} X_i$,
the mapping $\sigma \colon X^\epsilon_\sheaf{F} \to B_\sheaf{G}$ given by
$\sigma|_{X_i} = \sigma_i$ has the desired properties.

Similarly, we construct an injective $(\epsilon + \delta)$-map 
$\tau \colon X^\epsilon_\sheaf{G} \to B_\sheaf{F}$. By Lemma \ref{L:injectives}, 
there exists an $(\epsilon+\delta)$-matching between $B_\sheaf{F}$ and
$B_\sheaf{G}$.

For the stability statement, let $\epsilon > d_I (F, G)$. Then, for
any $\delta> 0$, there is an $(\epsilon+\delta)$-matching between $B_\sheaf{F}$ 
and $B_\sheaf{G}$. Taking the infimum over $\delta>0$, it follows that
$d_b (dgm(\sheaf{F}), dgm(\sheaf{G})) \leq \epsilon$. Taking the infimum over $\epsilon>0$, 
we obtain $d_b (dgm(\sheaf{F}), dgm(\sheaf{G})) \leq d_I (F, G)$.
\end{proof}

\begin{lemma}\label{L:tight}
Let $f \in B_\sheaf{F}$ be a $2\epsilon$-significant section and 
$S\subseteq B_\sheaf{G}$ an infinite subset with the property that each section
$g \in S$ is $\epsilon$-interleaved with $f$. Then, for any $\delta>0$, there exists
an infinite sub-collection $S_\delta \subseteq S$ such that any two sections in
$S_\delta$ are $\delta$-interleaved.
\end{lemma}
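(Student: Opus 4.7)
The plan is a sequential compactness argument on the space of endpoints of the supports of sections in $S$.

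\emph{Step 1 (Parameterization).} By Lemma \ref{L:csupp}, each $g\in S$, as well as $f$, has connected support, so I may write $\supp(f)=(p,q)$ and $\supp(g)=(p_g,q_g)$ with endpoints in $\dec$. Since the Hausdorff distance between two intervals of $\real$ is exactly the maximum of the differences of their corresponding endpoints (with the convention $|{\pm\infty}-{\pm\infty}|=0$), the problem of showing two sections $g_1,g_2\in S$ are $\delta$-matched reduces (the type condition being free inside $S$) to controlling
\[
\max\bigl(|p_{g_1}-p_{g_2}|,\,|q_{g_1}-q_{g_2}|\bigr)\leq\delta.
\]

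\emph{Step 2 (Compactness of the endpoint locus).} I claim the set $\{(p_g,q_g):g\in S\}$ lies in a compact subset $K$ of $\ereal\times\ereal$. The type condition in Definition \ref{D:smatch}(i) forces the interval $p$-sheaves $\sheaf{G}\langle g\rangle$ and $\sheaf{F}\langle f\rangle$ to have the same type, which means that an infinite endpoint of $\supp(f)$ is matched by an infinite endpoint of $\supp(g)$ (this is also forced independently by the finiteness of the Hausdorff distance: a bounded interval has infinite Hausdorff distance to an unbounded one). The bound $d_H(\supp(g),\supp(f))\leq\epsilon$ then confines each finite endpoint of $\supp(g)$ to a compact real interval of length $2\epsilon$ centered at the corresponding endpoint of $\supp(f)$. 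Hence $K$ is a product of compact intervals and/or $\pm\infty$-singletons.

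\emph{Step 3 (Extraction).} Index $S$ as a sequence $\{g_n\}_{n\geq 1}$. Applying Bolzano--Weierstrass in $K$ to the bounded sequence $(p_{g_n},q_{g_n})$ yields a convergent subsequence, and passing to a sufficiently late tail produces an infinite $S_\delta\subseteq S$ whose endpoint pairs are mutually within $\delta$ in each coordinate. By Step 1, any two elements of $S_\delta$ are then $\delta$-matched; since $S_\delta\subseteq S$ inherits the common interval type with $f$, Definition \ref{D:smatch} is satisfied and $S_\delta$ is the desired sub-collection. (If the map $g\mapsto(p_g,q_g)$ has an infinite fiber one gets infinitely many $g$'s with identical supports and the conclusion is immediate with $d_H=0$.)

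\emph{Main obstacle.} The only delicate point is ensuring that the endpoint locus is actually contained in a compact set, i.e.\ preventing endpoints from escaping to $\pm\infty$ in a way not already matched by $f$; this is what the type-preservation condition in the definition of $\epsilon$-matching buys us. The $2\epsilon$-significance hypothesis on $f$ does not seem essential for this compactness argument itself — it is presumably included because the lemma is applied in the proof of Theorem \ref{T:stab} to sections arising from $X^\epsilon_{\sheaf{F}}$.
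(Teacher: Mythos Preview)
Your proposal is correct and follows essentially the same approach as the paper: both arguments parameterize the supports of sections in $S$ by their endpoint pairs, use the $\epsilon$-matching bound to confine these endpoints to a compact region, and then extract an infinite sub-collection near an accumulation point via sequential compactness. Your treatment is slightly more careful than the paper's in explicitly handling the case of infinite endpoints via the type-preservation clause of Definition~\ref{D:smatch}, whereas the paper tacitly assumes finite endpoints by writing $\supp(f)=(s^\ast,t^\ast)$; your observation that the $2\epsilon$-significance hypothesis is not actually used in the argument is also correct.
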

\begin{proof}
Without loss of generality, we may assume that $S$ is countable.
Suppose that $\supp(f)=(s^\ast, t^\ast)$ and let $S=\{g_n \colon n\geq 1\}$ with
$\supp(g_n)=(s_n^\ast,t_n^\ast)$, $\forall n \geq 1$. Then, we have 
$s-\epsilon\leq s_n\leq s+\epsilon$ and $t-\epsilon\leq t_n\leq t+\epsilon$ for all $n$. 
Hence, $\{(s_n,t_n)\}$ is a bounded set under the $d^\infty$ metric and therefore
has at least one accumulation point, say, $(s_0,t_0)$. This implies that, for any $\delta>0$, 
we can choose a sub-collection of intervals $\{(s_{n_i}^\ast,t_{n_i}^\ast)\}_{i=1}^\infty$,
each contained in the $\delta/2$-neighborhood of $(s_0^-, t_0^+)$, with $s_{n_i} \to s_0$
and $t_{n_i} \to t_0$. By construction, any pair of sections in 
$S_\delta=\{g_{n_i} \colon i\geq 1\}$ are $\delta$-interleaved.
\end{proof}

\begin{theorem}[The Isometry Theorem] \label{T:iso}
If $\sheaf{F}$ and $\sheaf{G}$ are decomposable $p$-sheaves, then
\[
d_b (dgm(\sheaf{F}), dgm(\sheaf{G})) = d_I (\sheaf{F}, \sheaf{G}).
\]
\end{theorem}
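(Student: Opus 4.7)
The plan is very short: both inequalities have effectively been established in the preceding subsections, so the proof of the Isometry Theorem amounts to assembling them.

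First I would invoke Proposition \ref{P:cstability}, which provides the converse stability direction
\[
d_I(\sheaf{F},\sheaf{G}) \leq d_b(dgm(\sheaf{F}),dgm(\sheaf{G})).
\]
The argument there takes an $\epsilon$-matching of the diagrams and produces, for each pair of matched intervals, an $(\epsilon+\delta)$-interleaving of the corresponding summands, then assembles these pairwise interleavings into a global $(\epsilon+\delta)$-interleaving of $\sheaf{F}$ and $\sheaf{G}$; taking infima over $\epsilon$ and $\delta$ yields the inequality.

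Next I would invoke the Algebraic Stability Theorem (Theorem \ref{T:stab}), which gives the reverse inequality
\[
d_b(dgm(\sheaf{F}),dgm(\sheaf{G})) \leq d_I(\sheaf{F},\sheaf{G}).
\]
That result was the substantive one: starting from an $\epsilon$-interleaving and bases $B_{\sheaf{F}}$, $B_{\sheaf{G}}$, it constructed an injective $(\epsilon+\delta)$-map from $X^\epsilon_{\sheaf{F}}$ to $B_{\sheaf{G}}$ (and symmetrically) via the Matching Lemma, Hall's Theorem, and the tightness Lemma \ref{L:tight}, then used Lemma \ref{L:injectives} and Proposition \ref{P:basis_diagram} to convert these to an $(\epsilon+\delta)$-matching of diagrams. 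Taking the infimum over $\delta>0$ and then over admissible $\epsilon$ gives the claimed bound.

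Combining the two inequalities yields the asserted equality. There is no real obstacle here since the hard work was carried out in Theorem \ref{T:stab}; the only thing to be careful about is that both $\sheaf{F}$ and $\sheaf{G}$ are assumed decomposable so that $dgm(\sheaf{F})$ and $dgm(\sheaf{G})$ are well defined and both Proposition \ref{P:cstability} and Theorem \ref{T:stab} apply.
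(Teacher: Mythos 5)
Your proposal is correct and is exactly the paper's proof: the Isometry Theorem follows by combining the inequality $d_I \leq d_b$ from Proposition \ref{P:cstability} with the reverse inequality from the Algebraic Stability Theorem \ref{T:stab}. Your summary of how each of those two results is obtained matches the paper's arguments as well.
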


\begin{proof}
This follows from Proposition \ref{P:cstability} and Theorem \ref{T:stab}.
\end{proof}

%--------------------------

\section{Applications} \label{S:applications}

The formulation of persistent structures developed in this paper largely has been
motivated by applications. This section explores some of these applications, explaining how
correspondence modules relate to levelset zigzag persistence, to slices of 2-D persistence
modules, as well as how to obtain homological barcodes richer in geometric information than
those obtained from sublevel (or superlevel) set filtrations or extended persistence 
\cite{Cohen-Steiner2009}. Among other things, we establish a Mayer-Vietoris sequence 
relating sublevel and superlevel set homology modules to levelset homology modules.

\subsection{Levelset Persistence} \label{S:levelset}

Let $f \colon X \to \real$ be a continuous function defined on a topological space $X$.
It is of great interest to summarize the topological changes in the level sets of $f$ across
function values, as such summaries can provide valuable insights on $f$. However, unlike
sublevel sets, there are no natural mappings relating different level sets, so a common
practice is to use interlevel sets to interpolate level sets in a zigzag structure 
\cite{Carlsson2010,Carlsson2009,Carlsson2019}.
To be more precise, for $s \leq t$, denote the interlevel set between $s$ and $t$ by
$X_s^t := f^{-1} ([s,t])$. To further simplify notation, write $X[t]$ for the level sets $X_t^t$.
At the topological space level, we have inclusions $X[s] \hookrightarrow X_s^t
\hookleftarrow X[t]$ that induce homomorphisms
\begin{equation}
\begin{tikzcd}
H_\ast (X[s]) \ar[r, "\phi_s^t"] & H_\ast (X_s^t) &  H_\ast (X [t]) \ar[l, "\psi_s^t" ']
\end{tikzcd}
\end{equation}
on homology (with field coefficients). Thus, for an increasing sequence $(t_n)$,
we obtain a zigzag module
\begin{equation}
\begin{tikzcd}[cramped]
\ldots & H_\ast (X[t_{n-1}]) \ar[r, "\phi_{n-1}^n"]  \ar[l] & H_\ast (X_{t_{n-1}}^{t_n}) &
H_\ast (X [t_n]) \ar[l, "\psi_{n-1}^n" '] \ar[r] & \ldots
\end{tikzcd}
\label{E:zigzag}
\end{equation}
%------------------------------
Using correspondences, we eliminate the homology of interlevel sets from
\eqref{E:zigzag}, treating the triple $(H_\ast (X_{t_{n-1}}^{t_n}), \phi_{n-1}^n, \psi_{n-1}^n)$
as a $\cat{CVec}$-morphism from $H_\ast (X[t_{n-1}])$ to $H_\ast (X[t_n])$. This has
the virtue of leaving only the homology of the level sets as objects in the sequence,
also leading to a categorical formulation of level set persistence that easily
extends to a continuous parameter $t \in \real$.

To state the next theorem, recall that we denote the graph of a mapping $T$
by $G_T$ and the operator that reverses correspondences by $^\ast$. For $s,t \in \real$,
$s \leq t$, define a correspondence $h_s^t \subseteq H_\ast(X[s]) \times H_\ast(X[t])$ by
$h_s^t = G_{\psi_s^t}^\ast \circ G_{\phi_s^t}$, and let
$\cmod{H}^-_\ast (f) := (H_\ast (X[t]), h_s^t)$, $s,t \in \real, s \leq t$, which we refer to
as the levelset $c$-module associated with $f$. As in \cite{Carlsson2019}, the homology theory
used is Steenrod-Sitnikov homology \cite{Milnor1961}.

\begin{theorem} \label{T:level}
Let $X$ be a locally compact polyhedron. If $f \colon X \to \real$ is a proper continuous
function and $H_\ast$ is Steenrod-Sitnikov homology with field coefficients, then 
$\cmod{H}^-_\ast (f)$ is a $c$-module. 
\end{theorem}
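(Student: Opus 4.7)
The plan is to verify the two functoriality axioms for $\cmod{H}^-_\ast(f) \colon (\real,\leq) \to \cat{CVec}$: first, that $h_t^t$ is the diagonal correspondence $\Delta_{H_\ast(X[t])}$, and second, that $h_s^t \circ h_r^s = h_r^t$ for every $r \leq s \leq t$. The identity at $t$ is immediate, since $X_t^t = X[t]$ makes both $\phi_t^t$ and $\psi_t^t$ identity maps, so $G^\ast_{\psi_t^t} \circ G_{\phi_t^t} = \Delta_{H_\ast(X[t])} \circ \Delta_{H_\ast(X[t])} = \Delta_{H_\ast(X[t])}$. The real content is the composition law, which I will split into the two inclusions after unpacking what $h_s^t \circ h_r^s$ and $h_r^t$ mean as subspaces.

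Unpacking the definitions, $h_r^t$ consists of pairs $(a,c)$ with $\phi_r^t(a) = \psi_r^t(c)$ in $H_\ast(X_r^t)$, while $h_s^t \circ h_r^s$ consists of pairs $(a,c)$ for which there exists $b \in H_\ast(X[s])$ with $\phi_r^s(a) = \psi_r^s(b)$ and $\phi_s^t(b) = \psi_s^t(c)$. The forward inclusion $h_s^t \circ h_r^s \subseteq h_r^t$ will follow by pure naturality: the inclusions $X[r] \subseteq X_r^s \subseteq X_r^t$ and $X[t] \subseteq X_s^t \subseteq X_r^t$, together with $X[s] \subseteq X_r^s \cap X_s^t$, induce maps $i_s \colon H_\ast(X_r^s) \to H_\ast(X_r^t)$ and $j_s \colon H_\ast(X_s^t) \to H_\ast(X_r^t)$ satisfying $\phi_r^t = i_s \circ \phi_r^s$, $\psi_r^t = j_s \circ \psi_s^t$, and $i_s \circ \psi_r^s = j_s \circ \phi_s^t$. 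A one-line chase then gives $\phi_r^t(a) = i_s(\psi_r^s(b)) = j_s(\phi_s^t(b)) = \psi_r^t(c)$.

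The main obstacle, and where the hypotheses on $X$, $f$, and Steenrod--Sitnikov homology enter, is the reverse inclusion $h_r^t \subseteq h_s^t \circ h_r^s$. Here I would invoke the Mayer--Vietoris sequence for the closed cover $X_r^t = X_r^s \cup X_s^t$ with intersection $X_r^s \cap X_s^t = X[s]$. Since $X$ is a locally compact polyhedron and $f$ is proper, the interlevel sets $X_r^s$, $X_s^t$, $X_r^t$, and $X[s]$ are compact polyhedra, so Steenrod--Sitnikov homology, which satisfies strong excision in this setting (cf.\,\cite{Milnor1961}), yields an exact sequence
\[
H_\ast(X[s]) \xrightarrow{(\psi_r^s,\, \phi_s^t)} H_\ast(X_r^s) \oplus H_\ast(X_s^t) \xrightarrow{i_s - j_s} H_\ast(X_r^t).
\]
Given $(a,c) \in h_r^t$, setting $\alpha = \phi_r^s(a)$ and $\gamma = \psi_s^t(c)$, the hypothesis $\phi_r^t(a) = \psi_r^t(c)$ translates to $i_s(\alpha) - j_s(\gamma) = 0$. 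Exactness then produces $b \in H_\ast(X[s])$ with $\psi_r^s(b) = \alpha = \phi_r^s(a)$ and $\phi_s^t(b) = \gamma = \psi_s^t(c)$, which is precisely what is required to conclude $(a,c) \in h_s^t \circ h_r^s$.

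The only place I anticipate needing care is making sure that the Mayer--Vietoris sequence is genuinely applicable in Steenrod--Sitnikov homology for the closed cover $\{X_r^s, X_s^t\}$ of the compact polyhedron $X_r^t$. Properness of $f$ and local compactness of $X$ guarantee compactness of each interlevel set, and strong excision then delivers the exact sequence without needing open covers. With that in hand, the two inclusions combine to give $h_s^t \circ h_r^s = h_r^t$, and together with the identity property this establishes that $\cmod{H}^-_\ast(f)$ is a functor into $\cat{CVec}$, i.e.\,a $c$-module.
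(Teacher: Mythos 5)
Your proposal is correct and follows essentially the same route as the paper: the forward inclusion $h_s^t\circ h_r^s\subseteq h_r^t$ by naturality of the inclusion-induced maps, and the reverse inclusion by exactness of the Mayer--Vietoris diamond $H_\ast(X[s])\to H_\ast(X_r^s)\oplus H_\ast(X_s^t)\to H_\ast(X_r^t)$, which the paper obtains by citing Proposition 3.7 of Carlsson et al.\ (resting on the strong excision property of Steenrod--Sitnikov homology for the compact interlevel sets, exactly as you argue).
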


\begin{proof}
To prove that  $\cmod{H}^-_\ast (f)$ is a $c$-module, it suffices to verify the validity
of the composition rule $h_r^t = h_s^t \circ h_r^s$, for any $r \leq s \leq t$. For the
argument we present, it will be useful to consider the inclusions of interlevel sets
$X_r^s \hookrightarrow X_r^t \hookleftarrow X_s^t$, for $r \leq s \leq t$, and the induced
homomorphisms
\begin{equation}
\begin{tikzcd}
H_\ast (X_r^s) \ar[r, "\rho_r^{s,t}"] & H_\ast (X_r^t) &  
H_\ast (X_s^t) \ar[l, "\sigma_{r,s}^t" '] .
\end{tikzcd}
\end{equation}
%-------------------------------
The proof amounts to a chase in the commutative diagram
\begin{equation}
\begin{tikzcd}[column sep=tiny]
& & H_\ast(X_r^t) & & \\
& H_\ast (X_r^s) \ar[ur, "\rho_r^{s,t}" '] & & H_\ast (X_s^t) \ar[ul, "\sigma_{r,s}^t"] & \\
H_\ast (X[r]) \ar[ur, "\phi_r^s" '] \ar[uurr, bend left=35, "\phi_r^t"]
& & H_\ast(X[s]) \ar[ul, "\psi_r^s"] \ar[ur, "\phi_s^t" '] 
& & H_\ast(X[t]) \ar[ul, "\psi_s^t"] \ar[uull, bend right=35, "\psi_s^t" '] \,,
\end{tikzcd}
\label{E:diagram}
\end{equation}
noting that the assumptions on $X$ and $f$ along with the fact that $H_\ast$ is
Steenrod-Sitnikov homology imply that the center diamond in \eqref{E:diagram}
is exact (Proposition 3.7 of \cite{Carlsson2019}). This means that the sequence
\begin{equation}
\begin{tikzcd}
H_\ast (X[s]) \ar[r, "\alpha"] & H_\ast (X_r^s) \oplus  H_\ast (X_s^t) \ar[r, "\beta"]  &  
H_\ast (X_r^t)
\end{tikzcd}
\end{equation}
is exact, where $\alpha (a) = \phi_r^s (a) \oplus \psi_s^t (a)$ and 
$\beta (a,b) = \rho_r^{s,t} (a) - \sigma_{r,s}^t (b)$.

To check that $h_s^t \circ h_r^s \subseteq h_r^t$, let $(v_r, v_s) \in h_r^s$
and $(v_s, v_t) \in h_s^t$. Set $v = \rho_r^{s,t} \circ \phi_r^s (v_s) =
\sigma_{r,s}^t (v_s)$. From \eqref{E:diagram}, it follows that
$\phi_r^t (v_r) = \psi_s^t (v_t) = v$, which implies that $(v_r, v_t) \in h_r^t$.
For the reverse inclusion, let $(v_r, v_t) \in h_r^t$, which means that
$\phi_r^t (v_r) = \psi_s^t (v_t)$. The commutativity of the diagram implies that
the vectors $w_1 = \phi_r^s (v_r)$ and $w_2 = \psi_s^t (v_t)$ satisfy
$\rho_r^{s,t} (w_1) = \sigma_{r,s}^t (w_2)$. By exactness, there exists
$v_s \in H_\ast(X[s])$ such that $\phi_r^s (v_s) = w_1$ and 
$\psi_s^t (v_s) = w_2$. Thus, $(v_r, v_s) \in h_r^s$ and
$(v_s, v_t) \in h_s^t$, showing that $(v_r, v_t) \in h_s^t \circ h_r^s$.
\end{proof}

\begin{remark}
If $\dim H_\ast(X[t]) < \infty$, $\forall t \in \real$, then the levelset $c$-module
$\cmod{H}^-_\ast (f)$ is virtually tame, thus admitting an interval decomposition. This
is the case, for example, if $X$ is a locally compact polyhedron and $f$ is a proper
piecewise-linear map. As in \cite{Carlsson2019}, using rectangle measures, one may define
persistence diagrams under the more general setting of Theorem \ref{T:level}, without
requiring an interval decomposition of $\cmod{H}^-_\ast (f)$. However, we refrain
from exploring this point of view in this paper.
\end{remark}

\begin{example}
Let $f \colon X \to \real$ be as indicated in Fig.\,\ref{F:level}. The interval decomposition
of the levelset $c$-module $H_0^- (f)$ contains all four types of interval modules, as
indicated in the barcode. The bar types follow the convention made in Remark
\ref{R:arrows}.
\begin{center}
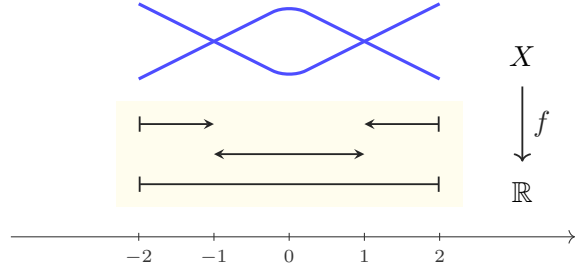
\begin{figure}[h]
\begin{tikzpicture}[line width=1.2pt]
\begin{scope}[blue!70]
\draw (-2, 3.1) {[rounded corners=7pt] -- (0, 2.1)} -- (2, 3.1);
\draw (-2, 2.1) {[rounded corners=7pt] -- (0, 3.1)} -- (2, 2.1);
\end{scope}
\draw (3.1, 2.4) node {$X$} (3.1, 0.6) node {$\boldsymbol{\real}$};
\draw[->, line width=0.7pt, black!85] (3.1, 2) -- node[right=0.1pt] {$f$} (3.1, 1);
%----
\begin{scope}[line width=0.7pt, >=stealth, black!85]
\fill[yellow!8!white] (-2.3, 0.4) rectangle (2.3, 1.8);
\draw [|->] (-2, 1.5) -- (-1, 1.5); 
\draw [<-|] (1, 1.5) -- (2, 1.5);
\draw [<->] (-1, 1.1) -- (1, 1.1);
\draw [|-|] (-2, 0.7) -- (2, 0.7);
\end{scope}
%----
\draw [->, thin, black!80]  (-3.7, 0) -- (3.8, 0);
%----
\foreach \x in {-2, -1, ..., 2}
\draw[thin, black!80] (\x, -1.5pt) -- (\x, 1.5pt) node[below=2.5pt] {\tiny{$\x$}};
\end{tikzpicture}
\caption{Persistence diagram for $H_0^- (f)$.}
\label{F:level}
\end{figure}
\end{center}
\end{example}

\begin{remark} \label{R:2dext}
There is an interesting connection between the present formulation of
levelset homology using $c$-modules and 2-D  persistence modules. Consider 
$\real^2$  with the partial ordering in which $(s_0, t_0) \cle (s_1, t_1)$ if 
$s_o \geq s_1$ and $t_0 \leq t_1$. As a category, this ordering corresponds to 
$\real^{\text{op}} \times \real$. Let $\Delta^+ := \{(s,t) \in \real^2 \colon s \leq t\}$ 
and $\Delta^- := \{(s,t) \in \real^2 \colon s \geq t\}$. Similar to the extension of
zigzag modules to an exact 2-D persistence module \cite{Botnan2018,Cochoy2020}, 
it is possible to extend a $c$-module to a persistent module over $\real^2$.
Place the $c$-module  $H^-_\ast (f)$ along the diagonal $\Delta \subseteq \real^2$.
Under the assumptions of
Theorem \ref{T:level}, one can define an exact $p$-module over $\Delta^+$,
extending $H^-_\ast (f)$, whose vector space at $(s,t) \in \Delta^+$ is 
$V_{(s,t)} := H_\ast (X_s^t)$ and whose  morphisms $v_{(s_0, t_0)}^{(s_1, t_1)}$ 
are the mappings on homology induced  by the inclusions 
$X_{s_0}^{t_0} \hookrightarrow X_{s_1}^{t_1}$. Letting $\sheaf{F}$ denote the 
$p$-sheaf of  sections of the $c$-module  $\cmod{H}^-_\ast (f)$, define a persistence 
module over $\Delta^-$  by $V_{(s,t)} := \sheaf{F} ([t,s])$ and  
$v_{(s_0, t_0)}^{(s_1, t_1)} :=  \sheaf{F}^{[t_0, s_0]}_{[t_1, s_1]}$. One can verify 
that these two structures combine to yield a single exact 2-D persistence module 
$\cmod{V}$ over $\real^{\text{op}} \times \real$. 
\end{remark}

%----------------------------------------

\subsection{A Persistent Mayer-Vietoris Sequence} \label{S:mv}

Let $f \colon X \to \real$ be a continuous function. Under the assumptions of
Theorem \ref{T:level}, in this section, we construct a Mayer-Vietoris (M-V) sequence of
$c$-modules  for covers of $X$ given by sublevel and superlevel sets of $f$. 
The level set $c$-modules $\cmod{H}_i^- (f)$, $i \geq 0$, constructed in
Section \ref{S:levelset}, represents the homology of the intersections of the
elements of these covers.  Recall that $\cat{CMod}$ is the category whose objects
are the $c$-modules (over $\real$) with natural transformations as morphisms.
In general, morphisms in $\cat{CMod}$ do not have kernels. However, in this
particular case, there is sufficient structure to formulate 
an exact M-V sequence. We begin with the construction of
the objects in the sequence. 

Denote the sublevel sets of $f$ by $X^t := f^{-1} ((-\infty, t])$ and the superlevel
sets of $f$ by $X_t := f^{-1} ([t, +\infty))$. For $s \leq t$ and $i \geq 0$, let $\imath_s^t \colon
H_i (X^s) \to H_i (X^t)$ and $\jmath_s^t \colon H_i (X_t) \to H_i (X_s)$ be
the morphisms induced by the inclusions $X^s \subseteq X^t$ and $X_t \subseteq X_s$,
respectively. We denote the graphs of $\imath_s^t$ and $\jmath_s^t$ by $I_s^t$ and
$J_s^t$, respectively. Then, $\cmod{H}_i^\vee (f) := (H_i X^t, \imath_s^t)$ and
$\cmod{H}_i^\wedge (f) := (H_i (X_t), \jmath_s^{t\ast})$ are the homology $c$-modules associated
to the sublevel and superlevel filtrations of $X$, respectively. The direct sum
$\cmod{H}_i^\vee (f) \oplus \cmod{H}_i^\wedge (f)$ is the homology $c$-module
associated with the covers $X = X^t \cup X_t$, $t \in \real$. Note that, for $s \leq t$, the
elements $(a_s, b_s) \in H_i (X^s) \oplus H_i (X_s)$ and $(a_t, b_t) \in
H_i (X^t) \oplus H_i (X_t)$  are in correspondence in $\cmod{H}_i^\vee (f)
\oplus \cmod{H}_i^\wedge (f)$ if and only if $\imath_s^t (a_s) = a_t$ and
$\jmath_s^t (b_t) = b_s$. We also define the ``constant''
$c$-module $\cmod{H}_i (X)$ in which the vector space over any
$t \in \real$ is $H_i(X)$ with the diagonal subspace $\Delta_{H_i (X)}$ as
correspondence, for any $s \leq t$.

Now we define the relevant $c$-module morphisms for the M-V sequence.
Let $p_i^t \colon  H_i (X^t) \to H_\ast (X)$ and $q_i^t \colon H_i (X_t)
\to H_\ast(X)$ be the mappings induced on homology by the inclusions
$X^t \subseteq X$ and $X_t \subseteq X$. The commutativity of the diagrams
\begin{equation}
\begin{tikzcd}[column sep=tiny]
H_\ast (X^s) \ar[rr, "\imath_s^t"] \ar[dr, "p_i^s" '] & & H_\ast (X^t) \ar[dl, "p_i^t"] \\
& H_\ast (X) &
\end{tikzcd}
\quad
\begin{tikzcd}[column sep=tiny]
H_\ast (X_s) \ar[dr, "q_i^s" '] & & H_\ast (X_t) \ar[dl, "q_i^t"] \ar[ll, "\jmath_s^t" '] \\
& H_\ast (X) &
\end{tikzcd}
\end{equation}
for any $s \leq t$, implies that the mappings $p_i^t - q_i^t \colon H_i (X^t)
\oplus H_i (X^t) \to H_i (X)$, given by $(a_t, b_t) \mapsto
p_i^t (a_t) - q_i^t (b_t)$, induce a $c$-module morphism
\begin{equation}
p_i - q_i \colon \cmod{H}_i^\vee (f) \oplus \cmod{H}_i^\wedge (f) \to
\cmod{H}_i (X) .
\label{E:mvmap1}
\end{equation}

Similarly, the inclusions $X[t] \subseteq X^t$ and $X[t] \subseteq X_t$,
$t \in \real$, induce mappings $\psi_i^t \colon H_i (X[t]) \to H_i (X^t)$
and $\phi_i^t \colon H_i (X[t]) \to H_i (X_t)$ on homology, which in turn
induce a $c$-module morphism
\begin{equation}
\psi_i \oplus \phi_i \colon \cmod{H}_i^- (f) \to
\cmod{H}_i^\vee (f) \oplus \cmod{H}_i^\wedge (f) .
\label{E:mvmap2}
\end{equation}

To define the connecting $c$-module morphisms, we work under the assumptions
of Theorem \ref{T:level}. For each $t \in \real$, consider the cover $X = X^t \cup X_t$,
as well as the coarser cover $X = X^t \cup X_s$, for $s \leq t$. Naturality of the
Mayer-Vietoris sequence implies that inclusions yield a commutative diagram
\begin{equation}
\begin{tikzcd}[column sep=small]
\ar[r, dotted] & H_i (X) \ar[r, "\partial_i"] \ar[d, "id"] & H_{i-1} (X[s]) \ar[r] \ar[d, "\phi_s^t"] &
H_{i-1} (X^s)  \oplus H_{i-1} (X_s) \ar[d] \ar[r, dotted, twoheadrightarrow] & H_0 (X) \ar[d, "id"] \\
\ar[r, dotted] & H_i (X) \ar[r, "\partial_i"] & H_{i-1} (X_s^t) \ar[r] &
H_{i-1} (X^t)  \oplus H_{i-1} (X_s) \ar[r, dotted, twoheadrightarrow] & H_0 (X) \\
\ar[r, dotted] & H_i (X) \ar[r, "\partial_i"] \ar[u, "id" '] & H_{i-1} (X[t]) \ar[r] \ar[u, "\psi_s^t" '] &
H_{i-1} (X^t)  \oplus H_{i-1} (X_t) \ar[u] \ar[r, dotted, twoheadrightarrow] & H_0 (X) \ar[u, "id" '] \,.
\end{tikzcd}
\end{equation}
In particular, for any $i \geq 1$ and $s \leq t$, the diagram
\begin{equation}
\begin{tikzcd}[column sep=small, row sep=small]
& H_{i-1} (X[s]) \ar[rd, "\phi_s^t"] & \\
H_i (X) \ar[ur, "\partial_i"] \ar[dr, "\partial_i" '] & & H_{i-1} (X_s^t)  \\
& H_{i-1} (X[t]) \ar[ur, "\psi_s^t" '] & 
\end{tikzcd}
\end{equation}
commutes, showing that the mappings $\partial_i \colon H_i (X) \to H_{i-1} (X[t])$,
$t \in \real$, induce a $c$-module connecting morphism
\begin{equation}
\Delta_i \colon \cmod{H}_i (X)
\to \cmod{H}^-_{i-1} (X) \,.
\label{E:mvmap3}
\end{equation}

\begin{theorem} \label{T:mv}
If $f \colon X \to \real$ is a proper, continuous function defined on a locally
compact polyhedron $X$ and $H_\ast$ is Steenrod-Sitnikov homology, then
the Mayer-Vietoris sequence
\[
\begin{tikzcd}
\dots \ar[r] & \cmod{H}_{i+1} (X) \ar[r, "\Delta_{i+1}"] & \cmod{H}_i^- (f) 
\ar[r, "\psi_i \oplus \phi_i "] & \cmod{H}_i^\vee (f) \oplus \cmod{H}_i^\wedge (f)
\ar[r, "p_i - q_i"] & \dots
\end{tikzcd}
\]
is exact in the  $\cat{CMod}$ category.
\end{theorem}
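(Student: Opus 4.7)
The plan is to reduce exactness in $\cat{CMod}$ to pointwise exactness at each $t \in \real$. At every $t$, all three morphisms in the sequence have components that are honest linear maps: at $t$, $\Delta_{i+1}$ is given by $\partial_{i+1}\colon H_{i+1}(X) \to H_i(X[t])$, $\psi_i \oplus \phi_i$ is given by $(\psi_i^t, \phi_i^t)\colon H_i(X[t]) \to H_i(X^t) \oplus H_i(X_t)$, and $p_i - q_i$ acts as $(a,b)\mapsto p_i^t(a)-q_i^t(b)$. Consequently, the image and kernel constructions preceding Proposition \ref{P:imker} reduce at every position to the usual notions in $\cat{Vec}$, and exactness of the $c$-module sequence should mean exactness of the resulting vector-space sequence for every $t \in \real$.

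At fixed $t$, the pointwise sequence is precisely the classical Mayer-Vietoris long exact sequence for the closed cover $X = X^t \cup X_t$ with intersection $X^t \cap X_t = X[t]$. Under the standing hypotheses that $X$ is a locally compact polyhedron, $f$ is proper, and $H_\ast$ is Steenrod-Sitnikov homology, this long exact sequence holds. This is a consequence of the strong excision property satisfied by Steenrod-Sitnikov homology \cite{Milnor1961} and is the same ingredient used through Proposition 3.7 of \cite{Carlsson2019} in the proof of Theorem \ref{T:level}.

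The naturality needed to assemble these pointwise exact sequences into a sequence of $c$-modules is essentially built into the construction of $\Delta_{i+1}$, $\psi_i \oplus \phi_i$, and $p_i - q_i$: commutativity of the inclusion triangles for $X^s \subseteq X^t$, $X_t \subseteq X_s$, and $X[s] \hookrightarrow X_s^t \hookleftarrow X[t]$, combined with naturality of the Mayer-Vietoris connecting homomorphism with respect to refinement of covers, is what was verified (and diagrammed) when each morphism was defined. Thus the three natural transformations are well-defined $\cat{CMod}$-morphisms whose pointwise components fit together in a long exact sequence of vector spaces for every $t$.

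The main obstacle will be unpacking exactness at the middle term $\cmod{H}_i^-(f)$, whose transition correspondences $h_s^t = G_{\psi_s^t}^\ast \circ G_{\phi_s^t}$ are not graphs of linear maps. One must verify that the pointwise kernel $\ker(\psi_i^t \oplus \phi_i^t) \subseteq H_i(X[t])$ assembles into a submodule of $\cmod{H}_i^-(f)$ in the sense of Definition \ref{D:morphism}(ii); equivalently, for $s \leq t$ and $(v_s,v_t) \in h_s^t$ with $v_s \in \ker(\psi_i^s \oplus \phi_i^s)$, one needs $v_t \in \ker(\psi_i^t \oplus \phi_i^t)$, and symmetrically for the image of $\Delta_{i+1}$. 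Both compatibilities follow from the fact that $h_s^t$ factors through $H_i(X_s^t)$: if $v_s$ and $v_t$ have a common lift to $H_i(X_s^t)$ that maps to $0$ in $H_i(X^s) \oplus H_i(X_s)$ and $H_i(X^t) \oplus H_i(X_t)$ via the interlevel inclusions, the same diagram chase used to establish naturality of $\psi_i \oplus \phi_i$ transports the kernel condition from $s$ to $t$, and likewise for images. Once this submodule compatibility is checked, the theorem follows by combining the pointwise Mayer-Vietoris exactness with the already-verified naturality.
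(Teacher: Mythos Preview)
Your overall strategy---reduce to the pointwise Mayer--Vietoris sequence for the cover $X = X^t \cup X_t$ and then assemble---is exactly the paper's approach. The first three paragraphs are fine and match the paper's ``by construction'' step.

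The last paragraph, however, misstates what must be checked. You write that $\ker(\psi_i \oplus \phi_i)$ being a submodule of $\cmod{H}_i^-(f)$ is \emph{equivalent} to the condition ``$(v_s,v_t) \in h_s^t$ with $v_s \in K_s$ implies $v_t \in K_t$.'' That is not the submodule condition of Definition~\ref{D:morphism}(ii): what is needed is the composition rule $k_r^t = k_s^t \circ k_r^s$ for $k_s^t := (K_s \times K_t) \cap h_s^t$, not that $h_s^t$ ``preserves'' $K$. Your condition is strictly stronger and can fail even when $K$ is a submodule (e.g.\ take $h_s^t = V_s \times V_t$ and $K_s, K_t$ proper subspaces). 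So the diagram chase you outline is aimed at the wrong target.

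The paper sidesteps this entirely by invoking Proposition~\ref{P:imker}(i)--(ii). Part (i) proves, once and for all, that $\im(G)$ is a submodule for any $\cat{CMod}$ morphism $G$; part (ii) then observes that pointwise exactness gives $\ker(F) = \im(G)$ pointwise, hence $\ker(F)$ is a submodule. No Mayer--Vietoris--specific chase is needed. Replace your last paragraph with a direct citation of Proposition~\ref{P:imker}(i)--(ii) and the proof is complete.
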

%------------
\begin{proof}
If $\begin{tikzcd}[column sep=small, cramped] \cmod{W} \ar[r, "G"] & \cmod{U} \ar[r, "F"] 
&  \cmod{V} \end{tikzcd}$ are any two consecutive morphisms in the sequence, by
construction, $\begin{tikzcd}[column sep=small, cramped] W_t \ar[r, "g_t"] & U_t \ar[r, "f_t"] 
&  V_t \end{tikzcd}$ is exact, $\forall t \in \real$. By Proposition \ref{P:imker}(i) and (ii), all 
morphisms in the sequence have well-defined image and kernel $c$-modules. 
$\cat{CMod}$ exactness follows from $\cat{CVec}$ exactness at each $t \in \real$.
\end{proof}

\begin{example}
Let $f \colon X \to \real$ be projection of the space depicted in Fig.\,\ref{F:cokernel} 
to a horizontal axis. By Proposition \ref{P:imker}(iii), $\text{coker} (p_1 - q_1)$ is
a $c$-module. The barcode for $\text{coker} (p_1 - q_1)$, also shown in the figure,
exactly captures the horizontal spread of the 1-dimensional cycles of $X$.
\begin{center}
\begin{figure}[h]
\begin{tikzpicture}[line width=1.2pt, scale=1.1]
\begin{scope}[blue!60]
\draw (-0.8, 3.3) -- (0.8, 3.3) {[rounded corners]-- (1, 2.6)} -- (0.4, 1.6)
-- (-0.4, 1.6) {[rounded corners] -- (-1, 2.6)} -- cycle;

\draw (-0.45, 2.8) circle (0.2); \filldraw (-0.45, 2.75) ellipse (1pt and 2pt);
\draw (0.45, 2.8) circle (0.2); \filldraw (0.45, 2.75) ellipse (1pt and 2pt);
\draw (0, 2.2) ellipse (0.45 and 0.1);
\end{scope}
%----
\draw [->, thin, black!80]  (-2.5, 0.25) -- (2.8, 0.25);
%----
\foreach \x in {-1.5, -1, ..., 1.5}
\draw[thin, black!80] (\x, 0.2) -- (\x, 0.3) node[below=2.5pt] {\tiny{$\x$}};
%----
\begin{scope}[line width=0.5pt, >=stealth, color=black!90]
\fill[yellow!8!white] (-1.3, 0.55) rectangle (1.3, 1.4);
\draw [<->] (-0.65, 1.2) -- (-0.25, 1.2); 
\draw [<->] (0.25, 1.2) -- (0.65, 1.2);
\draw [<->] (-0.45, 1) -- (0.45, 1);
\draw [<->] (-1, 0.8) -- (1, 0.8);
\end{scope}
\draw (2, 2.6) node {$X$} (2, 0.8) node {$\boldsymbol{\real}$};
\draw[->, line width=0.7pt, black!85] (2, 2.2) -- node[right=0.1pt] {$f$} (2, 1.2);
\end{tikzpicture}
\caption{Barcode for the cokernel of $p_1-q_1 \colon
\cmod{H}_1^\vee (f) \oplus \cmod{H}_1^\wedge (f) \to \cmod{H}_1 (X)$.}
\label{F:cokernel}
\end{figure}
\end{center}
In this example, the barcode for extended persistence \cite{Cohen-Steiner2009} also encodes 
the same geometric properties, but $c$-modules present the information in a categorical
framework that naturally integrates various different types of persistence architectures.
\end{example}

%-------------------------

\subsection{Slicing 2-D Persistence Modules}

Multidimensional persistent homology is of great practical interest, as topological analysis
of complex data frequently gives rise to topological or simplicial filtrations that depend
on multiple parameters. However, unlike the one-dimensional case in which the barcode of
a sufficiently tame $p$-module yields a complete invariant, it is impossible to obtain a
discrete, complete representation of the structure of multidimensional $p$-modules
\cite{Carlsson2009multiD}. As such, it is of interest to define, albeit incomplete, computable and
informative invariants for these modules, such as rank invariants 
\cite{Carlsson2009multiD,Patel2018} and some numeric invariants \cite{Skryzalin2017numeric}, 
to summarize their structural  properties. Lesnick and Wright slice 2-D persistence modules 
along affine lines of non-negative  slopes to obtain a family of  one-dimensional $p$-modules 
whose structures may be described by
persistence diagrams \cite{Lesnick2015interactive}. Here, we show how to define a 
$c$-module structure along affine lines of negative slope. If the original 2-D persistence 
module is pointwise finite dimensional, then these negatively sloped slices are virtually tame 
thus admitting an interval decomposition.

We view $(\real^2, \leq)$ as a poset, where $(x_1, y_1) \leq (x_2, y_2)$ if and only if 
$x_1 \leq x_2$ and $y_1 \leq y_2$. Let $\ell \subseteq \real^2$ be an affine line of
negative slope. We fix an orientation  for $\ell$ via the unit vector $u = e^{i \theta}$,
$-\pi/2 < \theta < 0$, parallel to $\ell$. This induces a linear ordering on $\ell$ given by
$s \cle t$ if and only if $(t-s) \cdot u \geq 0$. Given a 2-D persistence module
$\cmod{U} \colon \real^2 \to \cat{Vec}$, we define a $c$-module
$\cmod{V} \colon \real \to \cat{CVec}$, termed the slice of $\cmod{U}$ along $\ell$,
as follows. The vector space at $t \in \ell$ is $V_t := U_t$. To define the correspondences,
we introduce some notation. For $a,b \in \real^2$, let $a \vee b \in \real^2$ be the
(unique) element that is initial with respect
to the property that $a \leq a \vee b$ and $b \leq a \vee b$. If $a = (x_1, y_1)$
and $b = (x_2, y_2)$ satisfy $x_1 \leq x_2$ and $y_1 \geq y_2$, then
$a \vee b := (x_2, y_1)$. For $s, t \in \ell$ with $s \cle t$, let $P(s,t)$ be collection
of all finite sequences $T = (t_i)_{i=0}^n$ of points in $\ell$ satisfying
$t_0 \cle  \ldots \cle t_n$, $t_0 =s$ and $t_n = t$. Letting $r_i = t_{i-1} \vee t_i$,
define the staircase $\Gamma_T$ associated to $T \in P(s,t)$ as the
sequence in $\real^2$ given by
\begin{equation}
t_0 \leq r_1 \geq t_1 \leq \ldots \geq t_{n-1} \leq r_n \geq t_n \,,
\end{equation}
as depicted in Fig.\,\ref{F:staircase}(i). 
\begin{figure}
\begin{center}
\begin{tabular}{cc}
\begin{tikzpicture}[scale=0.6]
\draw (-3.5,3.5) -- (3.5,-3.5);
\fill (-3,3) circle (3pt) node[anchor=east] {$s=t_0$};
\fill (3,-3) circle (3pt);
\draw[-stealth] (-3,3) -- (-2.15,3);
\draw (-2,3) circle (2pt) node[anchor=west] {$r_1$};
\draw[-stealth] (-2,2) -- (-2,2.85);
\draw[-stealth] (-2,2) node[anchor=east]{$t_1$} -- (-1.15,2);
\draw[-stealth,dotted] (-1,1) -- (-1,1.85);
\draw (-1,2) circle (2pt) node[anchor=west] {$r_2$};
\draw[-stealth,dotted] (-1,1) node[anchor=east]{$t_i$} -- (0.55,1);
\draw (0.7,1) circle (2pt) node[anchor=west] {$r_i$};
\draw[-stealth] (3,-3) node[anchor=east] {$t=t_n$} -- (3,-2.15) node[anchor=west] {$r_n$};
\draw (3,-2) circle (2pt);
\draw[-stealth] (2,-2) node[anchor=east] {$t_{n-1}$} -- (2.85,-2);
\draw (3,-2) circle (2pt);
\draw[-stealth] (2,-2) -- (2,-0.85);
\draw (2,-0.7) circle (2pt) node[anchor=west] {$r_{n-1}$};
\draw[-stealth,dotted] (0.7,-0.7) -- (1.85,-0.7);
\draw[-stealth,dotted] (0.7,-0.7) -- (0.7,0.85);
\end{tikzpicture}
& 
\begin{tikzpicture}[scale=0.6]
\draw (-3.5,3.5) -- (3.5,-3.5);
\fill (-3,3) circle (3pt) node[anchor=east] {$v_s=w_0$};
\fill (3,-3) circle (3pt);
\draw[-stealth] (-3,3) -- (-2.15,3);
\draw (-2,3) circle (2pt) node[anchor=west] {$z_1$};
\draw[-stealth] (-2,2) -- (-2,2.85);
\draw[-stealth] (-2,2) node[anchor=east]{$w_1$} -- (-1.15,2);
\draw[-stealth,dotted] (-1,1) -- (-1,1.85);
\draw (-1,2) circle (2pt) node[anchor=west] {$z_2$};
\draw[-stealth,dotted] (-1,1) node[anchor=east]{$w_i$} -- (0.55,1);
\draw (0.7,1) circle (2pt) node[anchor=west] {$z_i$};
\draw[-stealth] (3,-3) node[anchor=east] {$v_t=w_n$} -- (3,-2.15) node[anchor=west] {$z_n$};
\draw (3,-2) circle (2pt);
\draw[-stealth] (2,-2) node[anchor=east] {$w_{n-1}$} -- (2.85,-2);
\draw (3,-2) circle (2pt);
\draw[-stealth] (2,-2) -- (2,-0.85);
\draw (2,-0.7) circle (2pt) node[anchor=west] {$z_{n-1}$};
\draw[-stealth,dotted] (0.7,-0.7) -- (1.85,-0.7);
\draw[-stealth,dotted] (0.7,-0.7) -- (0.7,0.85);
\end{tikzpicture} \smallskip \\
(i) a staircase & (ii) staircase interpolation
\end{tabular}
\caption{Interpolating $v_s \in V_s$ and $v_t \in V_t$ along a staircase.}
\label{F:staircase}
\end{center}
\end{figure}
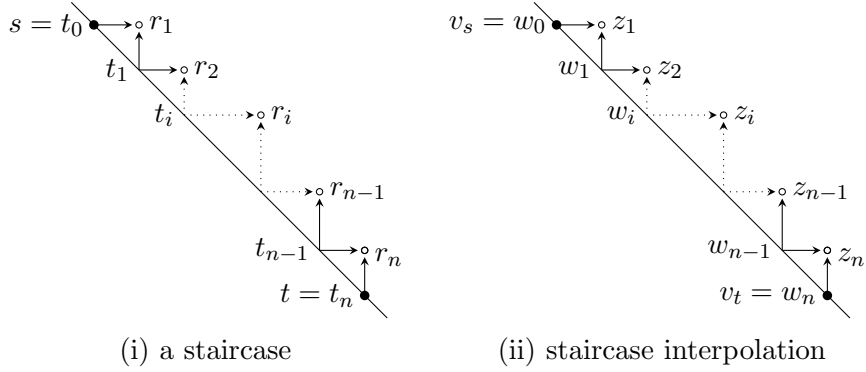
Using staircases, define a correspondence $v_s^t$, as follows.

\begin{definition} \label{D:stair}
A pair $(v_s, v_t) \in v_s^t$
if and only if for any staircase $\Gamma_T$, $T \in P(s,t)$, there are vectors 
that interpolate $v_s$ and $v_t$ along $\Gamma_T$, as illustrated in 
Fig.\,\ref{F:staircase}(ii). More precisely, there are vectors 
$w_i \in U_{t_i}$, $0 \leq i \leq n$, and $z_i \in U_{r_i}$, $1 \leq i \leq n$, such that
\begin{enumerate}[(i)]
\item $w_0 = v_s$ and $w_n = v_t$;
\item $u_{t_{i-1}}^{r_i} (w_{i-1}) = z_i$ and $u_{t_i}^{r_i} (w_i) = z_i$, for
$1 \leq i \leq n$.
\end{enumerate}
\end{definition}

\begin{lemma} \label{L:stairs}
Let $\cmod{V}$ be a slice of a 2-D persistence module $\cmod{U}$ along a line
of negative slope, and let $S, T \in P(s,t)$ with $S \subseteq T$. If $v_s \in V_s$
and $v_t \in V_t$ may be  interpolated along $\Gamma_T$, then $v_s$ and $v_t$
also may be interpolated along the coarser staircase $\Gamma_S$.
\end{lemma}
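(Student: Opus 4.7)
The plan is to reduce to the single-step case where $T$ differs from $S$ by exactly one intermediate point $t_k$, and then argue by induction on $|T \setminus S|$. So first I would assume $T = \{t_0 \cle \ldots \cle t_n\}$ and $S$ is obtained by deleting a single $t_k$ with $0 < k < n$. Writing $t_i = (x_i,y_i) \in \real^2$, the hypothesis that $\ell$ has negative slope gives $x_{k-1} \leq x_k \leq x_{k+1}$ and $y_{k-1} \geq y_k \geq y_{k+1}$, so the staircase corners of $\Gamma_T$ at $r_k = t_{k-1}\vee t_k = (x_k,y_{k-1})$ and $r_{k+1} = t_k\vee t_{k+1} = (x_{k+1},y_k)$ both lie below the new corner $r' := t_{k-1}\vee t_{k+1} = (x_{k+1}, y_{k-1})$ in the product order. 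Thus the transitions $u^{r'}_{r_k}$ and $u^{r'}_{r_{k+1}}$ of $\cmod{U}$ are defined.

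Next I would keep all the given interpolation data $\{w_i, z_i\}$ for $\Gamma_T$, discard $w_k, z_k, z_{k+1}$, and define the single new corner value
\[
z' := u^{r'}_{r_k}(z_k) \in U_{r'}.
\]
The verification that $(w_0,\ldots,w_{k-1},w_{k+1},\ldots,w_n)$ together with the corners $z_1,\ldots,z_{k-1}, z', z_{k+2},\ldots, z_n$ interpolates $v_s$ and $v_t$ along $\Gamma_S$ then reduces to two diagram chases in the 2-D $p$-module $\cmod{U}$: using $u^{r_k}_{t_{k-1}}(w_{k-1}) = z_k$ and functoriality,
\[
u^{r'}_{t_{k-1}}(w_{k-1}) = u^{r'}_{r_k} u^{r_k}_{t_{k-1}}(w_{k-1}) = u^{r'}_{r_k}(z_k) = z',
\]
and similarly
\[
u^{r'}_{t_{k+1}}(w_{k+1}) = u^{r'}_{r_{k+1}} u^{r_{k+1}}_{t_{k+1}}(w_{k+1}) = u^{r'}_{r_{k+1}}(z_{k+1}) = u^{r'}_{t_k}(w_k) = u^{r'}_{r_k} u^{r_k}_{t_k}(w_k) = u^{r'}_{r_k}(z_k) = z',
\]
where the middle equalities use $z_{k+1} = u^{r_{k+1}}_{t_k}(w_k)$, the commutativity of $\cmod{U}$ at the square with corners $t_k, r_k, r_{k+1}, r'$, and $z_k = u^{r_k}_{t_k}(w_k)$. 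All other interpolation relations are inherited verbatim from those for $\Gamma_T$.

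Finally, iterating this single-step argument $|T\setminus S|$ times establishes the lemma. There is no real obstacle here beyond bookkeeping: the essential ingredient is the pointwise functoriality of $\cmod{U}$ as a $(\real^2,\leq)$-indexed $p$-module, which makes the square $t_k \leq r_k, r_{k+1} \leq r'$ commute and supplies the factorization needed to collapse the two small corners $r_k, r_{k+1}$ of $\Gamma_T$ into the single larger corner $r'$ of $\Gamma_S$.
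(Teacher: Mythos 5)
Your proposal is correct and follows essentially the same route as the paper: reduce to deleting a single intermediate point, form the new corner $r'=t_{k-1}\vee t_{k+1}$, and use the commutativity of the square $t_k\leq r_k,r_{k+1}\leq r'$ in the $2$-D persistence module to show both neighboring $w$'s map to the same element $z'$ of $U_{r'}$. The paper's proof is the same diagram chase (with $\bar r_j$, $\bar z_j$ in place of your $r'$, $z'$), followed by the same iteration over $|T\setminus S|$.
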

%------------------
\begin{proof}
Using an iterative argument, it suffices to consider the case where $S$ and $T$
differ by a single element, say,
\begin{equation}
T = \{t_0, \ldots, t_j,  \ldots, t_n\}
\ \text{and} \
S = \{t_0, \ldots, t_{j-1}, \hat{t}_j,  t_{j+1}, \ldots, t_n\} ,
\end{equation}
where $\hat{t}_j$ indicates deletion of $t_j$. Using the notation of
Definition \ref{D:stair}, suppose $w_i \in U_{t_i}$ and $z_i \in U_{r_i}$
interpolate $v_s$ and $v_t$ along $\Gamma_T$. Let 
$\bar{r}_j = t_{j-1}\vee t_{j+1}$ and $\bar{z}_j = u_{t_{j-1}}^{\bar{r}_j} (w_{j-1})$.
Then,
\begin{equation}
\begin{split}
\bar{z}_j &= u_{t_{j-1}}^{\bar{r}_j} (w_{j-1}) = u_{r_j}^{\bar{r}_j} \circ 
u_{t_j}^{r_j}(w_j) = u_{r_{j+1}}^{\bar{r}_j} \circ u_{t_j}^{r_{j+1}} (w_j) \\
&= u_{r_{j+1}}^{\bar{r}_j} (z_{j+1}) = u_{r_{j+1}}^{\bar{r}_j} \circ 
u_{t_{j+1}}^{r_{j+1}} (w_{j+1}) = u_{t_{j+1}}^{\bar{r}_j}  (w_{j+1}) \,.
\end{split}
\end{equation}
Thus, $\bar{z}_j = u_{t_{j-1}}^{\bar{r}_j} (w_{j-1}) = 
u_{t_{j+1}}^{\bar{r}_j}  (w_{j+1})$, showing that the vectors
\begin{equation}
w_0, \ldots, w_{j-1}, w_{j+1}, \ldots, w_n
\quad \text{and} \quad
r_1, \ldots, r_{j-1}, \bar{r}_j, r_{j+2}, \ldots, r_n
\end{equation}
interpolate $v_s$ and $v_t$ along the staircase $\Gamma_S$.
\end{proof}

\begin{theorem}
Let $\cmod{U} \colon \real^2 \to \cat{Vec}$ be a pointwise finite-dimensional
persistence module. If $\ell \subseteq \real^2$ is a negatively sloped line, then
the slice of $\cmod{U}$ along $\ell$ is a virtually tame $c$-module.
\end{theorem}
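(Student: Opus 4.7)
The plan is to handle the two assertions separately. Virtual tameness is immediate: since $\cmod{U}$ is pointwise finite-dimensional, so is the slice $\cmod{V}$ (its spaces are just $V_t = U_t$), and the remark following Definition \ref{D:tame} then applies. All the genuine work lies in verifying that $\cmod{V}$ is actually a $c$-module, i.e., that the staircase-defined correspondences are functorial. Each $v_s^t$ is clearly a linear subspace of $V_s \times V_t$ (the interpolation condition is linear in the intermediate data $w_i, z_i$, and $v_s^t$ is an intersection of such linear subspaces, one per staircase). The identity relation $v_t^t = \Delta_{V_t}$ holds trivially, so the only nontrivial point is the composition law $v_r^t = v_s^t \circ v_r^s$ for $r \cle s \cle t$ on $\ell$.

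The inclusion $v_s^t \circ v_r^s \subseteq v_r^t$ is a direct consequence of Lemma \ref{L:stairs}. Given $(v_r, v_s) \in v_r^s$, $(v_s, v_t) \in v_s^t$, and any staircase $T \in P(r,t)$, refine $T$ to $T' = T \cup \{s\}$ and split it as $T' = T'_1 \cup T'_2$ with $T'_1 \in P(r,s)$ and $T'_2 \in P(s,t)$. The given interpolations along $\Gamma_{T'_1}$ and $\Gamma_{T'_2}$ agree at $s$ with common value $v_s$; concatenating them yields an interpolation along $\Gamma_{T'}$, and Lemma \ref{L:stairs} then coarsens this to an interpolation along $\Gamma_T$.

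The reverse inclusion $v_r^t \subseteq v_s^t \circ v_r^s$ is where the real argument lives, and where pointwise finite-dimensionality enters. For $(v_r, v_t) \in v_r^t$ I must exhibit a single $v_s \in V_s$ compatible with all staircases on both sides simultaneously. For each $T_1 \in P(r,s)$ the set
\[
X(T_1) := \{v_s \in V_s \colon v_r \text{ and } v_s \text{ admit an interpolation along } \Gamma_{T_1}\}
\]
is an affine subspace of $V_s$ (the image of an affine variety of intermediate choices under a linear projection), and similarly $Y(T_2) \subseteq V_s$ is affine for each $T_2 \in P(s,t)$. I would verify the finite intersection property for the family $\{X(T_1)\} \cup \{Y(T_2)\}$: given any finite selection $T_1^{(1)},\ldots,T_1^{(m)}$ and $T_2^{(1)},\ldots,T_2^{(n)}$, their union, reordered along $\ell$, is a single staircase $T^\star \in P(r,t)$ refining every member; interpolating $(v_r, v_t)$ along $\Gamma_{T^\star}$ and reading off the value at $s$ produces, via Lemma \ref{L:stairs} applied to each $T_1^{(i)}$ and $T_2^{(j)}$, a common element of all the chosen $X(T_1^{(i)})$ and $Y(T_2^{(j)})$.

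The pivotal final step — and the main obstacle — is upgrading the finite intersection property to a non-empty total intersection. This rests on the standard fact that in a finite-dimensional vector space, any family of affine subspaces with the finite intersection property has non-empty intersection (choose a finite sub-intersection $B$ of minimal dimension; by minimality every other member of the family must contain $B$, so $B$ lies in the total intersection). Pointwise finite-dimensionality of $\cmod{U}$ guarantees $\dim V_s < \infty$, so this dimensional argument applies, yielding the desired $v_s$ and completing the verification of functoriality, after which virtual tameness follows as noted.
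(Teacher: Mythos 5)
Your proposal is correct and follows essentially the same route as the paper: the easy inclusion via concatenation plus Lemma \ref{L:stairs}, and the reverse inclusion by showing the staircase-compatibility conditions at $s$ cut out affine subspaces of the finite-dimensional $V_s$ with the finite intersection property, then extracting a common point by a minimality argument (the paper takes a minimal element of the family $\{V_{T_1,T_2}\}$ under inclusion via Zorn, which is equivalent to your minimal-dimension finite sub-intersection). The only cosmetic difference is that you separate the left- and right-hand conditions into $X(T_1)$ and $Y(T_2)$ rather than working with the combined sets $V_{T_1,T_2}=X(T_1)\cap Y(T_2)$; this changes nothing essential.
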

%-------
\begin{proof}
Let $\cmod{V}$ be the slice of $\cmod{U}$ along $\ell$.
To show that $\cmod{V}$ is a $c$-module, it suffices to verify the composition rule
$v_r^t = v_s^t \circ v_r^s$ for morphisms. We begin with the inclusion 
$v_s^t \circ v_r^s \subseteq v_r^t $. Let $(v_r, v_s) \in v_r^s$ and $(v_s, v_t) \in
v_s^t$. Given $T \in P(r,t)$, let $\overline{T} = T \cup \{s\}$. Write $\overline{T}$ as
a union $\overline{T} = T_1 \cup T_2$, $T_1 \in P(r,s)$, and $T_2 \in P(s,t)$. By
assumption, we may interpolate $v_r$ and $v_s$ along $\Gamma_{T_1}$, as well
as $v_s$ and $v_t$ along $\Gamma_{T_2}$. Concatenating these interpolations,
we obtain an interpolation of $v_r$ and $v_t$ along $\Gamma_{\overline{T}}$.
By Lemma \ref{L:stairs}, $v_r$ and $v_t$ also
can be interpolated along the coarser staircase $\Gamma_T$. This proves that
$(v_r, v_t) \in v_r^t$.

For the reverse inclusion, let $(v_r, v_t) \in v_r^t$ and $s \in \ell$ be such that
$r \cle s \cle t$. Our goal is to show that there exists $v_s \in V_s = U_s$ such that
$(v_r, v_s) \in v_r^s$ and $(v_s, v_t) \in v_s^t$. Given $T_1 \in P(r,s)$ and
$T_2 \in P(s,t)$, let $V_{T_1, T_2}$ be the affine subspace of $V_s$ comprising
those vectors $v_s \in V_s$ such that:
\begin{enumerate}[(i)]
\item $v_r$ and $v_s$ may be interpolated along $\Gamma_{T_1}$;
\item $v_s$ and $v_t$ may be interpolated along $\Gamma_{T_2}$. 
\end{enumerate}
Note that $V_{T_1, T_2}$ is non-empty because $(v_s, v_t) \in v_s^t$ implies
that $v_r$ and $v_t$ may be interpolated along the staircase
$\Gamma_{T_1 \cup T_2}$. To conclude the argument, we show that 
\begin{equation}
W_s := \bigcap_{\substack{T_1 \in P(r,s) \\ T_2 \in P(s,t)}} V_{T_1, T_2}
\ne \emptyset \,,
\label{E:int}
\end{equation}
as this implies that any $v_s \in W_s$ satisfies $(v_r, v_s) \in v_r^s$
and $(v_s, v_t) \in v_s^t$, as desired. 

Let $A = \{V_{T_1, T_2} \colon T_1 \in P(r,s) \ \text{and} \ T_2 \in P(s,t)\}$,
partially ordered via inclusion. Since $V_s$ is finite dimensional, each descending
chain in $A$ stabilizes in finitely many steps, thus having a lower bound. By Zorn's
Lemma, $A$ has a minimal element $V_{R_1, R_2}$. We show that
$V_{R_1, R_2} \subseteq V_{T_1, T_2}$, for any $T_1 \in P(r,s)$ and $T_2 \in P(s,t)$. Let
$S_1 = T_1 \cup R_1$ and $S_2 = T_2 \cup R_2$. By Lemma \ref{L:stairs},
$V_{S_1, S_2} \subseteq V_{T_1, T_2}$ and $V_{S_1, S_2} \subseteq V_{R_1, R_2}$.
By minimality, $V_{R_1, R_2} = V_{S_1, S_2} \subseteq V_{T_1, T_2}$. 
Hence, $W_s = V_{R_1, R_2} \ne \emptyset$, as claimed.

The virtual tameness of $\cmod{V}$ follows from the assumption that
$\cmod{U}$ is pointwise finite dimensional. 
\end{proof}

\section{Closing Remarks} \label{S:remarks}

This paper introduced and developed two main concepts: (i) correspondence modules
that generalize structures such as persistence modules and zigzag modules and 
(ii) persistence sheaves that provide a 
pathway to the structural analysis of $c$-modules. Using sheaf-theoretical arguments, 
we proved interval decomposition theorems for sufficiently tame $c$-modules and 
$p$-sheaves parameterized over $\real$, as well as a stability theorem for 
persistence-diagram representations of $p$-sheaves. Applications
discussed in the paper include: (1) a new formulation of continuously parameterized
levelset persistence in a category theory framework; (2) a Mayer-Vietoris sequence
that brings together levelset, sublevelset and superlevelset homology
modules of a real-valued function; and (3) 1-dimensional slices of 2-dimensional 
persistence modules  along lines of negative slope.

This study of persistent homology from the viewpoint of $c$-modules and $p$-sheaves
opens up avenues for further investigation. We conclude with a discussion of some
of the questions raised by the results of this paper. 

\begin{enumerate}[(a)]

\item The Isometry Theorem was proven in the context of decomposable $p$-sheaves. 
On the other hand, Theorem \ref{T:intdec} shows that any virtually tame $c$-module 
$\cmod{V}$ admits a (unique) interval decomposition and therefore may be represented 
by a persistence diagram. Is there an isometry theorem for decomposable $c$-modules
under a suitable notion of  interleaving?

\item This paper has focused primarily on structural and stability questions associated 
with  $c$-modules. However, the practical relevance of $c$-modules depends heavily 
on computability of interval decompositions. Thus, a basic problem  is that 
of  developing and implementing an algorithm to calculate the persistence diagram 
of a sufficiently tame $c$-module.

\item There are algorithms to calculate the persistence diagrams for the sublevelset
and superlevelset homology modules of a function $f \colon X \to \real$. To what extent can the 
persistence diagrams for the levelset $c$-module of $f$ be inferred from these and the 
homology groups $H_n (X)$ via the persistent Mayer-Vietoris sequence of Section \ref{S:mv}?
More generally, given an exact sequence of $c$-modules in which the barcodes for
every other term is known, can we infer the other barcodes?

\end{enumerate}

\bibliographystyle{abbrv}
\bibliography{cmodule}

\begin{thebibliography}{10}

\bibitem{Berkouk2018derived}
N.~Berkouk and G.~Ginot.
\newblock A derived isometry theorem for constructible sheaves on $\mathbb{R}$.
\newblock arXiv:1805.09694, 2018.

\bibitem{Berkouk2019levelsets}
N.~Berkouk, G.~Ginot, and S.~Oudot.
\newblock Level-sets persistence and sheaf theory.
\newblock arXiv:1907.09759, 2019.

\bibitem{Bjerkevik2016stability}
H.~B. Bjerkevik.
\newblock Stability of higher-dimensional interval decomposable persistence
  modules.
\newblock arXiv: 1609.02086v2, 2016.

\bibitem{Botnan2018}
M.~Botnan and M.~Lesnick.
\newblock Algebraic stability of zigzag persistence modules.
\newblock {\em Algebraic Geom. Topol.}, 18(6):3133--3204, 2018.

\bibitem{Botnan2019decomposition}
M.~B. Botnan and W.~Crawley-Boevey.
\newblock Decomposition of persistence modules.
\newblock {\em Proc. Am. Math. Soc.}, doi: 10.1090/proc/14790, in press.

\bibitem{Bubenik2015}
P.~Bubenik.
\newblock Statistical topological data analysis using persistence landscapes.
\newblock {\em J. Mach. Learn. Res.}, 16(1):77--102, 2015.

\bibitem{Burghelea2017}
D.~Burghelea and S.~Haller.
\newblock Topology of angle valued maps, barcodes and {J}ordan blocks.
\newblock {\em J Appl. and Comput. Topology}, 1(1):121--197, 2017.

\bibitem{Carlsson2010}
G.~Carlsson and V.~de~Silva.
\newblock Zigzag persistence.
\newblock {\em Found. Comput. Math.}, 10(4):367--405, 2010.

\bibitem{Carlsson2019}
G.~Carlsson, V.~de~Silva, S.~Kali\v{s}nik, and D.~Morozov.
\newblock Parametrized homology via zigzag persistence.
\newblock {\em Algebraic Geom. Topol.}, 19(2):657--700, 2019.

\bibitem{Carlsson2009}
G.~Carlsson, V.~de~Silva, and D.~Morozov.
\newblock Zigzag persistent homology and real-valued functions.
\newblock In {\em Proceedings of the Twenty-Fifth Annual Symposium on
  Computational Geometry}, SCG ’09, pages 247--256, New York, NY, USA, 2009.
  Association for Computing Machinery.

\bibitem{Carlsson2009multiD}
G.~Carlsson and A.~Zomorodian.
\newblock The theory of multidimensional persistence.
\newblock {\em Discrete Comput. Geom.}, 42(1):71--93, 2009.

\bibitem{Carlsson2005}
G.~Carlsson, A.~Zomorodian, A.~Collins, and L.~J. Guibas.
\newblock Persistence barcodes for shapes.
\newblock {\em Int. J. Shape Model.}, 11(02):149--187, 2005.

\bibitem{Chazal2009}
F.~Chazal, D.~Cohen-Steiner, M.~Glisse, L.~J. Guibas, and S.~Y. Oudot.
\newblock Proximity of persistence modules and their diagrams.
\newblock In {\em Proceedings of the Twenty-Fifth Annual Symposium on
  Computational Geometry}, SCG ’09, pages 237--246, New York, NY, USA, 2009.
  Association for Computing Machinery.

\bibitem{Chazal2009dgh}
F.~Chazal, D.~Cohen-Steiner, L.~J. Guibas, F.~M\'{e}moli, and S.~Y. Oudot.
\newblock {G}romov-{H}ausdorff stable signatures for shapes using persistence.
\newblock {\em Comput. Graph. Forum}, 28(5):1393--1403, 2009.

\bibitem{Chazal2016}
F.~Chazal, V.~de~Silva, M.~Glisse, and S.~Oudot.
\newblock {\em The Structure and Stability of Persistence Modules}.
\newblock Springer Briefs in Mathematics. Springer International Publishing,
  2016.

\bibitem{Cochoy2020}
J.~Cochoy and S.~Oudot.
\newblock Decomposition of exact pfd persistence bimodules.
\newblock {\em Discrete Comput. Geom.}, 63(2):255--293, 2020.

\bibitem{Cohen-Steiner2007}
D.~Cohen-Steiner, H.~Edelsbrunner, and J.~Harer.
\newblock Stability of persistence diagrams.
\newblock {\em Discrete Comput. Geom.}, 37(1):103--120, 2007.

\bibitem{Cohen-Steiner2009}
D.~Cohen-Steiner, H.~Edelsbrunner, and J.~Harer.
\newblock Extending persistence using {P}oincar{\'e} and {L}efschetz duality.
\newblock {\em Found. Comput. Math.}, 9:133--134, 2009.

\bibitem{Crawley-Boevey2015}
W.~Crawley-Boevey.
\newblock Decomposition of pointwise finite-dimensional persistence modules.
\newblock {\em J. Algebra Its Appl.}, 14(05):1550066, 2015.

\bibitem{Curry2013}
J.~Curry.
\newblock Sheaves, cosheaves and applications.
\newblock arXiv:1303.3255, 2013.

\bibitem{Edelsbrunner2002}
H.~Edelsbrunner, D.~Letscher, and A.~Zomorodian.
\newblock Topological persistence and simplification.
\newblock {\em Discrete. Comput. Geom.}, 28(4):511--533, 2002.

\bibitem{Frosini1990}
P.~Frosini.
\newblock A distance for similarity classes of submanifolds of a euclidean
  space.
\newblock {\em Bull. Aust. Math. Soc.}, 42(3):407--415, 1990.

\bibitem{Frosini2019}
P.~Frosini, C.~Landi, and F.~M\'{e}moli.
\newblock The persistent homotopy type distance.
\newblock {\em Homol. Homotopy Appl.}, 21(2):231--259, 2019.

\bibitem{Gabriel1972}
P.~Gabriel.
\newblock Unzerlegbare darstellungen {I}.
\newblock {\em Manuscripta Math.}, 6(1):71--103, 1972.

\bibitem{Hang2019}
H.~Hang, F.~M\'{e}moli, and W.~Mio.
\newblock A topological study of functional data and {F}r\'{e}chet functions of
  metric measure spaces.
\newblock {\em J Appl. and Comput. Topology}, 3(4):359--380, 2019.

\bibitem{Kashiwara2018}
M.~Kashiwara and P.~Schapira.
\newblock Persistent homology and microlocal sheaf theory.
\newblock {\em J. Appl. Comput. Topol.}, 2(1--2):83--113, 2018.

\bibitem{Lesnick2015}
M.~Lesnick.
\newblock The theory of the interleaving distance on multidimensional
  persistence modules.
\newblock {\em Found. Comput. Math.}, 15(3):613--650, 2015.

\bibitem{Lesnick2015interactive}
M.~Lesnick and M.~Wright.
\newblock Interactive visualization of 2-d persistence modules.
\newblock arXiv:1512.00180, 2015.

\bibitem{Milnor1961}
J.~Milnor.
\newblock On the {S}teenrod homology theory (first distributed 1961).
\newblock In S.~C. Ferry, A.~Ranicki, and J.~M. Rosenberg, editors, {\em
  Novikov Conjectures, Index Theorems, and Rigidity: Oberwolfach 1993},
  volume~1 of {\em London Mathematical Society Lecture Note Series}, pages
  79--96. Cambridge University Press, 1995.

\bibitem{Oudot2015}
S.~Y. Oudot.
\newblock {\em Persistence theory: from quiver representations to data
  analysis}, volume 209.
\newblock American Mathematical Society Providence, 2015.

\bibitem{Patel2018}
A.~Patel.
\newblock Generalized persistence diagrams.
\newblock {\em J. Appl. Comput. Topol.}, 1(3-4):397--419, 2018.

\bibitem{Robins1999}
V.~Robins.
\newblock Towards computing homology from finite approximations.
\newblock In {\em Topology Proceedings}, volume~24, pages 503--532, 1999.

\bibitem{Skryzalin2017numeric}
J.~Skryzalin and G.~Carlsson.
\newblock Numeric invariants from multidimensional persistence.
\newblock {\em J. Appl. Comput. Topol.}, 1(1):89--119, 2017.

\bibitem{Zomorodian2005}
A.~Zomorodian and G.~Carlsson.
\newblock Computing persistent homology.
\newblock {\em Discrete. Comput. Geom.}, 33(2):249--274, 2005.

\end{thebibliography}

\end{document}